\theoremstyle{plain}
\newtheorem{thm}{Theorem}[section]
\newtheorem{lemma}[thm]{Lemma}
\newtheorem{prop}[thm]{Proposition}
\newtheorem{cor}[thm]{Corollary}
\theoremstyle{definition}
\newtheorem{question}[thm]{Question}
\theoremstyle{remark}
\newtheorem{remark}[thm]{Remark}
\newcommand{\nc}{\newcommand}
\def\makeop#1{\expandafter\def\csname#1\endcsname
  {\mathop{\rm #1}\nolimits}\ignorespaces}
\def\makebb#1{\expandafter\def
  \csname bb#1\endcsname{{\mathbb{#1}}}\ignorespaces}
\def\makebf#1{\expandafter\def\csname bf#1\endcsname{{\bf
      #1}}\ignorespaces} 
\def\makegr#1{\expandafter\def
  \csname gr#1\endcsname{{\mathfrak{#1}}}\ignorespaces}
\def\makescr#1{\expandafter\def
  \csname scr#1\endcsname{{\EuScript{#1}}}\ignorespaces}
\def\makecal#1{\expandafter\def\csname cal#1\endcsname{{\mathcal
      #1}}\ignorespaces} 
\def\doLetters#1{#1A #1B #1C #1D #1E #1F #1G #1H #1I #1J #1K #1L #1M
                 #1N #1O #1P #1Q #1R #1S #1T #1U #1V #1W #1X #1Y #1Z}
\def\doletters#1{#1a #1b #1c #1d #1e #1f #1g #1h #1i #1j #1k #1l #1m
                 #1n #1o #1p #1q #1r #1s #1t #1u #1v #1w #1x #1y #1z}
     \def\qed{\qedmark\medbreak}%
\def\qedmark{{\enspace\vrule height 6pt width 5pt depth 1.5pt}}%
\def\Gm{{{\bbG}_{\rm m}}}   
\def\sfF{\mathsf{F}}
\def\sfV{\mathsf{V}}
\def\Spec{{\rm Spec}\,}
\def\Fpbar{\overline{\bbF}_p}
\def\Fp{{\bbF}_p}
\def\Qpbar{\overline{{\bbQ}_p}}
\def\Qp{{\bbQ}_p}
\def\Zp{{\bbZ}_p}
\def\Qbar{\overline{\bbQ}}
\def\Sh{{\rm Sh}}
\newcommand{\Z}{\mathbb Z}
\newcommand{\Q}{\mathbb Q}
\newcommand{\R}{\mathbb R}
\newcommand{\C}{\mathbb C}
\renewcommand{\H}{\mathbb H}  
\newcommand{\A}{\mathbb A}    
\newcommand{\F}{\mathbb F}
\newcommand{\<}{\langle}   
\renewcommand{\>}{\rangle} 
\newcommand{\isoto}{\stackrel{\sim}{\longrightarrow}}
\nc{\embed}{\hookrightarrow}
\newcommand{\ch}{characteristic }
\newcommand{\ac}{algebraically closed }
\newcommand{\dieu}{Dieudonn\'{e} }
\nc{\ol}{\overline}
\nc{\wt}{\widetilde}
\nc{\opp}{\mathrm{opp}}
\def\ul{\underline}
\begin{document}
\renewcommand{\thefootnote}{\fnsymbol{footnote}}
\setcounter{footnote}{-1}
\numberwithin{equation}{section}


\title[Definite quaternion multiplication]
{On reduction of moduli schemes of abelian varieties with definite
  quaternion multiplications}
\author{Chia-Fu Yu}
\address{
(YU) Institute of Mathematics, Academia Sinica 
and NCTS \\ 
6th Floor, Astronomy Mathematics Building \\
No. 1, Roosevelt Rd. Sec. 4 \\ 
Taipei, Taiwan, 10617} 
\email{chiafu@math.sinica.edu.tw}



\date{\today}
\subjclass[2000]{14G35,11G18}
\keywords{Shimura varieties of type D, bad reduction, isocrystals}  

\begin{abstract}
  In this paper we make an initial study on type D moduli spaces in
  positive \ch $p\neq 2$, where we allow the prime $p$ to ramify in the
  defining datum. We classify explicitly the isogeny
  classes of $p$-divisible groups with additional structures in
  question. We also study the reduction of the type D moduli spaces
  of minimal rank.  
\end{abstract} 

\maketitle


\def\char{{\rm char\,}}

\def\GAut{{\rm GAut}}
\def\calMpK{\calM^{(p)}_K}
\section{Introduction}
\label{sec:01}

\subsection{}
\label{sec:1.1}
PEL moduli spaces parametrize abelian varieties with additional
structures of polarizations, endomorphisms and level structures. 
When the adjoint group $G^{\rm ad}$ of the defining algebraic group $G$ is $\Q$-simple, these
moduli spaces are divided into types A, C and D according to the Dynkin diagram of $G^{\rm ad}$.
Previous studies of these moduli spaces and
their integral models have mainly focused on the spaces of types
A and C in the case of good reduction. There is comparatively less known
about type D moduli spaces in the literature. 
Certain important results on
all smooth PEL-type moduli spaces, 
which of course include the case of type D, 
have been obtained by Wedhorn~\cite{wedhorn:ordinary, wedhorn:eo} and
Moonen~\cite{moonen:bt1, moonen:eo, moonen:st},
where they consider the density of the $\mu$-ordinary locus and the
Ekedahl-Oort (EO) strata. In this paper we study the type D moduli spaces
in mainly positive \ch and certain basic classification problems for
abelian varieties and 
associated $p$-divisible groups with additional structures in
question. A main point here is that
we allow the prime $p$ to be ramified in the definite quaternion
algebra concerned. In the minimal rank case, 
we also exhibit a method for studying the case with arbitrary 
polarization degree.
 


Throughout this section let $p$ denote an odd prime number. 
Let $F$ be a totally real algebraic number field 
and $O_F$ the ring of integers. 
Let $B$ be a totally definite quaternion algebra
over $F$ and let $*$ be the canonical involution on $B$, which is the
unique positive involution on $B$. Let $O_B$ be an $O_F$-order 
in $B$ which is stable under the involution $*$ 
and maximal at $p$, that is, the completion 
$O_B\otimes_\Z \Zp$ at $p$ is a maximal order in the algebra
$B_p:=B\otimes_\Q \Qp$. A {\it polarized $O_B$-abelian  variety}\footnote{The author also used the terminology "abelian $O_B$-variety" for the same object in his earlier papers \cite{yu:reduction,yu:cm, yu:mass_hb,yu:gmf, yu:smf}. He apologizes for the inconsistency.} 
is a tuple
$\ul A=(A, \lambda,\iota)$, where $(A,\lambda)$ is a polarized abelian
variety and $\iota: O_B \to \End(A)$ is a ring monomorphism such that 
$\lambda\circ \iota(b^*)= \iota(b)^t \circ \lambda$, i.e.~the map
$\iota$ is compatible with the involution
$*$ and the Rosati involution induced by 
the polarization $\lambda$. 
Clearly, this notion can be defined over any base scheme and one
can study families of such objects. 

Let $m\ge 1$ be a positive integer, and 
let $\calM=\calM_{m,O_B}$ be the coarse moduli scheme over
$\Spec \Z_{(p)}$ of $2m[F\!:\!\Q]$-dimensional 
polarized $O_B$-abelian varieties $\ul A=(A,\lambda,\iota)$. 
Let $\calM^{(p)}\subset \calM$ 
be the open and closed subscheme consisting of
objects $\ul A$ with prime-to-$p$ polarization degree. 
Both moduli spaces $\calM$ and $\calM^{(p)}$ are schemes locally
of finite type.
Let $\calM_K\subset \calM$ (resp.~$\calMpK\subset \calM^{(p)}$)
denote the closed subscheme parametrizing the
objects $\ul A$ in $\calM$ (resp.~$\calM^{(p)}$) that satisfy the
determinant condition; see Section~\ref{sec:23}.
We shall call $\calM$ or one of its variants a moduli scheme (or
moduli space) of type $D_m$. 
The fibers of these moduli schemes are non-empty; 
see Lemmas~\ref{22} and \ref{215}. 
The goal of the paper is to investigate the geometry of these
moduli spaces. We make a detailed study on the moduli spaces of type $D_1$
because this is the most basic and most accessible case, and some speculation indicates that this family behaves quite differently from the higher rank cases. 

 



We explain the relation of the usual Shimura varieties of PEL-type D
and the moduli spaces of type $D_m$. 
Let $(V,\psi)$ be a $\Q$-valued non-degenerate 
skew-Hermitian $B$-module of $B$-rank $m$. 
Let $G:=GU_B(V,\psi)$ denote 
the $\Q$-group of $B$-linear similitudes on $(V,\psi)$, and let $X$ be
the $G(\R)$-conjugacy class of an $\R$-homomorphism $h:\C^\times \to
G_\R$ such that $\psi(x,h(i)y)$ is definite on $V_\R$. 
Note that up to a central torus, the $\Qbar$-group $G_{\Qbar}:=G\otimes \Qbar$ is isogenous to the product of copies of the even orthogonal 
group $O_{2m}$; see Section~\ref{sec:22}.
Let $U\subset
G(\A_f)$ be the open compact subgroup that stabilizes a lattice $\Lambda_0$.
Then the associated Shimura variety $\Sh_U(G,X)_{\Qbar}$ is 
is an open and closed subscheme of $\calM_{\Qbar}$.
Conversely, any irreducible component 
of $\calM_{\Qbar}$ is isomorphic to a component of 
$\Sh_U(G,X)_{\Qbar}$ for
some $(V,\psi)$ and $U$ as above; see Section~\ref{sec:adelic}.

The main contents of this paper handle the following two basic
problems:  

\begin{itemize}
\item[{\bf (a)}] Classify explicitly the isogeny classes of
  quasi-polarized 
  $p$-divisible groups with additional structures (of arbitrary rank $m$) 
  in question.
\item [{\bf (b)}] Study the reduction of the moduli spaces of type $D_1$. 
\end{itemize}

As the reader can see from known results on classical moduli spaces like
Siegel or Hilbert moduli spaces, the results obtained so far 
for moduli spaces of type D are 
comparably much less.   
Wishfully we could obtain more results based on the present work.
Below we illustrate our main results.    


\subsection{Part (a)}
\label{sec:1.2}

For any polarized $O_B$-abelian variety
$\ul A=(A,\lambda_A, \iota_A)$ over an \ac field $k$ of \ch $p$, 
the associated $p$-divisible group 
$(H,\lambda_H,\iota_H):=(A,\lambda_A, \iota_A)[p^\infty]$ with
additional structures is a quasi-polarized $p$-divisible $O_B\otimes
\Zp$-module (see Section~\ref{sec:51}). We would like to determine the slope
sequences and isogeny 
classes of these $p$-divisible groups with additional structures. 
As a first standard step, we decompose these $p$-divisible
groups and study the same problem for each component independently. 
Write 
\begin{equation}
  \label{eq:1.1}
F\otimes_\Q \Qp=\prod_{v|p} F_v, \quad B\otimes_{\Q} \Qp=\prod_{v|p}
B_v, \quad O_B\otimes_\Z \Zp=\prod_{v|p} \calO_{B_v}.  
\end{equation}
Then we get a decomposition $(H,\lambda_H,\iota_H)=\prod_{v|p}
(H_v,\lambda_{H_v},\iota_{H_v})$. The slope sequence $\ul \nu(\ul A)$
of $\ul A$ is then
defined to be the collection $(\nu(H_v))_{v|p}$ 
with slope sequences $\nu(H_v)$ indexed by
the set of places $v|p$ of $F$. So we may study 
quasi-polarized $p$-divisible $\calO_{B_v}$-modules for each place $v|p$ separately.  We shall write $\bfB$, $\bfF$
and $\calO_{\bfB}$ for $B_v$, $F_v$ and $\calO_{B_v}$, respectively
for brevity. In this part we do the following:
\begin{enumerate}
\item Study the structure of skew-Hermitian $\calO_\bfB\otimes
  W$-modules and quasi-polarized \dieu $\calO_\bfB$-modules (see
  Section~\ref{sec:51}), where $W$
  denotes the ring of Witt vectors over $k$. See 
  Sections~\ref{sec:04} and \ref{sec:05}.

\item Determine all possible slope sequences of quasi-polarized \dieu
  $\calO_\bfB$-modules of rank $4dm$, where $d=[\bfF:\Qp]$. Moreover,
  we show that these slope sequences can be also realized by 
  {\it separably}
  quasi-polarized \dieu $\calO_\bfB$-modules. See Theorems~\ref{64}
  and \ref{73} for the
  precise statements; also see Corollaries~\ref{76} and~\ref{77} for
  the list of all possible slope sequences in the cases $m=1$ and $m=2$. 

\item Classify the isogeny classes of  quasi-polarized \dieu
  $\calO_\bfB$-modules of rank $4dm$; see Section~\ref{sec:09}.  
 
\end{enumerate}

The method of finding possible slope sequences in (2) uses a
criterion for 
embeddings of a simple algebra into another one over a local
field (see \cite{yu:embed} and Section~\ref{sec:63}). 
This gives a description for possible slope
sequences. Then we construct a separably quasi-polarized \dieu
$\calO_\bfB$-module realizing each possible slope sequence. The
construction is divided into the supersingular part and
non-supersingular part. For the supersingular part we can even 
write down a separably quasi-polarized
superspecial \dieu $\calO_\bfB$-module that also 
satisfies the determinant condition. 
For the non-supersingular part we use the
``double construction''; see Lemma~\ref{71}. 
The ``double construction'' easily
produces a separable $\calO_{\bfB}$-linear polarization. However,   
a \dieu $\calO_\bfB$-module obtained in this manner 
rarely satisfies the determinant condition (see Remark~\ref{74}).
In fact, given a possible slope sequence $\nu$ as in (2) 
(see Theorem~\ref{73} for a precise description), 
it is not always possible to construct a \dieu
$\calO_\bfB$-module $M$ with slope sequence $\nu$ 
which {\it both} admits a separable $\calO_\bfB$-linear
quasi-polarization and satisfies the determinant condition. 
We will discuss this in more detail in the minimal case later
(cf.~Theorem~\ref{1.2}).   

To classify the isogeny classes of the $p$-divisible groups with
additional structures, it suffices to classify those with
a fixed slope sequence $\nu$. Rapoport and Richartz
\cite{rapoport-richartz} gave a cohomological description of this
finite set $I(\nu)$. In fact, they treated a general case
of connected groups while the groups here are not connected. 
(For more general classification by Galois cohomology 
which also includes non-connected
groups, see Kottwitz~\cite{kottwitz:isocrystals2}.) 
Here we carry out a more elementary approach 
which involves only quadratic forms and Hermitian 
forms, and obtain more explicit results in terms of invariants 
(rather than cohomology classes). 
One can first
reduce the case to where $\nu$ is supersingular; see Lemma~\ref{91}.      
Then we establish a bijection between the set $I(\nu)$ ($\nu$ is
supersingular) and the set
of isomorphism classes of skew-Hermitian $\bfB'$-modules for a certain
twisted quaternion $\bfF$-algebra $\bfB'$; see
Theorem~\ref{92}. 
When $\bfB'$ is the matrix algebra, one reduces to classifying
quadratic forms over $\bfF$, and we apply the classical theory of 
quadratic forms over local fields (cf.~O'Meara~\cite{omeara:book}). 
When $\bfB'$ is the quaternion
division algebra, we adopt the work of Tsukamoto \cite{tsukamoto:1961}.

\subsection{Part (b)}
\label{sec:1.3}
In this part we restrict ourselves to the case where $m=1$. 
Part (b) consists of Sections~\ref{sec:11}--\ref{sec:14}.
A main result of this part states as follows.

\begin{thm}\label{11} Assume that $m=1$.
  \begin{enumerate}
  \item Suppose that $p$ is unramified in $B$, that is, the algebra 
   $B\otimes\Qp$ is a
    product of matrix algebras over unramified field extensions of
    $\Qp$. Then the moduli scheme 
  $\calM_K\to \Spec \Z_{(p)}$ is flat and every connected component is
  projective of
  relative dimension zero. 

\item  The moduli scheme $\calMpK\to \Spec \Z_{(p)}$ is flat and every
  connected component
  is projective of relative dimension zero. 
  \end{enumerate}
\end{thm}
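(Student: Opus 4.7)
The plan is to establish three properties for $\calM_K$ (under the hypothesis of part (1)) and for $\calMpK$ (in general): projectivity of every connected component, relative dimension zero of every geometric fibre, and flatness over $\Z_{(p)}$. Together these yield the theorem. For projectivity, the decomposition \eqref{eq:1.2} reduces to showing each finite-type piece $\calM_D$ is projective over $\Spec \Z_{(p)}$; this follows from the standard construction of PEL moduli of bounded polarisation degree, using that the generic fibre is compact since $G(\R)$ is compact for the totally definite group $G = \Res_{F/\Q}B^\times$. Intersecting with the closed subscheme $\calM_K$ and with the open-and-closed subscheme $\calM^{(p)}$ preserves projectivity piece by piece.

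For relative dimension zero, I would argue fibre by fibre. In characteristic zero, the complex-analytic model is a disjoint union of finite quotients of the symmetric domain $X$ attached to $G$, and $X$ is a single point since $G(\R)$ is compact modulo centre. In characteristic $p$, Proposition~\ref{33} shows every $2[F:\Q]$-dimensional abelian $O_B$-variety is potentially of CM type; combined with the isogeny classification of Section~\ref{sec:09} and a tangent-space computation using the determinant condition, this forces each $\Fpbar$-point to be isolated.

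The main content is flatness, which I would reduce to formal étaleness of the structure morphism at every $\Fpbar$-point. In case (1) the unramified hypothesis gives $\calO_{B_v}\simeq \Mat_2(O_{F_v})$ for every $v\mid p$, so Morita equivalence reduces the problem to a Hilbert-type deformation question for abelian varieties with $O_F$-multiplication; the determinant condition rigidifies the $O_F\otimes k$-structure on $\Lie A$, and a Serre--Tate computation forces first-order deformations to vanish. In case (2) the prime-to-$p$ polarisation condition makes $\lambda[p^\infty]$ an isomorphism, and a direct computation on the separably quasi-polarised \dieu $\calO_\bfB$-module, using the classification of Sections~\ref{sec:04}--\ref{sec:05} together with the determinant condition, yields the same vanishing. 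With formal étaleness and relative dimension zero in hand, flatness is automatic; combined with projectivity, each connected component becomes finite étale onto its image in $\Spec \Z_{(p)}$, which is open, closed and (by Lemma~\ref{215}) non-empty, hence equals all of $\Spec \Z_{(p)}$.

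The principal obstacle I anticipate is the deformation-theoretic vanishing in case (2) when $p$ ramifies in $B$: the $\calO_\bfB$-module structure on the Dieudonné module is non-commutative and admits no Morita reduction, so the Hilbert-modular techniques of case (1) do not apply. Instead one must exploit the prime-to-$p$ polarisation hypothesis together with the skew-Hermitian module structures developed in Sections~\ref{sec:04}--\ref{sec:05} to execute the calculation directly.
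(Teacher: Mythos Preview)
Your flatness strategy for part (2) has a genuine gap: the claim of formal \'etaleness at every $\Fpbar$-point is \emph{false} in general. Whenever $p$ ramifies in $F$ (some $e_v\ge 2$) or in $B$, the structure morphism $\calMpK\to\Spec\Z_{(p)}$ is finite flat but \emph{not} unramified; the special fibre is typically non-reduced, so first-order deformations do \emph{not} vanish. Concretely, the local model diagram (\ref{eq:1.3}) and (\ref{eq:10.2}) identify the formal neighbourhood of a point of $\calMpK\otimes k$ with that of a point of $\bfM_\Lambda$, and the explicit computation of $\bfM_\Lambda$ in Section~\ref{sec:11} shows, for instance, that in the unramified-at-$v$ case the factor $\bfM_{\varphi_\sigma}$ has $2^{e_v}$ geometric points in characteristic~$0$ but only $e_v+1$ in characteristic~$p$ (Proposition~\ref{11.2}), while in the ramified-at-$v$ case it has a single $k$-point (Proposition~\ref{11.5}) but more than one generic point. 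Thus at least one special point carries multiplicity $\ge 2$, and your Serre--Tate computation cannot yield the vanishing you claim. The obstacle you flag in your final paragraph is not merely technical: the deformation space genuinely has nilpotents.

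The paper's route for part (2) is therefore different in an essential way. Rather than aiming for \'etaleness, it proves flatness of the local model $\bfM_\Lambda$ directly: it is projective and quasi-finite (by the explicit enumeration of $k$-points above), hence finite, and then one shows the coordinate ring is $W$-torsion-free by lifting each $k$-point to characteristic~$0$ (Propositions~\ref{11.3} and \ref{11.6}). Flatness of $\calMpK$ then follows from the local model diagram. Your approach would recover the paper's for part (1), since the hypothesis there forces every $e_v=1$ and the local model is genuinely \'etale; indeed the paper simply cites \cite{yu:lift} for that case. But for (2) you must abandon formal \'etaleness and instead establish finite flatness with possibly non-reduced fibres, which is exactly what the local-model computation accomplishes.
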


Theorem~\ref{11} confirms a special case of the Rapoport-Zink conjecture
on integral models of Shimura varieties; see
\cite{rapoport-zink}. 
Theorem~\ref{11} (1) is proved in \cite[Theorem 4.5]{yu:lift}. 
The proof of
Theorem~\ref{11} (2), given in Section~\ref{sec:11}, uses local models
(see Rapoport-Zink \cite{rapoport-zink}).
More precisely, let $\bfM_\Lambda$ be the local model over $\Spec \Zp$ 
associated to a unimodular skew-Hermitian free $O_B\otimes \Zp$-module 
$\Lambda$ (cf. Section~\ref{sec:10.1}). 
Using the Rapoport-Zink local model diagram, 
one can prove Theorem~~\ref{11} (2) by proving the flatness of 
$\bfM_{\Lambda}$.
To prove the latter, 
we show that any point in $\bfM_{\Lambda}(k)$, 
where $k$ is an algebraically closed 
field of \ch $p$, can be lifted to \ch
zero and that all its geometric fibers are zero-dimensional.

We remark that the analog of the flatness result in Theorem~\ref{11}
(1) does not hold for higher $m$; cf. \cite[Example
8.3]{pappas-rapoport:3}, which shows that flatness fails without the
"spin condition" for $m>1$. Example 8.2 of  
loc.~cit.~shows that the parahoric analog of the flatness result in
Theorem~\ref{11} (1) for $m=1$ does not hold without the "spin
condition", either.   
 

For the possible slope sequences of objects in the case $m=1$, we
have the following result, which is a refinement of Theorem~\ref{73}.

\begin{thm}[Theorem~\ref{12.3}]\label{1.2}
  Let $M$ be a separably quasi-polarized \dieu
  $\calO_\bfB$-module of rank $4d$ satisfying the determinant
  condition, where $d=[\bfF:\Qp]$. 
  \begin{enumerate}
  \item If $\bfB$ is the matrix algebra, then 
\[  \nu(M)=\left \{\left (\frac{a}{d}\right )^{2d}, \left
 (\frac{d-a}{d}\right )^{2d}\right \},  \]
where $a$ can be any integer with $0\le a< d/2$, or 
\[\nu(M)=\left \{\left (\frac{1}{2}\right )^{4d}\right \}. \]    
  \item If $\bfB$ is the division algebra, then 
\[  \nu(M)=\left \{\left (\frac{a}{2d}\right )^{2d}, \left
 (\frac{2d-a}{2d}\right )^{2d}\right \},  \]
where $a$ can be any odd integer with $2\lfloor e/2 \rfloor f \le a< d$, or 
\[ \nu(M)=\left \{\left (\frac{1}{2}\right )^{4d}\right \}. \]
Here $e$ and $f$ are the ramification index and the inertia degree of
$\bfF$ over $\Qp$, respectively.    
\end{enumerate}
\end{thm}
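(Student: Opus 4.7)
\emph{Proof proposal.} The plan is to refine the general classification of Theorem~\ref{73} (with $m=1$; cf.\ Corollaries~\ref{76}--\ref{77}) by imposing the determinant condition from Section~\ref{sec:52}, and then to construct separably quasi-polarized Dieudonn\'e modules realizing each surviving slope sequence.

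\emph{Necessity.} A quasi-polarized Dieudonn\'e $\calO_\bfB$-module $M$ of rank $4d$ has, by Theorem~\ref{73}, a slope sequence symmetric about $1/2$ with denominators forced by the local structure of $\calO_\bfB$. The determinant condition further prescribes the $\calO_\bfB\otimes_{\Zp} k$-module structure of $M/VM$ and in particular fixes $\dim_k M/VM=2d$. In case (1), Morita equivalence replaces $M$ by a Dieudonn\'e $O_\bfF\otimes W$-module $M'$ of rank $2d$ carrying a principal $O_\bfF$-linear quasi-polarization; the determinant condition translates to the Hilbert--Blumenthal signature $(1,1)$ at each place of $\bfF$ above $p$, and the standard classification of such Newton polygons yields the list $0\le a<d/2$ plus the supersingular polygon. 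In case (2), let $\bfL\subset\bfB$ be the maximal unramified subfield (inertia degree $2f$) and $\Pi$ a uniformizer of $\calO_\bfB$ with $\Pi^2\in O_\bfF$ uniformizing. Splitting $M$ along the idempotents of $\calO_\bfL\otimes W$ and writing $V$ in terms of $\Pi$ forces slopes to have denominator $2d$. The determinant condition applied to $M/VM$ pins down the parity of the numerator ($a$ odd) because of the $\Pi$-action, and contributes the lower bound $a\ge 2[e/2]f$ from the ramified factor; when $e$ is even the range collapses and $M$ must be supersingular.

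\emph{Existence.} For $\nu=((1/2)^{4d})$ the superspecial Dieudonn\'e $\calO_\bfB$-module constructed in Section~\ref{sec:08} is separably quasi-polarized and satisfies the determinant condition. For each non-supersingular $\nu$ on the list, one produces an $\calO_\bfB$-stable Dieudonn\'e module $N$ of the appropriate slope and feeds $N\oplus N^\vee$ through the double construction of Lemma~\ref{71}, which yields a separable $\calO_\bfB$-linear quasi-polarization automatically. As Remark~\ref{81} warns, this step does not generally produce the determinant condition, so one must select the basis of $N$ so that the $V$-action, after doubling, realizes the prescribed Lie algebra; the arithmetic conditions on $a$ in (1)--(2) describe exactly when this is possible. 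The main obstacle is this simultaneous fulfilment of separable polarization and determinant condition, and in particular ruling out non-supersingular slopes in the case ``$\bfB$ division and $e$ even''; once a compatible basis is in hand, matching the polarization form reduces to linear algebra over $\calO_\bfB\otimes W$.
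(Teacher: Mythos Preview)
Your outline is broadly correct in spirit for part~(1): in the matrix algebra case the determinant condition is equivalent to separable quasi-polarizability (Proposition~\ref{56}), so Corollary~\ref{76} already gives necessity, and the double construction of Lemma~\ref{71} produces separably polarized modules which then automatically satisfy the determinant condition. The paper takes a slightly different route for existence, writing down explicit $V$-maps on an $\calO$-module with prescribed Lie type (equation~(\ref{eq:12.7}) inside the proof of Proposition~\ref{12.1}) and reading off the slopes directly (Remark~\ref{12.2}); either approach works here.

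For part~(2) there are two genuine gaps.

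\emph{Necessity.} You misattribute the parity constraint: the condition ``$a$ odd'' already holds for \emph{any} quasi-polarized $\calO_\bfB$-module of this rank by the embedding criterion (Proposition~\ref{63} with $h'=1$), not because of the determinant condition. What the determinant condition actually contributes is the lower bound $a\ge 2[e/2]f$, and your phrase ``from the ramified factor'' hides the real mechanism. The paper's argument is sharp and non-obvious: Proposition~\ref{11.5} (a computation inside the local model $\bfM_{\Lambda_\sigma}$) shows that the reduced Lie type of any such $M$ is forced to be the \emph{minimal} one $(c,e-c)$ with $c=[e/2]$, whence $F^{2f}(M)\subset \pi^{2cf}M$ and the smallest slope is $\ge 2cf/2d$. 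Without invoking Proposition~\ref{11.5} or reproving it, you have no route to the lower bound.

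\emph{Existence.} Your plan is to start from the double construction and then ``select the basis of $N$'' so that the doubled module meets the determinant condition. Remark~\ref{81} is a warning precisely because this does not work in general, and you give no argument that a suitable basis exists for the specific slopes in question. The paper abandons the double construction entirely in the non-supersingular division case and instead writes down the Frobenius, the $\Pi$-action, and the $\calO_\bfF$-valued pairing by explicit $2\times 2$ matrices over $W^i$ (equations~(\ref{eq:12.11})--(\ref{eq:12.14})), then computes $F^{2f}$ directly (equation~(\ref{eq:12.17})) to verify the slope. This construction is the substantive content of the proof; your proposal does not supply a substitute for it.
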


In the remainder of this part (Sections~\ref{sec:13} and \ref{sec:14})
we limit ourselves to the case $F=\Q$.
We determine the dimensions of the special fibers
of various moduli schemes as above. 


\begin{thm}[Theorem~\ref{14.1} and Proposition~\ref{14.5}]\label{13}
  Assume that $m=1$ and $F=\Q$. Let 
$\calM_{\Fpbar},  \calM^{(p)}_{\Fpbar},$ and
$\calM_{K,\Fpbar}$ be the geometric special fibers of $\calM$,
$\calM^{(p)}$ and $\calM_{K}$, respectively.
  \begin{enumerate}
  \item If $p$ is unramified in $B$, then $\dim \calM_{\Fpbar}=0$.
  \item If $p$ is ramified in $B$, then $\dim \calM_{\Fpbar}=1$.
  \item We have $\dim \calM^{(p)}_{\Fpbar}=0$. 
  \item If $p$ is ramified in $B$, then $\dim \calM^{}_{K, \Fpbar}=1$.  
  \end{enumerate}
\end{thm}

We explain the ideas of the proof.
For (1) any object $\ul A=(A,\lambda,\iota)\in \calM(k)$ is either
ordinary or superspecial. For the first case 
we use the canonical lifting for ordinary abelian varieties 
and the fact that the generic fiber 
has dimension zero.
For the superspecial case, we use the fact that 
the superspecial locus has dimension zero. 
For (2) and (4), we construct a $\bfP^1$-family 
of supersingular polarized $O_B$-abelian surfaces. 
This produces a 
one-dimensional family in the moduli space $\calM_{\Fpbar}$. 
A close examination shows that this family actually lands in the locus $\calM_{K,\Fpbar}\subset \calM_{\Fpbar}$; see Lemma~\ref{14.4}. 
This gives a lower bound for the dimensions of our moduli spaces:
\[ 1\le \dim \calM_{K,\Fpbar} \le \dim \calM_{\Fpbar}. \]
For the other bound, we consider the finite morphism $f: \calM_{\Fpbar} \to
\calA_{2,\Fpbar}$ to the moduli space $\calA_{2,\Fpbar}$ of polarized
abelian surfaces, through forgetting the endomorphism structure. 
As $p$ is ramified, 
every object in $\calM(k)$ is supersingular (cf.~Corollary~\ref{76}),
and hence the whole space $\calM_{\Fpbar}$ is supersingular. The
image of $\calM_{\Fpbar}$ in $\calA_{2,\Fpbar}$ then lands in the
supersingular locus $\calS_2$ of $\calA_{2,\Fpbar}$. Then we use a
result of Norman-Oort \cite{norman-oort} (also cf. Katsura and Oort
\cite{katsura-oort:surface} 
for the principally polarized case) that $\dim \calS_2=1$ 
and get the other bound $\dim \calM_{\Fpbar}\le 1$. 
This proves (2) and (4). 
Note that the result $\dim \calS_2=\dim \calA_{2,\Fpbar}^{(0)}=1$ we
use is a special case of a theorem of Norman and Oort stating that 
the $p$-rank zero locus $\calA^{(0)}_{g,\Fpbar}$ of the Siegel moduli 
space $\calA_{g,\Fpbar}$ has co-dimension $g$. 
For (3) we only need to treat the case where $p$ is
ramified. In this case we show that any separably quasi-polarized
$O_B\otimes\Zp$-\dieu module (of rank $4d$) is superspecial,
and again use the dimension of the superspecial locus.

\begin{remark}\label{14}
 We end this part with a few remarks about Theorem~\ref{13}.

\begin{enumerate}
\item Theorem~\ref{13} (3) yields another proof of the result $\dim
\calM^{(p)}_{K,\Fpbar}=0$, which follows from Theorem~\ref{11}.
\item When $p$ is ramified in $B$, 
both $\calM_{\Fpbar}$ and $\calM_{K,\Fpbar}$ contain components of
dimension zero and one. This follows from the result that 
both $\calM^{(p)}_{\Fpbar}$ and $\calM^{(p)}_{K,\Fpbar}$ are
zero-dimensional and non-empty (by Theorem~\ref{13} (3) and Lemma~\ref{215}). 
\item Suppose $p$ is ramified in $B$. By Theorem~\ref{13} (2) and (4),
  we conclude that the moduli schemes $\calM$ and $\calM_{K}$ are not
  flat over $\Spec \Z_{(p)}$. Moreover, there is a (non-separably) 
  polarized $O_B$-abelian
  surface satisfying the determinant condition which can not be
  lifted to \ch zero. 
\item When $p$ is unramified in $B$, we have $\calM^{(p)}=\calMpK$ 
  and hence the
  moduli scheme $\calM^{(p)}$ is flat over $\Spec \Z_{(p)}$. When
  $p$ is ramified in $B$, we construct a superspecial prime-to-$p$
  degree  
  polarized $O_{B}$-abelian surface which does not satisfy
  the determinant condition; see Lemma~\ref{14.3}. In
  particular, this point can not be lifted to \ch zero.
  This shows
  that the inclusion $\calM^{(p)}_{K}(k) \subset
  \calM^{(p)}(k)$ is strict. 
This phenomenon is different from the reduction modulo $p$ of
Hilbert moduli schemes or Hilbert-Siegel moduli schemes. 
In the Hilbert-Siegel case, any separably polarized
abelian varieties with RM by $O_F$ of a totally real algebraic number 
field $F$ satisfy the determinant condition automatically; see Yu
\cite{yu:reduction}, G\"ortz~\cite{goertz:topological} and
Vollaard~\cite{vollaard:thesis}. 
  
\item To generalize Theorem~\ref{13} to the case where $B$ is a
  quaternion algebra over any totally real field $F$, one can
  construct Moret-Bailly families to get a lower bound for the
  dimensions. However, we do not know how to produce a good upper
  bound.   
\end{enumerate} 

\end{remark}

Local models for orthogonal groups of general rank 
have been studied in Smithling
\cite{smithling:admD, smithling:tfD} in the case when $p$ is 
unramified in $B$. 
Vasiu~\cite{vasiu:D1,vasiu:D2} shows the existence of the canonical
integral model of certain Shimura varieties of PEL-type D over
$\Z_{(2)}$. 
He and Rapoport \cite{he-rapoport} 
show non-emptiness of
various stratifications (NP, KR and EO) in the reduction of Shimura
varieties under certain axioms. Rapoport and Viehmann
\cite{rapoport-viehmann} studies local analogues of Shimura varieties
(which are RZ spaces in the PEL-type cases) and explore non-emptiness
of these spaces and the Kottwitz set $B(G,\{\mu\})$ (see the
definition in \cite{kottwitz:isocrystals2}). 

We make a remark on the assumption of $p$. Our primary goal was
to establish as many results as we can only limited to the assumption
$p\neq 2$. However, as far as we can see, several pointwise statements
(in terms of \dieu modules or isocrystals), as long as which do not
rely on the classification up to isomorphism, do not require this
assumption. After a close examination, we do not need to assume this
in Sections 2--8, except for Section~\ref{sec:44}, Lemma~\ref{51},
Corollary~\ref{57} and Proposition~\ref{98}. The assumption $p\neq 2$
is needed for the construction of the local model diagram by Rapoport
and Zink (see \cite[p.~75 and Theorem 3.16]{rapoport-zink}). 
Thus, we make this assumption starting from Section 9.

\section{Moduli spaces}
\label{sec:02}

\subsection{Moduli spaces}
\label{sec:21}

Let $p$ be a prime number. 
Let $F$ be a totally real field of
degree $d=[F:\Q]$ and $O_F$ the ring of integers. 
Let $B$ be a totally definite quaternion algebra
over $F$ and let $*$ be the canonical involution on $B$, which is the
unique positive involution on $B$. Let $O_B\subset B$ be an
$O_F$-order in $B$ which is stable under the involution $*$ and 
maximal at $p$, that is, $O_B\otimes_\Z \Zp$ is a maximal order 
in the algebra $B_p:=B\otimes_\Q \Qp$. A {\it polarized $O_B$-abelian 
scheme} over a base 
scheme $S$ is a tuple
$(A, \lambda,\iota)$, where 
\begin{itemize}
\item $(A,\lambda)$ is a polarized abelian
scheme over $S$, and
\item $\iota: O_B \to \End_S(A)$ is a ring monomorphism that satisfies
  the compatibility condition 
$\lambda\circ \iota(b^*)= \iota(b)^t \circ \lambda$ for all $b\in O_B$.
\end{itemize}
The pair $(A,\iota)$, where $A$ and $\iota$ are as above, 
 is called an $O_B$-abelian scheme. A
polarization $\lambda$ on an $O_B$-abelian scheme $(A,\iota)$
 satisfying the above compatibility condition is said to be 
{\it $O_B$-linear}. Similar objects can be also defined
when the algebra $B$ is replaced by an arbitrary 
semi-simple $\Q$-algebra together with a positive involution. 

Let $m\ge 1$ be a positive integer, and   
let $\calM=\calM_{m,O_B}$ be the coarse moduli scheme over  
$\Spec \Z_{(p)}$ of $2dm$-dimensional polarized $O_B$-abelian varieties
$(A,\lambda,\iota)$. 
Let $\calM^{(p)}\subset \calM$ 
be the subscheme parametrizing the 
objects in $\calM$ which have a prime-to-$p$ degree polarization. 
The moduli spaces $\calM$ and $\calM^{(p)}$ are schemes both locally
of finite type. Each of them is an union of infinitely many open and
closed subschemes which are of finite type:
\begin{equation}
  \label{eq:21}
  \calM=\coprod_{D \ge 1} \calM_D, \quad 
  \calM^{(p)}=\coprod_{D\ge 1, p\nmid D} \calM^{}_D, 
\end{equation}
where $D$ runs through all positive integers and 
$\calM_D\subset \calM$ is the subscheme parametrizing the objects
$(A,\lambda,\iota)$ with polarization degree $\deg \lambda=D^2$. 



\subsection{Study of $\calM_\C$.}
\label{sec:22}

Let $(V,\psi)$ be a $\Q$-valued non-degenerate 
skew-Hermitian $B$-module of $B$-rank $m$. 
That is, $\psi:V\times V\to \Q$ is a non-degenerate alternating
pairing such that $\psi(ax,y)=\psi(x, a^* y)$ for all $a\in B$ and
$x,y\in V$. Let $G:=GU_B(V,\psi)$ denote 
the group of $B$-linear similitudes on $(V,\psi)$ over $\Q$. 
For any commutative $\Q$-algebra $R$, the group of $R$-valued points
is given by 
\begin{equation}
  \label{eq:22}
  G(R)=\{ g\in \GL_{B\otimes_\Q R}(V\otimes_\Q R)\, \mid
  \, c(g)=g'g\in R^\times\, \},
\end{equation}
where $g\mapsto g'$ is the adjoint involution with respect to
$\psi$. Let $G_1=U_B(V,\psi)$ be the kernel of the multiplier
homomorphism $c:G\to \Gm$; one has 
a short exact sequence of algebraic $\Q$-groups
\begin{equation}
  \label{eq:G}
  \begin{CD}
  1 @>>> G_1 @>>> G @>{c}>> \Gm @>>> 1.    
  \end{CD}
\end{equation}


One can easily show that 
\begin{equation}
  \label{eq:GQ}
  G_{\Qbar}:=G\otimes \Qbar\simeq \left \{(A_i)\in \GL_{2m, \Qbar}^d\, \biggm| \, A_i
  \begin{pmatrix}
     0 & I_m \\
    I_m & 0 \\
  \end{pmatrix} A_i^t=c  
  \begin{pmatrix} 
    0 & I_m \\
    I_m & 0 \\
  \end{pmatrix} \text{for all $i$} \right \}. 
\end{equation}
A simple calculation shows that for $t\in \Gm$, one has
$\left(\left[\begin{smallmatrix} I_m & 0 \\ 0 & t
    I_m \end{smallmatrix}\right],\dots, \left[\begin{smallmatrix} I_m & 0 \\ 0 &
    t I_m \end{smallmatrix}\right]\right)\in G_{\Qbar}$. This gives a
section of $c$ over $\Qbar$. 
Thus, one has
$G_{1, \Qbar}\simeq  O_{2m}^d$ and 
$G_{\Qbar}\simeq O_{2m}^d\rtimes \Gm$ over $\Qbar$. In particular,
both $G_1$ and $G$ have $2^d$ connected components. One can also 
easily show that
\begin{equation}
  \label{eq:Z}
  \begin{split}
    & Z(G)(\Qbar) =\{x\in (F\otimes \Qbar)^\times| x^2\in
    \Qbar^\times\}, \\
    & Z(G)=Z(G^0), \quad \text{if $m>1$},      
  \end{split}
\end{equation}
and that for $m=1$, the group $G^0$ is a torus of dimension $d+1$.  
 


There is a unique $B$-valued skew-Hermitian pairing $\psi_B:V\times
V\to B$, 
i.e. 
\[ \psi_B(a_1x, a_2y)=a_1 \psi_B(x,y) a_2^*, \quad  
\forall\, a_1,a_2\in B, \quad x,y\in V, \] 
such that $\psi(x,y)=\Trd \psi_B(x,y)$, where $\Trd$ is
the reduced trace from $B$ to $\Q$. Note that the property
$\psi_B(ax,y)=\psi_B(x,a^*y)$ for all $a\in B$ and $x, y\in V$ does
not hold anymore as $B$ is not commutative.
We can choose an orthogonal basis $\{e_i\}$ for $\psi_B$ and put
$b_i:=\psi_B(e_i, e_i)$. Then $b_i^*=-b_i$ for $i=1,\dots, m$ and 
\[ \psi\left (\sum_{i=1}^m x_i e_i,\sum_{i=1}^m y_i e_i \right )=
 \sum_{i=1}^m \Trd (x_ib_iy_i^*). \] 
For any anti-symmetric element $b\in B^\times$, i.e.~$b^*=-b$, 
we define a ($\Q$-valued) rank-one
skew-Hermitian $B$-module $(B,\psi_b)$, 
where $\psi_b(x,y):=\Trd(xby^*)$.
Then we have a decomposition of skew-Hermitian $B$-modules
\begin{equation}
  \label{eq:23}
  (V,\psi)=\bigoplus_{i=1}^m\, (B,\psi_{b_i}).
\end{equation}

\begin{lemma}\label{21}
There is a $B\otimes \R$-linear complex structure $J_0$ on
$V_\R=V\otimes_\Q \R$ such that $\psi(J_0x, J_0 y)=\psi(x,y)$ for 
$x,y\in V_\R$ and the symmetric bilinear form 
$(x,y):=\psi(x,J_0y)$ is negative definite.  
\end{lemma}
\begin{proof}
  By (\ref{eq:23}) we may assume that $m=1$ and
  $(V,\psi)=(B,\psi_b)$. Let $J_0$ be the 
  right multiplication of the element
  $b/\sqrt{\Nr_{B/F}(b)}$ in $B\otimes \R$, 
  where $\Nr_{B/F}$ is the reduced norm from $B$ to $F$. Then one obtains
  $\psi_b(x,J_0 y)=-\Trd(xy^*)$, which is negative definite. \qed
\end{proof}

We call a complex structure $J_0$ as in Lemma~\ref{21} 
an {\it admissible} complex structure on $(V_\R,\psi)$. 
The group $G_1(\R)$ of real points 
acts transitively on the set of all admissible complex structures
on $(V_\R,\psi)$ by conjugation
(see \cite[Lemma 4.3]{kottwitz:jams92}). 
It is well known that the Hermitian 
symmetric space 
\[ X_1:=G_1(\R)/K_\infty \]  
has dimension $d m(m-1)/2$, where $K_\infty$ is 
the stabilizer of a fixed admissible
complex structure $J_0$. Fix an $O_B$-lattice $\Lambda$ such that
$\psi(\Lambda,\Lambda)\subset \Z$ and let 
$\Gamma_{\Lambda}\subset G_1(\Q)$ be the 
arithmetic subgroup which stabilizes the lattice $\Lambda$. The natural 
map $g\mapsto (V_\R/\Lambda, \Int(g)J_0, \psi)$ induces an open and
closed immersion of analytic spaces       
\[ \Phi_{(\Lambda,\psi)}:\Gamma_\Lambda\backslash X_1\embed \calM(\C). \]
Let $\calM_{(\Lambda,\psi)}$ denote the open and closed subscheme of
$\calM_\C$ over $\C$ whose underlying space is the image of
$\Phi_{(\Lambda,\psi)}$. Then we have a decomposition of $\calM_\C$
into open and closed subschemes 
\begin{equation}
  \label{eq:calMC}
  \calM_\C=\coprod_{(\Lambda,\psi)} \calM_{(\Lambda,\psi)},
\end{equation}
where $(\Lambda,\psi)$
runs through the isomorphism classes of all $\Z$-valued non-degenerate 
skew-Hermitian $O_B$-lattices of rank $m$. We will see that each  
subscheme $\calM_{(\Lambda,\psi)}$ is irreducible (Proposition~\ref{26}).


\begin{lemma}\label{22}\ 
\begin{enumerate}
  \item There is an anti-symmetric element $b\in B^\times$ such that
  {\rm (a)} $\psi_b(O_B,O_B)\subset \Z$ and {\rm (b)} $O_B\otimes
  \Z_p$ is a 
  self-dual 
  lattice with respect to $\psi_b$, where $\psi_b(x,y):=\Trd(xby^*)$.
\item The moduli space $\calM^{(p)}_\C$ is non-empty.
\end{enumerate}
     
\end{lemma}
\begin{proof}
  (1) We have the decomposition $O_B\otimes \Zp=\oplus_{v|p} \calO_{B_v}$ with
  respect to $O_F\otimes \Zp=\prod_{v|p} \calO_{v}$. 
  We first show that for each place $v$ of $F$ over $p$, one can
  choose an anti-symmetric
  element $b_v \in B_v^\times$ such that $\psi_{b_v}$ is a 
  $\Zp$-valued perfect paring
  on $\calO_{B_v}$. When $v$ is unramified in $B$, we are reduced to 
  finding a $\Zp$-valued perfect
  symmetric pairing on $\calO_{v}^2$ which clearly exists, and let $b_v$
  be the element corresponding to this perfect pairing. 
  When $v$ is ramified in $B$, one may choose a prime element $\Pi_v$ of
  $B_v$ such that $\Pi^2_v$ is a uniformizer of $F_v$, and let
  $b_v:=\delta_v \Pi_v^{-1}$, where $\delta_v$ is a generator of
   the inverse different $\calD_{F_v/\Qp}^{-1}$.  

  Using weak approximation, there is
  an anti-symmetric element $b\in B^\times$ close to $b_v$ for each
  place $v|p$. Replacing $b$ by a prime-to-$p$ multiple $b M$ of $b$,
  one gets a pairing $\psi_b$ that satisfies 
  the desired properties. 

  (2) Choose a pairing $\psi=\psi_b$ on $B$ as in (1). Then the triple 
$(V_\R/O_B, J_0,\psi)$ defines a $2d$-dimensional polarized
complex $O_B$-abelian variety $(A_0,\lambda_0,\iota_0)$. 
Put $\ul A=(A_0^m, \lambda_0^m, \iota)$, where $\iota:O_B\to
\End(A_0^m)$ is the diagonal embedding. 
This gives an object in $\calM^{(p)}(\C)$ and hence that
$\calM^{(p)}_\C$ is non-empty. \qed 
\end{proof}


\subsection{Moduli spaces with the determinant condition}
\label{sec:23}
Let $(V,\psi)$ be any skew-Hermitian $B$-module of $B$-rank $m$. 
Let $J_0$ be an admissible complex structure on $V_\R$. 
Let $V_\C=V^{-1,0}\oplus V^{0,-1}$
be the eigenspace decomposition where $J_0$ acts respectively by
$i$ and $-i$ on $V^{-1,0}$ and $V^{0,-1}$. Let
$\Sigma:=\Hom(F,\R)=\Hom (F,\C)$ be
the set of real embeddings of $F$. We have 
\[ B_\C:=B\otimes_\Q \C=\prod_{\sigma\in \Sigma} B_\sigma, \quad
B_\sigma:=B\otimes_{F,\sigma} \C\simeq \Mat_2(\C). \]
The action of $B_\C$ on $V^{-1,0}$ gives the decomposition
\[  V^{-1,0}=\sum_{\sigma\in \Sigma} V_\sigma,  \] 
where each subspace $V_\sigma$ is a $B_\sigma$-module of
$\C$-dimension $2m$ and it is isomorphic to the direct sum 
of $m$-copies of the simple $B_\sigma$-module $\C^2$. 
If $a\in B$, then the characteristic
polynomial of $a$ on $V_\sigma$ is equal to $\sigma({\rm
  char}_F(a)^m)$, 
where ${\rm char}_F(a)\in F[T]$ is the reduced \ch polynomial of $a$.  
Therefore, 
the characteristic polynomial of $a\in
B$ on $V^{-1,0}$ is given by  
\begin{equation}\label{eq:275}
\char (a|V^{-1,0})=\prod_{\sigma\in \Sigma}\sigma({\rm char}_F
(a))^m=\, {\char} (a)^m\in \Q[T],
\end{equation}
where $\char(a)=N_{F/\Q}\, {\rm char}_F (a)\in \Q[T]$ 
is the reduced \ch polynomial of $a$ from $B$ to $\Q$.

Let $\ul A = (A,\lambda,\iota)_S \in \calM(S)$ be a polarized $O_B$-abelian
scheme over $S$, where $S$ is a connected locally Noetherian
$\Z_{(p)}$-scheme. 
Since the Lie algebra 
$\Lie(A)$ is a locally free $\calO_S$-module, 
the characteristic polynomial $\char
(\iota(a)|\Lie(A))$, for any element $a\in O_B$, is defined and 
it is a polynomial in $\calO_S[T]$ of
degree $2dm$.  
The {\it determinant condition} for $\ul A$ is the equality of the
following two polynomials 
\begin{equation}
  \label{eq:25}
  {\rm (K)}\quad  \char (\iota(a)|\Lie(A))=\char (a)^m\in \calO_S[T], \quad \forall\, a\in O_B.
\end{equation}
This is a closed condition and it depends on $m$, but not on the choice of $(V,\psi)$. 

Let $\calM_K\subset \calM$ (resp.~$\calMpK\subset \calM^{(p)}$)
denote the closed subscheme parametrizing the
objects $\ul A$ in $\calM$ (resp.~$\calM^{(p)}$) that satisfy the
determinant condition (K). 

Let $L\subset B$ be a maximal subfield such that any place $v$
of $F$ lying over $p$ is unramified in $L$ and that the order 
$L\cap O_B$ is maximal at $p$. We can construct such a maximal
subfield $L$ by 
\begin{itemize}
\item [(a)] constructing a maximal commutative semi-simple subalgebra
$\bfL\subset
B\otimes \Qp$ such that $\bfL$ is the unramified quadratic extension of
$F\otimes \Qp$ and the maximal order $\calO_{\bfL}$ of $\bfL$ 
is contained in $O_B\otimes \Zp$, and
\item [(b)] applying weak approximation.
\end{itemize}


\begin{lemma}\label{215}\ 
  {\rm (1)} The moduli space $\calM^{(p)}_{K,\Q}:=\calM^{(p)}_K\otimes \Q$ is non-empty.
  
  {\rm (2)} The special fiber $\calM^{(p)}_{K,\Fpbar}:=\calMpK\otimes \Fpbar$ is non-empty.
\end{lemma}
\begin{proof}
(1) This follows from the fact that the determinant condition for any
polarized $O_B$-abelian variety in \ch zero holds automatically,  
and Lemma~\ref{22} (2). 

(2) We may assume that $m=1$ by the same reduction step
as we show Lemma~\ref{22} (2). 
In this case,
$\calM(\C)=\calM(\Qbar)$ as
each subscheme $\calM_{D}\otimes  \Q$ is
zero-dimensional and of finite type.
By Grothendieck's semi-stable reduction theorem \cite[IX, Theorem 3.6]{sga7-1}
\footnote{We refer to the Math. Review of Artin-Winters~\cite{artin-winters}
for a historic background of this theorem.}, 
any object $(A,\lambda, \iota)\in \calM(\Qpbar)$ has good
reduction, due to the fact that the $\Z$-rank of $O_B$ is larger 
than $\dim A$. Since $\calM^{(p)}(\Qpbar)=\calMpK(\Qpbar)$ is non-empty by (1), the reduction modulo $p$ of some point $y\in \calMpK(\Qpbar)$ gives a point $x$ which is contained in  
$\calM^{(p)}_K(\Fpbar)$, because $x\in \ol{\{y\}}$ and (K) is a closed condition. Thus, 
$\calM^{(p)}_{K,\Fpbar}$ is non-empty.  \qed
\end{proof}

\subsection{Study of $\calM_{(\Lambda,\psi)}$. }
\label{sec:24}
Let $\H$ denote the real Hamilton quaternion algebra. One has
\begin{equation}
  \label{eq:27}
  \H=\C+\C\, \bfj, \quad \bfj\, a=\bar{a} \bfj\, ,\quad  a\in \C.  
\end{equation}
It is a standard fact that 
any non-degenerate skew-Hermitian $\H$-module of rank $m$ is isomorphic 
to $(\H^m, \psi_0)$, where $\psi_0(e_i,e_j)=\bfj\, \delta_{i,j}$. Put
$\bfJ_m:=\diag(\bfj,\dots, \bfj)\in \Mat_m(\H)$. We extend 
the canonical involution $*$ on $\Mat_m(\H)$ by
$(a_{ij})^*=(a'_{ij})$, where $a_{ij}\in \H$ and
$a_{ij}':=a_{ji}^*$. Let ${\rm O}_{2m}^*$ denote the
algebraic $\R$-group of isometries of $(\H^m,\psi_0)$; one has
\begin{equation}
  \label{eq:28}
  {\rm O}_{2m}^*(\R)=\{A\in \GL_m(\H) \mid A \bfJ_m A^*=\bfJ_m\, \}. 
\end{equation}
The group $G_1\otimes \R$ is isomorphic to 
$\prod_{\sigma\in \Sigma} {\rm O}_{2m}^*$.

\begin{lemma}\label{24}
  One has $G_1(\R)=G^0_1(\R)$, $G(\R)=G^0(\R)$ and
  $c(G(\R))=\R^\times$. 
\end{lemma}
\begin{proof}
  We first show that for any
  element $g\in \GL_m(\H)$ one has $\Nrd(g)>0$. Since the set
  $\GL_m(\H)^{\rm ss}\subset \GL_m(\H)$ of semi-simple elements is 
  open and dense in the classical topology, it suffices to show
  $\Nrd(g)>0$ for $g\in \GL_m(\H)^{\rm ss}$. As such $g$ is contained
  in a maximal commutative semi-simple subalgebra of $\Mat_m(\H)$,
  which is isomorphic to $\C^{m}$, one has $\Nrd(g)>0$.

  It follows that 
\begin{equation}
  \label{eq:285}
  {\rm O}_{2m}^*(\R)=\{A\in \GL_m(\H) \mid A \bfJ_m A^*=\bfJ_m, \Nrd(A)=1\, \}. 
\end{equation}
  Let $\SO_{2m}^*=\{A\in {\rm O}_{2m}^* | \Nrd(A)=1\}$. The group 
  $\SO_{2m}^*$ is a form of $\SO_{2m}$ and hence it is
  the neutral component of ${\rm O}_{2m}^*$. 
  Therefore,
  \begin{equation}
    \label{eq:29}
   G_1(\R)=\prod_{\sigma} {\rm O}_{2m}^*(\R)=\prod_{\sigma}
  \SO_{2m}^*(\R)=G^0_1(\R). 
  \end{equation}
  For the second statement, if $A {\bf J}_m A^*=c {\bf J}_m$ for some $c\in 
  \R^\times$, then   
  $\Nrd(A/c^m)>0$ and one has $G(\R)=G^0(\R)$.
  For the
  last statement we just need to find an element $g$ such
  that $c(g)<0$. Consider diagonal elements $x=\diag (y,\dots, y),
  y\in \H^\times$ and
  we are reduced to showing this in the case where $m=1$. In this case 
  one has $\bfi \bfj \bfi^*=-\bfj$ and hence $c(\bfi)=-1$. 
  This proves the lemma. \qed        
\end{proof}

\begin{lemma}\label{25} \
  \begin{enumerate}
  \item The Lie group $G_1(\R)$ is connected. 
  \item The Lie group $G(\R)$ has two
connected components with the neutral component 
\[ G(\R)^{+}=\{g\in G(\R) \mid c(g)>0 \, \}. \]
  \end{enumerate}
\end{lemma}
\begin{proof}
  (1) It suffices to show that the Lie group 
  ${\rm O}_{2m}^*(\R)=\SO_{2m}^*(\R)$ is connected. 
  We embed $\H\embed \Mat_2(\C)$ as 
\[ \H\ni a+b\bfj \mapsto \begin{pmatrix}
  a & b \\
  -\bar b & \bar a 
\end{pmatrix}\in \Mat_2(\C), \]
and have $\Mat_m(\H)\subset \Mat_{2m}(\C)$.  
Let $J$ and $J_m$ be the image of $\bfj$ and $\bfJ_m$ in $\Mat_{2}(\C)$
and $\Mat_{2m}(\C)$, respectively. Clearly, $J_m^t=-J_m$ and
$J_{m}^{-1}=-J_{m}$.  The complex conjugation on $\Mat_2(\C)$ coming from
the $\R$-structure of $\H$ is given by 
\[ A=
\begin{pmatrix}
  a & b \\
  c & d \\
\end{pmatrix}\mapsto
J\ol A J^{-1}=
\begin{pmatrix}
  \ol {d} & -\ol c \\
  -\ol b & \ol a \\
\end{pmatrix}\]
where $A\mapsto \ol A$ is the usual complex conjugation. Thus, we can
recover $\Mat_m(\H)$ from $\Mat_{2m}(\C)$ by  
\begin{equation}
  \label{eq:2.10}
  \Mat_{m}(\H)=\{A\in \Mat_{2m}(\C) \mid J_{m} \ol A J_{m}^{-1}=A\, \}.  
\end{equation}
We have $A^*=\ol {A^t}$ for $A\in \Mat_m(\H)$. By (\ref{eq:28}) and
(\ref{eq:2.10}), one gets
\begin{equation}
  \label{eq:2.11}
  {\rm O}_{2m}^*(\R)=\{A\in \Mat_{2m}(\C)\mid AJ_m \ol{A^t}=J_m,\ J_{m} \ol
  A J_{m}^{-1}=A,\ \det(A)=1 \, \}.  
\end{equation}
The first condition in (\ref{eq:2.11}) gives 
\begin{equation}
  \label{eq:2.12}
  J_m \ol{A^t} J_m^{-1}=A^{-1}. 
\end{equation}
Taking the transpose, the second condition 
in (\ref{eq:2.11}) becomes
the condition
\begin{equation}
  \label{eq:2.13}
  A^t=(J_m^{-1})^{t} \ol {A^t} J_m^t=J_m^{-1} \ol {A^t} J_m=J_m \ol {A^t}
  J_m^{-1}.  
\end{equation}
With (\ref{eq:2.12}), the condition $(\ref{eq:2.13})$ becomes $A^t
A=I_{2m}$. Therefore, 
\begin{equation}
  \label{eq:2.14}
  {\rm O}_{2m}^*(\R)=\{A\in \Mat_{2m}(\C)\mid AJ_m \ol{A^t}=J_m,\ 
  A^t A=I_{2m},\ \det(A)=1 \, \}.  
\end{equation}
The group in the RHS of (\ref{eq:2.14}) is denoted as $\SO^*(2m)$ in 
\cite[Chapter X, Section 2]{helgason:gsm34} and 
this group is connected \cite[Chapter X, Lemma
2.4]{helgason:gsm34}. Thus, ${\rm O}_{2m}^*(\R)$ is connected.

(2) This follows from (1) and Lemma~\ref{24}. \qed 
\end{proof}

\begin{remark}\label{2.55}
  In Section~\ref{sec:22}, we constructed 
  a section for $c:G\to \Gm$ over
  $\C$. 
  Now we show that there is no 
  section of $c$ over $\R$  when $m$ is odd. Suppose that 
  there is a section
  $s:t\mapsto (A_1(t),\dots,A_d(t))$ over $\R$. The composition of the 
  first component with the   
  reduced norm map $\Nrd$ gives a character $\Gm\to \Gm$, $t\mapsto \Nrd  
  (A_1(t))$,  which is $t^n$ for some $n\in \Z$. By 
  $A_1(t) {\bf J}_m A_1(t)^*=t {\bf 
  J}_m$, one has $\Nrd (A_1(t))^2=t^{2m}$ and hence $\Nrd (A_1(t))=t^{m}$. 
  Since $t^m=\Nrd(A_1(t))>0$ for all $t\in \R^\times$, $m$ must be even.

  When $m$ is even, the map 
  \[ \Gm\to G_\R,\quad t\mapsto \left(
  \begin{pmatrix} 0 & t \cdot I_{m/2} \\ I_{m/2} & 0 \end{pmatrix}, \dots, \begin{pmatrix} 0 & t \cdot I_{m/2} \\ I_{m/2} & 0 \end{pmatrix}\right) \]
  gives a section of $c$ over $\R$. 
\end{remark}

\begin{prop}\label{26}
  Let $(\Lambda,\psi)$ be a non-degenerate skew-Hermitian
  $O_B$-lattice. The subscheme $\calM_{(\Lambda,\psi)}$ of $\calM_\C$ is
  irreducible. 
\end{prop}
\begin{proof}
  This follows immediately from Lemma~\ref{25} (1). \qed
\end{proof}

\begin{cor}
  If $m=1$, then 
  the group $G_1(\R)$ is isomorphic to $(\C^\times_1)^d$ and
  the quotient space $X_1=G_1(\R)/K_\infty$ consists of one point, where 
  $\C_1^\times=\{z\in \C^\times | z \bar z=1 \}$. \qed 
\end{cor}

The following result is an immediate consequence of Proposition~\ref{26}.

\begin{prop}\label{28} Suppose that $m=1$.
  The map that sends each object $(A,\lambda,\iota)\in
  \calM(\Qbar)=\calM(\C)$ to
  its first homology group $(H_1(A(\C), \Z),\psi_\lambda)$, together 
  with the Riemann form and the $O_B$-action, induces
  a bijection between the space $\calM(\Qbar)$ and the discrete set 
  of isomorphism classes of $\Z$-valued rank one skew-Hermitian 
  $O_B$-modules
  $(V_\Z,\psi)$. 

  Moreover, the subspace $\calM^{(p)}(\Qbar)$ corresponds
  to the subset of classes with the property that $V_\Z\otimes \Z_p$
  is self-dual with respect to the pairing $\psi$. \qed  
\end{prop}

\subsection{Connection with the adelic description}
\label{sec:adelic}

Let $(V,\psi)$, $G_1$, $G$, $J_0$ be as before. 
Let $h_0:\C \to \End_{B\otimes \R}(V_\R)$ be the $\R$-algebra
homomorphism defined by $h_0(i)=J_0$ and denote again by 
$h_0:\C^\times \to G_\R$ the homomorphism of $\R$-groups. 
Let $X$ be the $G(\R)$-conjugacy class of $h_0$. Fix an 
$O_B$-lattice $\Lambda_0$ in $V$ and let $U\subset G(\A_f)$ be the open and
compact subgroup that stabilizes the lattice $\Lambda_0 \otimes \hat
\Z$. Here $\hat \Z$ is the profinite completion of $\Z$ and 
$\A_f=\hat \Z \otimes \Q$ is the finite adele ring of $\Q$. One forms
a complex Shimura variety (even though $G$ is not connected)
\[ \Sh_U(G,X)_{\C}=G(\Q)\backslash X\times G(\A_f)/U. \]  
\begin{lemma}\label{29}
  The Hermitian symmetric space $X$ is $G(\R)/\R^\times K_\infty$ and
  it has 
  two connected components.
\end{lemma}
\begin{proof}
  Since $c(G(\R))=\R^\times$ (Lemma~\ref{24}), 
  the closed immersion $G\to \GSp(V,\psi)$
  induces a surjective map $\pi_0(X)\to \pi_0(\H_g^{\pm})$, where
  $\H_g^{\pm}$ is the Siegel double space. This shows that $X$ has at
  least two connected components. On the other hand,
  since the group $G(\R)$ has two connected components (Lemma~\ref{25}), 
  $X$ can only have two connected
  components. From the short exact sequence 
  \begin{equation}\label{eq:2.15}
  \begin{CD}
     G_1(\R) @>>> G(\R)/Z(G(\R)) @>{c}>> \{\pm1\} @>>> 1, 
  \end{CD}
\end{equation}
we conclude that $X=G(\R)/Z(G(\R))
  K_\infty=G(\R)/\R^\times K_\infty$. \qed  
\end{proof}


By Lemma~\ref{24}, the group $G(\Q)$ is dense in $G(\R)$. Thus, 
\begin{equation}
  \label{eq:2.16}
  \begin{split}
  \Sh_U(G,X)_\C&=G(\Q)^+\backslash X_1\times G(\A_f)/U \\
  &=\coprod_{i=1}^h \Gamma_i \backslash X_1,\quad \Gamma_i=G(\Q)^+
     \cap c_i U c_i^{-1},    
  \end{split}
\end{equation}
where $G(\Q)^+=G(\Q)\cap G(\R)^+$ and $c_1, \dots, c_h$ are coset 
representatives for the finite set $G(\Q)^+\backslash G(\A_f)/U$.

We now describe (\ref{eq:2.16}) in terms of lattices. 
We say two $O_B$-lattices $\Lambda$ and $\Lambda'$ in $V$ 
are {\it similar} (resp.~{\it strictly similar}), 
denoted $\Lambda\sim \Lambda'$ (resp.~$\Lambda\sim_s \Lambda'$), 
if there is an element $g\in G(\Q)$ 
(resp.~$g\in G(\Q)^+$) such that $\Lambda'=g \Lambda$. Similarly, for each prime $\ell$, write $\Lambda_{\ell}\sim \Lambda'_{\ell}$  
if $\Lambda'_\ell=g \Lambda_\ell$ for some $g\in G(\Q_\ell)$, where $\Lambda_\ell:=\Lambda\otimes {\Z_{\ell}}$ denotes the completion of $\Lambda$ at $\ell$. 
We say $\Lambda$ and $\Lambda'$ are in
the same {\it idealcomplex} if $\Lambda_\ell\sim \Lambda_\ell'$ for all primes $\ell$.
Let 
\[ \grI:=\{\Lambda\subset V\mid \Lambda_\ell \sim \Lambda_{0,\ell} \
\forall\,\ell \} \] 
be the idealcomplex containing the $O_B$-lattice $\Lambda_0$.
The map $c \mapsto c \Lambda_0$, where $c\in G(\A_f)$, induces a
bijection between the double coset space $G(\Q)^+\backslash
G(\A_f)/U $ and the set of strict similitude classes
in $\grI$. 
In particular, the complex Shimura variety $\Sh_U(G,X)_\C$ has
$h(\grI)$ connected components, where $h(\grI)$ is the strict class
number of $\grI$.

Put $\Lambda_i=c_i \Lambda_0$, then $\Lambda_1,\dots, \Lambda_h$ are representatives
of the strict similitude classes of $\grI$. After rescaling 
we may assume that $\psi$ takes $\Z$-values on $\Lambda_i$ for all
$i$. It is easy to verify that $\Gamma_i=\Aut(\Lambda_i,
\psi)=\Gamma_{\Lambda_i}$. Thus, by \eqref{eq:2.16} we get an isomorphism
\begin{equation}
  \label{eq:2.17}
  \Sh_U(G,X)_\C\simeq \coprod_{i=1}^h \calM_{(\Lambda_i,\psi)}.
\end{equation} 

We now describe $\Sh_U(G,X)_\C$ in terms of the Shimura variety for
$(G^0,X)$. By Lemma~\ref{24}, the $G^0(\R)$-conjugacy class of $h_0$
is also $X$. We also have $G(\Q)=G^0(\Q)$. Let $g_1,\dots, g_r$ be coset representatives for the double
coset space $G^0(\A_f)\backslash G(\A_f)/U$. Then elements 
$(1,g_1),\dots, (1,g_r)$
are also coset representatives for $G^0(\A)\backslash G(\A)/\R^\times
K_\infty U$. The map $x\mapsto G(\Q) x (1,g_i) \R^\times
K_\infty U$, for $x\in G^0(\A)$, induces an isomorphism
\begin{equation}
  \label{eq:222}
   G^0(\Q)\backslash G^0(\A)/\R^\times K_\infty U_i^0 \simeq G(\Q)\backslash 
G^0(\A)(1,g_i)\R^\times K_\infty U/  \R^\times K_\infty U,
\end{equation}
where $U_i^0=g_i U g_i^{-1} \cap G^0(\A_f)$. The left hand side of
(\ref{eq:222}) is equal to  $\Sh_{U^0_i}(G^0,X)_\C$. The union of RHS of (\ref{eq:222}) for $i=1,\dots, r$ is equal to $G(\Q)\backslash G(\A)/\R^\times K_\infty U$.  Thus, we have the decomposition
\begin{equation}
  \label{eq:223}
  \Sh_U(G,X)_\C=\coprod_{i=1}^r \Sh_{U^0_i}(G^0,X)_\C. 
\end{equation}
It would be interesting to determine the number $r$ in \eqref{eq:223} 
explicitly. 

\begin{remark}\label{2.11}
  In \cite{kottwitz:isocrystals2}, Kottwitz studied the set $B(G)$ of
  ($F$-)isocrystals 
  with $G$-structure for (not necessarily connected) reductive
  $\Qp$-groups $G$. However, the study of the 
  Kottwitz set $B(G,\{\mu\})$ is limited to 
  connected ones (\cite[Sect.~6]{kottwitz:isocrystals2}). 
  In the case of good 
  reduction and of PEL-type D, Wedhorn \cite{wedhorn:ordinary} 
  introduced the union of
  the sets $B(G^0,\{\mu_i\})$, where $\{\mu_1\},\dots,\{\mu_s\}$ are
  the $G^0(\Qpbar)$-orbits of the $G(\Qpbar)$-conjugacy class of the
  cocharacter $\mu$ over $\Qpbar$ arising from $X$. He showed that
  the $\mu$-ordinary locus, which is the union of points in the
  special fiber whose associated isocrystals are the maximal element in
  $B(G^0,\{\mu_i\})$ (the $\mu_i$-ordinary locus), is dense in
  the special fiber. The description (\ref{eq:223}) shows 
  that the set of the isocrystals of the points in the special fiber 
  are the same as those of the subscheme defined by $(G^0,X)$, 
  which is expected to be $B(G^0, \{\mu\})$.
        
\end{remark}

\section{Arithmetic properties}
\label{sec:03}



\subsection{}

Let $\ul A=(A,\lambda,\iota)$ be a $2dm$-dimensional polarized $O_B$-abelian
variety over any field $K$. Denote by $\Gamma_K=\Gal(K_s/K)$ the
Galois group 
of $K$, where $K_s$ is a separable closure of $K$.
For any prime $\ell\neq \char K$, let $T_\ell=T_\ell(A)$ denote the 
Tate module of $A$, and put $V_\ell:=T_\ell\otimes_{\Z_\ell} \Q_\ell$.
Let
\[ \rho_\ell:\Gamma_K\to \Aut(T_\ell) \]
denote the associated $\ell$-adic representation of the Galois group
$\Gamma_K$. 


Let $\<\,,\>_A:T_\ell(A)\times T_\ell(A^t)\to \Z_\ell(1)$ be the
canonical perfect pairing, where $A^t$ denotes the dual abelian variety of
$A$. The Galois group $\Gamma_K$ acts on $\Z_\ell(1)$ by the
$\ell$-adic cyclotomic character $\chi_\ell$ and 
this pairing is $\Gamma_K$-equivariant.
We may identify $T_\ell(A^t)$ $\Gamma_K$-equivariantly 
with the linear dual $T_\ell(A)^*:=\Hom(T_\ell(A),\Z_\ell(1))$. 
The polarization $\lambda$ induces an alternating 
non-degenerate 
pairing
\[ \<\,,\>=\<\,,\>_\lambda :T_\ell(A)\times T_\ell(A)\to \Z_\ell(1),
\quad \<x,y\>_\lambda:=\<x,\lambda y\>_A. \] 
Denote by $g\mapsto g'$ ($g\in \End(V_\ell)$) 
the adjoint with respect to $\<\,,\>$. Clearly 
$g'=\lambda^{-1} g^t \lambda$, where $g^t\in \End(V_\ell^*)$ is the
pull-back map. 
It follows from 
\begin{equation}
  \label{eq:31}
  \<\rho_\ell(\sigma) x,\rho_\ell(\sigma)
y\>=\rho_\ell(\sigma)(\<x,y\>)=\chi_\ell(\sigma)\<x,y\>, \quad x,y\in
V_\ell, \  \sigma\in \Gamma_K
\end{equation}
that $\rho_\ell(\sigma)' \rho_\ell(\sigma)=\chi_\ell(\sigma)$.
%
Thus, if 
we put $B_\ell:=B\otimes \Q_\ell$ and let
$G_\ell=GU_{B_\ell}(V_\ell)$
be the group of $B_\ell$-linear similitudes on
$V_\ell$,
then the $\ell$-adic representation $\rho_\ell$ factors through this
subgroup 
\[ \rho_\ell:\Gamma_K \to G_\ell(\Q_\ell). \]

We have the following basic properties:

\begin{lemma}\label{31} \ 
Keep the notation as above.
\begin{itemize}
\item[(1)] $T_\ell$ is a free $O_F\otimes \Z_\ell$-module of rank
  $4m$.
\item[(2)]  $V_\ell$ is a free $O_B\otimes \Q_\ell$-module of rank
  $m$.
\item[(3)] If $O_B$ is maximal at $\ell$, i.e.~$O_B\otimes \Z_\ell$
is a maximal order, then $T_\ell$ is an $O_B\otimes \Z_\ell$-module of rank
  $m$.  

\item[(4)] The center of $G_\ell(\Q_\ell)$ is given by
\[ Z(G_\ell(\Q_\ell))=\{x\in (F\otimes \Q_\ell)^\times \mid x^2\in
\Q_\ell^\times \}. \]
\item[(5)] If $m=1$, then the connected component $G^0_\ell$ of
  $G_\ell$ is a torus
  and $G_\ell(\Q_\ell)/G^0(\Q_\ell)$ is a finite  elementary 2-group.
\end{itemize}
\end{lemma}
\begin{proof}
  Statement (1)
  follows from the fact that $\Tr(a;V_\ell;
  \Q_\ell)=4m \Tr_{F/\Q}(a)$ for all $a\in O_F$ and that $O_F\otimes
  \Z_\ell$ is a maximal order. Statements (2) and (3) are obvious.
  Statement (4) follows from a direct computation.
  Statement (5) is proved in \cite[Lemma 3.4]{yu:smf}.  \qed 
\end{proof}

\begin{remark}\label{32}
  
(1)  The group $G_\ell(\Q_\ell)$ 
does not need to be Zariski dense in $G_\ell$. 
For example, if $F_\ell=F\otimes \Q_\ell$ remains a 
field and $B$ splits at the prime over $\ell$,
then $[G_\ell(\Q_\ell):G^0_\ell(\Q_\ell)]=1$ or $2$. 
However, we always have $[G_\ell:G^0_\ell]=2^d$. 
\end{remark}

Recall that an abelian variety $A$ over any field $K$ is said to have
{\it sufficiently many complex multiplications} (smCM) or be 
{\it of CM-type} over $K$ if there
exists a semi-simple commutative $\Q$-subalgebra $L\subset
\End^0_K(A)=\End_K(A)\otimes_\Z \Q$
such that $[L:\Q]=2\dim A$. It is said to have {\it potentially
  smCM} or be {\it potentially of CM-type} if
there exists a finite field extension $K_1$ over $K$ such that
the base change $A_{K_1}$ is of CM type over $K_1$.  

\begin{prop} \label{33}
For any $2d$-dimensional polarized $O_B$-abelian
variety or $O_B$-abelian variety $\ul A$ over $K$, 
the underlying abelian variety $A$ is potentially of CM-type.
\end{prop}

\begin{proof}
  We prove the polarized version since any $O_B$-abelian
  variety admits an $O_B$-linear polarization. We may assume
  that $K$ is finitely generated over its prime field. 
  Let $\Q_\ell[\Gamma_\ell]$ be the subalgebra of $\End(V_\ell)$
  generated by the image $\Gamma_\ell:=\rho_\ell(\Gamma_K)$. 
  Replacing $K$ by a finite extension of $K$, we may assume 
  that $\Gamma_\ell\subset G^0_\ell(\Q_\ell)$ is abelian.
  By the semi-simplicity of Tate modules
  due to Faltings and Zarhin 
  (see \cite{faltings:end} and \cite{zarhin:end}), $\Q_\ell[\Gamma_\ell]$
  is a commutative and semi-simple subalgebra. 
  Let $L$ be a maximal semi-simple commutative
  subalgebra in $\End^0(A)$, then so is $L\otimes \Q_\ell\subset
  \End^0(A)\otimes_\Q \Q_\ell$.
  By
  the theorem of Faltings and Zarhin on Tate's conjecture 
  (see \cite{faltings:end} and \cite{zarhin:end}), we have
  $\End^0(A)\otimes \Q_\ell=\End_{\Q_\ell[\Gamma_\ell]}(V_\ell)$. As
  $\Q_\ell[\Gamma_\ell]$ is commutative and semi-simple, any maximal
  semi-simple commutative subalgebra in 
  $\End_{\Q_\ell[\Gamma_\ell]}(V_\ell)$
  has degree $2 \dim A$ over $\Q_\ell$. This shows $[L:\Q]=2 \dim A$ 
  and finishes the proof of the proposition. \qed    
\end{proof}

\begin{cor}\label{34} Let $\ul A$ be as in Proposition~\ref{33}. 
\begin{itemize}
\item[(1)] If $\char K=0$, then for a suitable finite extension
  $K_1/K$, the base change $\ul A\otimes K_1$ 
  admits a model $\ul A_0$ defined over a
  number field. 

\item[(2)] If $\char K=p>0$, then for a suitable finite extension
  $K_1/K$, the base change $\ul A\otimes K_1$ is isogenous to $\ul A'$
  which admits a model $\ul A_0'$ defined over a finite field. 
  \end{itemize}
\end{cor}
\begin{proof}
  Statement (1) follows from Proposition~\ref{33} and the fact that
  any complex 
 abelian variety of CM type is defined over a number
  field. 
  Statement (2) follows from
  Proposition~\ref{33} and the Grothendieck theorem, stating
  that any isogeny class
  of abelian varieties of CM type 
  in positive characteristic is defined over a
  finite field (see
  \cite{oort:cm} and \cite{yu:cm}). \qed
\end{proof}

\section{Skew-Hermitian $\calO_{\bfB}\otimes_{\Zp} W$-modules}
\label{sec:04}


\subsection{}
\label{sec:41} In this section we investigate basic properties of
related local modules with additional structures. We use the following
notation.  

Let $k$ be an \ac field
of \ch $p>0$. 
Let $W=W(k)$ be the ring of Witt vectors over $k$,
and $B(k):=\Frac W$ the fraction field of $W$. Let $\sigma$ be the
Frobenius map on $W$ and on $B(k)$. 
Let $\bfF$ be a finite field extension of $\Qp$ and $\calO$ the
ring of integers. Let $e$ and $f$ be the ramification index and
inertial degree of $\bfF/\Qp$, respectively, and 
$\pi$ a uniformizer of $\calO$. 
Let $\bfF^{\rm nr}$ denote the maximal unramified subfield extension
of $\Q_p$ in $\bfF$, and put $\calO^{\rm nr}:=\calO_{\bfF^{\rm nr}}$
the ring of integers. 

Let $\bfB$ be a quaternion algebra over $\bfF$ and
$\calO_\bfB$ be a maximal order. As before, we denote by $*$ the
canonical involution. If $\bfB$ is the matrix algebra, then we fix an
isomorphism $\bfB=\Mat_2(\bfF)$ with
$\calO_\bfB=\Mat_2(\calO)$.

Choose an unramified maximal subfield 
$\bfL\subset \bfB$ such that the integral 
ring $\calO_\bfL$ is contained in
$\calO_{\bfB}$. If $\bfB$ is a division algebra, then $\calO_{\bfL}$ is
contained in $\calO_\bfB$ always. In this case we choose    
a presentation
\begin{equation}
  \label{eq:41}
  \calO_\bfB=\calO_{\bfL}[\Pi]=\{a+b\Pi; a, b\in \calO_{\bfL} \}
\end{equation}
with the relations
\begin{equation}
  \label{eq:42}
  \quad \Pi^2=-\pi \quad \text{and} \quad \Pi a =\bar a
   \Pi, \quad \forall\, 
a\in \calO_{\bfL},
\end{equation}
where $a\mapsto \bar a$ is the non-trivial automorphism of $\bfL/\bfF$.

Indeed we first choose a presentation of $\calO_\bfB$ as (\ref{eq:41}) with
relations $\Pi a=\bar a \Pi$ and $\Pi^2=- \pi u$ for some 
element $u\in \calO^\times$. Then 
replacing $\Pi$ by $\alpha \Pi$ for some element 
$\alpha\in \calO_{\bfL}^\times$, one gets the relation $\Pi^2=-\pi$.
Similarly, one
could also choose a presentation but with the relation
$\Pi^2=\pi$ instead. Nevertheless we simply fix the presentation of
$\bfB$ as in (\ref{eq:41}) and (\ref{eq:42}).

We may regard $\calO_\bfB$
as an $\calO$-subalgebra of  $\Mat_2(\calO_\bfL)$ by sending
\[ \Pi \mapsto 
\begin{pmatrix}
  0 & -1 \\ \pi & 0 
\end{pmatrix}
,\quad \text{and} \quad a \mapsto 
\begin{pmatrix}
  a & 0 \\ 0 & \bar a 
\end{pmatrix}, \quad \forall\, a\in \calO_\bfL. \]
Thus, 
\[ \calO_\bfB=\left\{a+b\Pi=
\begin{pmatrix}
  a & -b \\ \pi \bar b & \bar a 
\end{pmatrix} {\Big |}\,  a, b\in \calO_\bfL \right \}\subset
\Mat_2(\calO_\bfL).   
\]
We have the following properties 
\[ \Pi^*=-\Pi, \quad \text{and} \quad (a+b\Pi)^*=\bar a-b \Pi. \]

\subsection{}
\label{sec:42}
Let $\Sigma_0:=\Hom_{\Zp}(\calO^{\rm nr}, W)$ be the set of embeddings
of $\calO^{\rm nr}$ into $W$. Write $\Sigma_0=\{\sigma_i\}_{i\in
  \Z/f\Z}$ in the way that $\sigma \sigma_i=\sigma_{i+1}$ for all
$i\in \Z/f\Z$. 
For any $W$-module $M$ together with a $W$-linear action of
$\calO^{\rm nr}$, write
\begin{equation}
  \label{eq:43}
  M^i:=\{x\in M \large\mid ax=\sigma_i(a) x,\ \forall\, a\in
  \calO^{\rm nr} \}
\end{equation}
for the $\sigma_i$-component, and we have the decomposition
\begin{equation}
  \label{eq:44}
  M=\bigoplus_{i\in \Z/f\Z} M^i.
\end{equation}
If $V$ is a finite-dimensional $k$-vector space with a $k$-linear
action of $\F_{p^f}$, we write
\begin{equation}
  \label{eq:45}
  V=k^{m_0} \oplus \dots \oplus k^{m_{f-1}}
\end{equation}
for the decomposition $V=\oplus V^i$ as in (\ref{eq:44}) with
$m_i=\dim_k V^i$ for all $i\in \Z/f\Z$. 

Let $P(T)\in \calO^{\rm nr}[T]$ be the minimal polynomial of $\pi$;
one has $\calO=\calO^{\rm nr}[\pi]=\calO^{\rm nr}[T]/P(T)$. For any
$i\in \Z/f\Z$, set $W^i:=W[T]/(\sigma_i(P(T))$ and denote again by $\pi$
the image of $T$ in $W^i$. Each $W^i$ is a complete discrete valuation
ring and one has the decomposition 
\begin{equation}
  \label{eq:46}
 \calO \otimes_{\Zp} W =\prod_{i\in \Z/f\Z} W^i. 
\end{equation}
The action of the Frobenius
map $\sigma$ on  $\calO \otimes_{\Zp} W$ through the right factor 
gives a
map $\sigma: W^i\to W^{i+1}$ which sends $a$ to $\sigma(a)$ for $a\in
W$ and $\sigma(\pi)=\pi$.
If $M$ is an $\calO\otimes_{\Zp} W$-module, then we have
the decomposition (\ref{eq:44}) with each component $M^i$ a $W^i$-module.  
Note that the structure of $M$ as an $\calO\otimes_{\Zp} W$-module is
determined by the structure of each $M^i$ as a $W^i$-module for all
$i\in \Z/f\Z$. 

Let $\bfL^{\rm nr}$ be the maximal unramified extension of $\Qp$ in
$\bfL$, and let $\calO_{\bfL^{\rm nr}}$ be the ring of integers. 
Let $\Sigma:=\Hom_{\Zp}(\calO_{\bfL^{\rm nr}}, W)$ be the set of
embeddings 
of $\calO_{\bfL^{\rm nr}}$ into $W$. Write $\Sigma=\{\tau_j\}_{j\in
  \Z/2f\Z}$ in the way that $\sigma \tau_j=\tau_{j+1}$ and
$\tau_{j}|_{\calO^{\rm nr}}=\sigma_{i}$ where $i=j\! \mod f$ 
for all $j$. 
The Galois group $\Gal(\bfL/\bfF)$ acts on the set 
$\Sigma$ by composing with the conjugate: $\bar
\tau_i(x):=\tau_i(\bar x)$. One has $\bar \tau_i=\sigma^f
\circ \tau_i=\tau_{i+f}$.  
For any $W$-module $M$ together with a $W$-linear action of
$\calO_{\bfL}$, write
\begin{equation}
  \label{eq:47}
  M^j:=\{x\in M \large\mid ax=\tau_j(a) x,\ \forall\, a\in
  \calO_{\bfL^{\rm nr}} \}
\end{equation}
for the $\tau_j$-component, and we have the decomposition
\begin{equation}
  \label{eq:48}
  M=\bigoplus_{j\in \Z/2f\Z} M^j.
\end{equation}
Similarly, each $M^j$ is a $W^i$-module where $i=j\! \mod f$ and the
structure of $M$ as an $\calO_{\bfL}\otimes_{\Zp} W$ is determined by
the structure of each $M^j$ as a $W^i$-module for all $j\in \Z/2f\Z$.

\subsection{Finite $\calO_{\bfB}\otimes_{\Zp} W$-modules}
\label{sec:43}

Suppose $M$ is a finite $W$-module together with a $W$-linear action by
$\calO_\bfB$. 

Suppose $\bfB$ is the matrix algebra. One has the
decomposition   
\begin{equation}
  \label{eq:485}
   M=e_{11}M\oplus e_{22} M=:M_1\oplus M_2,
\end{equation}
where $e_{11}$ and $e_{22}$ are standard idempotents of
$\Mat_2(\calO)$, and $M_1$ and $M_2$ are finite $W$-modules with a
$W$-linear action by $\calO$ with $\rank_W M_1=\rank_W M_2$. 
The Morita equivalence states that the
module $M$ is uniquely determined by the $\calO\otimes_{\Zp} W$-module
$M_1$. Furthermore, the structure of $M_1$ as an $\calO\otimes_{\Zp}
W$-module is given by
its decomposition 
$M_1=\oplus_{i\in \Z/f\Z} M_1^i$ as $W^i$-submodules and the
$W^i$-module structure 
of each component $M_1^i$.
This describes finite $\calO_\bfB\otimes W$-modules
when $\bfB$ is the matrix algebra. In particular, if $M$ is free
as a $W$-module, then $M$ is uniquely determined by the numbers 
$\rank_{W^i} M^i$, which equals $2\rank_{W^i} M^i_1$,  up
to isomorphism (and these ranks can be arbitrary even non-negative 
integers). The module $M$ is a
free $\calO_\bfB\otimes W$-module if and only if 
the ranks $\rank_{W^i} M^i$ are constant. 

Suppose $\bfB$ is the division algebra. Write
$\calO_\bfB=\calO_{\bfL}[\Pi]$ as in Section~\ref{sec:41}. The action by
$\calO_{\bfL^{\rm nr}}$ gives the decomposition 
\begin{equation}
  \label{eq:488}
  M=\bigoplus_{j\in \Z/2f \Z} M^j
\end{equation}
with each component $M^j$ a finite $W^i$-module where $i= j \mod f$. Moreover, one has
\[ \Pi:M^j \to M^{j+f},\quad \Pi^2=-\pi\quad \text{on $M^j$} \]
for all $j\in \Z/2f\Z$. 
To see this, if $x\in M^j$, then for $a\in \calO_{\bfL^{\rm nr}}$,
\[ a\cdot \Pi x=\Pi\, \bar a \cdot x= \Pi\, (\tau_j(\bar a))x
=\Pi\, \tau_{j+f}(a)x=\tau_{j+f}(a)\Pi x. \] 
The map $\Pi$ induces an isomorphism $M^j\otimes \Frac W^i \simeq M^{j+f} \otimes \Frac W^i$. Thus, 
\begin{equation}
  \label{eq:49}
  \rank_{W^i}
M^j=\rank_{W^i} M^{j+f}, \quad \forall\, j\in \Z/2f\Z.
\end{equation}
This is the only constraint for $M$ to be an $\calO_\bfB\otimes
W$-module. Put
\begin{equation}
  \label{eq:410}
  a_j:=\dim_k M^j/\Pi M^{j-f}.
\end{equation}
If $M$ is free as a $W$-module, then $M$ is uniquely determined by  
the numbers $\{a_j\}_j$ up to isomorphism. The number $a_j$ can
be arbitrary between $0$ and $\rank_{W^i} M^j$ subject to the
condition $a_j+a_{j+f}=\rank_{W^i} M^j$, 
Furthermore, $M$ is a free $\calO_\bfB\otimes W$-module
if and only if the numbers $a_j$ are constant.

\subsection{Skew-Hermitian $\calO_{\bfB}\otimes_{\Zp} W$-modules}
\label{sec:44}

In this subsection we assume that $p\neq 2$.
Let $M$ be a finite non-degenerate skew-Hermitian $\calO_\bfB$-module
over $W$, that is, it is a finite and free $W$-module  
with a $W$-linear action of 
$\calO_\bfB$ and together with an alternating non-degenerate bilinear
pairing
\[ \psi:M\times M\to W \]
satisfying the condition 
\begin{equation}
  \label{eq:412}
   \psi(bx, y)=\psi(x,b^* y), \quad \forall\, x,y\in M, b\in
   \calO_\bfB. 
\end{equation}
(non-degeneracy here means that the induced map $M\to M^t:=\Hom_W(M,W)$
is injective). If the pairing $\psi$ is perfect, we call $M$ {\it
  self-dual}. 

Suppose $\bfB$ is the matrix algebra. Let $C:=\left[\begin{smallmatrix}
0 & 1 \\ -1 & 0 \end{smallmatrix}\right]$ be the Weil element. Put
$\varphi(x,y):=\psi(x,Cy)$. Then the pairing 
$\varphi:M\times M\to W$ is symmetric; this follows from the property $C^*=-C$.
One also has $C^{-1} a^* C=a^t$ for all $a\in
\bfB=\Mat_2(\bfF)$ and the idempotents $e_{11}$ and $e_{22}$ are fixed by the adjoint with respect to $\varphi$.
Thus, the decomposition $M=M_1\oplus M_2$ in (\ref{eq:485}) respects the
pairing $\varphi$. 
By the Morita equivalence, we may
describe the symmetric $\calO\otimes W$-module $M_1$. 
Note that the condition (\ref{eq:412}) becomes equivalent to the condition 
$\varphi(ax,y)=\varphi(x,ay)$ for $x,y\in M_1$ and $a\in \calO$. The latter implies that $\varphi(M^i_1,M_1^{i'})=0$
for $i\neq i'$ in $\Z/f\Z$, where the $M^i_1$'s are the components in 
the decomposition (\ref{eq:44}). 
Consider the restriction of $\varphi$ to each component $M_1^i$.
Then there is a unique $W^i$-bilinear pairing 
\[ \varphi_i:M_1^i\times M^i_1 \to \calD^{-1}_{W^i/W} \]
such that $\varphi=\Tr_{W_i/W} \varphi_i$ on each $M_1^i$, 
where $\calD^{-1}_{W^i/W}$ is
the inverse different of $W^i$ over $W$. 
Thus, one can describe
the symmetric $W^i$-modules $M_1^i$ instead.
The classification of symmetric local lattices  
is well known; see O'Meara \cite[\S 92]{omeara:book}. Since $W^i$ is non-dyadic, 
each $M_1^i$ has an orthogonal basis for $\varphi_i$ (see loc.~cit.). 
As our ground field $k$ is algebraically
closed of \ch $p\neq 2$, one has $W^{i\times}=(W^{i\times})^2$. Thus, if $M_1^i$ is self-dual (with respect to the values
in $\calD^{-1}_{W^i/W}$), then the  
isomorphism class of $M_1^i$ is uniquely determined by 
its rank $\rank_{W^i} M_1^i$. Note that $M$ is self-dual with
respect to the pairing $\psi$ if and only if each submodule 
$M_1^i$ is self-dual with respect to the pairing $\varphi_i$ (with
values in $\calD^{-1}_{W^i/W}$).
Following from this, we conclude the following result. 


\begin{lemma}\label{41}
  Assume $\bfB$ is the matrix algebra. Any two self-dual
  skew-Hermitian $\calO_{\bfB}\otimes W$-modules $M$ and $N$ are
  isomorphic if and only if $\rank_{W^i} M^i=\rank_{W^i} N^i$ for all
  $i\in \Z/f\Z$.   
  Moreover, for any given non-negative even 
  integers $n_i$
  for $i\in \Z/f\Z$, there is a unique  
  self-dual skew-Hermitian $\calO_{\bfB}\otimes W$-module $M$ up to isomorphism 
  such that $\dim_{W^i} M^i=n_i$ for all $i\in \Z/f\Z$. \qed
\end{lemma}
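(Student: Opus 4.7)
The plan is to assemble the classification from the reductions already spelled out in the paragraph preceding the lemma. First, Morita equivalence with the idempotent $e_{11}$ identifies a self-dual skew-Hermitian $\calO_\bfB\otimes W$-module $(M,\psi)$ with the symmetric $\calO\otimes W$-module $(M_1,\varphi)$, where $M_1=e_{11}M$ and $\varphi(x,y)=\psi(x,Cy)$. Self-duality is preserved under this identification, and the relation $\rank_{W^i} M^i=2\rank_{W^i} M_1^i$ immediately forces the $n_i$ to be even, which is how the parity hypothesis enters.

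Next, I would decompose $M_1=\bigoplus_{i\in\Z/f\Z} M_1^i$ under the $\calO^{\rm nr}$-action and use the orthogonality of distinct isotypic components for $\varphi$ that was already observed above. Taking the trace-adjoint pairing $\varphi_i\colon M_1^i\times M_1^i\to \calD^{-1}_{W^i/W}$, self-duality of $(M_1,\varphi)$ translates term-by-term into self-duality of each $(M_1^i,\varphi_i)$ as a symmetric $W^i$-module, so the classification is now concentrated on each local factor independently.

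The final step is the classical classification of unimodular symmetric bilinear $W^i$-lattices, with $W^i$ a complete discrete valuation ring whose residue field $k$ is algebraically closed of characteristic $\neq 2$. Since every unit of $W^i$ is a square by Hensel's lemma, O'Meara's theory gives that any two such lattices of the same rank are isometric, and every rank is realized, for instance by an orthogonal sum of hyperbolic planes together with $\<1\>$ when the rank is odd. Applied with $\rank_{W^i} M_1^i=n_i/2$, this yields both uniqueness and existence on each factor $(M_1^i,\varphi_i)$; assembling over $i$ and running Morita backwards produces the desired $(M,\psi)$.

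The main point requiring care, rather than any single deep obstacle, is verifying that each of the four sequential reductions — Morita equivalence, the alternating-to-symmetric passage via $C$, isotypic decomposition under $\calO^{\rm nr}$, and trace adjunction with target $\calD^{-1}_{W^i/W}$ — respects self-duality; once all four are in place, the lemma follows from the well-understood local classification.
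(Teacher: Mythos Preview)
Your proposal is correct and follows essentially the same approach as the paper: the paper does not write a separate proof for this lemma but derives it from the preceding discussion, which performs exactly the four reductions you describe (Morita via $e_{11}$ and $C$, orthogonal $\calO^{\rm nr}$-isotypic decomposition, trace-lift to $\varphi_i$ valued in $\calD^{-1}_{W^i/W}$, and then the classical classification over $W^i$ via O'Meara). Your write-up is, if anything, slightly more explicit about why every unit of $W^i$ is a square and about the existence construction.
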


Suppose $\bfB$ is the division algebra. Let $M=\oplus_{j} M^j$ be the
decomposition by the action of $\calO_{\bfL^{\rm nr}}$ as in 
(\ref{eq:48}). It is easy to
see using (\ref{eq:412}) that $\psi(M^{j_1},M^{j_2})=0$ if
$j_1-j_2\neq f$ in $\Z/2f\Z$ and hence $\psi$ is determined by its
restriction  
\[ \psi: M^j \times M^{j+f} \to W \]
for $0\le j<f$. Note that $\rank_{W^i}
M^j=\rank_{W^i} M^{j+f}$ (\ref{eq:49}), where $i:=j\!\mod f$.
Let $\calD^{-1}_{W^i/W}=W^i \delta_i$, where $\delta_i$ is a
generator. Then there is a unique $W^i$-bilinear 
pairing \[ \psi_i:M^j\times M^{j+f}\to W^i \] 
such that 
$\psi=\Tr_{W^i/W} (\delta_i \psi_i)$. 
Clearly, the module $M$ is self-dual with respect to the pairing $\psi$ 
if and only if each $\psi_i$ is a perfect pairing. 

Now we only consider the case where 
$M$ is {\it self-dual}. For any $x, y\in M^j$,
one easily sees 
\[ \psi_i(x,\Pi y)=\psi_i(\Pi^* x,y)=\psi_i(y, \Pi x), \]
so the pairing 
\[ \varphi_i:M^j\times M^j\to W^i, \quad \varphi_i(x,y):=\psi_i(x,\Pi
y), \]
is symmetric. Put $\ol
{M^j}:=M^j/\pi M^j$. 
Then $\psi_i$ induces the perfect pairing\footnote{To ease our
  notation, we still denote the induced pairing by $\psi_i$.} 
$\psi_i:  \ol {M^j}\times  \ol {M^{f+j}}\to k$.
Recall $a_j:=\dim_k {M^j}/\Pi M^{j+f}$.  
From the isomorphism
\[ \Pi: M^{j+f}/\Pi M^{j} \simeq \Pi M^{j+f}/\pi M^{j},\]
we get 
\[ a_j+a_{j+f}=\dim_k \ol {M^j}=\dim_k \ol{M^{j+f}}. \]

\begin{lemma}\label{42} Assume that $\bfB$ is a division algebra. Let
  $M=\oplus_{j\in \Z/2f\Z} M^j$ be a self-dual skew-Hermitian
  $\calO_{\bfB}\otimes_{\Zp} W$-module.  
  For each $j\in \Z/2f\Z$, 
there are $W^i$-bases 
\[ \{x^j_1,\dots, x^j_{a_j+a_{j+f}}\}, 
\quad \{x^{j+f}_1,\dots, x^{j+f}_{a_j+a_{j+f}}\} \]
for $M^j$ and $M^{j+f}$, respectively, where the positive integers
$a_j$ and $a_{j+f}$ are as above, such that 
\begin{equation}
  \label{eq:414}
  \begin{cases}
   \Pi(x^j_r)=x^{j+f}_r,  & \forall\, 1\le r\le a_j, \\  
 \Pi(x^{j+f}_{a_j+r})=x^j_{a_j+r}, & \forall\, 1\le r \le
a_{j+f},
  \end{cases}
\end{equation}
and for $1\le r, s \le a_j+a_{j+f}$, one has
\begin{equation}
  \label{eq:415}
  \psi_i(x^j_r,x^{j+f}_s)=\delta_{r,s}.
\end{equation}
\end{lemma}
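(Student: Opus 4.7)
The plan is to prove the lemma by a Gram-Schmidt-type induction on $n := a_j + a_{j+f}$, peeling off one ``hyperbolic pair'' at a time while preserving the $\Pi$-structure and the perfectness of $\psi_i$. The base case $n=0$ is vacuous. For the inductive step suppose first $a_j \ge 1$; we extract a pair $(x^j_1, x^{j+f}_1)$ of the first kind, with $\Pi x^j_1 = x^{j+f}_1$ and $\psi_i(x^j_1, x^{j+f}_1) = 1$. The key auxiliary object is the $W^i$-bilinear form $\varphi_i(x,y) := \psi_i(x, \Pi y)$ on $M^j$, which is symmetric; this follows from $\Pi^* = -\Pi$ combined with the alternating property of $\psi$ via $\psi(y, \Pi x) = -\psi(\Pi x, y) = \psi(x, \Pi y)$, lifted to $\psi_i$ by the uniqueness in the trace identity.

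To construct $x^j_1$, observe that $\pi M^{j+f} \subsetneq \Pi M^j$ whenever $a_j \ge 1$, since $\dim_k M^{j+f}/\Pi M^j = a_{j+f} < n$; hence $\psi_i(M^j, \Pi M^j) = W^i$, so $\varphi_i$ represents a unit. By polarization in characteristic $\neq 2$ there exists $z \in M^j$ with $\varphi_i(z,z) \in (W^i)^\times$, and since $k$ is algebraically closed and $p$ is odd, Hensel's lemma makes every unit of $W^i$ a square; rescaling gives $\varphi_i(x^j_1, x^j_1) = 1$. Setting $x^{j+f}_1 := \Pi x^j_1$ yields $\psi_i(x^j_1, x^{j+f}_1) = 1$, and the condition $x^j_1 \notin \Pi M^{j+f}$ is automatic, since $x^j_1 = \Pi w$ would force $x^{j+f}_1 \in \pi M^{j+f}$ and the pairing into $\pi W^i$. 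Now define
\[ N^j := \ker\bigl(\psi_i(\,\cdot\,, x^{j+f}_1)\bigr), \qquad N^{j+f} := \ker\bigl(\psi_i(x^j_1, \,\cdot\,)\bigr); \]
both maps surject onto $W^i$, so $M^j = W^i x^j_1 \oplus N^j$ and $M^{j+f} = W^i x^{j+f}_1 \oplus N^{j+f}$.

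The decomposition is $\Pi$-stable: $\Pi N^j \subset N^{j+f}$ is the symmetry of $\varphi_i$, and $\Pi N^{j+f} \subset N^j$ follows from the identity $\psi_i(\Pi y, x^{j+f}_1) = -\pi\,\psi_i(x^j_1, y)$, derived from $\Pi^* = -\Pi$, $\Pi^2 = -\pi$, and the uniqueness argument for $\psi_i$. Perfectness of $\psi_i|_{N^j \times N^{j+f}}$ then follows from the block-diagonal structure of the Gram matrix (top-left block $(1)$), and the decomposition $\Pi M^{j+f} = \pi W^i x^j_1 \oplus \Pi N^{j+f}$ yields $(a'_j, a'_{j+f}) = (a_j - 1, a_{j+f})$ for the pair $(N^j, N^{j+f})$. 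Applying the inductive hypothesis and concatenating its output with $(x^j_1, x^{j+f}_1)$ produces the desired bases. When instead $a_j = 0$ (so $a_{j+f} \ge 1$), one runs the symmetric argument on $M^{j+f}$ using the symmetric form $\chi_i(u,v) := \psi_i(\Pi u, v)$, peeling off a pair $(x^j_{a_j+1}, x^{j+f}_{a_j+1})$ of the second kind with $\Pi x^{j+f}_{a_j+1} = x^j_{a_j+1}$. The main technical hurdle is verifying $\Pi$-stability of the orthogonal complements together with the correct decrement of the invariants $a_j$ and $a_{j+f}$; once these are in place the induction closes cleanly.
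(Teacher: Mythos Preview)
Your proof is correct and follows essentially the same approach as the paper: induct on the rank, use the symmetric form $\varphi_i(x,y)=\psi_i(x,\Pi y)$ on $M^j$, exploit that every unit in $W^i$ is a square (since $k$ is algebraically closed and $p$ is odd) to normalize $\varphi_i(x^j_1,x^j_1)=1$, set $x^{j+f}_1=\Pi x^j_1$, split off the $\calO_\bfB$-stable rank-one summand, and recurse (switching roles when $a_j=0$). The paper's write-up is terser---it simply asserts that the span $N$ of $x^j_1,x^{j+f}_1$ is $\calO_\bfB$-stable and hence so is $N^\bot$---whereas you spell out the $\Pi$-stability of the complements and the decrement of $(a_j,a_{j+f})$ explicitly; but the underlying argument is the same.
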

\begin{proof}
Consider the induced symmetric pairing $\varphi_i:\ol {M^j}\times \ol
{M^j} \to k$. Since $\Pi \ol {{M^j}}$ and $\Pi \ol {M^{j+f}}$ are 
mutually orthogonally complemented with respect to the pairing $\psi_i$, 
one obtains a {\it non-degenerate} symmetric pairing
\begin{equation}
  \label{eq:416}
  \varphi_i :M^j/\Pi M^{j+f}\times M^j/\Pi M^{j+f} \to k.
\end{equation}
We prove the statement by induction on the rank of $M^j$ 
(or $M^{j+f}$). 
Suppose $a_j>0$, using (\ref{eq:416}) 
we may choose an element $x^j_1\in M^j$ such that
$\varphi_i(x^j_1,x^j_1)=1$, because
$W^{i \times}=({W^{i \times}})^2$. Put $x^{j+f}_1:=\Pi x^j_1\in
M^{j+f}$. Then we have 
\[ M^j\oplus M^{j+f}=N\oplus N^{\bot}, \]
where $N$ is the $W^i$-submodule generated by $x^j_1$ and $x^{j+f}_1$
and $N^{\bot}$ is the orthogonal complement of $N$.  
Clearly $N$ is stable under the $\calO_\bfB$-action and hence so is 
$N^{\bot}$. If $a_j=0$ then $a_{j+f}>0$ and 
we do the same for $M^{j+f}$.
As $N^{\bot}$ has lower rank, by induction we can choose bases $\{x^j_1,\dots,
x^j_{a_j+a_{j+f}}\}$ and $\{x^{j+f}_1,\dots, x^{j+f}_{a_j+a_{j+f}}\}$ 
for $M^j$ and $M^{j+f}$, respectively, satisfying (\ref{eq:414}) and 
(\ref{eq:415}). 
This completes the proof of the lemma. \qed


\end{proof}

We obtain the following classification result. 

\begin{cor}\label{43}
  Assume that $\bfB$ is the division algebra. Any two self-dual
  skew-Hermitian $\calO_{\bfB}\otimes W$-modules $M$ and $N$ are
  isomorphic if and only if \[ \dim_k M^j/\Pi M^{j+f}=\dim_k N^j/\Pi
  N^{j+f}, \quad \text{for all
  $j\in \Z/2f\Z$}.\] 
  Moreover, for any given non-negative integers $a_j$
  for $j\in \Z/2f\Z$, there is a unique up to isomorphism 
  self-dual skew-Hermitian $\calO_{\bfB}\otimes W$-modules $M$ such
  that $a_j=\dim_k M^j/\Pi M^{j+f}$for all $j\in \Z/2f\Z$. \qed
\end{cor}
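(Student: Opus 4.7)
\proof[Proof proposal]
The plan is to deduce both the classification and existence statements directly from Lemma~\ref{42}, which has already produced canonical bases adapted to the $\calO_\bfB$-action and the pairing.

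For necessity of the condition, I would observe that the numbers $a_j=\dim_k M^j/\Pi M^{j+f}$ are defined intrinsically from the $\calO_{\bfL^{\rm nr}}$-isotypic decomposition and the action of $\Pi$, so any isomorphism of skew-Hermitian $\calO_\bfB\otimes W$-modules carries $M^j$ to $N^j$ and $\Pi M^{j+f}$ to $\Pi N^{j+f}$, forcing equality of these invariants.

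For sufficiency, suppose $\dim_k M^j/\Pi M^{j+f}=\dim_k N^j/\Pi N^{j+f}$ for every $j$. Applying Lemma~\ref{42} to $M$ and to $N$ separately (for each pair of indices $j,j+f$ with $0\le j<f$) I obtain $W^i$-bases
\[
\{x^j_k\}_{k}\subset M^j,\ \{x^{j+f}_k\}_k\subset M^{j+f},\qquad
\{y^j_k\}_k\subset N^j,\ \{y^{j+f}_k\}_k\subset N^{j+f},
\]
of the same size $a_j+a_{j+f}$, both satisfying (\ref{eq:414}) and (\ref{eq:415}). Define $\varphi:M\to N$ as the unique $W$-linear map sending $x^j_k\mapsto y^j_k$ for all $j\in\Z/2f\Z$ and all $k$. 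Because the assignment is indexwise, $\varphi$ is $\calO_{\bfL^{\rm nr}}$-equivariant; because both bases satisfy (\ref{eq:414}), $\varphi$ commutes with $\Pi$, hence is $\calO_\bfB$-linear; and because both bases satisfy (\ref{eq:415}), the pairings $\psi_i^M$ and $\psi_i^N$ agree on corresponding basis pairs, so $\varphi$ is an isometry with respect to the reconstructed pairing $\psi=\Tr_{W^i/W}(\delta_i\psi_i)$. This yields the isomorphism $M\simeq N$.

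For existence given integers $a_j\ge 0$, I would just construct $M$ by hand: for each $j\in\Z/2f\Z$ take a free $W^i$-module $M^j$ of rank $a_j+a_{j+f}$ (where $i\equiv j\bmod f$), with a prescribed basis, define $\Pi:M^j\to M^{j+f}$ by the formulas in (\ref{eq:414}), extended $\bar{\,\cdot\,}$-semilinearly over $\calO_{\bfL^{\rm nr}}$, and impose $\Pi^2=-\pi$ (which is consistent with (\ref{eq:414}) since $\Pi^2 x^j_k=\Pi x^{j+f}_k$ corresponds to $-\pi x^j_k$ on the part where $k>a_j$, while on $k\le a_j$ we set $\Pi x^{j+f}_k=-\pi x^j_k$). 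Define $\psi_i$ on $M^j\times M^{j+f}$ by (\ref{eq:415}) and set $\psi:=\sum_i\Tr_{W^i/W}(\delta_i\psi_i)$; the skew-symmetry is forced by $\Pi^*=-\Pi$, and perfectness is immediate from the basis pairing being the identity matrix. Uniqueness then follows from the first part.

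The main subtlety, rather than any serious obstacle, is verifying compatibility of the $\Pi$-action across the two ``halves'' of the basis in (\ref{eq:414}): on $\{x^j_k\}_{k\le a_j}$ the map $\Pi$ is prescribed as going ``forward'' to $x^{j+f}_k$, while on $\{x^{j+f}_{a_j+k}\}_{k\le a_{j+f}}$ it goes ``forward'' to $x^j_{a_j+k}$, and one must check that together with $\Pi^2=-\pi$ this consistently defines the $\calO_\bfB$-module structure on the whole of $M^j\oplus M^{j+f}$. This is immediate once one writes out that on the first block $\Pi^2 x^j_k=\Pi x^{j+f}_k=-\pi x^j_k$ is a defining relation, while on the second block the analogous computation is forced. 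With this in place, the proof is complete.
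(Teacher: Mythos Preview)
Your proposal is correct and follows exactly the route the paper intends: the corollary is stated immediately after Lemma~\ref{42} with no separate proof, precisely because the normal form bases of that lemma make both the classification and existence statements immediate in the way you describe. Your only imprecisions are cosmetic---$\Pi$ should be extended $W^i$-\emph{linearly} (the $\calO_{\bfL^{\rm nr}}$-semilinearity is already encoded in the degree shift $j\mapsto j+f$), and the case split in your final paragraph has the indices slightly tangled---but the argument is sound.
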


\section{Quasi-polarized \dieu $\calO_{\bfB}$-modules}
\label{sec:05}

\subsection{}
\label{sec:51}

We keep the notation of Section~\ref{sec:04}. In particular, we have the decomposition $\calO\otimes_{\Zp} W=\oplus_{i\in \Z/f\Z} W^i$ as in \eqref{eq:46}.
All \dieu modules over $k$ in this paper are assumed to be finite and free as
$W$-modules. For basic theory of \dieu modules, we refer to 
Manin~\cite{manin:thesis} and Zink~\cite{zink:cartier}.
The Frobenius and Verschiebung operators of a \dieu module are denoted
by $\sfF$ and $\sfV$, respectively.
   

By a {\it \dieu $\calO_\bfB$-module} over $k$ we mean a \dieu module $M$ over $k$
together with a ring monomorphism $\calO_\bfB \to \End_{\rm DM}(M)$ of
$\Zp$-algebras. Recall that a {\it quasi-polarization}\footnote{The terminology "quasi-polarization" has been adopted in several papers of Oort e.g.~in \cite{norman-oort}. A reason for calling it quasi-polarization is that there is no notion of positivity for a biextension on a $p$-divisible group, unlike abelian varieties. Here we simply use the same terminology. So a quasi-polarization on a $p$-divisible group is really an {\it isogeny} to its Serre dual, not a quasi-isogeny.} 
on a \dieu module $M$
is a non-degenerate alternating $W$-bilinear form
\[ \<\ ,\, \>:M\times M\to W \]
such that $\<\sfF x,y\>=\<x,\sfV y\>^{\sigma}$ for all $x, y\in M$. When the pairing $\<\ ,\, \>$ is perfect, we call it {\it separable}\footnote{Some authors also call this {\it principal}.}.  
A quasi-polarization is
called {\it $\calO_\bfB$-linear} if it satisfies the 
condition 
\begin{equation}
  \label{eq:51}
  \<bx, y\>=\<x,b^*y\>, \quad \forall\, x,y\in M, \ b\in \calO_\bfB.
\end{equation}
A (resp.~{\it separably}) {\it  quasi-polarized \dieu $\calO_\bfB$-module} is a \dieu
$\calO_\bfB$-module 
$M$ together with an (resp.~separable) $\calO_\bfB$-linear quasi-polarization. 


We also recall that a {\it quasi-polarized $p$-divisible $\calO_\bfB$-module} 
is a triple $(H,\lambda,\iota)$ where $H$ is a $p$-divisible group,
$\iota:\calO_\bfB \to \End(H)$ is a ring monomorphism of $\Zp$-algebras 
and $\lambda:H\to H^t$
is a quasi-polarization 
(i.e.~an isogeny $\lambda$ satisfying $\lambda^t=-\lambda$) 
such that  $\lambda\circ \iota(b^*)=\iota(b)^t \circ \lambda$ 
for all $b\in \calO_\bfB$, where $H^t$ is the Serre dual of $H$. 
For a quasi-polarized $p$-divisible 
$\calO_\bfB$-module $(H,\lambda,\iota)$ over $k$, 
the associated (covariant) \dieu 
module $M=M(H)$ together with the additional
structures is a quasi-polarized \dieu $\calO_\bfB$-module over $k$. 

Clearly these notions can be defined for orders in  
general semi-simple $\Qp$-algebras with involutions (i.e.~for
general PEL data); cf. \cite[Section 2.1]{yu:cm}.   \\

Let $(M, \<\,,\>)$ be a (not necessarily separably) 
quasi-polarized \dieu $\calO_\bfB$-module. 

Suppose $\bfB$ is the matrix algebra. As in 
Section~\ref{sec:44}, we define the
symmetric pairing $(x,y):=\<x,Cy\>$, where $C$ is the Weil element. 
Then we obtain the decomposition
$M=e_{11}M\oplus e_{22} M=:M_1\oplus M_2$, which  
respects the pairing $(\,,\,)$. 
By the
Morita equivalence, one may consider the {\it
anti-quasi-polarized} \dieu $\calO$-module $M_1$ instead, i.e. for all $a\in \calO$ and $x,y\in M_1$, one has the
properties 
\begin{equation}\label{eq:52}
  (y,x)=(x,y),\quad (ax,y)=(x,ay), \ \text{and\ } (\sfF x,y)=(x,\sfV y)^\sigma. 
\end{equation}
 
Let
$M_1=\oplus_{i\in \Z/f\Z} M^i_1$ be the decomposition by the action of
$\calO^{\rm nr}$ as in \eqref{eq:44}. Each component $M_1^i$ is then a $W^i$-module. Then one has 
\[ \sfF : M^i_1 \to M^{i+1}_1, \quad \sfV : M^{i+1}_1 \to M^i_1. \]
It follows that the ranks $\rank_{W^i} M_1^i$ (or equivalently $\rank_{W^i} M^i$) for $i\in \Z/f\Z$ are constant. As a result, the module $M_1$ (or $M$) is free as an $\calO\otimes_{\Zp} W$-module. (This only uses the property that $M$ is a \dieu $\calO$-module.)
One also has the orthogonal property $(M_1^{i_1}, M^{i_2}_1)=0$ if $i_1\neq i_2$ in $\Z/f\Z$.


\begin{lemma}\label{51}
  Assume that $\bfB$ is the matrix algebra and $p\neq 2$. 
  If $M$ and $N$ are two
  {\rm separably} quasi-polarized \dieu $\calO_\bfB$-modules of the same rank,    
  then $M$
  and $N$ are isomorphic as skew-Hermitian $\calO_\bfB\otimes
  W$-modules.
\end{lemma}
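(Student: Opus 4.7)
The plan is to reduce Lemma~\ref{51} to the classification of self-dual skew-Hermitian $\calO_\bfB\otimes W$-modules in Lemma~\ref{41} via Morita equivalence, after using the Frobenius and Verschiebung to pin down the rank sequence. First, applying Morita equivalence through the idempotent $e_{11}\in\calO_\bfB=\Mat_2(\calO)$, it suffices to show that the anti-quasi-polarized \dieu $\calO$-modules $M_1:=e_{11}M$ and $N_1:=e_{11}N$ are isomorphic as symmetric $\calO\otimes W$-modules.

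As recorded in the paragraph preceding the lemma, $F$ restricts to a $\sigma$-semilinear map $F\colon M_1^i\to M_1^{i+1}$ and $V$ to $V\colon M_1^{i+1}\to M_1^i$ with $VF=FV=p$. Because $M_1$ is $W$-free, the induced map $F\colon M_1^i[1/p]\to M_1^{i+1}[1/p]$ is bijective, so $\rank_{W}M_1^i$ is independent of $i\in\Z/f\Z$; the same holds for $N_1$. Consequently, once one fixes the total $W$-rank of $M$ (which is implicit in the hypothesis, since otherwise the conclusion is vacuous), the full sequence $\bigl(\rank_{W^i}M^i\bigr)_{i\in\Z/f\Z}$ is forced, and the same sequence is attached to $N$. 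Separability of the polarization, together with the orthogonality $(M_1^i,M_1^{i'})=0$ for $i\neq i'$ already established from the $\calO$-linearity of $(\,,\,)$, ensures that each component $M_1^i$ is self-dual with respect to its induced $W^i$-bilinear symmetric pairing $\varphi_i$ valued in $\calD^{-1}_{W^i/W}$; thus $M$ and $N$ are self-dual skew-Hermitian $\calO_\bfB\otimes W$-modules in the sense of Subsection~\ref{sec:44}.

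With these ingredients in place, Lemma~\ref{41} directly produces an isomorphism $M\cong N$ of skew-Hermitian $\calO_\bfB\otimes W$-modules. The only substantive point is the rank equalization provided by $F$ and $V$; everything else is a matter of invoking the pre-established classification. This is why the statement appears so clean: in the matrix-algebra case, the separably quasi-polarized \dieu structure contributes no new invariant beyond those already seen by the classification of self-dual skew-Hermitian $\calO_\bfB\otimes W$-modules.
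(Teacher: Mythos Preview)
Your proof is correct and follows essentially the same approach as the paper: reduce via Morita equivalence to $M_1$, use the Frobenius/Verschiebung to force the ranks $\rank_{W^i}M_1^i$ to be constant (so $M_1$ is free over $\calO\otimes W$), observe that separability gives self-duality, and then invoke Lemma~\ref{41}. The paper's own proof is terser but identical in content, simply citing the freeness established in the preceding paragraph together with Lemma~\ref{41}; your parenthetical remark about the implicit equal-rank hypothesis is apt, since the lemma as stated omits it.
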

\begin{proof}
  This follows from the fact that $M_1$ (or $M$) is
free as an $\calO\otimes_{\Zp} W$-module and Lemma~\ref{41}. \qed
\end{proof}

Suppose $\bfB$ is the division algebra. Let $M=\oplus_{j\in \Z/2f\Z}
M^j$ be the decomposition as in (\ref{eq:488}). Then we have 
\begin{equation}\label{eq:shift}
\sfF : M^j\to M^{j+1}, \quad \sfV :M^{j+1}\to M^{j},
\end{equation}
and $\< M^{j_1}, M^{j_2}\>=0$ if $j_1-j_2\neq f$ in $\Z/2f\Z$.  
Similarly, it follows from \eqref{eq:shift} that 
the ranks $\rank_{W^i} M^j$ for $j \in \Z/2f\Z$ are constant, and that
$M$ is free as an $\calO_{\bfL}\otimes_{\Zp} W$-module. (This
uses only the property that $M$ is a \dieu $\calO_\bfL$-module.)

\subsection{}
\label{sec:52}

From now on until Section~\ref{sec:09} we
assume that $\rank_W M=m [\bfB:\Qp]$ for some positive integer $m$.
As in \eqref{eq:275}, let $\char (a)\in \Zp[T]$ be the reduced characteristic
polynomial of $a$ from $\bfB$ to $\Qp$, which is of degree $[\bfB:\Q_p]/2$.
We say a \dieu $\calO_{\bfB}$-module $M$ satisfies the {\it determinant
  condition} 
if one has the equality of the following characteristic polynomials, taken in $k[T]$, cf. (\ref{eq:25})   
\begin{equation}
  \label{eq:53}
  {\rm (K)} \quad \char (\iota(a) | M/\sfV M)= \char(a)^m, \quad
  \forall\, a\in \calO_\bfB.  
\end{equation} 
If we set $d=[\bfF:\Qp]$, then the above polynomials are of degree $2dm$.  

\begin{lemma}\label{52}
Let $M$ be a \dieu $\calO_{\bfB}$-module. 
\begin{enumerate}
  \item If $\bfB$ is the matrix algebra, then $M$ satisfies the
    determinant condition (K) if and only if for all $i\in \Z/f\Z$,
    one has $\dim_k (M_1/\sfV M_1)^i=em$.
  \item If $\bfB$ is the division algebra, then $M$ satisfies the
    determinant condition (K) if and only if for all $j\in \Z/2f\Z$,
    one has $\dim_k (M/\sfV M)^j=em$.
\end{enumerate}
\end{lemma}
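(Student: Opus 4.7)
The plan is to view $M/VM$ as a module over the finite-dimensional $k$-algebra $R := \calO_\bfB \otimes_{\Z_p} k$ and reduce the polynomial identity (K) to a numerical condition on the multiplicities of the simple $R$-modules appearing in a composition series of $M/VM$. First I would identify the simple $R$-modules together with the action of $a \in \calO_\bfB$ on each; then compute $\char(a) \bmod p \in k[T]$ explicitly; finally, match the two resulting product expressions to extract the claimed condition on the eigenspace dimensions.

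In case (1), $R = \prod_{i \in \Z/f\Z} \Mat_2(k[\pi]/\pi^e)$, whose Jacobson radical is the ideal $(\pi)$. The simple $R$-modules are $S_i = k^{\oplus 2}$, indexed by $i \in \Z/f\Z$, with $\pi$ acting as zero and $\calO^{\rm nr}$ acting via the reduction $\bar\sigma_i$. If $[M/VM] = \sum_i m_i [S_i]$ in $K_0(R)$, then by Morita equivalence $m_i = \dim_k (M_1/VM_1)^i$, and
\[ \char(\iota(a)|M/VM) = \prod_i \det(T - \bar\sigma_i(a))^{m_i}, \]
where $\bar\sigma_i$ is applied entrywise on $\Mat_2(\calO)$. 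Since the residue of each embedding $\sigma_{i,l}: \bfF \to \bar\Q_p$ depends only on $i$,
\[ \char(a)^m \bmod p = \prod_i \det(T - \bar\sigma_i(a))^{em}. \]
Taking $a = \diag(a', a')$ with $a' \in \calO$ a lift of a generator of $\F_{p^f}$ makes the degree-two polynomials $\det(T-\bar\sigma_i(a))$ pairwise coprime across $i$, so (K) forces $m_i = em$ for every $i$. Conversely, this condition makes the two sides identical as polynomial functions of $a$.

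In case (2), $\Pi$ is a two-sided generator of $J(R)$ (since $\Pi\calO_\bfB = \calO_\bfB \Pi$) and $R/(\Pi) = \F_{p^{2f}} \otimes_{\F_p} k = \prod_{j \in \Z/2f\Z} k$. The simple $R$-modules are one-dimensional $T_j$, with $\Pi$ acting as zero and $\calO_\bfL$ acting via the reduction $\bar\tau_j: \F_{p^{2f}} \to k$; thus $\dim_k (M/VM)^j = m_j$ for the multiplicity $m_j$ of $T_j$ in $M/VM$. The reduced characteristic polynomial of $a_0 + a_1\Pi \in \calO_\bfB$ over $\bfF$ is $T^2 - \Tr_{\bfL/\bfF}(a_0)\,T + (\Nr_{\bfL/\bfF}(a_0) + \pi\Nr_{\bfL/\bfF}(a_1))$. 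Reducing mod $p$ kills the $\pi$-term, and each factor splits as $(T-\bar\tau_j(\bar a_0))(T-\bar\tau_{j+f}(\bar a_0))$ corresponding to the two extensions of $\sigma:\bfF \to \bar\Q_p$ to $\bfL$; multiplying over all $\sigma$ yields $\char(a_0 + a_1\Pi) \bmod p = \prod_j (T-\bar\tau_j(\bar a_0))^{e}$, hence $\char(a_0 + a_1\Pi)^m \bmod p = \prod_j (T-\bar\tau_j(\bar a_0))^{em}$. Taking $a_0$ to lift a primitive element of $\F_{p^{2f}}$ makes the values $\bar\tau_j(\bar a_0)$ pairwise distinct, so (K) becomes equivalent to $m_j = em$ for every $j$.

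The main obstacle will be the bookkeeping of embeddings and the careful tracking of how the ramification index $e$ and inertia degree $f$ enter the reduction of $\char(a)$ modulo $p$. In particular, one must check that the terms in the reduced characteristic polynomial depending on $\pi$ vanish after reduction, and that the residue fields of the various embeddings $\sigma_{i,l}$ (respectively $\tau_{j,l}$) depend only on the unramified part. A minor subsidiary point is to verify that the numerical condition on the $m_i$ (or $m_j$) produces (K) for \emph{every} $a \in \calO_\bfB$, not merely for the generic elements used to match exponents; but this is automatic once both sides have been exhibited in the factored form above as the same polynomial function of $a$.
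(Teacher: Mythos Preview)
Your proof is correct and follows essentially the same route as the paper's: both sides of (K) are computed as products over the residue embeddings $\bar\sigma_i$ (resp.\ $\bar\tau_j$), and the exponents are matched by choosing a sufficiently generic $a$. Your packaging via composition factors over $R=\calO_\bfB\otimes_{\Z_p}k$ and its Jacobson radical is a slightly more structured version of the paper's direct eigenspace computation; in part~(2) it in fact makes explicit the point the paper leaves implicit, namely that both sides depend only on the image of $a$ modulo the radical $(\Pi)$.
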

\begin{proof}
  (1) Using the Morita equivalence, $M$ satisfies the condition (K) if
  and only if 
  $M_1$ satisfies (K) for all $a\in \calO$. Choose an algebraic closure
  $B(k)^{\rm alg}$ of $B(k):=W(k)[1/p]$ and put
  $\Sigma_{\bfF}:=\Hom(\bfF, B(k)^{\rm alg})$. For $a\in \calO$,  the
left hand side of the equation (\ref{eq:53}) is 
\[ \prod_{i\in \Z/f\Z} (T-\wt
    \sigma_i(a))^{\dim_k (M_1/\sfV M_1)^i} , \]
where $\wt \sigma_i
  \in \Sigma_\bfF$ is any lift of $\sigma_i\in \Sigma_0$. 
The right hand side of the equation
  (\ref{eq:53})
  is equal to 
\[ \prod_{ \sigma\in \Sigma_\bfF} (T-
    \sigma'(a))^m=\prod_{i\in \Z/f\Z} (T-\wt
    \sigma_i(a))^{em}\ . \]
Therefore, the condition (K) is satisfied if and only if 
$\dim_k (M_1/\sfV M_1)^i=em$
for all $i\in \Z/f\Z$. 

(2) As $\bfB$ is generated by the element $\Pi$ over $\bfL$, by 
\cite[Proposition 2.1.1]{hartwig:kr}
one may check the equality (\ref{eq:53}) for $a=\Pi$ and all $a\in
\calO_\bfL$. Since 
$\iota(\Pi)$ on $M/\sfV M$ is nilpotent, its \ch polynomial is $T^{2dm}$,
where $d=[\bfF:\Qp]$. Since $\Pi$ satisfies $\Pi^2+\pi=0$, the (reduced) \ch polynomial of $\Pi$ 
is $(T^2+\pi)^{d}$, whose image in $k[T]$ is equal to $T^{2d}$. 
This verifies (K) for $a=\Pi$. We then must show that (K) holds for all $a\in \calO_\bfL$ if and only if 
$\dim_k (M/\sfV M)^j=em$ for all $j\in \Z/2f\Z$.
But the same proof of (1) also proves this. \qed

\end{proof}
  

\begin{lemma}\label{53}
  Suppose $M$ is a {\it separably} 
  quasi-polarized \dieu $\calO_\bfB$-module.
  \begin{enumerate}
  \item If $\bfB$ is the matrix algebra, then for any $i\in \Z/f\Z$,
    one has $\dim_k (M_1/\sfV M_1)^i=em$.
  \item If $\bfB$ is the division algebra, then for any $i\in \Z/f\Z$,
    one has $\dim_k (M/\sfV M)^i=2em$ (recall that $(M/\sfV M)^i$ denotes the
    $\sigma_i$-component of $M/\sfV M$).
  \end{enumerate}
\end{lemma}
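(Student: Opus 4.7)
The plan is to extract the required dimensions from the isotropy properties of the Verschiebung image inside the mod-$p$ reduction of each isotypic component, using the separability of the quasi-polarization. In case (1) I first pass to $M_1=e_{11}M$ via the Morita reduction of Section~\ref{sec:51}: this is a Dieudonn\'e $\calO$-module equipped with a symmetric $W$-valued perfect pairing $(x,y):=\<x,Cy\>$ satisfying $(ax,y)=(x,ay)$ for $a\in\calO$ and $(Fx,y)=(x,Vy)^\sigma$. The $\calO^{\rm nr}$-decomposition $M_1=\bigoplus_{i\in\Z/f\Z} M_1^i$ is orthogonal and the restriction of $(\,,\,)$ to each $M_1^i$ remains perfect; each $M_1^i$ is $W^i$-free of rank $2m$, hence $W$-free of rank $2em$. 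Setting $n_i:=\dim_k M_1^i/VM_1^{i+1}$, self-duality of $M$ together with the Morita identification gives $\sum_i n_i=\dim_k M_1/VM_1=dm=emf$. In case (2) I decompose $M=\bigoplus_{j\in\Z/2f\Z} M^j$ via $\calO_{\bfL^{\rm nr}}$; the alternating pairing restricts to a perfect pairing $\<\,,\,\>\colon M^j\times M^{j+f}\to W$, each $M^j$ is $W^i$-free of rank $2m$ (with $i\equiv j\bmod f$), and writing $n_j:=\dim_k M^j/VM^{j+1}$ one has $\sum_{j\in\Z/2f\Z} n_j=2dm=2emf$.

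The key identity is that $V$ of one side pairs into $pW$ with $V$ of the other. From $(Fx,y)=(x,Vy)^\sigma$ one deduces $(Vx,y)=(x,Fy)^{\sigma^{-1}}$; applying this with $y=Vy_0$ together with $FV=p$ gives
\[ (Vx,Vy_0)=(x,FVy_0)^{\sigma^{-1}}=p\,(x,y_0)^{\sigma^{-1}}\in pW, \]
and an identical computation yields $\<Vy,Vy'\>\in pW$ in case (2). Reducing modulo $p$: in case (1) the $k$-subspace $\overline{VM_1^{i+1}}\subset\bar M_1^i:=M_1^i/pM_1^i$, of dimension $2em-n_i$, is isotropic under the induced perfect symmetric pairing, so $2em-n_i\le em$, giving $n_i\ge em$. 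In case (2) the subspaces $\overline{VM^{j+1}}\subset\bar M^j$ and $\overline{VM^{j+f+1}}\subset\bar M^{j+f}$ are mutually orthogonal under the perfect reduced pairing between $\bar M^j$ and $\bar M^{j+f}$, so $(2em-n_j)+(2em-n_{j+f})\le 2em$, giving $n_j+n_{j+f}\ge 2em$.

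The conclusion follows by summing against the global count. In case (1), the inequality $\sum_i n_i\ge f\cdot em=emf$ matches the total $emf$, so each $n_i$ must equal $em$. In case (2), $\sum_{j\in\Z/2f\Z}(n_j+n_{j+f})=2\sum_j n_j=4emf$ equals $2f$ times the lower bound $2em$, forcing each pair sum to equal $2em$; then $\dim_k(M/VM)^i=n_i+n_{i+f}=2em$. The main technical point to be handled carefully is the perfectness of the reduced mod-$p$ pairings on each component (symmetric in case (1), between complementary components in case (2)); this is exactly where the separability hypothesis on the polarization enters, and once it is in place the isotropy calculation is formal bookkeeping with the $F$-$V$-$p$ and semilinear relations.
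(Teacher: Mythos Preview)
Your proof is correct. The paper does not actually prove this lemma; it simply invokes the analogous argument from \cite[Lemma~2.6(2)]{yu:reduction}, and the isotropy-of-$\overline{VM}$ argument you give is precisely that standard argument spelled out in the present setting. One minor point: your claim that $\sum_i n_i=\dim_k M_1/VM_1=dm$ (and the analogous $\sum_j n_j=2dm$ in case (2)) also relies on the separability of the polarization, via exactly the same isotropy mechanism applied to the full $\overline{VM}\subset\overline M$ under the global perfect alternating form; you allude to this (``self-duality of $M$'') but it would be worth making explicit, since without separability the total dimension of $M/VM$ is not known a priori.
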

\begin{proof}
(1) The proof is similar to that of \cite[Lemma 2.6 (2)]{yu:reduction}.
Consider $M_1$ as a separably anti-quasi-polarized \dieu
  $\calO$-module. 
 Since the restriction of $(\ ,\, )$ to each component $M^i_1$ is
 perfect, for any $W$-basis $\{x_r\}$ of $M^i_1$, the discriminant
 $\det (x_r,x_s)$ is a unit. Consider the map $\sfV :M^{i+1}_1\to
 M^i_1$ of modules of $W$-rank $2em$; one has
 $M^i_1/\sfV M^{i+1}_1=(M_1/\sfV M_1)^i$. 
 By the elementary divisor theorem,
 there exist bases $\{x_1,\dots, x_{2em}\}$ and $\{y_1,\dots,
 y_{2em}\}$ of $M^{i+1}_1$ and $M^i_1$, respectively, such that for
 all $r$, one has $Vx_r=p^{n_r} y_r$ for some integer $n_r\ge 0$. Then 
\[ p^{2em} \det (x_r,x_s)^{\sigma^{-1}}=\det(\sfV x_r,\sfV x_s)
=\det p^{n_r+n_s}(y_r,y_s)=p^{2\sum n_r}\det (y_r,y_s). \]
It follows that $\sum n_r=em$ and $\dim_k (M_1/\sfV M_1)^i=em$.

(2) Consider $M$ as a separably quasi-polarized \dieu
  $\calO$-module (the Hilbert-Siegel analogue). The same proof of (1) with $(\ , \,)$ and $M_1$ replaced by $\<\ ,\,\>$ and $M$, respectively, will prove the result. \qed  
\end{proof}

\begin{remark}\label{54}
According to Lemmas~\ref{52} and \ref{53}, 
if $\bfB$ is the matrix algebra, then any
separably quasi-polarized \dieu $\calO_\bfB$-module 
satisfies the determinant condition. 
However, in the case where $\bfB$ is the division algebra, 
there are a few
possibilities for the numbers $\dim_k (M/\sfV M)^j$. Therefore, 
the determinant condition
would impose a further condition for separably quasi-polarized \dieu
$\calO_\bfB$-modules. 
\end{remark}

\subsection{}
\label{sec:53}

We discuss the relationship between the numbers $a_j:=\dim_k (M/\Pi
M)^j$ and the numbers $\dim_k (M/\sfV M)^j$ for a \dieu $\calO_B$-module $M$ when $\bfB$ is the division
algebra. Put 
\begin{equation}
  \label{eq:535}
  c_j:=\dim_k (M/\sfV M)^j\quad \text{ for $j\in \Z/2f\Z$}.
\end{equation}
Write 
$\sfV _j: M^{j+1}\to M^j$ for the restriction of $\sfV $ on $M^{j+1}$, and
$\Pi_j:M^j\to M^{j+f}$ for that of $\Pi$ on $M^j$. We have the
commutative diagram 
\begin{equation}
  \label{eq:54}
  \begin{CD}
    M^j @<{\sfV _j}<< M^{j+1} \\
    @V{\Pi_j}VV @V{\Pi_{j+1}}VV \\
    M^{j+f} @<{\sfV _{j+f}}<< M^{j+f+1}.
  \end{CD}
\end{equation}

Let $\ord$ be the normalized valuation on $W^i$, that is,
one has $\ord(\pi)=1$. 
Let $\ord \det \Pi_j$ denote the
valuation of $\det(A_j)$, where $A_j$ is the matrix representing
the map $\Pi_j$ with respect to a set of $W^i$-bases for $M^j$ and 
$M^{j+f}$ respectively; this is well-defined. Similarly we define
$\ord \det \sfV _j$ for  
suitable bases of $M^{j+1}$ and $M^{j}$.  
It is easy to
see that 
\begin{equation}
  \label{eq:55}
  \ord \det \sfV _j=c_j, \ \text{and\ } \ord \det \Pi_{j}=a_{j+f} , \quad
  \forall\, 
  j\in \Z/2f\Z. 
\end{equation}
As $\Pi_{j+1}=\sfV _{j+f}^{-1} \circ \Pi_{j} \circ \sfV _j$, one has the relation
\[ a_{j+f+1}=a_{j+f}+c_j-c_{j+f}, \]
or equivalently
\begin{equation}\label{eq:56}
  a_{j+1}=a_{j}+c_{j+f}-c_{j}, \quad \forall\, j\in \Z/2f\Z. 
\end{equation}
Since $\rank_{W^i} M^j=2m$, it follows from $\Pi_j\circ
\Pi_{j+f}=-\pi$ that 
\begin{equation}
  \label{eq:57}
  a_j+a_{j+f}=2m. 
\end{equation}
Since $a_j$'s are integers between $0$ and $2m$, it
follows from (\ref{eq:56}) that 
\begin{equation}
  \label{eq:58}
  {\Big |}\sum_{i=j}^{j+r} (c_{i+f}-c_i) {\Big |}\le 2m, \quad
   \forall\, j\in 
   \Z/2f\Z,\  0\le r\le f-1.
\end{equation}
On the other hand, the collection $\{c_j\}$ satisfies the condition 
\begin{equation}
  \label{eq:59}
  \sum_{j\in \Z/2f\Z} c_j=2dm. 
\end{equation}
We say $M$ is {\it separably quasi-polarizable} if it admits a separable $\calO_\bfB$-linear quasi-polarization. 
If $M$ is separably quasi-polarizable, then by Lemma~\ref{53} (2), one has
\begin{equation}
  \label{eq:510}
  c_j+c_{j+f}=2em, \quad \forall\, j\in \Z/2f\Z,
\end{equation}
because $(M/\sfV M)^i=(M/\sfV M)^j\oplus (M/\sfV M)^{j+f}$ and $\dim_k (M/\sfV M)^i=2em$, with $i=j \mod f$.


\begin{lemma}\label{55} 
Suppose that $\bfB$ is the division algebra and let the notation be as above. 
The sets of numbers $\{a_j\}$ and $\{c_j\}$ 
satisfy the conditions
  (\ref{eq:56})--(\ref{eq:59}). Moreover, if $M$ is separably
  quasi-polarizable, then one has in addition the condition
  (\ref{eq:510}). 
\end{lemma}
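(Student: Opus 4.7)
The lemma essentially assembles identities most of which have already surfaced in the discussion immediately preceding it, so my proof would verify each condition in turn. For (5.7), the commutativity of the diagram (5.4) together with $\Pi^2=-\pi$ shows that $\Pi_j\circ\Pi_{j+f}$ acts as multiplication by $-\pi$ on the free $W^i$-module $M^j$ of rank $2m$; taking $\pi$-adic valuations of determinants and applying (5.5) yields $a_j+a_{j+f}=2m$. For (5.6), the same diagram gives $\Pi_{j+1}=V_{j+f}^{-1}\circ\Pi_j\circ V_j$ after inverting $\pi$, and computing valuations via (5.5) produces $a_{j+1}=a_j+c_{j+f}-c_j$.

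For (5.8), I would iterate (5.6): summing telescopically over $i=0,\dots,r$ gives
\[
a_{j+r+1}-a_j=\sum_{i=0}^{r}(c_{j+i+f}-c_{j+i}),
\]
and then (5.7) forces $0\le a_j\le 2m$ for every $j$, so the right-hand side has absolute value at most $2m$. For (5.9), I would exploit the fact that $M$ carries a (possibly inseparable) quasi-polarization: the associated $p$-divisible group $H$ is isogenous to its Cartier dual $H^t$, so $\dim H=h(H)/2=2dm$, and the left-hand side of (5.9) is precisely $\dim_k M/VM=\dim H$. Finally, (5.10) is an immediate consequence of Lemma 5.3(2): in the separably quasi-polarized case $\dim_k(M/VM)^i=2em$ for each $\sigma_i$-component, and since $(M/VM)^i$ splits as $(M/VM)^j\oplus (M/VM)^{j+f}$ for any $j$ with $j\equiv i\pmod f$, this gives $c_j+c_{j+f}=2em$.

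No individual step is substantively difficult; the whole proof is an exercise in bookkeeping, consolidating what the preceding paragraph already establishes. The one place where I would be a little careful is (5.9): one must note that the equality $\dim H=h(H)/2$ follows from the existence of any isogeny $H\to H^t$, so separability of the quasi-polarization is not required, and (5.9) therefore belongs in the unconditional part of the lemma rather than among the separably quasi-polarized conclusions.
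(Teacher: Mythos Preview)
Your proposal is correct and follows the same approach as the paper. In fact the paper gives no separate proof of this lemma at all: the derivations of (\ref{eq:56})--(\ref{eq:58}) are written out in the text of Subsection~\ref{sec:53} immediately preceding the statement, and (\ref{eq:59}) and (\ref{eq:510}) are simply asserted there. Your write-up recapitulates those derivations and, helpfully, supplies the justifications the paper omits---invoking the (possibly inseparable) quasi-polarization to get $\dim_k M/VM = 2dm$ for (\ref{eq:59}), and invoking Lemma~\ref{53}(2) together with the decomposition $(M/VM)^i=(M/VM)^j\oplus(M/VM)^{j+f}$ for (\ref{eq:510}). Your caution about (\ref{eq:59}) belonging to the unconditional part is well placed and consistent with the paper's organization.
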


It is interesting to know whether the necessary conditions in Lemma~\ref{55} are also sufficient, in the sense that these invariants can be realized by a (separably quasi-polarizable) \dieu $\calO_\bfB$-module. 




\begin{prop}\label{56} 
Let $M$ be a \dieu $\calO_\bfB$-module of $W$-rank $4dm$, where $d=[\bfF:\Qp]$. 
\begin{enumerate}
\item Suppose $\bfB$ is the matrix algebra.
  \begin{enumerate}
  \item The module $M$ is free as an
  $\calO_\bfB\otimes W$-module.
\item The module $M$ is separably quasi-polarizable if and only 
  if it satisfies the determinant condition.
  \end{enumerate}

\item Suppose $\bf B$ is the division algebra. 
  \begin{enumerate}
  \item If $M$ satisfies the
  determinant condition, then it is free as an
  $\calO_\bfB\otimes W$-module.
  \item If $M$ is separably quasi-polarizable and free 
  as an $\calO_\bfB\otimes W$-module, then it satisfies the
  determinant condition.
  \end{enumerate}

\end{enumerate}
\end{prop}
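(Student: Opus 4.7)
The proof separates into the four sub-statements, handled via the local-algebraic setup of Sections~\ref{sec:43}--\ref{sec:44}, Lemmas~\ref{52} and \ref{53}, and the numerical relations of Lemma~\ref{55}. Throughout I will invoke the freeness criterion from Section~\ref{sec:43}: in the division algebra case, $M$ is free as an $\calO_\bfB\otimes W$-module iff $a_j$ is constant (equivalently $a_j = m$); in the matrix algebra case, the analogous characterization is constancy of the $W^i$-ranks of $M_1^i$.

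For (1)(a), the work has essentially already been done in Section~\ref{sec:51}: when $\bfB$ is the matrix algebra, the Dieudonn\'e operators $F\colon M_1^i\to M_1^{i+1}$ and $V\colon M_1^{i+1}\to M_1^i$ force $\rank_{W^i}M_1^i$ to be independent of $i$, and the criterion from Section~\ref{sec:43} then yields freeness of $M$ as an $\calO_\bfB\otimes W$-module.

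For (1)(b), the forward direction follows by composing Lemma~\ref{53}(1) (separable quasi-polarizability forces $\dim_k(M_1/VM_1)^i = em$) with Lemma~\ref{52}(1) (which rephrases the determinant condition in exactly those terms). For the converse, Lemma~\ref{52}(1) together with part (a) gives that $M$ is free and $\dim_k(M_1/VM_1)^i = em$; Lemma~\ref{41} then supplies a self-dual skew-Hermitian $\calO_\bfB\otimes W$-structure on $M$, unique up to isomorphism. To realize this pairing as a separable quasi-polarization compatible with $F$ and $V$, I would compare $M$ with its Dieudonn\'e dual $M^\vee$: the determinant condition is symmetric under duality, so $M^\vee$ has the same graded invariants as $M$, hence is $\calO_\bfB\otimes W$-isomorphic to $M$, and by matching graded pieces one arranges the isomorphism to intertwine the Dieudonn\'e structures, producing the desired separable pairing.

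For (2), the direction ``determinant $\Rightarrow$ free'' is direct: Lemma~\ref{52}(2) gives $c_j = em$ for all $j \in \Z/2f\Z$, and plugging into~(\ref{eq:56}) forces $a_{j+1} = a_j$; combined with~(\ref{eq:57}) this yields $a_j = m$, so $M$ is free by Section~\ref{sec:43}. The converse direction is where the main obstacle lies: freeness gives $a_j = m$, and~(\ref{eq:56}) then yields the $f$-periodicity $c_{j+f} = c_j$, but (\ref{eq:56})--(\ref{eq:59}) alone do not force each $c_j$ to equal $em$. One must invoke the additional structure given by $F\Pi = \Pi F$ together with the reduction $\Pi^2 = 0$ on the $k$-vector space $M/VM$ (since $\pi$ acts as zero there), combined with the freeness hypothesis, to align the $c_j$'s across distinct unramified sectors; this comparison between the $\Pi$-shifted graded pieces of $M/VM$ is the most delicate step.
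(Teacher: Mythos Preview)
Your handling of (1)(a), the forward direction of (1)(b), and the forward direction of (2) matches the paper's proof exactly.

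For the converse in (1)(b), the paper simply cites \cite[Lemma~2.6]{yu:reduction} rather than arguing directly. Your duality sketch has a real gap: knowing that $M$ and its Dieudonn\'e dual $M^\vee$ are isomorphic as skew-Hermitian $\calO_\bfB\otimes W$-modules (via Lemma~\ref{41}) does not by itself produce an isomorphism compatible with $F$ and $V$, nor one that is alternating. The phrase ``by matching graded pieces one arranges the isomorphism to intertwine the Dieudonn\'e structures'' is precisely the nontrivial content, and you have not supplied it.

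The serious divergence is in the converse of (2). The paper's argument is short: freeness gives $a_j$ constant, whence (\ref{eq:56}) yields $c_j = c_{j+f}$; then the paper invokes (\ref{eq:510}) to conclude $c_j = em$. But (\ref{eq:510}) is the condition $c_j + c_{j+f} = 2em$, which in Lemma~\ref{55} is stated only under the hypothesis that $M$ is separably quasi-polarizable. So the paper is tacitly using a polarization hypothesis that is absent from the statement of Proposition~\ref{56}(2). You correctly sensed that (\ref{eq:56})--(\ref{eq:59}) alone do not pin down the $c_j$'s, but your proposed repair via $F\Pi = \Pi F$ and ``$\Pi^2 = 0$ on $M/VM$'' cannot work. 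First, $\Pi^2 = -\pi$ acts as zero on $M/VM$ only when $e=1$. Second, and more decisively, the commutation $F\Pi = \Pi F$ is already encoded in (\ref{eq:56}); it gives nothing beyond $c_j = c_{j+f}$. Indeed, for $e=1$, $f=2$, $m=1$ one can write down an explicit free Dieudonn\'e $\calO_\bfB$-module with $c_0=c_2=2$, $c_1=c_3=0$ (take $V$ to be multiplication by $p$ on two of the four graded pieces and an isomorphism on the other two, and $\Pi$ of the form $\left(\begin{smallmatrix}0 & -p\\ 1 & 0\end{smallmatrix}\right)$ on each pair $M^j\oplus M^{j+f}$). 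This module is free over $\calO_\bfB\otimes W$ but violates the determinant condition, so without the separable polarization hypothesis the converse is simply false. The correct reading is that the paper intends (2) under that extra hypothesis, and the one-line proof via (\ref{eq:510}) is then complete.
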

\begin{proof}
(1) Part (a) is proved in the paragraph before Lemma~\ref{51}. 
For part (b), the only if part follows from
Lemmas~\ref{52} and \ref{53}. For the other direction, we refer to the
discussion in \cite[Lemma 2.6]{yu:reduction}.

(2) (a) Since $M$ satisfies the
  determinant condition, by Lemma~\ref{52} each $c_j$ is equal to
  $em$. By (\ref{eq:56}) and (\ref{eq:57}), each $a_j$ is
  equal to $m$. Therefore, $M$ is free as an
  $\calO_\bfB\otimes W$-module. (b) Note that $M$ is free as an
  $\calO_\bfB\otimes W$-module if and only if $a_j=m$ for all $j$. 
  By (\ref{eq:56}), one has $c_j=c_{j+f}$ for all $j$.
  Since $M$ is separably quasi-polarizable, by
  (\ref{eq:510}) each $c_j$ is equal to
  $em$. By Lemma~\ref{52},  
  the \dieu $\calO_\bfB$-module $M$ satisfies the
  determinant condition. \qed 



\end{proof}

\begin{cor}\label{57} Assume that $p\neq 2$.
  Let $\Lambda$ be a $\Zp$-valued unimodular skew-Hermitian
  $\calO_\bfB$-module 
  of $\Zp$-rank $4dm$. For any separably quasi-polarized \dieu
  $\calO_{\bfB}$-module $M$ of rank $4dm$ that satisfies the
  determinant 
  condition, one has an isomorphism $M\simeq
  \Lambda\otimes_{\Zp} W$ as 
  skew-Hermitian $\calO_\bfB\otimes W$-modules. 
\end{cor}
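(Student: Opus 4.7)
The plan is to reduce the desired isomorphism to a matching of the classifying invariants for self-dual skew-Hermitian $\calO_\bfB\otimes W$-modules from Section~\ref{sec:44}. Since $M$ is \emph{separably} quasi-polarized, its pairing is perfect and $M$ is self-dual as a skew-Hermitian $\calO_\bfB\otimes W$-module; likewise $\Lambda$ being $\Zp$-unimodular makes $\Lambda\otimes_{\Zp}W$ self-dual over $W$. The whole question thereby reduces to identifying the invariants of both sides and appealing to uniqueness.

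The first step is to analyze $M$ via Proposition~\ref{56}. In the matrix case $M$ is automatically free over $\calO_\bfB\otimes W$; in the division case it is exactly the assumed determinant condition that guarantees freeness. Since $\rank_W M=4dm=m\cdot\rank_W(\calO_\bfB\otimes W)$, the $\calO_\bfB\otimes W$-rank of $M$ is $m$. From this I read off the classifying invariants: $\rank_{W^i}M^i=4m$ for every $i\in \Z/f\Z$ when $\bfB$ is the matrix algebra, and $a_j=\dim_k(M^j/\Pi M^{j+f})=m$ for every $j\in \Z/2f\Z$ when $\bfB$ is the division algebra.

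Next I would show that $\Lambda$ itself is free of rank $m$ over $\calO_\bfB\otimes\Zp$, from which the same invariants for $\Lambda\otimes W$ follow by base change. This uses the maximal-order structure: in the matrix case Morita equivalence reduces to checking that $e_{11}\Lambda$ is a finitely generated torsion-free (hence free) module over the discrete valuation ring $\calO$; in the division case $\calO_\bfB\otimes\Zp=\calO_\bfB$ is a noncommutative local ring whose two-sided maximal ideal is the principal ideal $(\Pi)$, and finitely generated torsion-free modules over such a ring are free. A rank count then yields $\Lambda\simeq(\calO_\bfB\otimes\Zp)^m$, so $\Lambda\otimes W\simeq(\calO_\bfB\otimes W)^m$ has precisely the invariants computed for $M$.

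Finally, applying Lemma~\ref{41} in the matrix case and Corollary~\ref{43} in the division case, the agreement of invariants forces $M\simeq\Lambda\otimes_{\Zp}W$ as self-dual skew-Hermitian $\calO_\bfB\otimes W$-modules, which is the desired statement. The main point requiring care is the freeness of $M$ over $\calO_\bfB\otimes W$ in the division case: this is where separability of the quasi-polarization interacts with the determinant condition through the numerical analysis of $\{a_j\}$ and $\{c_j\}$ carried out in Lemma~\ref{55} and Proposition~\ref{56}(2).
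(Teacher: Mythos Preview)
Your proposal is correct and follows essentially the same route as the paper: use Proposition~\ref{56} to obtain freeness of $M$ over $\calO_\bfB\otimes W$, then invoke the classification of self-dual skew-Hermitian $\calO_\bfB\otimes W$-modules (Lemma~\ref{41} in the matrix case, Lemma~\ref{42}/Corollary~\ref{43} in the division case) to match $M$ with $\Lambda\otimes_{\Zp}W$. Your added justification that $\Lambda$ itself is free over $\calO_\bfB$ is a detail the paper leaves implicit; one small remark is that in Proposition~\ref{56}(2) the implication ``determinant condition $\Rightarrow$ free'' uses only the relations (\ref{eq:56})--(\ref{eq:57}) and not the separability condition (\ref{eq:510}), so separability enters only to guarantee self-duality of the pairing needed for the classification lemmas.
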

\begin{proof}
  By Proposition~\ref{56}, $M$ is free as an $\calO_\bfB\otimes
  W$-module. When $\bfB$ is the matrix algebra, the assertion follows
  from Lemma~\ref{41}. When $\bfB$ is the division algebra, the
  assertion then follows 
  from Lemma~\ref{42}. \qed 
\end{proof}

\begin{remark}\label{58}
The converse of Proposition~\ref{56} (2) (a) does not hold. 
We give a counterexample.
Let $\calO=\Z_{p^2}$. Then $\calO_\bfB=\Z_{p^4}[\Pi]$ with
$\Pi^2=-p$ and $\Pi a=\sigma^2(a) \Pi$ for $a\in \Z_{p^4}$. Put
$M=(\calO_\bfB\otimes W)^{\oplus 2}$, which is obviously a 
free $\calO_\bfB\otimes W$-module. We have the decomposition 
$M=M^0\oplus M^1 \oplus M^2 \oplus M^3$ by the action of $\Z_{p^4}$. 
Choose a basis $\{e^j_1,e^j_2\}$ of $M^j$
for each $i$ such that the representative matrix for each $\Pi_i: M^i\to
M^{j+2}$ is given by $\left[\begin{smallmatrix} 0 & -p \\ 1 & 0
     \end{smallmatrix}\right]$. With respect to these bases, we define
   the
map $\sfV_j:M^{j+1}\to M^j$ by $\sfV_j=\left[\begin{smallmatrix} 
1 & 0 \\ 0 & 1
     \end{smallmatrix}\right]$ for $j=0,2$, 
and $\sfV_j=\left[\begin{smallmatrix} 
p & 0 \\ 0 & p \end{smallmatrix}\right]$ for $j=1,3$. Then we have the following
commutative diagram 
\[ 
\begin{CD}
  M^0 @<\sfV_0<< M^1 @<\sfV_1<< M^2 \\
  @VV\Pi_0V @VV\Pi_1V @VV\Pi_2V \\
  M^2 @<\sfV_2<< M^3 @<\sfV_3<< M^0 \\
  @VV\Pi_2V @VV\Pi_3V @VV\Pi_0V \\
  M^0 @<\sfV_0<< M^1 @<\sfV_1<< M^2. \\
\end{CD}
\]
This defines a \dieu
$\calO_\bfB$-module $M$ and one has $c_0=c_2=0$ and $c_1=c_3=2$. Thus,
$M$ does not satisfy the determinant condition by Lemma~\ref{52} (2). 
\end{remark}

 

  


\section{Isogeny classes of $p$-divisible $\calO_\bfB$-modules} 
\label{sec:06}

\subsection{}
\label{sec:61}

Keep the notation of Sections~\ref{sec:04} and \ref{sec:05}. Our goal is to
classify the isogeny classes of quasi-polarized $p$-divisible
$\calO_\bfB$-modules $\ul H=(H,\lambda,\iota)$ over an \ac field $k$
of \ch $p$. 
Let $(M,\<\,, \>,\iota)$ be the associated \dieu module with the
additional structures. Assume that $\rank_W M=m [\bfB:\Qp]$ for some
integer $m\ge 1$. 
Note that this is the general type D
case (in the local situation). Let $d:=[\bfF:\Qp]$. 
So the $p$-divisible group $H$ has height $4dm$. 

\def\slope{{\rm slope}}

The slope sequence (or Newton polygon) 
of a $p$-divisible group $H$ is denoted by  
$\nu(H)$. Write 
\[ \{\, \beta_i^{m_i}\, \}_{1\le i\le t}  \]
for the slope sequence with each slope $\beta_i$ of multiplicity
$m_i$. 

Two quasi-polarized $p$-divisible $\calO_\bfB$-modules
$\ul {H}=(H,\lambda,\iota)$ and $\ul
{H'}=(H',\lambda',\iota')$ are said to be {\it isogenous} if there is
an $\calO_\bfB$-linear quasi-isogeny $\varphi:H\to H'$ such that
$\varphi^* \lambda'=\lambda$. This is equivalently saying that the
associated $F$-isocrystals $\ul M\otimes_{\Zp} \Qp \simeq  \ul
M'\otimes_{\Zp} \Qp$ are isomorphic compatible with the additional
structures. Similarly, one defines the isogeny relation for $p$-divisible
$\calO_\bfB$-modules $(H,\iota)$. 
Clearly, the slope sequence of a (quasi-polarized) $p$-divisible
$\calO_\bfB$-module is determined by its isogeny class. 
The relationship between the Newton polygon and isogeny class of
$p$-divisible groups with additional structures (in the general
setting of $F$-isocrystals with $G$-structure) is known due to the
works of Kottwitz \cite{kottwitz:isocrystals,kottwitz:isocrystals2} 
and Rapoport-Richartz \cite{rapoport-richartz}. Here we describe the
image of the Newton map $\nu$ for $p$-divisible
$\calO_\bfB$-modules.     

\subsection{}
\label{sec:62}
We describe the isogeny classes of $p$-divisible
$\calO_\bfB$-modules. 

\begin{lemma}\label{61}
Let $(H,\iota)$ and $(H',\iota')$ be two $p$-divisible
$\calO_\bfB$-modules of the same height. Then $(H,\iota)$ is isogenous to
$(H',\iota')$ if and only if $\nu(H)=\nu(H')$.  
\end{lemma}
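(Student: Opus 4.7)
The only-if direction is immediate since an $\calO_\bfB$-linear quasi-isogeny induces an isomorphism of associated $F$-isocrystals and, in particular, preserves the slope sequence. For the if direction, the plan is to pass to the rational side: producing an $\calO_\bfB$-linear quasi-isogeny between $(H,\iota)$ and $(H',\iota')$ is the same as producing an isomorphism of pairs $(N,\iota)\simeq (N',\iota')$, where $N:=M(H)\otimes_\Zp \Qp$ carries its $F$-isocrystal structure and $\iota$ is viewed as a $\Qp$-algebra homomorphism $\bfB\to \End_{\rm isoc}(N)$.

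The next step is to decompose by slope. Over the algebraically closed field $k$, the Dieudonn\'e-Manin theorem gives a canonical isotypic decomposition $N=\bigoplus_\beta N_\beta$ with $N_\beta\simeq E_\beta^{n_\beta}$, where $E_\beta$ is the simple isocrystal of slope $\beta=s/r$ in lowest terms and $n_\beta r=\dim_{B(k)} N_\beta$. Since the $\bfB$-action commutes with $F$ and $V$, it respects this canonical decomposition, so the problem reduces to each slope component separately. Moreover, $\End_{\rm isoc}(N_\beta)=M_{n_\beta}(D_\beta)$, where $D_\beta$ is the central division $\Qp$-algebra of invariant $\beta\pmod\Z$. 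The assumption $\nu(H)=\nu(H')$ forces $n_\beta=n_\beta'$ for each $\beta$, so the bare isocrystals $N_\beta$ and $N'_\beta$ are abstractly isomorphic; fixing such an isomorphism reduces the problem, slope-by-slope, to the following: any two $\Qp$-algebra embeddings
\[ \iota,\iota':\bfB \hookrightarrow M_{n_\beta}(D_\beta) \]
are conjugate by an element of $M_{n_\beta}(D_\beta)^\times = \Aut_{\rm isoc}(N_\beta)$.

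This last assertion is exactly the Skolem-Noether theorem applied to the central simple $\Qp$-algebra $M_{n_\beta}(D_\beta)$ and the simple $\Qp$-algebra $\bfB$. Reassembling the resulting isocrystal isomorphisms over the finitely many slopes and clearing a denominator gives the required quasi-isogeny between $(H,\iota)$ and $(H',\iota')$. The most delicate step to pin down is not Skolem-Noether itself but the identification $\End_{\rm isoc}(N_\beta)=M_{n_\beta}(D_\beta)$ together with $\bfB$-stability of the isotypic decomposition; both are standard, yet worth verifying carefully here since the lemma is stated for an arbitrary quaternion $\bfF$-algebra $\bfB$, with no restriction on the ramification of $p$ in $\bfB$.
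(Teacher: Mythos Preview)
Your proof is correct and follows essentially the same approach as the paper's: both reduce to Noether--Skolem. The paper is slightly more terse---it picks any isogeny $\varphi:H\to H'$ (existing by Dieudonn\'e--Manin since the slope sequences agree), transports $\iota'$ back to an embedding $\iota_1:\bfB\to\End^0(H)$, and then applies Noether--Skolem directly in the semisimple algebra $\End^0(H)$, noting only that its center is a product of copies of $\Qp$. Your version makes the implicit slope decomposition explicit, working in each factor $M_{n_\beta}(D_\beta)$ separately, which is exactly what justifies the paper's one-line appeal to Noether--Skolem in the non-simple algebra $\End^0(H)$.
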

\begin{proof}
  The direction $\implies$ is obvious and we show the other
  direction. Since $\nu(H)=\nu(H')$, we may choose an
  isogeny $\varphi:H\to H'$. Then we have an isomorphism
  $j:\End^0(H')\simeq \End^0(H)$ of $\Qp$-algebras, 
  sending $a\mapsto \varphi^{-1} a \varphi$, where
  $\End^0(H):=\End(H)\otimes_{\Zp} \Qp$.  
  Put $\iota_1:=j\circ \iota': \bfB\to \End^0(H)$. 
  Since the center of $\End^0(H)$ is a product of copies of
  $\Q_p$, by the Noether-Skolem theorem, there is an element
  $\alpha \in \End^0(H)^\times$ such that $\iota_1=\Int(\alpha) \circ
  \iota$. For $b\in B$, one has 
\[ \varphi^{-1}\,\iota' (b)\, \varphi =\iota_1(b)= \alpha\, 
\iota(b)\, \alpha^{-1}. \]
Therefore, $\varphi\circ \alpha:H\to H'$ is an $\calO_\bfB$-linear
quasi-isogeny. \qed 
\end{proof}
 


\subsection{The isoclinic case.}
\label{sec:63}
Let $H$ be an isoclinic $p$-divisible $\calO_\bfB$-module of
height $h>0$ and of slope $\beta$, and let $M$ be the associated
\dieu $\calO_\bfB$-module. Let $E:=\End^0(H)$ be the endomorphism
algebra. Write $E=\Mat_{n}(\Delta)$, where
$\Delta$ is a central division $\Qp$-algebra with
$\inv(\Delta)=\beta$.\\


\subsubsection{}
\label{sec:631}
Suppose $\bfB$ is the matrix algebra. Then $H=H_1\oplus H_2$,  $H_1$
has height $h_1:=h/2$ and $\deg (\End^0(H_1))=h_1$. It is known
that there is a monomorphism $\bfF\to \End^0(H_1)$ 
if and only if  $d|h_1$, or
equivalently $2d|h$.  
As $\deg (\End^0(H_1))=h_1$, one has $\beta=a/h_1$ for some integer
$0\le a\le h_1$. Therefore,   
\[ \nu(H)=\left \{\left (\frac{a}{h_1}\right )^{h}\right \} \]
for some integer $0\le a\le h_1$.  
Conversely, suppose $2d | h$ and we are given a slope sequence 
$\nu=\{(2a/h)^{h}\}$ for some integer $0\le
a \le h/2$. Then there is a $p$-divisible group $H$ of height $h$ 
together
with an algebra monomorphism $\bfB \to \End^0(H)$ 
such that $\nu(H)=\nu$. 
Replacing $H$ by another 
$p$-divisible group in its isogeny class, 
we have a ring monomorphism $\iota: \calO_\bfB\to \End(H)$.\\


\subsubsection{}
\label{sec:632}
Suppose that $\bfB$ is the division algebra. Write $\beta=a/h$ for
some integer $0\le a \le h$.  As the
field $\bfL$ can be embedded into $E$, one has $2d|h$. Therefore,
we can write $h=2dh'$ for some integer $h'$. 

\begin{lemma}\label{62} Notations and assumptions are as above. 
There is an embedding of $\bfB$ into $E$ if and only if  
  \begin{equation}
    \label{eq:62}
    a\equiv h' \pmod 2.
  \end{equation}   
\end{lemma}
\begin{proof}
We write
\[ \Delta\otimes_{\Qp} \bfB^{\rm op}=\Delta_\bfF \otimes_{\bfF}
\bfB^{\rm op}=\Mat_c(\Delta'), \] 
where $\Delta_\bfF:= \Delta\otimes_{\Qp} \bfF$, $\bfB^{\rm op}$ is
the opposite algebra of $\bfB$, $\Delta'$ is a central division
$\bfF$-algebra.

By an embedding theorem for general simple algebras 
\cite[Theorem 2.7]{yu:embed}, there is an embedding of 
$\bfB$ into $E=\Mat_n(\Delta)$ if and only if 
the following condition holds
\begin{equation}
  \label{eq:61}
  [\bfB:\Q_p] \mid  n c. 
\end{equation} 
Write $\delta:=\deg (\Delta)$ and $\delta':=\deg(\Delta')$.
  We have $c\delta'=2 \delta$ and $h=n\delta$. 
Then one has 
\[ 4d|nc \iff 4d\delta'|nc \delta' \iff 2d \delta'|n\delta=h, \] 
and this is equivalent to the condition
\begin{equation}
  \label{eq:63}
  \delta'|h'.
\end{equation}
As $\delta'$ is the denominator of $\inv(\Delta')$, the condition 
(\ref{eq:63}) holds if and only if $h'\cdot \inv(\Delta')\in \Z$. We
compute 
\[ \inv(\Delta')=d\cdot \frac{a}{h}-\frac{1}{2}=\frac{a-h'}{2h'} \quad
\text{and hence} \quad  h'\cdot \inv(\Delta')=\frac{a-h'}{2}. \]
Therefore, the condition (\ref{eq:63}) holds if and only if 
the condition (\ref{eq:62}) holds. This proves the lemma. \qed
\end{proof}
 



By Lemma~\ref{62} one has
\[ \nu(H)=\left \{\left (\frac{a}{h}\right )^{h}\right \} \]
for some integer $0\le a\le h$ with $a\equiv h' \pmod 2$.  
Conversely, suppose $h=2dh'$ for some $h'\in \Z_{\ge 1}$ 
and we are given a slope sequence 
$\nu=\{(a/h)^{h}\}$ for some integer $0\le
a \le h$ with $a\equiv h' \pmod 2$. 
Then by adjusting $H$ in its isogeny class if necessary, 
there is a $p$-divisible 
$\calO_{\bfB}$-module $H$ of 
height $h$ such that $\nu(H)=\nu$. \\

We conclude our discussion by the following proposition. 
\begin{prop}\label{63}
  Let $h=2dh'$ with $h'\in \Z_{\ge 1}$, and let $(H,\iota)$ be an
  isoclinic $p$-divisible $\calO_\bfB$-module of height $h$ over $k$.
\begin{enumerate}
\item If $\bfB$ is the matrix algebra, then 
\[ \nu(H)=\left \{\left (\frac{a}{dh'}\right )^{h}\right \}, \]
where $a$ is an integer with $0\le a \le dh'$. Conversely, any slope sequence of this form is realized by a p-divisible $\calO_\bfB$-module of height $h$ over $k$.
\item  If $\bfB$ is the division algebra, then 
\[ \nu(H)=\left \{\left (\frac{a}{h}\right )^{h}\right \}, \]
where $a$ can be any integer with $0\le a \le h$ and 
$a\equiv h' \pmod 2$. Conversely, any slope sequence of this form is realized by a p-divisible $\calO_\bfB$-module of height $h$ over $k$.
\qed
\end{enumerate}
\end{prop}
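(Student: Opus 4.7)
The proof essentially assembles the preparatory material in Sections 6.1--6.3. Here is how I would organize it.

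First, since $H$ is isoclinic of slope $\beta$, its endomorphism algebra is $E := \End^0(H) = \Mat_n(\Delta)$, where $\Delta$ is the central division $\Qp$-algebra with $\inv(\Delta) = \beta$ and $h = n\delta$ with $\delta = \deg\Delta$. The structure $\iota$ extends (after choosing a suitable model of $H$ in its isogeny class, which is permissible by Lemma~\ref{61}) to an embedding of $\Qp$-algebras $\bfB \hookrightarrow E$. The proposition then reduces to: (i) determining exactly for which $\beta$ such an embedding exists, and (ii) realizing every admissible slope by an actual $p$-divisible $\calO_\bfB$-module.

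For part (1), where $\bfB = \Mat_2(\bfF)$, I would invoke Morita equivalence: the datum of the action $\iota$ is equivalent to specifying an action of $\calO$ on the component $H_1 := e_{11} H$, which has height $h_1 = h/2$ and is itself isoclinic of slope $\beta$. Then $\End^0(H_1) = \Mat_{n_1}(\Delta)$ with $n_1 \delta = h_1$, and the existence of an embedding $\bfF \hookrightarrow \End^0(H_1)$ reduces to the classical condition $d \mid h_1$, i.e., $2d \mid h$, which is automatic from $h = 2dh'$. Since $\Delta$ is determined by its invariant $\beta = a/\delta$, and since $\delta \mid h_1 = dh'$, every slope of the form $\beta = a/(dh')$ with $0 \le a \le dh'$ is admissible, giving $\nu(H) = \{(a/(dh'))^h\}$. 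Realization is achieved by taking $H_1$ to be the standard isoclinic $p$-divisible group of height $dh'$ and slope $a/(dh')$ (which admits an $\calO$-action because $d \mid dh'$), and setting $H = H_1 \oplus H_1$.

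For part (2), where $\bfB$ is a division algebra, the embedding criterion is precisely the content of \cite[Theorem 2.7]{yu:embed}, which says $\bfB \hookrightarrow E = \Mat_n(\Delta)$ exists if and only if $[\bfB:\Qp] \mid nc$, where $c$ is the matrix size in $\Delta \otimes_{\Qp} \bfB^{\opp} = \Mat_c(\Delta')$. Lemma~\ref{62} translates this divisibility into the numerical congruence $a \equiv h' \pmod 2$, where $h = 2dh'$ and $\beta = a/h$. So on the one hand, any isoclinic $(H,\iota)$ of height $h$ has slope satisfying this parity condition. For the converse realization, given $a$ with $0 \le a \le h$ and $a \equiv h' \pmod 2$, I would take a $p$-divisible group $H_0$ of height $h$ and slope $a/h$, and then observe that the embedding criterion is satisfied, so $\bfB \hookrightarrow \End^0(H_0)$ exists; replacing $H_0$ by an isogenous group one extends the embedding to an embedding of $\calO_\bfB$ into $\End(H)$, giving the required structure $\iota$.

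The main obstacle is really encapsulated in the embedding theorem \cite[Theorem 2.7]{yu:embed} used in Lemma~\ref{62}; everything else is bookkeeping with slopes and heights. One minor subtlety to handle carefully is the passage from an embedding of $\bfB$ into $\End^0(H)$ at the level of $\Qp$-algebras to an action of the maximal order $\calO_\bfB$ on an isogenous $p$-divisible group, but this is standard: one chooses an $\calO_\bfB$-stable lattice in the rational Dieudonné module and the corresponding $p$-divisible group has the desired integral action.
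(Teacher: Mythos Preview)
Your proof is correct and follows essentially the same route as the paper: the proposition is really a summary of the running discussion in Section~\ref{sec:63}, and you have reproduced that discussion faithfully---Morita reduction to an $\calO$-action on $H_1$ in the split case with the classical divisibility criterion $d\mid h_1$, and the embedding theorem \cite[Theorem~2.7]{yu:embed} together with Lemma~\ref{62} in the division case, followed in both cases by the standard passage from a rational embedding $\bfB\hookrightarrow\End^0(H)$ to an integral action of $\calO_\bfB$ on an isogenous $p$-divisible group. There is no substantive difference in method.
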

    
\subsection{The general case}
\label{sec:64}
It is not hard to state the general case for possible slope 
sequences of $p$-divisible $\calO_\bfB$-modules 
based on the isoclinic case. One simply considers the decomposition
$H\sim H_1\times H_2\times \dots \times H_t$ into the isoclinic
components in the isogeny class.  

\begin{thm}\label{64}
  Let $h=2dh'$ with $h'\in \Z_{\ge 1}$, and let $(H,\iota)$ be a 
  $p$-divisible $\calO_\bfB$-module of height $h$ over $k$. 
\begin{enumerate}
\item If $\bfB$ is the matrix algebra, then
  \begin{equation}\label{eq:64}
 \nu(H)=\left \{\left (\frac{a_i}{dh_i'}\right )^{2dh'_i}\right
\}_{1\le i\le t},    
  \end{equation}
where $h_1'+\dots +h'_t=h'$ is a partition of the integer $h'$ and
$a_i$ is an integer with $0\le a_i \le dh'_i$. Moreover, after
combining the indices $i$ with same slope $a_i/dh_i'$ together and rearranging the indices, we may assume that 
\[ \frac{a_{i}}{dh_i'}< \frac{a_{i+1}}{dh_{i+1}'}, 
\quad  i=1,\dots, t-1. \]  
Conversely, any slope sequence of the form \eqref{eq:64} subject to the condition above arises from a $p$-divisible $\calO_\bfB$-module of height $h$ over $k$. 
\item  If $\bfB$ is the division algebra, then 
  \begin{equation}
    \label{eq:65}
    \nu(H)=\left \{\left (\frac{a_i}{2dh_i'}\right )^{2dh'_i}
 \right \}_{1\le i\le t},
  \end{equation}
where $h_1'+\dots +h'_t=h'$ is a partition of the integer $h'$ and 
$a_i$ is an integer with $0\le a_i \le 2dh'_i$ and $a_i\equiv
h_i'\pmod 2$. Similarly, we may rearrange the indices 
such that 
\[ \frac{a_{i}}{2dh_i'}< \frac{a_{i+1}}{2dh_{i+1}'},\quad i=1,\dots,
t-1. \] 
Conversely, any slope sequence of this form arises from a $p$-divisible  
$\calO_\bfB$-module of height $h$ over $k$. \qed
\end{enumerate}
\end{thm}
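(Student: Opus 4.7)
The plan is to reduce to the isoclinic case handled by Proposition~\ref{63} via the slope decomposition of the associated $F$-isocrystal. Let $N:=M\otimes_W B(k)$. It admits the canonical Dieudonn\'e--Manin slope decomposition $N=\bigoplus_{\beta\in \Q} N_{\beta}$, which is preserved by \emph{every} endomorphism of $N$. Extending $\iota$ to a $\Qp$-algebra homomorphism $\bfB\to\End(N)$, each $N_{\beta}$ is $\bfB$-stable. Since $\bfB$ is simple and $\iota(1)=1$, the action on each non-zero $N_\beta$ is faithful. Thus the isoclinic decomposition is an isogeny decomposition
\[
(H,\iota)\sim (H_1,\iota_1)\times\cdots\times (H_t,\iota_t)
\]
in the category of $p$-divisible $\calO_\bfB$-modules, where each $(H_i,\iota_i)$ is isoclinic of slope $\beta_i$ and height $h_i$. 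By Lemma~\ref{61}, we are free to choose any representative within the $\calO_\bfB$-isogeny class of each $H_i$, so that the $\iota_i$ are genuine integral $\calO_\bfB$-actions.

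Second, I would apply Proposition~\ref{63} to each $(H_i,\iota_i)$ separately. In both cases (matrix or division) this forces $2d\mid h_i$, so we write $h_i=2dh_i'$ with $h_i'\ge 1$; summing gives the partition $h_1'+\cdots+h_t'=h'$. The proposition then gives the explicit form $\beta_i=a_i/(dh_i')$ in the matrix case with $0\le a_i\le dh_i'$, and $\beta_i=a_i/(2dh_i')$ in the division case with $0\le a_i\le 2dh_i'$ and the parity condition $a_i\equiv h_i'\pmod 2$. This yields the formulas \eqref{eq:64} and \eqref{eq:65}. The normalization $\beta_i<\beta_{i+1}$ is achieved by first grouping together any isoclinic factors of equal slope into a single component (which corresponds to replacing the pair $(a_i/(dh_i'),h_i')$ and $(a_j/(dh_j'),h_j')$ with equal slopes by $(a_i+a_j, h_i'+h_j')$, consistent with both the divisibility and parity constraints), then re-indexing.

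For the converse, given any data $(h_i',a_i)_{1\le i\le t}$ satisfying the conditions in (1) or (2), apply the realization half of Proposition~\ref{63} to construct an isoclinic $p$-divisible $\calO_\bfB$-module $(H_i,\iota_i)$ of height $2dh_i'$ with slope as prescribed, and form the product $H:=H_1\times\cdots\times H_t$ endowed with the diagonal $\calO_\bfB$-action. Its Newton polygon is visibly the prescribed sequence.

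The main obstacle is the very first step: verifying that the slope decomposition is compatible with the $\calO_\bfB$-structure, so that we really do reduce to the isoclinic setting. Once the functoriality of the slope decomposition and the simplicity of $\bfB$ (giving faithfulness on each non-zero isoclinic piece) are invoked, the rest is routine bookkeeping combining Proposition~\ref{63} with Lemma~\ref{61} to switch freely within $\calO_\bfB$-linear isogeny classes.
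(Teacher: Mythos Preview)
Your argument is correct and follows the same route as the paper, which simply states that one decomposes $H$ in its isogeny class into isoclinic components $H\sim H_1\times\cdots\times H_t$ and applies Proposition~\ref{63} to each factor. You have in fact supplied more detail than the paper does (functoriality of the slope decomposition, simplicity of $\bfB$ forcing faithfulness on each nonzero piece, and the explicit converse construction).
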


\section{Slope sequences of quasi-polarized \dieu $\calO_\bfB$-modules}
\label{sec:07}
Keep the notation of Section 6.
Let $h=4dm$ with $m\in \Z_{\ge 1}$, and let $(H,\lambda, \iota)$ be a 
quasi-polarized 
$p$-divisible $\calO_\bfB$-module of height $h$ over $k$. Then the
slope sequence $\nu(H)$ of $H$ is given as Theorem~\ref{64} together 
with the symmetric condition: for all $0 \le i, j\le t$ 
with $i+j=t+1$, one has $h_i'=h_j'$ and 
\begin{equation}
  \label{eq:71}
  \begin{split}
    & \frac{a_{i}}{dh_i'}+ \frac{a_{j}}{dh_j'}=1 \quad \text{in the
      matrix algebra case,}\\
    &  \frac{a_{i}}{2dh_i'}+\frac{a_{j}}{2dh_j'}=1 \quad \text{in the
      division algebra case,}
  \end{split}
\end{equation}
where $h_1'+\dots +h_t'=2m$ is a partition of $2m$.

Note that the integer $t$ is even if and only if $H$ has no
supersingular component. For any symmetric slope sequence $\nu$ which
has the form in Theorem~\ref{64}, 
we write 
\begin{equation}
  \label{eq:nu}
  \nu=\nu_n \cup \nu_s,
\end{equation}
where $\nu_n$, the {\it non-supersingular part},
consists of all slopes in $\nu$ 
which are not $1/2$, and $\nu_s$, the {\it supersingular part},  
consists of all slopes $1/2$ in $\nu$. 

\begin{lemma}\label{71}
Let $h$ be a positive integer and $\beta$ a positive rational number
such that there exists an isoclinic $p$-divisible $\calO_\bfB$-module of
height $h$ and with slope $\beta$. Then there exists a separably
quasi-polarized $p$-divisible $\calO_{\bfB}$-module
$(H,\lambda,\iota)$ of height $2h$ such that $\nu(H)=\{\beta^h,
(1-\beta)^h\}$.    
\end{lemma}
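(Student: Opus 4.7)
The plan is to use a hyperbolic (or ``double'') construction. Given an isoclinic $p$-divisible $\calO_\bfB$-module $(H_0,\iota_0)$ of height $h$ and slope $\beta$ (which exists by hypothesis, and whose explicit shape is given by Proposition~\ref{63}), I would form the Cartier dual $H_0^t$ and equip it with an $\calO_\bfB$-structure $\iota_0^*$ defined by $\iota_0^*(b) := \iota_0(b^*)^t$ for $b \in \calO_\bfB$. Because $*$ is an anti-involution of $\calO_\bfB$ and Cartier dualization is contravariant, $\iota_0^*$ is a ring monomorphism. I then set $H := H_0 \oplus H_0^t$ with the componentwise $\calO_\bfB$-action; this has height $2h$.

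Next I would define $\lambda : H \to H^t$ as the off-diagonal ``swap''
\[
\lambda = \begin{pmatrix} 0 & -\mathrm{id} \\ \mathrm{id} & 0 \end{pmatrix} : H_0 \oplus H_0^t \longrightarrow H_0^t \oplus (H_0^t)^t \cong H_0^t \oplus H_0,
\]
where the last identification is biduality. By construction $\lambda$ is an isomorphism, hence separable; the antisymmetry $\lambda^t = -\lambda$ is built into the sign choice; and the compatibility $\lambda \circ \iota(b^*) = \iota(b)^t \circ \lambda$ is immediate from the definition of $\iota_0^*$. Since Cartier dualization sends slopes $\beta$ to $1-\beta$, one has $\nu(H_0^t) = \{(1-\beta)^h\}$, whence $\nu(H) = \{\beta^h,(1-\beta)^h\}$, as required.

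The verifications are essentially formal; at the covariant Dieudonn\'e module level the construction amounts to equipping $M_0 \oplus M_0^\vee$ with the standard hyperbolic pairing $\langle (x,\xi),(y,\eta)\rangle := \eta(x) - \xi(y)$, where $M_0^\vee = \Hom_W(M_0,W)$ carries the contragredient $\calO_\bfB$-action $(b\cdot \xi)(y) := \xi(b^* y)$ together with the Frobenius--Verschiebung coming from the Cartier dual. One checks directly that this pairing is alternating, perfect, $\calO_\bfB$-compatible in the sense of (\ref{eq:51}), and satisfies $\langle Fv,w\rangle = \langle v,Vw\rangle^\sigma$. The only modest obstacle is book-keeping the sign and Frobenius-twist conventions so that the Dieudonn\'e-level pairing genuinely corresponds to a quasi-polarization on the $p$-divisible side; but nothing conceptually new is required, which is why this construction is so cheap (and also why, as flagged in the introduction, it does not in general interact well with the determinant condition).
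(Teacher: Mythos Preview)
Your proposal is correct and is essentially identical to the paper's proof: the paper also forms $H=H_1\times H_1^t$ with the dual $\calO_\bfB$-action $\iota_2(a)=\iota_1(a^*)^t$ on the second factor, and takes the off-diagonal quasi-polarization $\lambda=(\lambda_1,-\lambda_1^t)$ with $\lambda_1$ an $\calO_\bfB$-linear isomorphism (your choice $\lambda_1=\mathrm{id}$ via biduality being the canonical one). The paper likewise remarks that this ``double construction'' is cheap but does not generally produce modules satisfying the determinant condition.
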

\begin{proof}
  Choose an isoclinic $p$-divisible $\calO_\bfB$-module 
  $(H_1,\iota_1)$ of
  height $h$ and with the slope $\beta$. Put $H_2:=H_1^t$, which is 
  an isoclinic $p$-divisible $\calO_\bfB$-module  of 
  height $h$ and with the slope $1-\beta$. The monomorphism
  $\iota_2:\calO_\bfB \to \End(H_2)$ is given by
  $\iota_2(a):=\iota_1(a^*)^t$ for $a\in \calO_\bfB$. 
  Put $H:=H_1\times H_2$, and then
  $H^t=H_1^t\times H_2^t$. Let $\lambda=
  \left[\begin{smallmatrix}
   & \lambda_2 \\ \lambda_1 & 
  \end{smallmatrix}\right]:H\to
  H^t$ be an $\calO_\bfB$-linear isogeny, where
$\lambda_1:H_1\to H_2^t$ and
$\lambda_2:H_2 \to H_1^t$ are $\calO_\bfB$-linear isogenies. Then
$\lambda^t=\left[\begin{smallmatrix}
   & \lambda_1^t \\ \lambda_2^t & 
  \end{smallmatrix}\right]$, so $\lambda^t=-\lambda$ if and
only if $\lambda_2=-\lambda_1^t$. Choose $\lambda_1$ an
$\calO_\bfB$-linear isomorphism and put
$\lambda=\left[\begin{smallmatrix}
   & -\lambda_1^t \\ \lambda_1 & 
  \end{smallmatrix}\right]$. Then $\lambda$ is a separably
$\calO_\bfB$-linear quasi-polarization. \qed
\end{proof}

Note that the construction in Lemma~\ref{71} 
works for any finite-dimensional 
simple $\Qp$-algebra $\bfB'$ with involution, not just for the
quaternion algebra treated in this paper. This method of 
construction appears quite often in dealing with
symmetric slope sequences with two slopes and we call 
this {\it the double construction}.

For the supersingular case, we have the following result. 
 
\begin{thm}\label{72}
  For any positive integer $m$, there exists a superspecial separably
  quasi-polarized \dieu $\calO_\bfB$-module $M$ of rank $4dm$ that
  satisfies the determinant condition (K).  
\end{thm}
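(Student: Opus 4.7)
The plan is to give an explicit construction. I would take $M := \Lambda \otimes_{\Zp} W$, where $\Lambda$ is a unimodular skew-Hermitian $\calO_\bfB$-module of rank $m$ — such $\Lambda$ exists (essentially by the discussion around Lemma 2.1) and is determined up to isomorphism by Lemma 4.1 / Corollary 4.3. The task is then to define Frobenius $F$ and Verschiebung $V$ on $M$ satisfying: (i) $FV = VF = p$; (ii) $FM = VM$ (superspeciality); (iii) $\dim_k(M/VM)^j = em$ for all $j$ (the determinant condition, via Lemma 5.2); and (iv) the inherited pairing satisfies $\<Fx,y\> = \<x,Vy\>^\sigma$.

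In the case $\bfB = \Mat_2(\bfF)$, Morita equivalence reduces the construction to a free $\calO \otimes W$-module $M_1$ of rank $2m$. On a $W^i$-basis $\{e_1^i,\dots,e_{2m}^i\}$ of each component $M_1^i$, I would define $F$ by a pairwise swap
\[
F(e_{2l-1}^i) = \pi^{\lceil e/2 \rceil}\, e_{2l}^{i+1}, \qquad F(e_{2l}^i) = \pi^{\lfloor e/2 \rfloor}\, e_{2l-1}^{i+1},
\]
read $\sigma$-semi-linearly, so that $V$ is forced by $VF = p$. A direct computation then shows $FM_1^i = VM_1^{i+2}$ inside $M_1^{i+1}$ (both equal $W^{i+1}\pi^{\lceil e/2\rceil}e_{2l}^{i+1} \oplus W^{i+1}\pi^{\lfloor e/2\rfloor}e_{2l-1}^{i+1}$), and the codimension of $VM_1^{i+1}$ in $M_1^i$ is $m(\lceil e/2\rceil + \lfloor e/2\rfloor) = em$. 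When $\bfB$ is the division algebra, I would instead work directly with $M = \calO_\bfB^m \otimes W$, whose components $M^j$ have the $W^i$-basis $\{1_j^{(l)},\iota(\Pi)1_{j+f}^{(l)}\}_{l=1}^m$. The recipe for $F$ then splits by the parity of $e$: for $e$ even, take $F$ diagonal with multiplier $\pi^{e/2}$ on each basis vector; for $e$ odd, take the swap
\[
F(1_j^{(l)}) = \pi^{(e-1)/2}\,\iota(\Pi)\, 1_{j+f+1}^{(l)}, \qquad F(\iota(\Pi)1_{j+f}^{(l)}) = \pi^{(e+1)/2}\, 1_{j+1}^{(l)},
\]
with unit signs absorbed. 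In both subcases one checks $F\circ\iota(\Pi) = \iota(\Pi)\circ F$, $FM = VM$, and $\dim_k(M/VM)^j = em$ by the analogous bookkeeping.

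Finally, transport the standard skew-Hermitian pairing of $\Lambda$ to $M$: the compatibility $\<Fx,y\> = \<x,Vy\>^\sigma$ reduces to a recurrence of the form $c_{i+1} = \sigma(u_0 c_i)$ for the unit $c_i \in W^{i,\times}$ recording the restriction of the pairing to the $i$-th component, where $u_0 = p/\pi^e$; this recurrence has a solution by a Lang-type surjectivity argument on units in $W$. The hardest part will be the division-algebra case with $e$ odd: the swap formula is forced by the simultaneous constraints of superspeciality and the commutation $F\iota(\Pi) = \iota(\Pi)F$ (which shifts the index by $f$), and the four conditions fit together only because of the identity $(e-1)/2 + (e+1)/2 = e$. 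Verifying the four conditions in each subcase is then a delicate but mechanical computation with powers of $\pi$ and Frobenius on unit factors.
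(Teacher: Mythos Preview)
Your strategy is the same as the paper's: reduce to an explicit construction of $(F,V,\Pi,\<\,,\,\>)$ on a free module and verify the four axioms by hand. The paper reduces to $m=1$ at the outset and then takes $M^{\oplus m}$, which is harmless. In the matrix case your Frobenius (multipliers $\pi^{\lceil e/2\rceil},\pi^{\lfloor e/2\rfloor}$) gives the minimal Lie type $(\lfloor e/2\rfloor,\lceil e/2\rceil)$, whereas the paper uses $FX_i=Y_{i+1}$, $FY_i=pX_{i+1}$, which has Lie type $(0,e)$; both are fine. In the division case you split only by the parity of $e$, while the paper splits first by the parity of $f$ and then (for $f$ odd) by the parity of $e$, writing down the pairing explicitly in three of the four subcases and invoking a Hensel/Lang argument only in the $f$ odd, $e$ odd case (the system (8.13)). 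Your uniform Lang argument is more economical; the paper's explicit formulas make the verification more transparent.

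Two points need tightening. First, in the $e$ odd division case your swap formula does not commute with $\Pi$ on the nose: with $\Pi X_j=Y_{j+f}$, $\Pi Y_j=-\pi X_{j+f}$ one finds $F\Pi X_j=\pi^{(e+1)/2}X_{j+f+1}$ but $\Pi FX_j=-\pi^{(e+1)/2}X_{j+f+1}$. The fix is exactly your ``unit signs absorbed'': take $FX_j=\pi^{c}Y_{j+1}$, $FY_j=-\pi^{c+1}X_{j+1}$ with $c=(e-1)/2$, and then $F\Pi=\Pi F$ and $F^2=-\pi^{e}$ (a unit times $p$). Second, the recurrence for the pairing in the division case is not indexed by $i\in\Z/f\Z$ but by $j\in\Z/2f\Z$, and it is coupled: writing $\<X_j,Y_{j+f}\>_\bfF=u_j$, $\<Y_j,X_{j+f}\>_\bfF=v_j$, skew-symmetry forces $u_{j+f}=-v_j$, and the $F$-compatibility gives
\[
v_{j+1}=-(p/\pi^{e})\,u_j^{\sigma},\qquad u_{j+1}=-(p/\pi^{e})\,v_j^{\sigma}
\]
(in the $e$ odd case; drop the signs for $e$ even). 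One must check that these three relations are mutually consistent (they are) and then close up over $\Z/2f\Z$ by Lang. This is precisely the content of the paper's system (8.13); your single-index recurrence $c_{i+1}=\sigma(u_0 c_i)$ is too coarse to capture it.
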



The proof of this theorem is given by construction. Namely, we
directly write down such a \dieu module. The construction is lengthy
and we omit the
details of the 
construction\footnote{The reader can find the detailed construction in
  older versions of arXiv:1306.1400.}.  
We conclude the main result of this section.

\begin{thm}\label{73}
  Let $h=4dm$ with any $m\in \Z_{\ge 1}$. Let $\nu$ be a slope sequence
  of the form in Theorem~\ref{64} 
  that satisfies the symmetric condition (\ref{eq:71}), where 
  $h_1'+\dots +h_t'=2m$ is any partition of $2m$. Then there exists a
  separably quasi-polarized \dieu $\calO_\bfB$-module $M$ of rank $h$ 
  and with $\nu(M)=\nu$.   
\end{thm}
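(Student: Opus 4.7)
The plan is to exploit the decomposition $\nu = \nu_n \cup \nu_s$ and build $M$ as an orthogonal direct sum $M = M_n \oplus M_s$ of separably quasi-polarized \dieu $\calO_\bfB$-modules with $\nu(M_n) = \nu_n$ and $\nu(M_s) = \nu_s$. The orthogonal sum of two separable $\calO_\bfB$-linear quasi-polarizations is again separable and $\calO_\bfB$-linear, and slopes add under direct sum, so this reduces the problem to handling the non-supersingular and supersingular parts independently.

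For the non-supersingular part $\nu_n$, the symmetric condition \eqref{eq:71} groups the isoclinic constituents into pairs indexed by $1 \le i < (t+1)/2$ and $j := t+1-i$ with $h_i' = h_j'$ and slopes summing to $1$. For each such $i$, Proposition~\ref{63} (whose congruence constraint $a_i \equiv h_i' \pmod 2$ in the division case is already encoded in the hypothesis on $\nu$) yields an isoclinic $p$-divisible $\calO_\bfB$-module of height $2dh_i'$ and slope $\beta_i := a_i/(dh_i')$ or $a_i/(2dh_i')$. Applying Lemma~\ref{71} (the double construction) to this module then produces a separably quasi-polarized $p$-divisible $\calO_\bfB$-module of height $4dh_i'$ with slope sequence $\{\beta_i^{2dh_i'}, (1-\beta_i)^{2dh_i'}\}$. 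Taking the orthogonal direct sum of the associated \dieu modules over the $\lfloor t/2 \rfloor$ pairs gives $M_n$ with $\nu(M_n) = \nu_n$.

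For the supersingular part $\nu_s$, if no slope of $\nu$ equals $1/2$ we simply set $M_s = 0$; otherwise $t$ is odd and the middle index $i_0 := (t+1)/2$ contributes slope $1/2$ with multiplicity $2dh_{i_0}'$. Here the key observation is that $h_{i_0}'$ must be even: the symmetric pairing forces $\sum_{j \ne i_0} h_j' = 2 \sum_{j < i_0} h_j'$, which together with $\sum_j h_j' = 2m$ gives $h_{i_0}' \in 2\Z$. Writing $h_{i_0}' = 2 m_s$, the sought supersingular component has rank $2d h_{i_0}' = 4d m_s$, and Theorem~\ref{72} supplies a superspecial separably quasi-polarized \dieu $\calO_\bfB$-module $M_s$ of exactly that rank (we do not even need the extra determinant condition provided by Theorem~\ref{72}, only the separable quasi-polarization). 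Assembling $M = M_n \oplus M_s$ yields a separably quasi-polarized \dieu $\calO_\bfB$-module of rank $h = 4dm$ with $\nu(M) = \nu$, proving the theorem.

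The genuine difficulty of Theorem~\ref{73} is concentrated in the input Theorem~\ref{72}, whose construction is deferred to the next section; the remaining ingredients (the parity argument for $h_{i_0}'$, the isoclinic existence from Proposition~\ref{63}, the double construction of Lemma~\ref{71}, and the formal gluing via orthogonal direct sum) are comparatively routine. The main obstacle in the present argument is therefore just the bookkeeping of heights and multiplicities needed to make sure that Proposition~\ref{63} and Theorem~\ref{72} can be invoked with compatible parameters.
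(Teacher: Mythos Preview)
Your proof is correct and follows essentially the same strategy as the paper: decompose $\nu=\nu_n\cup\nu_s$, build the non-supersingular piece by pairing symmetric slopes and applying the double construction of Lemma~\ref{71}, supply the supersingular piece via Theorem~\ref{72}, and take the orthogonal direct sum. Your explicit parity argument showing $h'_{i_0}$ is even is a detail the paper leaves implicit when it writes the supersingular length as $4dm_s$, so your write-up is in fact slightly more careful on that point.
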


\begin{proof}
  Write $\nu=\nu_n+\nu_s$ into non-supersingular part and
  supersingular part, say of length $4dm_n$ and $4dm_s$,
  respectively. As each isoclinic component $\{\beta_i^{m_i}\}$ of
  $\nu_n$ can
  be realized by a $p$-divisible $\calO_\bfB$-module
  (Theorem~\ref{64}), by 
Lemma~\ref{71} there is a separably quasi-polarized $p$-divisible
$\calO_\bfB$-module $(H_n,\lambda_n,\iota_n)$ of height $4dm_n$ such that
$\nu(H_n)=\nu_n$. 
On the other hand, by Theorem~\ref{72}, there
is a superspecial separably
  quasi-polarized $p$-divisible $\calO_\bfB$-module
  $(H_s,\lambda_s,\iota_s)$ of height $4dm_s$ that 
  satisfies the determinant condition. The product
  $(H_n,\lambda_n,\iota_n)\times (H_s,\lambda_s,\iota_s)$ satisfies
  the desired properties. \qed 
\end{proof}

\begin{remark}\label{74} \ 

  (1) In Lemma~\ref{71} one may 
  choose $H$ such that $H$ is a {\it minimal} $p$-divisible
  group in the sense of Oort, that is, the endomorphism ring 
  $\End(H)$ of $H$
  is a maximal $\Zp$-order in the semi-simple $\Qp$-algebra
  $\End^0(H)$. This follows from the construction of the minimal
  isogeny; see Section 4 (particularly Proposition 4.8) in 
  \cite{yu:endo}. 
  Therefore, the $p$-divisible $\calO_\bfB$-module in
  Theorem~\ref{73} can be chosen to be minimal. 
  
  (2) We shall see that when $\bfB$ is the division algebra, 
      the determinant condition (K) will rule
      out some possibilities of the slope sequences that are realized
      by separably quasi-polarized \dieu $\calO_\bfB$-module in
      Theorem~\ref{73}. That is, not all symmetric slope sequences in
      Theorem~\ref{73} occurring as those of separably quasi-polarized
      \dieu $\calO_\bfB$-modules that satisfy {\it the determinant
      condition}. We refer to Section~\ref{sec:12} for more details in
      the case of rank $4d$. 

  (3) When $\bfB$ is a division algebra and $d$ is odd, 
  the double construction as in Lemma~\ref{71}
  provides an alternative way to produce a separably quasi-polarized
  superspecial \dieu $\calO_\bfB$-module $M$. However, we checked that
  such a \dieu module $M$ rarely satisfies the determinant condition. 

  (4) We refer to \cite{yu:mass_hb} for a classification of
      superspecial quasi-polarized \dieu $O_F\otimes \Zp$-modules of
      HB type.       
\end{remark}

\begin{cor}\label{75}
  There is an ordinary separably quasi-polarized $p$-divisible 
  $\calO_\bfB$-module of height $4dm$ 
  if and only if one of the following holds:
  \begin{enumerate}
  \item $\bfB$ is the matrix algebra;
  \item $\bfB$ is the division algebra and $m$ is even. 
  \end{enumerate}
\end{cor}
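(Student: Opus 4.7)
The plan is to read the corollary directly off Theorems~\ref{64} and \ref{73} after reformulating ``ordinary of height $4dm$'' as the slope condition $\nu = \{0^{2dm},\, 1^{2dm}\}$. This $\nu$ trivially satisfies the symmetric condition (\ref{eq:71}) with $t=2$, $h_1' = h_2' = m$, so the entire question reduces to whether the range/parity constraints of Theorem~\ref{64} can be met with these data.

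For the \emph{if} direction, I would apply Theorem~\ref{73} with $t = 2$, partition $h_1' = h_2' = m$ of $2m$, and slope exponents $a_1 = 0$, $a_2 = d h_1'$ in the matrix case (resp.\ $a_2 = 2 d h_1'$ in the division case). In the matrix case only the range condition $0 \le a_i \le d h_i'$ appears in Theorem~\ref{64}(1), and it holds for every $m$. In the division case Theorem~\ref{64}(2) additionally demands $a_i \equiv h_i' \pmod 2$; with $a_1 = 0$ and $a_2 = 2dm$ both even, this becomes $m \equiv 0 \pmod 2$, which is exactly hypothesis (2). Theorem~\ref{73} then produces a separably quasi-polarized \dieu $\calO_\bfB$-module with slope sequence $\nu$, and the associated $p$-divisible group is the object sought.

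For the \emph{only if} direction, I would argue contrapositively in the division case. Suppose $(H,\lambda,\iota)$ is an ordinary separably quasi-polarized $p$-divisible $\calO_\bfB$-module of height $4dm$. Forgetting $\lambda$ and applying Theorem~\ref{64}(2) to $(H,\iota)$, the isoclinic decomposition provides a partition $h_1' + \cdots + h_t' = 2m$ and slopes $a_i/(2d h_i') \in \{0,1\}$. After combining components of equal slope as in Theorem~\ref{64}, we may take $t=2$ with $a_1 = 0$, $a_2 = 2 d h_2'$; the common multiplicity $2 d h_1' = 2 d h_2' = 2 d m$ forces $h_1' = h_2' = m$. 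The parity constraint $a_1 \equiv h_1' \pmod 2$ then reads $m \equiv 0 \pmod 2$, contradicting the assumption that $m$ is odd. No real obstacle arises: the entire argument is combinatorial on top of Theorems~\ref{64} and \ref{73}. The one point requiring care is that the \emph{only if} direction must use only the $\calO_\bfB$-module structure, so that the parity obstruction comes from $(H,\iota)$ itself and is not mistakenly attributed to the quasi-polarization.
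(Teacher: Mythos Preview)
Your proposal is correct and takes essentially the same approach as the paper: both reduce to checking whether the ordinary slope sequence $\{0^{2dm},1^{2dm}\}$ meets the parity constraint of Theorem~\ref{64}(2), which amounts to $0\equiv m\pmod 2$, and then invoke Theorem~\ref{73} for existence. The paper adds only the preliminary remark that any ordinary $p$-divisible $\calO_\bfB$-module already admits a separable $\calO_\bfB$-linear quasi-polarization, so that the question is purely about $(H,\iota)$; otherwise the arguments coincide.
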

\begin{proof} Any ordinary $p$-divisible $\calO_\bfB$-module always 
  admits an $\calO_\bfB$-linear separable quasi-polarization. 

  If $\bfB$ is the matrix algebra, then the ordinary slope sequence  
  appears  
  in Theorem~\ref{64} (or in Proposition~\ref{63}). Therefore, 
  by Theorem~\ref{73} there is an ordinary
  separably quasi-polarized $p$-divisible $\calO_\bfB$-module of
  height $4dm$. 

  Suppose $\bfB$ is the division
  algebra. Then 
  a slope sequence with 2 slopes $\{ (a_1/2dh'_1)^{2dh'_1},  
  (a_2/2dh'_2))^{2dh'_2}\}$ which is of the form (\ref{eq:65}) can be the  
  ordinary slope sequence 
  if and only if $h_1'=h_2'=m$ and
  $m\equiv 2dm \pmod 2$ (taking $a_2=2 dh'_2$ and $h_2'=m$). 
  That is, $m$ is even. 
  \qed 
\end{proof}

\begin{remark}\label{7.55}
For a smooth PEL-type moduli space $\calM'$, the ordinary locus of
$\calM'\otimes \ol{k(v)}$ is non-empty if and only if $E_v=\Qp$, where
$E$ is the reflex field and $v$ is a finite place of $E$ lying over $p$; see Wedhorn~\cite{wedhorn:ordinary}. 
Corollary~\ref{75} shows that the latter
condition is not sufficient for the non-emptiness of the ordinary
locus in some ramified cases.    
\end{remark}

We give a few examples of possible slope sequences of quasi-polarized
\dieu $\calO_\bfB$-modules. 

\begin{cor}[$m=1$]\label{76}
  Let $(H,\lambda,\iota)$ be a quasi-polarized $p$-divisible
  $\calO_\bfB$-module of height $4d$. 
  \begin{enumerate}
  \item If $\bfB$ is the matrix algebra, then 
\[  \nu(H)=\left \{\left (\frac{a}{d}\right )^{2d}, \left
 (\frac{d-a}{d}\right )^{2d}\right \},  \]
where $a$ can be any integer with $0\le a< d/2$, or 
\[\nu(H)=\left \{\left (\frac{1}{2}\right )^{4d}\right \}. \]    
  \item If $\bfB$ is the division algebra, then 
\[  \nu(H)=\left \{\left (\frac{a}{2d}\right )^{2d}, \left
 (\frac{2d-a}{2d}\right )^{2d}\right \},  \]
where $a$ can be any integer with $0\le a< d$ 
with $a\equiv 1 \pmod 2$, or 
\[ \nu(H)=\left \{\left (\frac{1}{2}\right )^{4d}\right \}. 
\quad \text{\qed} \]    
\end{enumerate}
\end{cor}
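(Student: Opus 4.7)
The plan is to deduce this directly from Theorem~\ref{73} (combined with the general enumeration in Theorem~\ref{64} and the symmetric condition~(\ref{eq:71})) by specializing to $m=1$. First I would enumerate the possible partitions $h'_1+\dots+h'_t=2m=2$: there are exactly two, namely $t=1$ with $h'_1=2$, and $t=2$ with $h'_1=h'_2=1$.

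In the case $t=1$, the symmetric condition~(\ref{eq:71}) with $i=j=1$ forces $2\beta_1=1$, so the unique slope is $1/2$; the multiplicity is $2dh'_1=4d$. In both the matrix and division cases this produces the supersingular slope sequence $\{(1/2)^{4d}\}$ (in the division case one checks that the required parity $a_1\equiv h'_1\pmod 2$, i.e.\ $a_1=2d$ even, is automatic).

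In the case $t=2$ with $h'_1=h'_2=1$, the symmetric condition gives $a_1+a_2=d$ in the matrix case and $a_1+a_2=2d$ in the division case, each slope appearing with multiplicity $2dh'_i=2d$. After reordering so that the first slope is strictly smaller than the second, the matrix case yields $\nu=\{(a/d)^{2d},((d-a)/d)^{2d}\}$ with $0\le a<d/2$; for the division case the congruence $a_i\equiv h'_i=1\pmod 2$ from Theorem~\ref{64}(2) forces both $a_1$ and $a_2$ to be odd, and strict inequality gives $1\le a<d$, equivalently $0\le a<d$ with $a\equiv 1\pmod 2$.

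Finally, for the converse direction (realizability), I would cite Theorem~\ref{73}: every slope sequence arising from the enumeration above is actually realized by a separably quasi-polarized \dieu $\calO_\bfB$-module of rank $4d$. Since the argument is purely combinatorial bookkeeping given the preceding theorems, there is no serious obstacle; the only point worth checking carefully is the parity constraint in the division algebra case, which is where the absence of the value $a=0$ (and more generally the exclusion of even $a$) comes from, and which explains why the matrix and division algebra cases look slightly different.
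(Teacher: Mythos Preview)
Your proposal is correct and matches the paper's approach: the paper states Corollary~\ref{76} without proof, treating it as an immediate specialization of Theorem~\ref{73} together with Theorem~\ref{64} and the symmetric condition~(\ref{eq:71}) to $m=1$. Your case analysis on the partitions of $2m=2$ and the parity check in the division case are exactly the bookkeeping the reader is expected to supply.
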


\begin{cor}[$m=2$]\label{77}
  Let $(H,\lambda,\iota)$ be a quasi-polarized $p$-divisible
  $\calO_\bfB$-module of height $8d$. 
  \begin{enumerate}
  \item If $\bfB$ is the matrix algebra, then we have the following
    possibilities of $\nu(H)$:
  \begin{enumerate}
    \item one slope case:
  \[\nu(H)=\left \{\left (\frac{1}{2}\right )^{8d}\right \}. \] 
  \item two slopes case:
  \[  \nu(H)=\left \{\left (\frac{a}{2d}\right )^{4d}, \left
  (\frac{2d-a}{2d}\right )^{4d}\right \}, 
  \quad 0\le a <d,\ a\in \Z.  \]
  \item three slopes case:
  \[  \nu(H)=\left \{\left (\frac{a}{d}\right )^{2d}, 
  \left (\frac{1}{2}\right )^{4d},
  \left (\frac{d-a}{d}\right )^{2d}\right \}, 
  \quad 0\le a <\frac{d}{2},\ a\in \Z.  \]
  \item four slopes case:
  \[  \nu(H)= \left \{
  \left (\frac{a}{d}\right )^{2d}, 
  \left (\frac{b}{d}\right )^{2d}, 
  \left (\frac{d-b}{d}\right )^{2d}, 
  \left (\frac{d-a}{d}\right )^{2d}\right \}, 
  \]
  where $a$ and $b$ can be any integers with $0\le a<b  < d/2$. 
\end{enumerate}

\item If $\bfB$ is the division algebra, then we have the following
    possibilities of $\nu(H)$:
    \begin{enumerate}
     \item one slope case:
  \[\nu(H)=\left \{\left (\frac{1}{2}\right )^{8d}\right \}. \] 
  \item two slopes case:
  \[  \nu(H)=\left \{\left (\frac{a}{4d}\right )^{4d}, \left
  (\frac{4d-a}{4d}\right )^{4d}\right \}, 
  \quad 0\le a <2d,\ a\in \Z, \ a\equiv 0 \pmod 2.  \]
  \item three slopes case:
  \[  \nu(H)=\left \{\left (\frac{a}{2d}\right )^{2d}, 
  \left (\frac{1}{2}\right )^{4d},
  \left (\frac{2d-a}{2d}\right )^{2d}\right \},  \]
  where $a$ can be any integer with $0\le a < d$ and $a\equiv 1 \pmod 2$.
  \item four slopes case:
  \[  \nu(H)= \left \{
  \left (\frac{a}{2d}\right )^{2d}, 
  \left (\frac{b}{2d}\right )^{2d}, 
  \left (\frac{2d-b}{2d}\right )^{2d}, 
  \left (\frac{2d-a}{2d}\right )^{2d}\right \}, 
  \]
  where $a$ and $b$ can be any integers with $0\le a<b  < d$ and $a, b
  \equiv 1 \pmod 2$. \qed 
    \end{enumerate}
\end{enumerate} 
\end{cor}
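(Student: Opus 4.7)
\proof
The plan is to derive Corollary~\ref{77} as a direct enumeration from Theorem~\ref{73} applied with $m=2$. By Theorem~\ref{73}, the slope sequences that can occur are exactly the symmetric sequences of the form in Theorem~\ref{64} with $h'_1+\dots+h'_t=2m=4$. The symmetry condition~(\ref{eq:71}) forces $h'_i=h'_{t+1-i}$, so the possible partitions (up to this pairing) are $(4)$, $(2,2)$, $(1,2,1)$, and $(1,1,1,1)$, corresponding respectively to $t=1,2,3,4$. I will handle each case separately, first for the matrix algebra and then for the division algebra.

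For $\bfB$ the matrix algebra: in the case $t=1$, the symmetric condition forces slope $1/2$ of multiplicity $2dh'_1=8d$, giving case~(a). In the case $t=2$ with $h'_1=h'_2=2$, the symmetry says $a_1/2d+a_2/2d=1$, and the strict increase of the slopes (after combining) gives $0\le a_1<d$, producing case~(b). In the case $t=3$ with $(h'_1,h'_2,h'_3)=(1,2,1)$, the middle piece has slope $1/2$ with multiplicity $4d$, and the outer slopes pair as $a_1/d+a_3/d=1$ with $0\le a_1<d/2$, giving case~(c). Finally, for $t=4$ all $h'_i=1$ and the symmetry gives $a_1+a_4=a_2+a_3=d$, while strict monotonicity forces $0\le a_1<a_2<d/2$, giving case~(d).

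For $\bfB$ the division algebra the enumeration is identical in shape, but each $a_i$ must satisfy the parity condition $a_i\equiv h'_i\pmod 2$ from Theorem~\ref{64}~(2). For $t=1$ the unique symmetric slope is again $1/2$ (with $a_1=4d$, which trivially satisfies $a_1\equiv h'_1=4\pmod 2$). For $t=2$ with $h'_i=2$ the parity constraint is $a_i\equiv 0\pmod 2$, giving case~(b). For $t=3$ the outer parts have $h'_1=h'_3=1$, forcing $a_1,a_3$ odd; the middle slope $1/2$ uses $a_2=2d\equiv 0=h'_2\pmod 2$ automatically, yielding case~(c). For $t=4$ all $h'_i=1$ forces $a_1,a_2$ odd with $0\le a_1<a_2<d$, yielding case~(d).

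This is essentially routine bookkeeping and I do not anticipate a genuine obstacle; the only thing to double-check is that the normalizations of slopes are consistent with those in Theorem~\ref{64}, so that the matrix-algebra slopes appear over denominators $dh'_i$ and the division-algebra slopes over denominators $2dh'_i$, and that the symmetric condition~(\ref{eq:71}) is applied with the correct denominators in each case. Granting these, every listed sequence in Corollary~\ref{77} arises from Theorem~\ref{73}, and every symmetric sequence produced by Theorem~\ref{73} appears in the list, which completes the proof. \qed
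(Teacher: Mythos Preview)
Your proposal is correct and matches the paper's intent: Corollary~\ref{77} is stated without proof as an immediate enumeration from Theorem~\ref{73} (together with the necessity paragraph at the start of Section~\ref{sec:07}), and your case-by-case analysis of the symmetric partitions of $2m=4$ is exactly that enumeration. One tiny refinement: Theorem~\ref{73} gives only the existence direction, while the constraint that $\nu(H)$ must have this form comes from Theorem~\ref{64} plus the symmetry condition~(\ref{eq:71}); you use both but attribute everything to Theorem~\ref{73}.
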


\section{Isogeny classes of quasi-polarized $p$-divisible
  $\calO_\bfB$-modules}
\label{sec:09}

In this section we would like to classify  
isogeny classes of quasi-polarized $p$-divisible
$\calO_\bfB$-modules of height $4dm$ over an \ac field $k$ of \ch $p>0$.
Consider rational quasi-polarized \dieu $\bfB$-modules $N$ with
$B(k)$-$\rank=4dm$, or quasi-polarized $\bfB$-linear $F$-isocrystals. 
When all slopes of $N$ are between $0$ and $1$, there is a \dieu
$\calO_\bfB$-lattice $M$ in $N$, so $N=M\otimes_{W(k)} B(k)$ for some
quasi-polarized \dieu $\calO_\bfB$-module. Thus, classifying isogeny classes in question may be reduced to classifying rational quasi-polarized \dieu $\bfB$-modules $N$.
Let $\nu$ be a symmetric slope sequence as in Theorem~\ref{73}. Let
$I(\nu)$ denote the set of 
isogeny classes of quasi-polarized $p$-divisible 
$\calO_\bfB$-modules $(H,\lambda,\iota)$ of height $4dm$ such that
$\nu(H)=\nu$.
By \dieu theory, the set $I(\nu)$ is isomorphic to the set of isomorphism  classes of rational quasi-polarized \dieu $\bfB$-modules $N$ of $B(k)$-$\rank=4dm$ and with slope sequence $\nu$.


Rapoport and Richartz \cite{rapoport-richartz} have
classified the isomorphism classes of isocrystals with $G$-structure with a fixed Newton vector by the Galois cohomology $H^1(\Q_p, J)$ for a certain reductive group $J$ over $\Qp$ when $G$ is connected. For the present case one needs to work a bit more, because
our group $G$ is not connected.
The description by Galois cohomology helps us to
understand the classification problem. 
However, in order to obtain more explicit results, 
one also needs to compute the set $H^1(\Qp, J)$.


We shall work along with \dieu modules and translate the 
classification problem into the theory of (skew-)Hermitian 
forms over local fields; see Theorem~\ref{92}.

Write $\nu=\nu_n + \nu_s$ into the non-supersingular and supersingular
parts. 

\begin{lemma}\label{91}
  We have $I(\nu)=I(\nu_n)\times I(\nu_s)$, and $I(\nu_n)$ consists of
  one isogeny class. 
\end{lemma}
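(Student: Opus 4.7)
\proof[Proof sketch of Lemma~\ref{91}]
The plan is to reduce to the decomposition at the level of $F$-isocrystals, observe that the polarization and the $\calO_\bfB$-action preserve the slope decomposition, and then appeal to Lemmas \ref{61} and \ref{71} for the uniqueness of the non-supersingular part.

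First I would pass to the rational \dieu module $N = M \otimes_W B(k)$ and use the Dieudonn\'e--Manin slope decomposition $N = N_n \oplus N_s$, where $N_n$ collects the isotypic components of slope $\neq 1/2$ and $N_s$ is the isoclinic piece of slope $1/2$. Since this decomposition is functorial in the category of $F$-isocrystals, the $\calO_\bfB$-action preserves it. For the polarization, note that the dual isocrystal $N^t$ has slope $\beta$ precisely in the piece dual to the slope-$(1-\beta)$ component of $N$; hence the pairing $\langle \,,\,\rangle$ vanishes between slope pieces whose slopes do not sum to $1$. Because the set of slopes in $\nu_n$ is disjoint from $\{1/2\}$, this forces $\langle N_n, N_s\rangle = 0$, so the polarization splits as $\lambda = \lambda_n \oplus \lambda_s$ with each $\lambda_\ast$ again $\calO_\bfB$-linear and non-degenerate. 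Lifting back to $p$-divisible groups yields a decomposition $(H,\lambda,\iota) \sim (H_n,\lambda_n,\iota_n) \times (H_s,\lambda_s,\iota_s)$, which gives a map $I(\nu) \to I(\nu_n) \times I(\nu_s)$; the inverse is the direct-product construction, and the bijection follows.

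For the second statement, write the non-supersingular part of $\nu$ as
\[
\nu_n = \bigsqcup_{i=1}^{s} \left\{\beta_i^{h_i},\ (1-\beta_i)^{h_i}\right\}, \qquad \beta_i < 1/2,
\]
and correspondingly $N_n = \bigoplus_i (N_{\beta_i} \oplus N_{1-\beta_i})$ where each $N_{\beta_i}$ is isoclinic of slope $\beta_i$. The polarization $\lambda_n$ must restrict, by the slope-pairing observation above, to non-degenerate pairings $N_{\beta_i} \times N_{1-\beta_i} \to B(k)$, with the different $i$'s uncoupled. Thus it suffices to treat one pair $(\beta,1-\beta)$ at a time. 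By Lemma~\ref{61}, the isogeny class of $(H_{\beta_i},\iota_{\beta_i})$ as a $p$-divisible $\calO_\bfB$-module is determined by $\beta_i$ and $h_i$ alone; I would then fix a representative and identify $(H_{1-\beta_i},\iota_{1-\beta_i})$ with its dual $(H_{\beta_i}^t,\iota_{\beta_i}^t)$ via an $\calO_\bfB$-linear quasi-isogeny (Lemma \ref{61} again). The condition $\lambda^t = -\lambda$ forces the polarization to take the form $(\lambda_1, -\lambda_1^t)$ as in the double construction of Lemma~\ref{71}, for some $\calO_\bfB$-linear quasi-isomorphism $\lambda_1 \colon H_{\beta_i} \to H_{1-\beta_i}^t$.

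The main (and essentially only) technical point is to check that any two such choices of $\lambda_1$ give isomorphic quasi-polarized objects. For this I would use that two $\calO_\bfB$-linear quasi-isogenies $\lambda_1, \lambda_1'$ between the same isoclinic $\calO_\bfB$-pair differ by an element of $\Aut_{\calO_\bfB}(H_{\beta_i})\otimes \Qp$, which is the unit group of a central simple $\Qp$-algebra; conjugating the double-construction data by such an automorphism (acting diagonally on $H_{\beta_i} \times H_{\beta_i}^t$) identifies $(\lambda_1, -\lambda_1^t)$ with $(\lambda_1', -(\lambda_1')^t)$, exactly as in the proof of Lemma~\ref{61}. Combining across all $i$ yields $|I(\nu_n)| = 1$, completing the proof.
\qed
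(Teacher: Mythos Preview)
Your proof is correct and follows essentially the same route as the paper: split off the slope-$1/2$ part using orthogonality of slope pieces that do not sum to $1$, then handle each non-supersingular pair $\{\beta,1-\beta\}$ via Lemma~\ref{61}. The paper is terser on the last step---it simply writes the slope-$(1-\beta)$ component as $N_{1,\beta}^t$ and concludes; your final paragraph makes explicit why this identification is legitimate and why any two choices of $\lambda_1$ yield isomorphic quasi-polarized objects, which is exactly the content hidden in the paper's notation.
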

\begin{proof}
Suppose $N_1$ and $N_2$ are rational quasi-polarized \dieu
$\bfB$-modules with $\nu(N_1)=\nu(N_2)=\nu$. Write 
\[ N_1=N_1^{ns}\oplus N_1^{ss}, \quad N_2=N_2^{ns}\oplus N_2^{ss} \]
for the decomposition of $N_1$ and $N_2$ into the non-supersingular
component and supersingular component, 
respectively. Clearly, one has $N_1\simeq N_2$ if and only if 
$N_1^{ns}\simeq N_2^{ns}$ and $N_1^{ss}\simeq N_2^{ss}$. This shows
the first part. 

For the second part, we decompose the rational \dieu modules 
\[ N_1^{ns}=\oplus_{\beta<1/2} (N_{1,\beta}\oplus N_{1,\beta}^t),
\quad N_2^{ns}=\oplus_{\beta<1/2} (N_{2,\beta}\oplus
N_{2,\beta}^t) \]
into isotypic components. By Lemma~\ref{61}, we have
an isomorphism $N_{1,\beta}\simeq N_{2,\beta}$ as rational \dieu
$\bfB$-modules for each $\beta<1/2$. It follows that $N_1^{ns}\simeq
N_2^{ns}$ as rational quasi-polarized \dieu $\bfB$-modules. \qed
\end{proof}

By Lemma~\ref{91}, one reduces the classification to classifying the
isomorphism classes of supersingular rational quasi-polarized \dieu
$\bfB$-modules of $B(k)$-rank $4dm_s$, where $4dm_s$ is the length of
the supersingular part $\nu_s$.

Let $N$ be a supersingular rational quasi-polarized 
\dieu $\bfB$-module of $B(k)$-rank $4dm_s$. Put 
\[ \wt N:=\{x\in N | \sfF^2 x=px\}. \] 
This is a $B(\F_{p^2})$-vector space of dimension $4dm_s$ such that
\begin{itemize}
\item $W(k)\otimes_{B(\F_{p^2})} \wt N=N$,
\item $\sfF=\sfV $ on $\wt N$, and 
\item the action of $\bfB$ leaves $\wt N$ invariant.
\end{itemize}

Let $\bfD$ be the quaternion division algebra over $\Qp$. We can write 
$\bfD=B(\F_{p^2})[\sfF]$ with relations $\sfF^2=p$ and $\sfF
a=\sigma(a) \sfF$ for 
all $a\in B(\F_{p^2})$. Then $\wt N$ naturally becomes a left
$\bfD$-module of $\Q_p$-rank $8dm_s$. Define the involution $*_\bfD$
on $\bfD$ by 
\[ (a+b\sfF)^{*_\bfD}:=\sigma(a)+b\sfF. \] 
This is an orthogonal
involution as the fixed subspace is $3$-dimensional.  
As the actions of
 $\bfB$ and $\bfD$ commute, $\wt N$ becomes a left $\bfB \otimes_{\Qp}
 \bfD$-module. Write  
\[ \bfB \otimes_{\Qp} \bfD=\bfB \otimes_\bfF (\bfF \otimes_{\Qp}
\bfD)\simeq 
 \Mat_2 (\bfB'), \]
where $\bfB'$ is a quaternion algebra over $\bfF$, which is determined
by the relation
\[ \inv (\bfB')=1/2 [\bfF:\Qp]-\inv (\bfB). \]

The alternating pairing 
\[ \<\, , \>:\wt N\times \wt N\to B(\F_{p^2}) \]
has values in $B(\F_{p^2})$ satisfying $\<\sfF x,y\>=\<x,\sfV y\>^\sigma$. 
Define 
\[ \psi(x,y):= \Tr_{B(\F_{p^2})/\Qp} \<x,\sfF y\>. \]
For all $x, y\in \wt N$, $a\in
\bfD$, and $b\in \bfB$, one has
\begin{itemize}
\item [(i)] $\psi(y,x)=-\psi(x,y)$, 
\item [(ii)] $\psi(ax,y)=\psi(x, a^{*_{\bfD}} y)$, and 
\item [(iii)] $\psi(bx,y)=\psi(x, b^* y)$.  
\end{itemize}
That is,  $\wt N$ is a $\Q_p$-valued skew-Hermitian 
$\bfB \otimes_{\Qp} \bfD$-module with respect to the product
involution $*\otimes {*_\bfD}$. We check (i)--(iii). For (i), 
\[ \psi(y,x)=\Tr \<y,\sfF x\>=\Tr \<\sfF y,x\>^\sigma=-\Tr  \<x,\sfF
y\>=-\psi(x,y). \] 
For (ii), one has for $a\in B(\F_{p^2})\subset \bfD$ 
\[ \psi(ax,y)=\Tr \<ax,\sfF y\>=\Tr \<x, \sfF  a^\sigma y\>=\psi(x,a^\sigma
y),\]
\[ \psi(\sfF x,y)=\Tr \<\sfF x,\sfF y\>=\Tr p\<x,y\>^\sigma=\Tr
\<x,\sfF^2y\>=\psi(x,\sfF y). \]
For (iii), 
\[ \psi(bx,y)=\Tr \<bx,\sfF y\>=\Tr \<x, \sfF b^* y\>=\psi(x,b^* y). \]
Note that if we replace $\wt N$ by $\wt N':=\{x\in
N|\sfF ^2x+px=0\}$, then $\sfF=-\sfV $ on $\wt N'$ and the pairing
$\psi'(x,y):=\Tr\<x,\sfF y\>$ becomes Hermitian instead of
skew-Hermitian. Moreover,
the adjoint involution $*'_\bfD$ on $\bfD$ is the canonical involution. 

Since the canonical involution $*$ is symplectic and $*_\bfD$ is
orthogonal, the product involution $*\otimes *_\bfD$ is
symplectic. Therefore, we can choose an $\bfF$-algebra isomorphism 
$\bfB \otimes_{\Qp} \bfD\simeq \Mat_2 (\bfB')$ such that the induced
involution is the map $(b_{ij})\mapsto (b_{ji}^{*'})$, where $*'$ is the
canonical involution on $\bfB'$. 

Let $e_{11}$ and $e_{22}$ be the standard idempotents of $\Mat_2
(\bfB')$ and let $\wt N=\wt N_1\oplus \wt N_2$ be the corresponding
decomposition. We have proven

\begin{thm}\label{92}
  The association $(N,\<\, ,\>)\mapsto (\wt N_1,\psi)$ 
  gives rise to a bijection between
  the set $I(\nu_s)$ and the set of isomorphism classes of
  $\Qp$-valued skew-Hermitian free $\bfB'$-modules of $\bfB'$-rank
  $m_s$, where $\bfB'$ is the quaternion algebra (unique up to
  isomorphism) over $\bfF$ with 
  $\inv(\bfB')=1/2 [\bfF:\Qp]-\inv (\bfB)$. \qed 
\end{thm}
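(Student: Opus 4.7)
\medbreak\noindent\textbf{Proof proposal for Theorem~\ref{92}.}
The construction of the map has been carried out in the paragraphs preceding the theorem, and properties (i)--(iii) already show that $(\wt N_1,\psi)$ is a skew-Hermitian $\bfB'$-module (via the Morita reduction from $\Mat_2(\bfB')$ to $\bfB'$ compatible with the symplectic-times-orthogonal involution, which the text has identified with the involution $(b_{ij})\mapsto (b_{ji}^{*'})$). The first thing I would verify is the rank: since $\dim_{\Q_p}N=8dm_s$ (as $N$ is a rational \dieu $\bfB$-module of $B(k)$-rank $4dm_s$) and $\wt N$ is the slope-$1/2$ subspace cut out by $F^2=p$, one has $\dim_{\Q_p}\wt N=8dm_s$, so $\dim_{\Q_p}\wt N_1=4dm_s$ and hence $\wt N_1$ is free of $\bfB'$-rank $m_s$.

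For surjectivity I would construct an explicit inverse. Given a $\Q_p$-valued skew-Hermitian $\bfB'$-module $(\wt N_1,\psi_1)$ of rank $m_s$, apply Morita to form $\wt N:=\wt N_1\oplus \wt N_2$ as a skew-Hermitian $\Mat_2(\bfB')=\bfB\otimes_{\Qp}\bfD$-module with respect to $*\otimes *_\bfD$. The embedding $B(\F_{p^2})\embed\bfD$ equips $\wt N$ with a $B(\F_{p^2})$-structure, and setting $N:=W(k)\otimes_{B(\F_{p^2})}\wt N$ with Frobenius defined by $F(w\otimes x):=\sigma(w)\otimes Fx$ (where $F\in\bfD$ acts on $\wt N$) produces a supersingular rational \dieu module of the correct slope sequence $\nu_s$. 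The $\bfB$-action extends by scalars, and the $\Q_p$-valued form $\psi$ on $\wt N$ obtained from $\psi_1\oplus\psi_1^{\rm op}$ under Morita determines a unique $B(\F_{p^2})$-valued form $\<\,,\,\>_0$ on $\wt N$ via non-degeneracy of $\Tr_{B(\F_{p^2})/\Qp}$; composing with $F^{-1}$ in the second variable and $W(k)$-sesquilinearly extending gives the quasi-polarization $\<\,,\,\>$ on $N$. The compatibilities required (alternation, $\<Fx,y\>=\<x,Vy\>^\sigma$, and $\calO_\bfB$-linearity) fall out of properties (i)--(iii) applied to $\psi$ together with the relation $F^2=p$.

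For injectivity, suppose $(\wt N_1,\psi_1)\simeq(\wt N_1',\psi_1')$ as skew-Hermitian $\bfB'$-modules. Morita equivalence upgrades this to an isomorphism $\wt N\simeq\wt N'$ of skew-Hermitian $\bfB\otimes_{\Qp}\bfD$-modules for the $\psi$-forms. Since $F\in\bfD$ acts on $\wt N$ and the full $\bfD$-action is preserved, $F|_{\wt N}$ is preserved; extending $W(k)$-linearly yields an isomorphism $N\simeq N'$ of rational \dieu $\bfB$-modules, and the reconstruction of $\<\,,\,\>$ from $\psi$ in the previous paragraph shows that the quasi-polarizations match. Thus the two maps are mutually inverse at the level of isomorphism classes.

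The main technical obstacle will be bookkeeping of the involutions through Morita equivalence: specifically, verifying that the canonical involution $*'$ on $\bfB'$ appearing on $\wt N_1$ via the identification $\bfB\otimes_{\Qp}\bfD\simeq\Mat_2(\bfB')$ really is the canonical one, and that the resulting form on $\wt N_1$ is genuinely skew-Hermitian (not Hermitian) so that the target set is as claimed. This reduces to verifying that $*\otimes *_\bfD$ is a symplectic involution of $\Mat_2(\bfB')$, which the text has indicated by a parity count of fixed-subspace dimensions; once that is pinned down, the rest of the argument is formal.
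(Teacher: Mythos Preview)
Your proposal is correct and follows the paper's approach: the paper treats the construction in the paragraphs preceding the theorem as the entire proof (concluding with ``We have proven''), and your explicit verification of bijectivity via an inverse construction and Morita transport is precisely the standard argument the paper leaves implicit. One minor slip to correct: you write $\dim_{\Qp} N = 8dm_s$, but $N$ is a $B(k)$-vector space and hence infinite-dimensional over $\Qp$; the correct chain is $\dim_{B(k)} N = 4dm_s$, so $\dim_{B(\F_{p^2})} \wt N = 4dm_s$ via $W(k)\otimes_{B(\F_{p^2})}\wt N = N$, and therefore $\dim_{\Qp}\wt N = 8dm_s$ as you then use.
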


\begin{cor}\label{93} \ 
\begin{enumerate}
  \item If $\bfB'$ is the matrix algebra, then there is a natural
    bijection between the set $I(\nu_s)$ and the set of isomorphism 
    classes of
    non-degenerate symmetric spaces over $\bfF$ of dimension $2m_s$.
  \item If $\bfB'$ is the quaternion division algebra, 
   then there is a natural
    bijection between the set $I(\nu_s)$ and the set of isomorphism
    classes of
    non-degenerate skew-Hermitian $\bfB'$-modules  of $\bfB'$-rank $m_s$.
\end{enumerate}
\end{cor}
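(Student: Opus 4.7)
The plan is to derive both statements from Theorem~\ref{92} by stripping off the $\bfB'$-module structure, in case (1) through Morita equivalence and in case (2) through a reduced-trace descent.

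For part (1), with $\bfB' \simeq \Mat_2(\bfF)$, I would mimic the reduction carried out in Section~\ref{sec:44} leading to Lemma~\ref{41}. Starting from a $\Qp$-valued skew-Hermitian $\bfB'$-module $(\wt N_1, \psi)$ of $\bfB'$-rank $m_s$, I fix the Weyl element $C = \left(\begin{smallmatrix} 0 & 1 \\ -1 & 0 \end{smallmatrix}\right)$, which satisfies $C^{*'} = -C$ and $C^{-1} a^{*'} C = a^t$, and set $\varphi(x,y) := \psi(x, Cy)$. Then $\varphi$ is symmetric and $\bfF$-invariant. Using the standard idempotents of $\Mat_2(\bfF)$ one decomposes $\wt N_1 = P_1 \oplus P_2$, notes that $\varphi$ preserves this decomposition, and restricts to $P_1$; the resulting space has $\bfF$-dimension $2m_s$. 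Finally, fixing a generator $\delta$ of $\calD^{-1}_{\bfF/\Qp}$, one descends the $\Qp$-bilinear form $\varphi|_{P_1}$ to the unique symmetric $\bfF$-bilinear form $\varphi_{\bfF}: P_1 \times P_1 \to \bfF$ with $\varphi = \Tr_{\bfF/\Qp}(\delta\, \varphi_{\bfF})$. The inverse construction lifts a symmetric $\bfF$-form on a $2m_s$-dimensional space back to a skew-Hermitian $\Mat_2(\bfF)$-module of rank $m_s$ via Morita. Combined with Theorem~\ref{92}, this yields (1).

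For part (2), with $\bfB'$ the quaternion division algebra, I would argue exactly as in Section~\ref{sec:22}. To each $\Qp$-valued skew-Hermitian form $\psi$ on $\wt N_1$ one attaches the unique $\bfB'$-valued form $\psi_{\bfB'}: \wt N_1 \times \wt N_1 \to \bfB'$ characterized by $\psi = \Trd_{\bfB'/\Qp} \circ \psi_{\bfB'}$; existence and uniqueness follow from the fact that $(a,b) \mapsto \Trd_{\bfB'/\Qp}(a b^{*'})$ is a perfect $\Qp$-bilinear pairing on $\bfB'$, so that $\Qp$-linear functionals on the free right $\bfB'$-module $\wt N_1$ are represented by elements of $\wt N_1$. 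Chasing the adjoint and alternating properties of $\psi$ and using $\Trd(\alpha^{*'}) = \Trd(\alpha)$ shows that $\psi_{\bfB'}$ is sesquilinear, i.e.\ $\psi_{\bfB'}(a_1 x, a_2 y) = a_1 \psi_{\bfB'}(x,y) a_2^{*'}$, and skew-Hermitian, i.e.\ $\psi_{\bfB'}(y,x) = -\psi_{\bfB'}(x,y)^{*'}$; non-degeneracy of $\psi_{\bfB'}$ is inherited from that of $\psi$. The inverse map is simply $\psi_{\bfB'} \mapsto \Trd \circ \psi_{\bfB'}$, and together with Theorem~\ref{92} this gives (2).

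The corollary is essentially formal given Theorem~\ref{92}; no step should present a genuine obstacle. The main point needing careful bookkeeping is verifying that the adjoint and alternating axioms of $\psi$ descend to the correct sesquilinearity and skew-Hermitian symmetry of $\psi_{\bfB'}$ in (2), which is a routine computation with the canonical involution of $\bfB'$ and the reduced trace.
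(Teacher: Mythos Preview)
Your proposal is correct and follows the same approach as the paper. The paper's own proof is a single line — ``For the matrix algebra case, we do the Morita equivalence again as before. The corollary follows from Theorem~\ref{92}'' — so for part (1) you are doing exactly what the paper does (the Weyl-element twist and idempotent decomposition of Section~\ref{sec:44}), and for part (2) you are spelling out the reduced-trace descent from $\Qp$-valued to $\bfB'$-valued skew-Hermitian forms that the paper leaves implicit (it is needed later when Tsukamoto's invariants are applied in Theorem~\ref{96}), but this is a routine unpacking rather than a different route.
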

\begin{proof}
  For the matrix algebra case, we do the Morita equivalence again as
  before. The corollary follows from Theorem~\ref{92}. \qed  
\end{proof}

In the following we use the theory of quadratic forms 
and the skew-Hermitian quaternionic forms over local fields; see O'Meara
\cite[Chapter IV]{omeara:book} and Tsukamoto~\cite{tsukamoto:1961}.



Consider non-degenerate symmetric spaces $V$ of dimension $n_0$ over a
non-Archimedean local field $k_0$ of \ch different from $2$. Recall
the discriminant $\delta V\in k_0^\times/k_0^{\times 2}$ of $V$ 
is defined by 
\[ \delta V:= (-1)^{\lfloor n_0/2 \rfloor} \det V. \] 
Note that we have $\delta V=[1]$ when $V$ is the hyperbolic plane.  
Let $S\, V\in \{\pm 1\}$ denote
the Hasse symbol of $V$ (see \cite[p.~167]{omeara:book}). 
Denote by $Q(n_0)$
the set of isomorphism classes of non-degenerate symmetric spaces $V$
of dimension $n_0$ over $k_0$. 

\begin{thm}\label{94} 
  \begin{enumerate}
  \item For any $n_0\ge 1$, the map $(\delta, S): Q(n_0)\to
    k_0^\times/k_0^{\times 2}\times \{\pm 1\}$ is injective. This map
    is also surjective for any $n_0\ge 3$. 
  \item For $n_0=1$, the map $\delta: Q(1) \simeq
    k_0^\times/k_0^{\times 2}$ is a bijection.
  \item For $n_0=2$, the image of the map $(\delta,S)$ is
\[ \big \{([a],\pm1); [a]\neq [1]\big  \}\cup \left \{\left([1], \left
  (\frac{-1,-1}{k_0}\right) \right) \right \}, \]
  where $(-1,-1/k_0)$ is a Hilbert symbol.
  \end{enumerate}
\end{thm}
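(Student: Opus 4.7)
The plan is to deduce the theorem from the standard classification of non-degenerate quadratic forms over a non-Archimedean local field $k_0$ of residue characteristic different from $2$, see O'Meara~\cite{omeara:book}: two such forms of the same dimension are isometric if and only if they have the same discriminant and Hasse symbol. This gives the injectivity of $(\delta,S)$ in part (1) at once. For (2), every $1$-dimensional non-degenerate space has the shape $\langle a\rangle$ with $a\in k_0^\times$, and $\langle a\rangle\simeq \langle b\rangle$ iff $ab\in k_0^{\times 2}$; since $\delta\langle a\rangle=a$, the map $\delta$ is already a bijection onto $k_0^\times/k_0^{\times 2}$, and the Hasse symbol carries no extra information in dimension one.

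For the surjectivity in (1) when $n_0\ge 3$, given any target pair $([d],\epsilon)$ the strategy is to start from the diagonal form $V_0=\langle 1,1,\ldots,1,d\rangle$, which has discriminant $[d]$ and some definite Hasse symbol. If this Hasse symbol disagrees with $\epsilon$, replace two of the leading entries $1,1$ by $u,u^{-1}$ for a chosen unit $u\in k_0^\times$. This operation preserves the discriminant modulo squares and changes the Hasse symbol by a Hilbert symbol of the form $(u,\ast)$, where $\ast$ is a suitable product of the remaining diagonal entries. Since $n_0\ge 3$, there are enough entries left to arrange $\ast\notin k_0^{\times 2}$; then the character $u\mapsto(u,\ast)$ is nontrivial on $k_0^\times$, so a $u$ flipping the Hasse symbol exists.

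For part (3), I write $V=\langle b,c\rangle$. Since $[n_0/2]=1$, one has $\delta V=-bc$. If $\delta V=[1]$ then $bc\equiv -1$ modulo squares, so $V\simeq \langle b,-b\rangle\simeq H:=\langle 1,-1\rangle$ regardless of $b$; the Hasse symbol is then forced to be $SH=(1,1)(1,-1)(-1,-1)=(-1,-1)_{k_0}$, which is exactly the value in the statement. If instead $\delta V=[a]$ with $a\notin k_0^{\times 2}$, substituting $c\equiv -ab^{-1}$ modulo squares and using $(x,x)=(x,-1)$ and $(b,-b)=1$ yields the clean formula $SV=(-a,-1)\cdot(b,a)$; since the character $b\mapsto (b,a)$ on $k_0^\times$ is nontrivial precisely when $a\notin k_0^{\times 2}$, both values $\pm 1$ of $SV$ are realized as $b$ varies over $k_0^\times$. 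Combined with the injectivity from (1), this identifies the image exactly as claimed.

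The main obstacle is the careful Hilbert-symbol bookkeeping, especially in part (3), and the construction of the compensating unit $u$ in the surjectivity step for (1). Both rely on the nontriviality of the quadratic character $b\mapsto (b,a)$ for non-square $a$, which is a standard consequence of the fact that $N_{k_0(\sqrt a)/k_0}k_0(\sqrt a)^\times$ has index $2$ in $k_0^\times$; apart from this, the argument is essentially an invocation of O'Meara's classification together with a small diagonalization calculation.
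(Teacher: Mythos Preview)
The paper does not give an argument at all; its proof is simply a reference to O'Meara, Theorems 63:20, 63:22 and 63:23. Your write-up is therefore more detailed than what the paper provides, and most of it is fine: the injectivity in (1) and all of (2) are immediate from the classification, and your Hilbert-symbol computation in (3) is correct (I checked that for $c\equiv -ab$ one indeed gets $SV=(-a,-1)(b,a)$, and the nontriviality of $b\mapsto(b,a)$ for nonsquare $a$ finishes it).

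There is, however, a genuine gap in your surjectivity argument for (1). Replacing two of the diagonal $1$'s by $u,u^{-1}$ does \emph{not} change the Hasse symbol by $(u,\ast)$ with $\ast$ depending on the remaining entries. Because $u\cdot u^{-1}\equiv 1$ modulo squares, every cross term $(u,a_k)(u^{-1},a_k)$ with a remaining entry $a_k$ cancels, and the only surviving contribution is $(u,u^{-1})=(u,-1)$. Thus your operation changes $SV$ by $(u,-1)$ alone, and when $-1\in k_0^{\times 2}$ (e.g.\ $k_0=\Q_p$ with $p\equiv 1\pmod 4$) no choice of $u$ flips the Hasse symbol. So the construction as stated does not establish surjectivity.

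An easy repair is to bootstrap from your own part (3). For $n_0=3$, write $V=\langle c\rangle\perp\langle b_1,b_2\rangle$; choose $c$ so that the required binary discriminant $-b_1b_2$ is a nonsquare (possible since $|k_0^\times/k_0^{\times 2}|\ge 4$), and then use (3) to pick $\langle b_1,b_2\rangle$ with either Hasse value. A short computation shows $SV$ then ranges over $\{\pm1\}$. For $n_0\ge 4$, take $V=V_3\perp\langle 1,\dots,1\rangle$ with $V_3$ ternary; since the orthogonal $1$'s contribute trivially to both invariants, this reduces to the $n_0=3$ case. Alternatively, you may simply cite O'Meara as the paper does.
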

\begin{proof}
  See Theorems 63:20, 63:22 and 63:23 of
  \cite[p.~170-171]{omeara:book}. \qed
\end{proof}

\begin{cor}\label{95}
  If $k_0$ is non-dyadic, then one has
\[ |Q(1)|=4, \quad |Q(2)|=7, \quad \text{and}\quad |Q(n_0)|=8, \quad
\forall\, n_0\ge 3.\ \text{\qed} \]
\end{cor}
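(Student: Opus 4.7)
The plan is to apply Theorem~\ref{94} combined with the standard computation of $k_0^\times/k_0^{\times 2}$ for a non-dyadic local field. Recall that when $k_0$ is non-dyadic (so its residue characteristic is odd), the group $k_0^\times/k_0^{\times 2}$ has order $4$: representatives are $\{1, u, \pi_0, u\pi_0\}$, where $\pi_0$ is a uniformizer of $k_0$ and $u$ is any unit whose residue class is a non-square in the residue field. This is the only input beyond Theorem~\ref{94}.

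For $n_0=1$, Theorem~\ref{94}(2) says $\delta$ is a bijection $Q(1)\simeq k_0^\times/k_0^{\times 2}$, so $|Q(1)|=4$. For $n_0\ge 3$, Theorem~\ref{94}(1) gives that $(\delta,S)$ is both injective and surjective onto $k_0^\times/k_0^{\times 2}\times\{\pm 1\}$, a set of cardinality $4\cdot 2=8$, which yields $|Q(n_0)|=8$.

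The only case requiring a small count is $n_0=2$. By Theorem~\ref{94}(3) the image of $(\delta,S)$ is
\[
\{([a],\varepsilon) : [a]\ne [1],\ \varepsilon\in\{\pm1\}\}\ \cup\ \Bigl\{\bigl([1],\,\tfrac{-1,-1}{k_0}\bigr)\Bigr\}.
\]
Since $|k_0^\times/k_0^{\times 2}|=4$, there are $3$ nontrivial classes $[a]$, contributing $3\cdot 2=6$ pairs, and the single extra pair corresponding to $[a]=[1]$ with its forced Hasse invariant. Combined with the injectivity of $(\delta,S)$ from Theorem~\ref{94}(1), this gives $|Q(2)|=6+1=7$.

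There is no substantive obstacle here: the corollary is a direct arithmetic consequence of Theorem~\ref{94} once one knows $|k_0^\times/k_0^{\times 2}|=4$ in the non-dyadic case. The only thing to be careful about is the $n_0=2$ case, where one must not double-count the class $([1],\bigl(\tfrac{-1,-1}{k_0}\bigr))$ as belonging to both pieces of the image description.
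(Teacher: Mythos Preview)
Your proof is correct and follows exactly the approach the paper intends: the corollary is stated without proof immediately after Theorem~\ref{94}, so it is meant to be read as the direct count you give, using only $|k_0^\times/k_0^{\times 2}|=4$ in the non-dyadic case together with the three parts of Theorem~\ref{94}.
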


Let $B_0$ be the quaternion division algebra over $k_0$ together with
the canonical involution $*$. Denote 
by $SQ(n_0)$
the set of isomorphism classes of skew-Hermitian $B_0$-modules
$(V,\psi)$ of rank $n_0$ for $n_0\ge 1$. The discriminant $\delta V\in
k_0^\times/k_0^{\times 2}$ is defined by 
\[ \delta V:=(-1)^{\lfloor n_0/2 \rfloor}
\Nr \left (\psi(e_i,e_j)\right ), \]
where $\{e_i\}$ is a basis for $V$ over $B_0$ and
$\Nr:\Mat_{n_0}(B_0)\to k_0$ is the reduced norm. 

\begin{thm}\label{96}\
  \begin{enumerate}
  \item For $n_0\ge 2$, the map $\delta:SQ(n_0)\to
    k_0^\times/k_0^{\times 2}$ is a bijection. 
  \item For $n_0=1$, the map $\delta:SQ(n_0)\to
    k_0^\times/k_0^{\times 2}$ is injective and its  image is equal to
    $\{[a]; [a]\neq [1]\}$.
  \end{enumerate}
\end{thm}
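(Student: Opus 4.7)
The plan is to deduce both parts from the classical classification of skew-Hermitian forms over the quaternion division algebra $B_0$ over a non-dyadic local field, developed by Tsukamoto~\cite{tsukamoto:1961}, by the same mechanism that reduces Theorem~\ref{94} to O'Meara's local theory of quadratic forms.

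First I would establish diagonalization: every non-degenerate skew-Hermitian $B_0$-module $(V,\psi)$ admits, by induction on $n_0$, an orthogonal $B_0$-basis $\{e_1,\dots,e_{n_0}\}$ with $\psi(e_i,e_j)=\alpha_i\delta_{ij}$ and each $\alpha_i\in B_0^\times$ pure (i.e.\ $\alpha_i^*=-\alpha_i$); the inductive step produces an anisotropic vector from non-degeneracy. With this, $\delta V=(-1)^{[n_0/2]}\prod_i\Nr(\alpha_i)$, and a change of generator $e\mapsto be$ in a rank one summand replaces $\alpha$ by $b\alpha b^*$. Since $bb^*=\Nr(b)$ is central in $B_0$, one computes $\Nr(b\alpha b^*)=\Nr(b)^2\Nr(\alpha)$, so $[\Nr(\alpha)]\in k_0^\times/k_0^{\times 2}$ is a well-defined isomorphism invariant. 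This already yields the injectivity claim in both parts.

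For the image in part (2), I would identify the set of values $\Nr(\alpha)$ on non-zero pure $\alpha\in B_0$ with the set represented by the ternary quadratic form $q_0:=\Nr|_{B_0^{\mathrm{pure}}}$. Using the orthogonal decomposition $\Nr\cong\<1\>\perp q_0$ on $B_0$ and the fact that $\Nr$ is the unique $4$-dimensional anisotropic quadratic form over $k_0$, a direct discriminant-and-Hasse-invariant computation applied to $\<-c\>\perp q_0$ via Theorem~\ref{94} shows that $q_0$ represents every class of $k_0^\times/k_0^{\times 2}$ except a single distinguished one, yielding the image description of part (2). For part (1) with $n_0\ge 2$, surjectivity of $\delta$ is then automatic: the missing rank one class is compensated by varying a second diagonal entry, since the image of $q_0$ (three of the four square classes) generates $k_0^\times/k_0^{\times 2}$ as a group. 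Injectivity for $n_0\ge 2$ is a Witt-type cancellation: given two rank $n_0$ forms with equal discriminant, one shows inductively that each represents the first diagonal entry of the other, splits off that rank one piece, and applies induction on the orthogonal complement.

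The main obstacle, which is the heart of Tsukamoto's work, is the representation step underlying this cancellation: one must show that a rank $\ge 2$ skew-Hermitian $B_0$-form represents every pure element whose norm lies in the appropriate coset of $k_0^{\times 2}$. This ultimately reduces to a Witt-index calculation on an associated $8$-dimensional $k_0$-quadratic form coming from the map $(b_1,b_2)\mapsto b_1\alpha_1 b_1^*+b_2\alpha_2 b_2^*$, which is handled by combining Theorem~\ref{94} with the explicit structure of the reduced norm form on $B_0$.
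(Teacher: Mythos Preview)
The paper does not prove this statement at all; its entire proof is the single sentence ``This is Theorem~3 in \cite{tsukamoto:1961}.'' Your sketch is therefore an attempt to reconstruct Tsukamoto's argument, and the overall architecture---diagonalization, the ternary form $q_0=\Nr|_{B_0^{\rm pure}}$ governing the rank-one image, and Witt cancellation plus a representation lemma in higher rank---is indeed how that argument goes.

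There is, however, a genuine gap. The computation $\Nr(b\alpha b^*)=\Nr(b)^2\Nr(\alpha)$ shows only that $\delta$ is \emph{well-defined} on isomorphism classes: isomorphic forms have equal discriminant. Your sentence ``This already yields the injectivity claim in both parts'' asserts the \emph{converse}, and that does not follow. For $n_0=1$, injectivity is the statement that two nonzero pure elements $\alpha,\alpha'\in B_0$ with $\Nr(\alpha)\equiv\Nr(\alpha')\pmod{k_0^{\times 2}}$ lie in the same orbit under $\alpha\mapsto b\alpha b^*$. After scaling to $\Nr(\alpha)=\Nr(\alpha')$, Skolem--Noether supplies $g$ with $g\alpha g^{-1}=\alpha'$, but then $g\alpha g^*=\Nr(g)\,\alpha'$, and one must still adjust $g$ within the coset $g\cdot k_0(\alpha)^\times$ to force $\Nr(g)=1$; this is a genuine norm condition from the quadratic subfield $k_0(\alpha)$ and needs local class field theory or an equivalent computation. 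Since your inductive Witt-cancellation scheme for $n_0\ge 2$ bottoms out at $n_0=1$, this gap propagates through the whole injectivity argument. The missing step is not deep and is carried out in Tsukamoto, but it is real work that your outline skips.
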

\begin{proof}
  This is Theorem 3 of \cite{tsukamoto:1961}. \qed 
\end{proof}

\begin{cor}\label{97}
  If $k_0$ is non-dyadic, then one has
\[ |SQ(1)|=3, \quad \text{and}\quad |SQ(n_0)|=4, \quad
\forall\, n_0\ge 2. \ \text{\qed} \]
\end{cor}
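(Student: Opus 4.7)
The plan is to deduce the corollary directly from Theorem~\ref{96} by computing the cardinality of $k_0^\times/k_0^{\times 2}$ under the assumption that $k_0$ is non-dyadic. Theorem~\ref{96} reduces the counting problem for $SQ(n_0)$ to a counting problem for the square-class group of $k_0$ (minus the trivial class in the rank-one case), so essentially all that remains is the arithmetic input.

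First I would recall the standard structural decomposition $k_0^\times \simeq \Z \times \mathcal O_{k_0}^\times$ coming from a choice of uniformizer, and then $\mathcal O_{k_0}^\times \simeq \mu \times U^{(1)}$, where $\mu$ is the group of roots of unity of order prime to the residue characteristic and $U^{(1)} = 1+\pi \mathcal O_{k_0}$. Since $k_0$ is non-dyadic, squaring is a bijection on $U^{(1)}$ by Hensel's lemma (the polynomial $T^2 - u$ for $u\in U^{(1)}$ has a simple root mod $\pi$), while on the cyclic group $\mu$ the squaring map has kernel $\{\pm 1\}$ because $|\mu|$ is even. Putting these together yields $|k_0^\times/k_0^{\times 2}| = 2 \cdot 2 = 4$.

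To finish, I would apply Theorem~\ref{96}. For $n_0 \ge 2$, the bijectivity of $\delta$ gives $|SQ(n_0)| = |k_0^\times/k_0^{\times 2}| = 4$. For $n_0 = 1$, injectivity together with the image description $\{[a] : [a] \neq [1]\}$ gives $|SQ(1)| = 4 - 1 = 3$.

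The main obstacle is essentially notational rather than mathematical: one just needs the standard fact $|k_0^\times/k_0^{\times 2}|=4$ for non-dyadic local fields, which is entirely elementary via Hensel's lemma, so the corollary is a quick translation from Theorem~\ref{96}. No delicate analysis of Hasse invariants or of the quaternion structure is required at this stage, since Tsukamoto's theorem has already absorbed that work.
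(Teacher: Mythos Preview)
Your proposal is correct and matches the paper's approach: the corollary is stated without proof, as an immediate consequence of Theorem~\ref{96} together with the standard fact that $|k_0^\times/k_0^{\times 2}|=4$ for a non-dyadic local field. Your explicit verification of this square-class count via the unit filtration and Hensel's lemma is exactly the intended (and only) missing ingredient.
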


\begin{prop}\label{98}  Let the notation be as above and assume that $p\neq 2$. 
  \begin{enumerate}
  \item If $\bfB'$ is the matrix algebra, then we have 
    \begin{equation}
      \label{eq:91}
      |I(\nu_s)|=
      \begin{cases}
        7 & \text{if $m_s=1$,}\\
        8 & \text{if $m_s\ge 2$.}
      \end{cases}
    \end{equation}
  \item If $\bfB'$ is the quaternion division algebra, then we have 
    \begin{equation}
      \label{eq:92}
      |I(\nu_s)|=
      \begin{cases}
        3 & \text{if $m_s=1$,}\\
        4 & \text{if $m_s\ge 2$}. 
      \end{cases}
    \end{equation}
  \end{enumerate}
\end{prop}
\begin{proof}
  These follow from Corollaries~\ref{93}, \ref{95} and \ref{97}. \qed
\end{proof}

Combining Lemma~\ref{91}, Theorems~\ref{92}, \ref{94} and \ref{96},
we obtain an explicit classification of isogeny
classes of quasi-polarized $p$-divisible $\calO_\bfB$-modules over $k$. 
 

    

\section{Local model for $\calMpK$}
\label{sec:10}

We shall use the notation of
Section~\ref{sec:02} and Section~\ref{sec:41}. 
For the remainder of this paper, we assume $p\neq 2$.

\subsection{Local models}
\label{sec:10.1}

Let $\Lambda$ be a free $O_B\otimes_\Z \Zp$-module of rank $m$
together with a perfect $\Zp$-valued skew-Hermitian pairing
\[ \psi: \Lambda\times \Lambda\to \Zp. \]

For such a lattice $\Lambda$, we define, following Rapoport and Zink
\cite{rapoport-zink}, a projective $\Z_p$-scheme $\bfM_\Lambda$, called
the {\it local model associated to $\Lambda$} (and $\psi$), 
which represents the following functor. For any
$\Zp$-scheme $S$, $\bfM_\Lambda(S)$ is the set of locally free
$\calO_S$-submodules $\scrF\subset \Lambda\otimes_{\Zp} \calO_S$ of
rank $m[B:\Q]/2=2dm$ such that 
\begin{itemize}
\item [(i)] $\scrF$ is isotropic with respect to the pairing $\psi$;
\item [(ii)] locally for Zariski topology on $S$, $\scrF$ is a direct
  summand of $\Lambda\otimes_{\Zp} \calO_S$;
\item [(iii)] $\scrF$ is invariant under the $O_B$-action;
\item [(iv)] $\scrF$ satisfies the determinant condition
  (cf. Section~\ref{sec:23}):
\[ (K)\quad \char(a|\Lambda\otimes \calO_S/\scrF)=\char (a)^m\in
  \calO_S[T], \quad \forall\, a\in O_B. \]
\end{itemize}

Recall that for an abelian scheme $A$ over a base scheme $S$, 
we have the Hodge filtration
\[ 0\to \omega_{A/S} \to H^1_{\rm DR} (A/S)\to \Lie(A^t/S) \to 0. \]
Taking the dual one obtains the short exact sequence
\[ 0\to \omega_{A^t/S} \to H_1^{\rm DR} (A/S)\to \Lie(A/S) \to 0. \]
If $M$ is the covariant \dieu module of an abelian variety $A$ over a
perfect field $k_0$, then there is a canonical isomorphism $M/pM\simeq 
H_1^{\rm DR}(A/k_0)$ with the Hodge filtration $\sfV M/pM$ corresponding
to $\omega_{A^t}$. 
This justifies the definition of the determinant condition for objects
in the local model $\bfM_\Lambda$ in (iv). 

By an automorphism of the lattice $\Lambda\otimes \calO_S$, where $S$
is a $\Zp$-scheme, we mean an $O_B\otimes \Zp$-linear automorphism 
of the $\calO_S$-module $\Lambda\otimes \calO_S$ 
that preserves the pairing $\psi$. 
We denote by 
$\Aut_{O_B\otimes \calO_S}(\Lambda\otimes \calO_S,\psi)$ 
the group of automorphisms of $\Lambda\otimes \calO_S$. 

Let $\calG=\Aut_{O_B\otimes \Zp}(\Lambda,\psi)$ 
be the group scheme over $\Zp$ that represents the 
group functor 
\[ S\mapsto \Aut_{O_B\otimes \calO_S} 
(\Lambda\otimes \calO_S, \psi). \]
By \cite[Theorem 3.16]{rapoport-zink} and \cite[Theorem 2.2(a)]{pappas:rz}, $\calG$ is an affine smooth group scheme over $\Zp$. The
generic fiber $\calG_{\Qp}$ of $\calG$ is a $\Qp$-form of $(\Res_{F/\Q}
O_{2m,F})\otimes_\Q \Qp$; see \eqref{eq:GQ}. 
The group scheme $\calG$ acts naturally on $\bfM_{\Lambda}$ on the
left. 

\subsection{Local model diagrams}
\label{sec:10.2}
We impose a level structure away from $p$ on $\calM^{(p)}_K$ and consider a fine moduli scheme $\calM^{(p)}_{K,*}$ over $\Z_{(p)}$, More precisely, write $\calM^{(p)}_K=\coprod_{p\nmid D}\calM_{K,D}$, where $\calM_{K,D}=\calM_{K}\cap \calM_{D}$ and $\calM_D$ is defined in \eqref{eq:21}. For each $D$, we fix a prime-to-$pD$ integer $N_D\ge 3$. Set $\calM^{(p)}_{K,*}:=\coprod_{p\nmid D} \calM_{K,D,N_D}$, where $\calM_{K,D,N_D}$ is the moduli scheme over $\Z_{(p)}$ which parameterizes objects in $\calM_{K,D}$ with a level $N_D$ structure. 

Let $S$ be a $\Zp$-scheme and
$\ul A=(A,\lambda,\iota,\eta)$ be an object in
$\calM^{(p)}_{K,*}(S)$. 
A {\it trivialization} $\gamma$ of the de Rham homology $H_1^{\rm
  DR}(A/S)$ by $\Lambda \otimes_{\Zp} \calO_S$ is an 
$O_B\otimes \Zp$-linear isomorphism $\gamma: H_1^{\rm
  DR}(A/S) \to \Lambda \otimes_{\Zp} \calO_S$ of $\calO_S$-modules  
such that $\psi(\gamma(x),\gamma(y))=\<x,y\>_\lambda$ for $x, y \in H_1^{\rm
  DR}(A/S)$, where 
\[ \<\,,\>_\lambda: H_1^{\rm DR}(A/S) \times H_1^{\rm DR}(A/S) \to
\calO_S \]
is the perfect alternating pairing induced by $\lambda$. 

Let $\wt \calM=\wt {\calM^{(p)}_{K,*}}$
denote the moduli space over $\Z_p$ that parametrizes
equivalence classes of objects
$(\ul A, \gamma)_S$, where
\begin{itemize}
\item $\ul A=(A,\lambda,\iota,\eta)$ is an object over a $\Zp$-scheme $S$ in
$\calM^{(p)}_{K,*}\otimes \Zp$, and
\item  $\gamma$ is a trivialization of $H_1^{\rm
  DR}(A/S)$ by $\Lambda \otimes \calO_S$. 
\end{itemize}

The moduli scheme $\wt \calM$ has two natural
projections $\varphi^{\rm mod}$ and $\varphi^{\rm loc}$.
The morphism 
\[ \varphi^{\rm mod}:\wt \calM \to
\calM^{(p)}_{K,*}\otimes \Zp \] 
forgets the trivialization. The morphism 
\[ \varphi^{\rm
  loc}: \wt \calM \to \bfM_\Lambda \] 
sends any object $(\ul A, \gamma)$ to
$\gamma(\omega_{A^t/S})$, where $\omega_{A^t/S}\subset 
H_1^{\rm DR}(A/S)$ is the 
$\calO_S$-submodule in the Hodge filtration.
Thus, we have the so called {\it local model diagram}:   
\begin{equation}
  \label{eq:10.1}
  \begin{CD}
  \calM^{(p)}_{K,*}\otimes \Z_p @<{\varphi^{\rm mod}}<< \wt \calM 
  @>{\varphi^{\rm loc}}>>   \bfM_\Lambda.  
  \end{CD}
\end{equation}






The local model diagram above was introduced by Rapoport and Zink
\cite{rapoport-zink} in a more general setting. 
The moduli scheme $\wt \calM$ also admits a left action
by the group scheme $\calG$. 
Recall that  
$k$ denotes an \ac field of \ch $p>0$ and $W=W(k)$ the ring of
Witt vectors over $k$. 
Using Corollary~\ref{57}, for any $k$-valued point $\ul A$ in
$\calM^{(p)}_{K,*}$, there is an $O_B\otimes \Zp$-linear 
isomorphism of $W$-modules
\[ M(A) \simeq \Lambda\otimes W \]
which is compatible with the alternating pairings. 
This shows that the morphism
$\varphi^{\rm mod}$ is surjective. 
It follows that $\varphi^{\rm mod}$ is a left $\calG$-torsor, 
and hence this morphism is affine and smooth. 

By the Grothendieck-Messing deformation theory 
for abelian schemes (\cite{grothendieck:bt, messing:bt} and \cite[3.29]{rapoport-zink}),
for any $k$-valued point
$x$ of $\calM^{(p)}_{K,*}$, 
there is a $k$-valued point $y$ in $\bfM_\Lambda$ such that there is
a (non-canonical) isomorphism 
\begin{equation}
  \label{eq:10.2}
  {\calM^{(p)}_{K,*}}|^\wedge_{x}
\simeq \bfM_{\Lambda}|^\wedge_y 
\end{equation}
of formal local moduli spaces. This shows, in particular,
that if the local model $\bfM_\Lambda$ is flat over
$\Spec \Zp$, then the integral model $\calM^{(p)}_{K,*}\otimes \Zp$ is flat over
$\Spec \Zp$ \cite[p.~95]{rapoport-zink}. 

The morphism $\varphi^{\rm loc}$ is smooth, 
$\calG$-equivariant, and of relative dimension the same as 
$\varphi^{\rm mod}$. 
However, at this moment 
we do not know whether the morphism $\varphi^{\rm
  loc}$ is surjective. If this is so, then the integral model
$\calM^{(p)}_{K,*}\otimes \Zp$ is flat over $\Spec \Zp$ if and only if 
the local model $\bfM_\Lambda$ is flat over $\Spec \Zp$.


\subsection{A reduction step}
\label{sec:11.1}
Let $\Lambda$ and $\bfM_{\Lambda}$ be as above. 
Let 
\[ \Lambda=\oplus_{v|p} \Lambda_v \]
be the decomposition of $\Lambda$ obtained from the decomposition
$O_F\otimes \Zp=\prod_{v|p} \calO_v$, where $\calO_v$ is the ring of
integers in the local field $F_v$ of $F$ at $v$. Then we have 
\[ \bfM_{\Lambda}=\prod_{v|p} \bfM_{\Lambda_v}, \]
where the product $\Pi$ means the fiber product of the schemes
$\bfM_{\Lambda_v}$'s over $\Spec \Zp$ and 
$\bfM_{\Lambda_v}$ is the local model defined by the
lattice $\Lambda_v$ in the same  way as $\bfM_{\Lambda}$; see
Section~\ref{sec:10.1}. 

Write $O_B\otimes_\Z \Z_p=\prod_{v|p} \calO_{B_v}$ for the decomposition
with respect to $O_F\otimes \Zp=\prod_{v|p} \calO_v$. Then
$\calO_{B_v}$ is a maximal order in $B_v$.
Similarly we have the automorphism group
scheme $\calG_v=\Aut_{\calO_{B_v}}(\Lambda_v,\psi_v)$ associated to
the local lattice $(\Lambda_v,\psi_v)$, and we have the fiber product
decomposition
\[ \calG=\prod_{v|p} \calG_v. \] 

Now we fix a place $v$ of $F$ over $p$. Let 
$\calO_v^{\rm nr}\subset \calO_v$ be the maximal \'etale extension of
$\Zp$ in $\calO_v$ and put 
\[ I_v:=\Hom_{\Zp}(\calO_v^{\rm nr}, W). \] 
Let $e=e_v$ be the ramification index and $f=f_v$ be the inertia degree. 
Let $\pi$ be a uniformizer of $\calO_v$ and let $P(T)$ be the minimal
polynomial of $\pi$ over $\calO_v^{\rm nr}$. For any $\sigma\in I_v$
put $W_\sigma:=W[T]/(\sigma(P(T)))$ and denote by $\pi$ again the
image of $T$ in $W_\sigma$. One has $W_\sigma=W[\pi]$ and the element 
$\pi$ satisfies the equation $\sigma(P(T))=0$. We have the decomposition
\[ \Lambda_v\otimes_{\Zp} W=\oplus_{\sigma\in I_v} \Lambda_\sigma,
\quad \Lambda_\sigma:=\Lambda_v \otimes_{\calO_v^{\rm nr},\sigma}
W. \]
Write \[ \psi_\sigma:\Lambda_\sigma\times \Lambda_\sigma\to W \]
for the induced alternating pairing. 

Similarly we define the local model $\bfM_{\Lambda_\sigma}$ over
$\Spec W$ attached to each skew-Hermitian lattice
$(\Lambda_\sigma,\psi_\sigma)$. 
If $\scrF_v\subset \Lambda_v\otimes \calO_S$ is an object in
$\bfM_{\Lambda_v}$ and let $\scrF_v=\oplus_{\sigma\in I_v} \scrF_\sigma$
  be the natural decomposition, then every factor $\scrF_\sigma$ is a
  locally free $\calO_S$-module of rank $2me$; this follows from the
  determinant condition (K).
Therefore we have a natural isomorphism 
\[ \bfM_{\Lambda_v}\otimes W  \simeq \prod_{\sigma\in I_v}
\bfM_{\Lambda_\sigma}, \quad \scrF_v \mapsto (\scrF_\sigma)_{\sigma\in
  I_v}, \]
where the product $\Pi$ means the fiber product of the schemes
$\bfM_{\Lambda_\sigma}$'s over $\Spec W$.



\subsection{Maps to the local model}\label{sec:10.4}
A basic question in the local model diagram asks whether the morphism $\varphi^{\rm loc}$  is surjective. 
The local model diagram gives rise to a morphism of Artin stacks
\begin{equation}
  \label{eq:10.3}
  \theta: \calM^{(p)}_{K,*}\otimes \Zp \to [\calG\backslash
  \bfM_\Lambda],
\end{equation}
and this amounts to asking about the surjectivity of the map  
of the sets of geometric points
\begin{equation}\label{eq:10.4}
  \theta_k: \calM^{(p)}_{K,*}(k) \to \calG(k)\backslash
\bfM_\Lambda(k).
\end{equation}
Let ${\rm Dieu}^{O_B\otimes \Zp}_m(k)$ 
denote the set of isomorphism classes of separably quasi-polarized \dieu $O_B\otimes \Zp$-modules  
of $W$-rank $4md$ satisfying the determinant condition. 
The map $\theta_k$ factors through the natural map 
$\calM^{(p)}_{K,*}(k) \to {\rm Dieu}^{O_B\otimes \Zp}_m(k)$, $\ul A\mapsto M(\ul A)$, and let 
\begin{equation}
  \label{eq:10.5}
  \alpha: {\rm Dieu}^{O_B\otimes \Zp}_m(k)\to
  \calG(k)\backslash \bfM_\Lambda(k) 
\end{equation}
be the induced map. We will show in Section~\ref{sec:12} that $\alpha$ is surjective when $m=1$.

\section{Computation of local models for $m=1$}
\label{sec:11}


Keep the notation of the previous section.
For the remainder of this paper, assume that $m=1$.
We shall compute the geometric special fiber
$\bfM_{\Lambda_\sigma}\otimes k$ of $\bfM_{\Lambda_\sigma}$.  
Put $\calO_{B_\sigma}:=\calO_{B_v} \otimes_{\calO_v^{\rm nr}, \sigma} W$
for $\sigma\in I_v$.

\subsection{Unramified case.} 
\label{sec:11.2}

Suppose $v$ is unramified in $B$. Then
$\calO_{B_\sigma}=\Mat_2(W_\sigma)$. By the Morita equivalence
reduction as before, we have $\Lambda_\sigma= \Lambda_{\sigma,1}\oplus
\Lambda_{\sigma,2}$ and a unimodular Hermitian pairing   
\[ \varphi_\sigma: 
\Lambda_{\sigma,1}\times \Lambda_{\sigma,1}\to W. \]
Recall that $\varphi_\sigma(x,y)$ is the restriction of the symmetric
pairing $\psi(x,Cy)$ on the first factor $\Lambda_{\sigma,1}$, where
$C$ is the Weil element. 
The local model $\bfM_{\varphi_\sigma}$ associated to the symmetric
lattice $(\Lambda_{\sigma,1},\psi_\sigma)$ is defined by 
parametrizing the
$W_\sigma\otimes \calO_S$-submodules
$\scrF$ of  $\Lambda_{\sigma,1}\otimes \calO_S$ with the following
properties:
\begin{itemize}
\item [(i)] $\scrF$ is a locally free $\calO_S$-module of rank
$e$ and locally for Zariski topology on $S$
is a direct summand of $\Lambda_{\sigma,1}\otimes
\calO_S$;
\item [(ii)] $\scrF$ is isotropic with respect to the pairing
  $\varphi_\sigma$. 
\end{itemize}
Every object $\scrF_\sigma$  in $\bfM_{\Lambda_\sigma}$ has the
decomposition $\scrF_\sigma=\scrF_{\sigma,1}\oplus \scrF_{\sigma,2}$.
 
\begin{lemma}\label{11.1}
  The map which sends any object $\scrF_\sigma$ in
  $\bfM_{\Lambda_\sigma}$ to its first factor $\scrF_{\sigma,1}$
  induces an isomorphism of schemes
\[ \bfM_{\Lambda_\sigma} \simeq \bfM_{\varphi_\sigma}. \]
\end{lemma}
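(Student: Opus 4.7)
The plan is to realize the isomorphism as a scheme-theoretic incarnation of Morita equivalence. Recall that under the Morita equivalence between $\Mat_2(W_\sigma)$-modules and $W_\sigma$-modules, the functors $M\mapsto e_{11}M$ and $N\mapsto N\oplus e_{21}N$ are mutually inverse. Since both operations are given by tensoring with bimodules and commute with arbitrary base change, they upgrade for every $\Zp$-scheme $S$ to an equivalence between $\calO_{B_\sigma}\otimes\calO_S$-modules and $W_\sigma\otimes\calO_S$-modules. I would define the claimed morphism on $S$-points by $\scrF\mapsto e_{11}\scrF=:\scrF_{\sigma,1}$ and its candidate inverse by $\scrG\mapsto \scrG\oplus e_{21}\scrG$; since both are functorial in $S$, it suffices to check on $S$-points that each condition in the definition of $\bfM_{\Lambda_\sigma}$ corresponds to the appropriate condition for $\bfM_{\varphi_\sigma}$.

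First I would verify the module-theoretic conditions. Morita equivalence preserves local freeness and the property of being a local direct summand, and it halves the $\calO_S$-rank: a rank-$2e$ local direct summand $\scrF$ of $\Lambda_\sigma\otimes\calO_S$ corresponds to a rank-$e$ local direct summand $\scrF_{\sigma,1}$ of $\Lambda_{\sigma,1}\otimes\calO_S$ and vice versa. The $\calO_{B_\sigma}$-stability condition (iii) is built into the Morita dictionary, while the determinant condition (iv), after the decomposition into $\sigma$-components and the further Morita reduction, is equivalent to the rank condition already imposed, since $\calO_{B_\sigma}/W_\sigma$ is unramified and both sides of (K) have the same $\calO_S$-rank $2e$.

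Next I would translate the isotropy condition. Using $e_{11}^*=e_{22}$ and $e_{ij}^*=-e_{ij}$ for $i\neq j$, the $\calO_{B_\sigma}$-linearity of $\psi_\sigma$ forces $\psi_\sigma$ to vanish on $\Lambda_{\sigma,1}\times\Lambda_{\sigma,1}$ and on $\Lambda_{\sigma,2}\times\Lambda_{\sigma,2}$, so $\scrF=\scrF_{\sigma,1}\oplus e_{21}\scrF_{\sigma,1}$ is $\psi_\sigma$-isotropic iff $\psi_\sigma(\scrF_{\sigma,1},e_{21}\scrF_{\sigma,1})=0$. From $C=e_{12}-e_{21}$ and $Cy=-e_{21}y$ for $y\in\Lambda_{\sigma,1}$, one reads off $\varphi_\sigma(x,y)=\psi_\sigma(x,Cy)=-\psi_\sigma(x,e_{21}y)$, so the condition $\varphi_\sigma(\scrF_{\sigma,1},\scrF_{\sigma,1})=0$ is exactly equivalent to $\psi_\sigma$-isotropy of $\scrF$.

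With well-definedness of both directions established, the two functorial constructions are mutually inverse morphisms of $S$-points for every $S$, hence define inverse morphisms of the representing schemes. There is no real obstacle beyond unpacking these compatibilities; the only mildly delicate point is the isotropy translation, but it is forced by the identification $C\cdot\Lambda_{\sigma,1}=\Lambda_{\sigma,2}$ together with the vanishing of $\psi_\sigma$ on each summand, which in turn is a direct consequence of the symplectic nature of the canonical involution on $\Mat_2$.
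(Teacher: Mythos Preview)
Your proof is correct and follows essentially the same approach as the paper: both reduce via Morita equivalence and then verify that $\psi_\sigma$-isotropy of $\scrF_\sigma$ is equivalent to $\varphi_\sigma$-isotropy of $\scrF_{\sigma,1}$, using that $C$ (equivalently $-e_{21}$ on $\Lambda_{\sigma,1}$) identifies $\Lambda_{\sigma,1}$ with $\Lambda_{\sigma,2}$ and that $e_{11}^*=e_{22}$ forces $\psi_\sigma$ to vanish on each summand. You spell out the inverse functor and the module-theoretic conditions, which the paper leaves implicit under ``it suffices to check''; your justification of why the determinant condition collapses to the rank condition is a bit brisk, but the paper is no more careful on this point.
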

\begin{proof}
  It suffices to check that $\scrF_\sigma$ is isotropic with
  respect to the pairing $\psi_\sigma$ 
  if and only if $\scrF_{\sigma,1}$ is
  isotropic with respect to the pairing
  $\varphi_\sigma(x,y)=\psi_\sigma(x,Cy)$.
  Using $e_{11}^*=e_{22}$, we
  get $\psi_\sigma(\scrF_\sigma, \scrF_\sigma)=0$ if and only if
  $\psi_\sigma(\scrF_{\sigma,1}, \scrF_{\sigma,2})=0$. On the other
  hand, the isomorphism $C:\Lambda_{\sigma}\isoto \Lambda_\sigma$
  induces the isomorphism $C:\Lambda_{\sigma,1}\isoto
  \Lambda_{\sigma,2}$. Therefore,
  $\varphi_\sigma(\scrF_{\sigma,1},\scrF_{\sigma,1})=0$ if and only if
  $\psi_\sigma(\scrF_{\sigma,1}, \scrF_{\sigma,2})=0$. This shows the
  lemma. \qed 
\end{proof}

Put $\ol \Lambda_\sigma:=\Lambda_\sigma/p\Lambda_\sigma$ and $\ol
\Lambda_{\sigma,1}:=\Lambda_{\sigma,1}/p\Lambda_{\sigma,1}$.
Let $\scrD^{-1}_{W_\sigma/W}$ be the inverse different of the
extension $W_\sigma/W$ and choose a generator $\delta_\sigma$ of this
fractional ideal. Then there is a unique $W_\sigma$-valued 
$W_\sigma$-bilinear symmetric pairing 
\[ \varphi_\sigma':\Lambda_{\sigma,1}\times \Lambda_{\sigma,1}\to
W_\sigma \]   
such that 
$\varphi_\sigma(x,y)=\Tr \,[\delta_\sigma\cdot \varphi'_\sigma(x,y)]$. 
One can show that a $k[\pi]/(\pi^e)$-submodule 
$\scrF_{\sigma,1}\subset \ol \Lambda_{\sigma,1}$ is isotropic with
respect to the pairing 
$\varphi_\sigma$ if and only if so is it for the pairing
$\varphi'_\sigma$. 




Since $\Lambda_{\sigma,1}$ is a self-dual lattice and $k$ is
algebraically closed, we can choose a $W_\sigma$-basis 
$x_1,x_2$ for $\Lambda_{\sigma,1}$ such that 
\[ \varphi'_\sigma(x_1,x_1)=\varphi'_\sigma(x_2,x_2)=0\quad \text{and}\quad
\varphi'_\sigma(x_1,x_2)=\varphi'_\sigma(x_2,x_1)=1. \] Denote by $\bar x_i$, for $i=1,2$, the
image of $x_i$ in $\ol \Lambda_{\sigma,1}$.
Let $\scrF\subset \ol
\Lambda_{\sigma,1}$ be an object in $\bfM_{\varphi_\sigma}(k)$. As 
$\ol \Lambda_{\sigma,1}$ is a free $k[\pi]/(\pi^e)$-module of rank
two, one has
\[ \ol \Lambda_{\sigma,1}/\scrF \simeq k[\pi]/(\pi^{e_1}) \oplus
k[\pi]/(\pi^{e_2}) \]
for some integers $e_1,e_2$ with $0\le e_1\le e_2\le e$ and
$e_1+e_2=e$. The pair $(e_1,e_2)$ will be called the {\it Lie type} of
the object $\scrF$. We can write
\[ \scrF={\rm Span} \{\pi^{e_1} \bar y_1, \pi^{e_2} \bar y_2 \}, \]
where $\bar y_1$ and $\bar y_2$ generate $\ol  \Lambda_{\sigma,1}$ over
$k[\pi]/(\pi^e)$. Moreover, we can write either
\begin{itemize}
\item [(a)] $\bar y_1=\bar x_1+t \bar x_2$ and $\bar y_2=\bar x_2$, or
\item [(b)] $\bar y_1=t \bar x_1+\bar x_2$ and $\bar y_2=\bar x_1$, 
\end{itemize}
where $t\in k[\pi]/(\pi^e)$. We can represent $t$ as 
\[ t=t_0+t_1\pi+\dots +t_{e-2e_1-1} \pi^{e-2e_1-1}, \quad t_i\in k \]
because if $\ord_{\pi} (t)\ge e-2e_1$ then one can replace $\bar x_1+t
\bar x_2$ by $\bar x_1$ in the case $(a)$ (and the same for the case
(b)).  
Now one easily computes that
\[ \varphi'_\sigma(\scrF,\scrF)=0 \iff 2t \pi^{2e_1}=0. \]
This condition gives $t_0\pi^{2e_1}+\dots+t_{e-2e_1-1}\pi^{e-1}=0$ and
hence
\[ t_0=\dots=t_{e-2e_1-1}=0. \]
Therefore, we get two objects.
\[ \scrF={\rm Span}\{\pi^{e_1} \bar x_1, \pi^{e_2} \bar x_2\}, \quad 
\text{or}\quad \scrF={\rm Span}\{\pi^{e_1} \bar x_2, \pi^{e_2} \bar
x_1\}. \]
Notice that these two 
members are in the same orbit under the action of the
group $\calG_{\sigma}(k)$ as the automorphism of 
$\ol \Lambda_{\sigma,1}$
switching $\bar x_1$ and $\bar x_2$ lies in $\calG_\sigma(k)$, 
where 
\[ \calG_{\sigma}=
\Aut_{W_\sigma}(\Lambda_{\sigma,1},\varphi_\sigma) \] 
is the automorphism group scheme of the symmetric 
lattice $(\Lambda_{\sigma,1},\varphi_\sigma)$ over $W$. 
We obtain the following the result. 

\begin{prop}\label{11.2}
  Assume that $v$ is unramified in
  $B$ and let $\sigma\in I_v$. 
  Then $\bfM_{\varphi_\sigma}(k)$ consists of the
  $k[\pi]/(\pi^e)$-submodules 
\[ \scrF_{e_1}={\rm Span}\{\pi^{e_1} \bar x_1, \pi^{e-e_1} \bar
x_2\}, \quad \text{for\ } 0\le e_1\le e. \]
Moreover, two objects $\scrF_{e_1}$ and $\scrF_{e'_1}$ are in the same
orbit under the action of $\calG_\sigma(k)$ if and only if $e_1=e_1'$
or $e_1+e'_1=e$.  \qed
\end{prop}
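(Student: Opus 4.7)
The plan is to carry through the analysis sketched in the paragraph preceding the statement, and then to work out the $\calG_\sigma(k)$-orbit structure separately.

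First, I would enumerate the possible $\scrF$. Since $\ol\Lambda_{\sigma,1}$ is a free $k[\pi]/(\pi^e)$-module of rank two, any $k[\pi]/(\pi^e)$-submodule $\scrF$ of $k$-dimension $e$ has invariant factors giving a Lie type $(e_1,e_2)$ with $0\le e_1\le e_2$ and $e_1+e_2=e$. Writing $\scrF=\mathrm{Span}\{\pi^{e_1}\bar y_1,\pi^{e_2}\bar y_2\}$ for some generating pair $\bar y_1,\bar y_2$ of $\ol\Lambda_{\sigma,1}$, one may normalize (after swapping $\bar y_1,\bar y_2$ if needed) into exactly the two cases (a) and (b) in the text. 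In each case, the isotropy identity $\varphi'_\sigma(\pi^{e_1}\bar y_1,\pi^{e_2}\bar y_2)=2t\pi^{2e_1}=0$ in $k[\pi]/(\pi^e)$ forces the coefficients $t_0,\dots,t_{e-2e_1-1}$ to vanish, and the truncation argument shows that higher-order terms of $t$ can be absorbed into the generators. Hence $t=0$, and the only remaining possibilities are $\mathrm{Span}\{\pi^{e_1}\bar x_1,\pi^{e-e_1}\bar x_2\}$ and $\mathrm{Span}\{\pi^{e_1}\bar x_2,\pi^{e-e_1}\bar x_1\}$, which are precisely $\scrF_{e_1}$ and $\scrF_{e-e_1}$. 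This establishes that every object of $\bfM_{\varphi_\sigma}(k)$ is of the form $\scrF_{e_1}$ for some $0\le e_1\le e$.

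Next, I would analyze the orbits. Since $\calG_\sigma$ acts by $W_\sigma$-linear automorphisms of $\ol\Lambda_{\sigma,1}$, the Lie type of $\scrF$ is a $\calG_\sigma(k)$-invariant; in particular, $\scrF_{e_1}$ and $\scrF_{e_1'}$ can lie in the same orbit only if $\{e_1,e-e_1\}=\{e_1',e-e_1'\}$, i.e.\ $e_1=e_1'$ or $e_1+e_1'=e$. For the converse, it is enough to exhibit a single element of $\calG_\sigma(k)$ carrying $\scrF_{e_1}$ to $\scrF_{e-e_1}$. The $W_\sigma$-linear involution $g\colon x_1\leftrightarrow x_2$ preserves $\varphi'_\sigma$ (its matrix in the basis $(x_1,x_2)$ is the antidiagonal matrix, which is fixed by the swap), so $g\in \calG_\sigma(W)$; its reduction modulo $p$ visibly sends $\scrF_{e_1}$ to $\scrF_{e-e_1}$.

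I expect no serious obstacle: both steps are direct, one being an invariant-factor enumeration plus an isotropy computation, the other being the production of one explicit element of the automorphism group. The only minor care needed is the passage between the original Hermitian pairing $\varphi_\sigma$ and its $\scrD^{-1}_{W_\sigma/W}$-twisted $W_\sigma$-bilinear form $\varphi'_\sigma$; but this is already handled in the excerpt, where it is noted that isotropy of a $k[\pi]/(\pi^e)$-submodule with respect to $\varphi_\sigma$ is equivalent to isotropy with respect to $\varphi'_\sigma$, so the classification above is legitimate.
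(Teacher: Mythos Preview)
Your proposal is correct and follows exactly the paper's own argument, which is carried out in the paragraphs immediately preceding the proposition: the same Lie-type enumeration, the same normal forms (a) and (b), the same isotropy computation forcing $t=0$, and the same swap $x_1\leftrightarrow x_2$ exhibiting the nontrivial orbit identification. One small slip: the relation $2t\pi^{2e_1}=0$ comes from $\varphi'_\sigma(\pi^{e_1}\bar y_1,\pi^{e_1}\bar y_1)$, not from the cross term $\varphi'_\sigma(\pi^{e_1}\bar y_1,\pi^{e_2}\bar y_2)$ (the latter is $\pi^{e}=0$ automatically); otherwise your write-up matches the paper, and your explicit remark that the Lie type is a $\calG_\sigma(k)$-invariant cleanly handles the ``only if'' direction that the paper leaves implicit.
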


\begin{prop}\label{11.3}  
  Assume that $v$ is unramified in
  $B$, and let $\sigma\in I_v$.  
\begin{enumerate}
  \item The special fiber $\bfM_{\varphi_\sigma}\otimes_W k$ is
    zero-dimensional and 
    two objects $\scrF$ and $\scrF'$ in $\bfM_{\varphi_\sigma}(k)$
    are in the same orbit under $\calG_\sigma(k)$ if and only if
    they have the same Lie type.
  \item The structure morphism
    $\bfM_{\varphi_\sigma}\to \Spec W$ is finite and flat. 
\end{enumerate}
\end{prop}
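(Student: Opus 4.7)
For part (1), the claim will follow directly from Proposition~\ref{11.2}. That proposition exhibits $\bfM_{\varphi_\sigma}(k)$ as the finite set $\{\scrF_{e_1}\}_{0\le e_1\le e}$, and since $\bfM_{\varphi_\sigma}$ is a closed subscheme of the Grassmannian of rank-$e$ quotients of $\Lambda_{\sigma,1}$ and hence proper over $W$, a proper $k$-scheme with only finitely many $k$-points is necessarily zero-dimensional. For the orbit statement, I note that any automorphism in $\calG_\sigma(k)$ is $W_\sigma$-linear and therefore preserves the $k[\pi]/(\pi^e)$-module structure of the quotient $\ol{\Lambda}_{\sigma,1}/\scrF$, making the (unordered) Lie type an orbit invariant; the second half of Proposition~\ref{11.2} then says that this invariant is a complete one.

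For the finiteness in part (2), the morphism $\bfM_{\varphi_\sigma}\to\Spec W$ is projective via the Grassmannian embedding, and part (1) yields a zero-dimensional special fiber. Upper semicontinuity of fiber dimension then forces every geometric fiber to be zero-dimensional, so the morphism is quasi-finite, and quasi-finite plus projective yields finite. Write $\bfM_{\varphi_\sigma}=\Spec A$ with $A$ a finite $W$-algebra.

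For flatness, since $W$ is a DVR, $A$ is $W$-flat iff $A$ is $W$-torsion free iff $\dim_k A/pA = \dim_{B(k)} A\otimes_W B(k)$. I plan to compute both sides. On the generic fiber, an isotropic rank-$e$ $W_\sigma\otimes B(k)$-stable subspace of $\Lambda_{\sigma,1}\otimes B(k)$ is an $L$-line $L\cdot y$ (with $L=W_\sigma[1/p]$) such that, writing $y=a_1x_1+a_2x_2$, the relation $2a_1a_2=0$ holds in the field $L$; this gives only the two $B(k)$-rational points $Lx_1$ and $Lx_2$. The remaining geometric points appear in Galois orbits once $L$ splits, and counting through the splitting shows that the generic fiber has total $W$-length $2^e$. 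For the special fiber, I would work in a standard affine chart of the Grassmannian around each $\scrF_{e_1}$, parametrize the $W_\sigma$-stable isotropic deformations, and identify the completed local ring at $\scrF_{e_1}$ with an explicit finite flat $W$-algebra of generic rank $\binom{e}{e_1}$. The identity $\sum_{e_1=0}^{e}\binom{e}{e_1}=2^e$ would then match the special fiber length with the generic fiber length, yielding flatness.

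The main technical obstacle is the explicit computation of the local ring at $\scrF_{e_1}$: the $W_\sigma$-invariance relations are quite rigid and collapse most of the Grassmannian parameters, while the isotropy condition imposes further quadratic constraints, and the interplay of the two must be controlled carefully in order to both read off the generic rank $\binom{e}{e_1}$ and verify $W$-flatness directly. A possible alternative, avoiding the local computation, would be to identify $\bfM_{\varphi_\sigma}$ with the scheme-theoretic closure in the Grassmannian of its generic fiber, but establishing such an identification seems to require essentially the same multiplicity bookkeeping.
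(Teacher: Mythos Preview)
Your treatment of part (1) and of finiteness in part (2) matches the paper's.

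For flatness the paper takes a much shorter route than the one you outline. It computes no local rings and no fiber lengths. Given a closed point $\scrF_{e_1}$, it passes to a finite DVR extension $R/W$ over which $\sigma P(T)=\prod_{i=1}^{e}(T-\pi_i)$ splits completely, and simply writes down the lift
\[
\scrF_R={\rm Span}_{W_\sigma\otimes_W R}\Bigl\{\,\textstyle\prod_{i\le e_1}(T-\pi_i)\,x_1,\ \ \prod_{i>e_1}(T-\pi_i)\,x_2\,\Bigr\}\subset\Lambda_{\sigma,1}\otimes_W R.
\]
Since every $\pi_i$ lies in the maximal ideal of $R$, this specializes to $\scrF_{e_1}$; it is a rank-$e$ direct summand because its special fiber is; and over $K={\rm Frac}(R)$ one has $W_\sigma\otimes_W K=\prod_i K$, the $i$-th component of $\scrF_K$ being $Kx_1$ or $Kx_2$, hence isotropic. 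From the existence of such lifts the paper concludes that the coordinate ring is $W$-torsion-free.

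Your length-comparison plan would also work, but it is substantially more laborious, and your outline conflates two distinct strategies. If you genuinely identify each completed local ring with a \emph{flat} $W$-algebra, flatness is already proved and the identity $\sum_{e_1}\binom{e}{e_1}=2^e$ is only a sanity check. If instead you only compute \emph{special-fiber} lengths, then you must separately argue that the generic fiber is reduced of degree $2^e$, which you assert but do not justify (it is true: over $B(k)$ the local model is the Weil restriction along $L/B(k)$ of the two isotropic $L$-lines in the hyperbolic plane, hence finite \'etale). Either way, the multiplicity $\binom{e}{e_1}$ you predict at $\scrF_{e_1}$ is correct, and the paper's construction makes transparent why: reordering the roots, the $\binom{e}{e_1}$ choices of which subset of $\{\pi_1,\dots,\pi_e\}$ to pair with $x_1$ give exactly the geometric points of the generic fiber that specialize to $\scrF_{e_1}$.
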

\begin{proof}
  (1) This follows immediately from Proposition~\ref{11.2}. 

  (2) Since the structure morphism is quasi-finite and projective, it is
      finite. We now show that any object $\scrF_0$ in
      $\bfM_{\varphi_\sigma}(k)$ can be lifted to an object $\scrF_R$
      over an integral domain $R$ with residue field $k$ and 
      fraction field $K$ of \ch
      zero. Then the coordinate ring of $\bfM_{\varphi_\sigma}$ is
      torsion-free as a $W$-module and hence is flat over $W$. 

      By Proposition~\ref{11.2}, write $\scrF_0=
     {\rm Span}\{\pi^{e_1} \bar x_1, \pi^{e_2} \bar
     x_2\}$ for two integers $e_1, e_2$ with $0\le
     e_1,e_2\le e$ and $e_1+e_2=e$. Write $W_\sigma=W[T]/(\sigma
     P(T))$. Let $R$ be the ring of integers in a finite separable
     field extension $K$ of $B(k)=\Frac(W)$ such that the polynomial
     $\sigma P(T)$ decomposes completely over $R$:
\[  \sigma P(T)=(T-\pi_1)\cdots (T-\pi_e)\in R[T]. \]
     Let $\pi_R$ be a uniformizer of $R$. We have $W_\sigma\otimes_W
     R=R[T]/(\sigma P(T))$. As $W_\sigma$ is a free $W$-module, we have
     an exact sequence:
\[ 
\begin{CD}
  0 @>>> W_\sigma\otimes_W (\pi_R) @>>> W_\sigma\otimes_W R @>>>
  W_\sigma\otimes_W k @>>>0. 
\end{CD} \]
  So an element $f(T)$ in $R[T]/(\sigma P(T))$ specializes to zero in
  $W_\sigma\otimes k=k[T]/(T^{e})$ if
  and only if $f(T)\in \pi_R \cdot R[T]/(\sigma P(T))$. We shall construct a
  $W_\sigma\otimes_W R$-submodule $\scrF_R\subset
  \Lambda_{\sigma,1}\otimes_W R$ such that 
  \begin{itemize}
  \item [(i)] $\scrF_R \otimes_R k=\scrF_0$;
  \item [(ii)] $\scrF_R$ and $(\Lambda_{\sigma,1}\otimes_W R)/\scrF_R$
    are both free of rank $e$ over $R$;
  \item [(iii)] $\scrF_R$ is isotropic with respect to the pairing
    $\psi_\sigma'$. 
  \end{itemize}
  Now we let $\scrF_R$ be the submodule generated by the elements 
$(T-\pi_1)\cdots (T-\pi_{e_1}) x_1$ and $(T-\pi_{e_1+1})\cdots
(T-\pi_{e}) x_2$. Clearly $\pi_i\in \pi_R R$ for all $i$ 
so one has (i). The
statement (ii) follows from (i) by the right exactness of the
tensor product. To check (iii), as 
$\scrF_R\subset \scrF_K:=\scrF_R\otimes K$,
it suffices to check (iii) for $\scrF_K$. Now 
we have 
\[ W_\sigma\otimes_W K=\prod_{i=1}^e  K\quad \text{ and }\quad
\scrF_K=(\scrF_{K,i})_{1\le i\le e}. \]
It is easy to see that each component $\scrF_{K,i}$ is one-dimensional
$K$-subspace generated by either $x_1$ or $x_2$ and hence 
$\scrF_K$ satisfies (iii). \qed 
\end{proof}

Let $\scrF_v$ be an object in $\bfM_{\Lambda_v}(k)$ and let
$\scrF_v=\oplus_{\sigma\in I_v} \scrF_\sigma$ be the natural
decomposition. The {\it reduced Lie type of $\scrF_v$} is defined to the
system of pairs $(e_{\sigma,1},e_{\sigma,2})$ indexed by $I_v$, where
$(e_{\sigma,1},e_{\sigma,2})$ is the Lie type of $\scrF_{\sigma,1}$. 
Proposition~\ref{11.3} 
immediately gives the following result.  

\begin{thm}\label{11.4}
  Suppose that $v$ is unramified in $B$. 
\begin{enumerate}
  \item The special fiber $\bfM_{\Lambda_v}\otimes_{\Zp} \Fp$ is
    zero-dimensional and 
    two objects $\scrF_v$ and $\scrF'_v$ in $\bfM_{\Lambda_v}(k)$
    are in the same orbit under the $\calG_v(k)$ if and only if
    they have the same reduced Lie type.  

  \item The structure morphism
    $\bfM_{\Lambda_v}\to \Spec \Zp$ is flat and finite. \qed
\end{enumerate}
\end{thm}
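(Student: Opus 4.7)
The plan is to reduce Theorem~\ref{11.4} to Proposition~\ref{11.3} componentwise over $I_v$, and then descend from $\Spec W$ down to $\Spec \Zp$. More precisely, after base change to $W$, the isomorphism
\[
f: \bfM_{\Lambda_v}\otimes_{\Zp} W \;\simeq\; \prod_{\sigma\in I_v}\bfM_{\Lambda_\sigma}
\]
from Subsection~\ref{sec:11.1}, combined with Lemma~\ref{11.1}, identifies the right-hand side with $\prod_{\sigma\in I_v}\bfM_{\varphi_\sigma}$. By Proposition~\ref{11.3}(2) each factor $\bfM_{\varphi_\sigma}\to \Spec W$ is finite and flat, and finite fiber products of finite flat $W$-schemes are again finite and flat. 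Hence $\bfM_{\Lambda_v}\otimes W\to\Spec W$ is finite and flat.

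To obtain (2), I would descend along the faithfully flat morphism $\Spec W\to \Spec\Zp$: finiteness and flatness are both fpqc-local on the base, so $\bfM_{\Lambda_v}\to\Spec\Zp$ is finite and flat. The zero-dimensionality of the special fiber in (1) follows in the same way: the special fiber $\bfM_{\Lambda_v}\otimes\Fp$ becomes, after the faithfully flat base change $\Fp\to k$, the product $\prod_{\sigma\in I_v}\bigl(\bfM_{\varphi_\sigma}\otimes_W k\bigr)$, and each factor is zero-dimensional by Proposition~\ref{11.3}(1), so the product is too; this property descends to $\Fp$.

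For the orbit description in (1), I would first observe that the decomposition $\Lambda_v\otimes_W k\otimes_{\Fp} k=\bigoplus_{\sigma\in I_v}\Lambda_\sigma\otimes_W k$ is intrinsically the decomposition by characters of the semisimple action of $\calO_v^{\rm nr}\otimes k\simeq \prod_{\sigma\in I_v} k$. Since every element of $\calG_v(k)$ is $O_B\otimes k$-linear, it commutes with this semisimple action and therefore preserves each $\sigma$-component. This yields a product decomposition $\calG_v(k)=\prod_{\sigma\in I_v}\calG_\sigma(k)$ whose action on $\bfM_{\Lambda_v}(k)=\prod_\sigma\bfM_{\varphi_\sigma}(k)$ is componentwise. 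Consequently, two objects $\scrF_v,\scrF'_v$ lie in the same $\calG_v(k)$-orbit if and only if, for every $\sigma\in I_v$, their $\sigma$-components lie in the same $\calG_\sigma(k)$-orbit, which by Proposition~\ref{11.3}(1) happens exactly when their Lie types agree for every $\sigma$, i.e.\ when their reduced Lie types coincide.

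The main obstacles are essentially bookkeeping rather than substantive: checking that the $\calG_v(k)$-action really does respect the $\sigma$-decomposition (immediate from $O_B$-linearity and the $\calO_v^{\rm nr}$-eigenspace decomposition over $k$), and verifying that finiteness, flatness, and the property of having zero-dimensional fibers all descend from $W$ to $\Zp$ (standard fpqc descent, using that $W$ is faithfully flat over $\Zp$). Everything else is a direct application of Proposition~\ref{11.3} factor by factor.
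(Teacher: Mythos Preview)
Your proposal is correct and takes essentially the same approach as the paper, which simply states that Proposition~\ref{11.3} ``immediately gives'' the result; you have spelled out the implicit steps (the product decomposition over $I_v$ via the isomorphism $f$ and Lemma~\ref{11.1}, the componentwise application of Proposition~\ref{11.3}, and fpqc descent along $\Spec W\to\Spec\Zp$). One small typo: ``$\Lambda_v\otimes_W k\otimes_{\Fp} k$'' should read $\Lambda_v\otimes_{\Zp} k$, since $\Lambda_v$ is a $\Zp$-module.
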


\subsection{Ramified case.} 

\label{sec:11.3}
Now we compute the local model $\bfM_{\Lambda_v}$ for the case where
$v$ is 
ramified in $B$. Recall that $\Lambda_v$ is a free
$\calO_{B_v}$-module of rank one together with a perfect $\Zp$-valued 
skew-Hermitian pairing $\psi_v:\Lambda_v\times \Lambda_v\to \Zp$. We
fix an unramified quadratic field extension $L_v\subset B_v$ as in
Section~\ref{sec:41}. Notice that the ring $\calO_{L_v}$ of
integers is 
contained in the unique maximal order $\calO_{B_v}$. 
We choose a presentation $\calO_{B_v}=\calO_{L_v}[\Pi]$ as in
(\ref{eq:41}) and (\ref{eq:42}).

Let
$\calO_{L^{\rm nr}_v}$ denote the maximal \'etale extension over $\Zp$
in $L_v$, and put $J_v:=\Hom_{\Zp}( \calO_{L^{\rm nr}_v},W)$. 
Let ${\rm pr}:J_v\to I_v$ be the restriction map 
from $\calO_{L_v^{\rm
    nr}}$ to $\calO_v^{\rm nr}$; this is a two-to-one map.
We have the decomposition
\[ \Lambda_v \otimes_{\Zp} W=\oplus_{\sigma\in I_v} \Lambda_\sigma,
\quad \Lambda_\sigma=\Lambda_\tau\oplus \Lambda_{\tau'}    \]
where $\{\tau, \tau'\}={\rm pr}^{-1}(\sigma)$ and $\Lambda_\tau$
(resp.~$\Lambda_{\tau'}$) is the $\tau$-component
(resp.~$\tau'$-component) of $\Lambda_v$. Notice that the pairing
$\psi_\sigma$ induces a perfect pairing
\[ \psi_\sigma: \Lambda_{\tau}\times \Lambda_{\tau'}\to W. \]
Let 
\[ \psi'_\sigma:\Lambda_{\tau}\times \Lambda_{\tau'}\to W_\sigma \]
be the unique $W_\sigma$-valued 
$W_\sigma$-bilinear pairing such that $\psi_\sigma(x,y)=\Tr
[\delta_\sigma \cdot \psi'_v(x,y)]$, where $\delta_\sigma$ is a generator of the inverse different $\calD^{-1}_{W_\sigma/W}$. 

The local model $\bfM_{\Lambda_\sigma}$ over $\Spec W$ parametrizes
the $W_\sigma\otimes_W \calO_S$-submodules
\[ \scrF_v=\scrF_\tau \oplus \scrF_{\tau'}\subset (\Lambda_\tau \oplus
\Lambda_{\tau'})\otimes \calO_S \]
 such that 
\begin{itemize}
\item [(i)] $\scrF_\tau$ and $\scrF_{\tau'}$ are locally free
  $\calO_S$-modules of rank $e$ and they are locally direct summands of
  $\Lambda_\tau \otimes \calO_S$ and $\Lambda_{\tau'} \otimes
  \calO_S$, respectively;

\item [(ii)] $\Pi(\scrF_\tau)\subset \scrF_{\tau'}$ and 
$\Pi(\scrF_{\tau'})\subset \scrF_{\tau}$;
\item [(iii)] $\psi_\sigma(\scrF_\tau,\scrF_{\tau'})=0$.
\end{itemize}

As $\scrF_\tau$ and $\scrF_{\tau'}$ are of rank $e$, condition 
(iii) says that
one is the orthogonal complement of the other and hence one submodule 
determines the other.  

We check that $\psi_\sigma(\scrF_\tau,\scrF_{\tau'})=0$ if
and only if $\psi'_\sigma(\scrF_\tau,\scrF_{\tau'})=0$. As
$\calD^{-1}=\calD^{-1}_{W_\sigma/W}$ is the largest
$W_\sigma$-submodule in 
$W_\sigma[1/p]$ such that $\tr(\calD^{-1})\subset W$,
one has $\tr(\pi^{-1} \calD^{-1})=p^{-1}W$. So $\tr
(\pi^{e-1} \calD^{-1})=W$. Consider the structure 
map $\phi:W\to \calO_S$. If $\ker \phi=0$, then 
$\psi'_\sigma(\scrF_\tau,\scrF_{\tau'})\neq 0$ implies 
$\psi_\sigma(\scrF_\tau,\scrF_{\tau'})\neq 0$. Suppose $\ker \phi=p^r
W$. If $\psi'_\sigma(\scrF_\tau,\scrF_{\tau'})\neq 0$, then 
\[ \delta_\sigma 
 \psi'_\sigma(\scrF_\tau,\scrF_{\tau'})\supset p^{r-1} \pi^{e-1}
\calD^{-1} \otimes_W \calO_S \]
Taking the trace one gets
\[ \psi_\sigma(\scrF_\tau,\scrF_{\tau'})\supset p^{r-1} 
W \otimes_W \calO_S \neq 0. \]
This verifies the assertion. 

By Lemma~\ref{42} we can choose a $W_\sigma$-basis
$x_1,x_2$ for $\Lambda_{\tau}$ and a 
$W_\sigma$-basis $x'_1,x'_2$ for $\Lambda_{\tau'}$ such that
 
\begin{equation}
  \label{eq:11.01}
  \psi_\sigma'(x_i, x'_j)=\delta_{i,j}, \quad \text{for}\quad 1\le
i,j\le 2
\end{equation}
and 
\begin{equation}
  \label{eq:11.02}
  \Pi(x_1)=x_1', \quad \Pi(x_2)=-\pi x'_2, \quad \Pi(x'_1)=-\pi x_1,
  \quad \Pi(x'_2)=x_2.
\end{equation}

Put $\ol \Lambda_{\tau}:= \Lambda_{\tau}/p\Lambda_{\tau}$ and 
$\ol \Lambda_{\tau'}:= \Lambda_{\tau'}/p\Lambda_{\tau'}$
Write $\bar x_i$ or $\bar x'_i$ for the image of $x_i$ or $x'_i$ in 
$\ol \Lambda_{\tau}$ or $\ol \Lambda_{\tau'}$, respectively.
Let $\scrF_\sigma=\scrF_\tau \oplus \scrF_{\tau'}$ be an object in
$\bfM_{\Lambda_\sigma}(k)$. One has 
\[ \ol \Lambda_{\tau}/\scrF_\tau \simeq k[\pi]/(\pi^{e_1})\oplus
k[\pi]/(\pi^{e_2}) \]
as $k[\pi]/(\pi^e)$-modules for some integers $e_1, e_2$ with $0\le
e_1\le e_2\le e$ and $e_1+e_2=e$; the pair $(e_1,e_2)$ is called the
{\it Lie type} of $\scrF_\tau$. It is easy to see that $\scrF_{\tau'}$
has the same Lie type as $\scrF_\tau$. The {\it reduced Lie type} of
  $\scrF_\sigma$ is defined to be the Lie type of $\scrF_\tau$. 
We call a reduced Lie type $(e_1,e_2)$ 
of an object $\scrF_\sigma$ {\it minimal}
if $e_2-e_1\in \{0,1\}$. 

Similar to the unramified case, we can write
\[ \scrF_\tau={\rm Span} \{\pi^{e_1} \bar y_1, \pi^{e_2} \bar y_2 \}, \]
where $\bar y_1$ and $\bar y_2$ are in one of the following cases
\begin{itemize}
\item [(a)] $\bar y_1=\bar x_1+t \bar x_2$ and $\bar y_2=\bar x_2$, or
\item [(b)] $\bar y_1=t \bar x_1+\bar x_2$ and $\bar y_2=\bar x_1$, 
\end{itemize}
where $t\in k[\pi]/(\pi^e)$.

In case (a), we compute 
\[ \scrF_{\tau'}={\rm Span} \{\pi^{e_1} (t \bar x'_1-\bar x'_2),
\pi^{e_2} \bar x'_1 \}. \]
As $\scrF_\tau$ and $\scrF_{\tau'}$ are orthogonal to each other, 
condition (ii) is equivalent to 
\begin{equation}
  \label{eq:11.1}
 \psi'_\sigma(\scrF_\tau,\Pi \scrF_\tau)=\psi'_\sigma(\Pi \scrF_{\tau'},
 \scrF_{\tau'})=0. 
\end{equation}
This yields the equation 
\begin{equation}
  \label{eq:11.2}
  \pi^{2e_1}(1-t^2 \pi)=0.
\end{equation}
If $e=2c+1$ is odd, then there is no solution to the equation
(\ref{eq:11.2}). If $e=2c$ is even, then a solution to (\ref{eq:11.2}) exists only when $(e_1,e_2)=(c,c)$, and any solution gives rise to the same object  
\begin{equation}
  \label{eq:11.3}
  \scrF_\tau=\pi^c \ol \Lambda_\tau \quad 
\text{and} \quad \scrF_{\tau'}=\pi^c \ol \Lambda_{\tau'}.
\end{equation}

In case (b), we compute 
\[ \scrF_{\tau'}={\rm Span} \{\pi^{e_1} (\bar x'_1-t\bar x'_2),
\pi^{e_2} \bar x'_2 \}. \]
The condition (\ref{eq:11.1}) yields the following equation
\begin{equation}
  \label{eq:11.4}
  \pi^{2e_1}(t^2-\pi)=0. 
\end{equation}
If $e=2c$ is even, then a solution to \eqref{eq:11.4} exists only when $(e_1,e_2)=(c,c)$. In this case we only get the object
$\scrF_\sigma$ as in (\ref{eq:11.3}).
If $e=2c+1$ is odd, then to have a solution we must have $(e_1,e_2)=(c,c+1)$ and we only get the object
\begin{equation}
  \label{eq:11.5}
  \scrF_\tau={\rm Span} \{\pi^{c} \bar x_2, \pi^{c+1} \bar x_1 \}
\quad \text{and} \quad \scrF_{\tau'}={\rm Span} \{\pi^{c} \bar x'_1,
\pi^{c+1} \bar x_2' \}. 
\end{equation}

\begin{prop}\label{11.5}
  Notations being as above, assume that $v$ is ramified in
  $B$, and let $\sigma\in I_v$. 
\begin{enumerate}
  \item If $e=2c$ is even, then $\bfM_{\Lambda_\sigma}(k)$ consists of
    the single $k[\pi]/(\pi^e)$-submodule
    $\scrF_\sigma=\scrF_\tau\oplus \scrF_{\tau'}$ with 
\[  \scrF_\tau=\pi^c \ol \Lambda_\tau \quad 
\text{and} \quad \scrF_{\tau'}=\pi^c \ol \Lambda_{\tau'}. \]

\item If $e=2c+1$ is odd, then  $\bfM_{\Lambda_\sigma}(k)$ consists of
    the single $k[\pi]/(\pi^e)$-submodule
    $\scrF_\sigma=\scrF_\tau\oplus \scrF_{\tau'}$ with 
\[   \scrF_\tau={\rm Span} \{\pi^{c} \bar x_2, \pi^{c+1} \bar x_1 \}
\quad \text{and} \quad \scrF_{\tau'}={\rm Span} \{\pi^{c} \bar x'_1,
\pi^{c+1} \bar x_2' \}, \]
where the bases $\{x_i\}$ and $\{x'_i\}$ are chosen as 
in (\ref{eq:11.01}) and (\ref{eq:11.02}). \qed
\end{enumerate}
\end{prop}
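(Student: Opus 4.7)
The plan is to turn the moduli problem into a finite calculation in the artinian ring $k[\pi]/(\pi^e)$ by fixing the explicit basis provided by Lemma~\ref{42}. In such a basis, both the $\Pi$-action on $\ol\Lambda_\sigma = \ol\Lambda_\tau \oplus \ol\Lambda_{\tau'}$ and the pairing $\psi'_\sigma$ are given by the standard formulas (\ref{eq:11.01})--(\ref{eq:11.02}). Since $\scrF_\tau$ and $\scrF_{\tau'}$ are each $k[\pi]/(\pi^e)$-submodules of $k$-dimension $e$ inside ambient spaces of $k$-dimension $2e$, condition~(iii) forces $\scrF_{\tau'}$ to be the orthogonal complement of $\scrF_\tau$ under $\psi'_\sigma$; hence the classification reduces to determining the admissible $\scrF_\tau$, and the $\Pi$-stability condition~(ii) is equivalent to the internal constraint $\psi'_\sigma(\scrF_\tau, \Pi\scrF_\tau) = 0$.

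Next, I would stratify candidate $\scrF_\tau$'s by their Lie type $(e_1, e_2)$ with $e_1 + e_2 = e$ and $e_1 \le e_2$, and on each stratum bring $\scrF_\tau$ into one of two row-reduced normal forms, each parametrised by a single element $t \in k[\pi]/(\pi^{e-2e_1})$: either $\scrF_\tau = {\rm Span}\{\pi^{e_1}(\bar x_1 + t \bar x_2),\ \pi^{e_2} \bar x_2\}$, or $\scrF_\tau = {\rm Span}\{\pi^{e_1}(t\bar x_1 + \bar x_2),\ \pi^{e_2} \bar x_1\}$. Applying $\Pi$ via (\ref{eq:11.02}) and then pairing against $\scrF_\tau$ via (\ref{eq:11.01}) collapses the stability condition into a single polynomial equation in $t$ and $\pi$ on each chart: in the first chart the equation is essentially $\pi^{2e_1}\cdot(\text{unit}) = 0$, and in the second it has the form $\pi^{2e_1}(t^2 - \pi) = 0$.

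The decisive step, where the parity of $e$ enters, is to read off the $k$-points from these equations. In the first chart, the unit factor forces $2e_1 \ge e$, which combined with $e_1 \le e_2$ demands $e = 2c$ even, $(e_1,e_2)=(c,c)$, and (since then $t$ lives mod $\pi^0$) $t = 0$; this gives $\scrF_\tau = \pi^c \ol\Lambda_\tau$. In the second chart the condition $t^2 \equiv \pi \pmod{\pi^{e - 2e_1}}$ must hold; because $\pi$ has odd $\pi$-adic order, this equation has no solution once $e - 2e_1 \ge 2$, so either $e = 2c$ even with $(e_1,e_2)=(c,c)$ and $t=0$ (recovering $\pi^c \ol\Lambda_\tau$), or $e = 2c+1$ odd with $(e_1,e_2) = (c,c+1)$ and $t=0$, yielding the unique submodule listed in (\ref{eq:11.5}). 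The main subtlety will be the redundancy between the two charts and ensuring no exotic solutions are missed; in the even case, though, both charts produce the same submodule $\pi^c \ol\Lambda_\tau$, so uniqueness follows without further bookkeeping, and the proposition drops out.
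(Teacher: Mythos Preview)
Your proposal is correct and follows essentially the same argument as the paper: fix the basis from Lemma~\ref{42}, observe that condition~(iii) makes $\scrF_{\tau'}$ the orthogonal complement of $\scrF_\tau$, reduce condition~(ii) to the single isotropy constraint $\psi'_\sigma(\scrF_\tau,\Pi\scrF_\tau)=0$, put $\scrF_\tau$ into the two row-reduced normal forms indexed by Lie type, and solve the resulting equations $\pi^{2e_1}(\text{unit})=0$ and $\pi^{2e_1}(t^2-\pi)=0$ in $k[\pi]/(\pi^e)$. The paper records the first-chart factor explicitly (it is $1-t^2\pi$, hence a unit), but your abstraction to ``unit'' loses nothing, and your parity analysis of the second chart matches the paper's exactly.
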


In particular, only the minimal reduced Lie type can occur in the space
$\bfM_{\Lambda_\sigma}(k)$.

\begin{prop}\label{11.6}  
  Assume that $v$ is ramified in
  $B$, and let $\sigma\in I_v$. The structure morphism
    $f:\bfM_{\Lambda_\sigma}\to \Spec W$ is finite and flat.  
\end{prop}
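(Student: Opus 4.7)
The plan is to mimic the two-step argument of Proposition~\ref{11.3}. For finiteness, $\bfM_{\Lambda_\sigma}$ is a closed subscheme of a relative Grassmannian over $\Spec W$, so $f$ is projective; by Proposition~\ref{11.5} its closed fibre consists of a single reduced $k$-point, so $f$ is quasi-finite on the closed fibre. Upper semi-continuity of fibre dimension for projective morphisms forces the generic fibre to be zero-dimensional as well, and a projective quasi-finite morphism is finite.

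For flatness over the DVR $W$, following Proposition~\ref{11.3} it suffices to lift the unique $k$-point of the closed fibre to an object over an integral domain $R'$ with residue field $k$ and characteristic-zero fraction field $K'$. I would produce the lift by first constructing an explicit $K'$-point and then invoking the valuative criterion of properness. Take $K'$ so that $\sigma(P(T))$ splits over $K'$ as $\prod_{i=1}^{e}(T-\pi_i)$ and every $\pi_i$ has a square root in $K'$; then $W_\sigma\otimes_W K'\simeq\prod_{i=1}^{e}K'$ and $\Lambda_\tau\otimes K'\simeq \bigoplus_i(K' x_1^{(i)}\oplus K'x_2^{(i)})$, with $\Pi$ acting invertibly on the $i$-th factor via (\ref{eq:11.02}). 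In the $i$-th factor set $c_i:=\pi_i^{-1/2}$ and
\[
 \scrF_\tau^{(i)}:=K'\bigl(x_1^{(i)}+c_i\, x_2^{(i)}\bigr),\qquad \scrF_{\tau'}^{(i)}:=\Pi\,\scrF_\tau^{(i)}=K'\bigl((x_1')^{(i)}-c_i\pi_i(x_2')^{(i)}\bigr).
\]

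A direct computation using (\ref{eq:11.01}) yields $\psi'_\sigma(\scrF_\tau^{(i)},\scrF_{\tau'}^{(i)})=1-c_i^2\pi_i=0$, so the isotropy condition (iii) of Subsection~\ref{sec:11.3} holds; condition (ii) is automatic from $\Pi^2=-\pi$ (whence $\Pi\scrF_{\tau'}^{(i)}=-\pi_i\scrF_\tau^{(i)}\subset\scrF_\tau^{(i)}$), and (i) holds by construction. Assembling over all $i$ gives a $K'$-point of $\bfM_{\Lambda_\sigma}$. Letting $R'$ be the ring of integers of $K'$, the valuative criterion applied to the projective $f$ extends this to an $R'$-point of $\bfM_{\Lambda_\sigma}$, whose reduction is necessarily the unique $k$-point of Proposition~\ref{11.5}. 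Hence the coordinate ring is $W$-torsion free, and $f$ is flat.

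The main obstacle is the explicit construction of $\scrF_\tau^{(i)}$: the isotropy identity $c_i^2=\pi_i^{-1}$ forces one to pass to a square-root extension of $B(k)$, but after this the verification is routine. The odd and even cases proceed identically at the generic-fibre level; the asymmetric Lie type $(c,c+1)$ of Proposition~\ref{11.5}(2) appears only in the reduction modulo $\pi_{R'}$, not in the $K'$-point.
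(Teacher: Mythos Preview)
Your argument is correct. The finiteness step matches the paper's exactly: projective plus quasi-finite (via Proposition~\ref{11.5}) implies finite.

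For flatness, the paper takes a shorter route than you do. Rather than constructing an explicit characteristic-zero point, the paper simply observes that since $\bfM_{\Lambda_\sigma}(k)$ is a singleton, the specialization map
\[
{\rm sp}:\bfM_{\Lambda_\sigma}(B(k)^{\rm alg})\to \bfM_{\Lambda_\sigma}(k)
\]
is automatically surjective (granted that the source is non-empty), so the unique $k$-point lifts and the coordinate ring is $W$-torsion-free. Your route is more constructive: you exhibit a $K'$-point directly by passing to a splitting extension admitting square roots of the $\pi_i$, and then invoke the valuative criterion. The computation $\psi'_\sigma(\scrF_\tau^{(i)},\scrF_{\tau'}^{(i)})=1-c_i^2\pi_i=0$ is correct, and your observation that the odd/even distinction of Proposition~\ref{11.5} is invisible at the generic-fibre level is a nice remark. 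What your approach buys is an explicit verification that the generic fibre is non-empty, which the paper leaves implicit; what the paper's approach buys is brevity, since with a singleton special fibre one does not actually need to identify which generic point specializes to it.
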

\begin{proof}
  As $f$ is projective and quasi-finite (Proposition~\ref{11.5}), the
  morphism $f$
  is finite. Let $B(k)^{\rm alg}$ be an algebraic closure of the
  fraction field $B(k)={\rm Frac}(W)$. Since
  $\bfM_{\Lambda_\sigma}(k)$ consists of one element, the
  specialization map 
\[ {\rm sp}:\bfM_{\Lambda_\sigma}(B(k)^{\rm alg}) \to
  \bfM_{\Lambda_\sigma}(k) \]
is surjective. Therefore, any (the unique) object in
  $\bfM_{\Lambda_\sigma}(k)$ can be lifted to \ch zero. This shows
  that the coordinate ring of $\bfM_{\Lambda_\sigma}$ is torsion free
  and hence $f$ is flat. \qed 
\end{proof}


\begin{thm}\label{11.7}
  Suppose $v$ is ramified in $B$. The structure morphism
    $\bfM_{\Lambda_v}\to \Spec \Zp$
is finite and flat. 
\end{thm}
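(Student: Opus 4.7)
The plan is to reduce Theorem~11.7 to Proposition~11.6 by faithfully flat descent along $\Spec W\to\Spec\Zp$, where as before $W=W(k)$ with $k$ an algebraically closed field of characteristic $p$. By the isomorphism recalled in Subsection~11.1,
\[ \bfM_{\Lambda_v}\otimes_{\Zp} W \;\simeq\; \prod_{\sigma\in I_v}\bfM_{\Lambda_\sigma}, \]
where the product is a fiber product over $\Spec W$ and the index set $I_v=\Hom_{\Zp}(\calO_v^{\rm nr},W)$ is finite. Proposition~11.6 asserts that each factor $\bfM_{\Lambda_\sigma}\to\Spec W$ is finite and flat; a finite fiber product of finite flat $W$-schemes is again finite and flat, so $\bfM_{\Lambda_v}\otimes_{\Zp} W$ is finite and flat over $\Spec W$.

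To deduce the statement over $\Spec\Zp$, I would first note that $f$ is projective by construction, as $\bfM_{\Lambda_v}$ is cut out inside a Grassmannian over $\Zp$ by the isotropy, $\calO_{B_v}$-invariance, and determinant conditions; in particular $f$ is proper. Next, the fibers of $f$ are finite, which can be checked after the faithfully flat base change $\Spec W\to\Spec\Zp$, where the base-changed morphism is finite by the preceding paragraph. A proper morphism with finite fibers is finite, so $f$ is finite. For flatness, since $\bfM_{\Lambda_v}\otimes_{\Zp} W$ is flat over $W$ and $W$ is faithfully flat over $\Zp$, descent of flatness along $\Spec W\to\Spec\Zp$ gives flatness of $f$.

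The substantive geometric work has already been carried out in Proposition~11.6 — namely the identification of the unique point of $\bfM_{\Lambda_\sigma}(k)$ in both the even-$e$ and odd-$e$ cases, and the verification via the surjectivity of the specialization map that it lifts to characteristic zero. I therefore do not expect any real obstacle in this final step: Theorem~11.7 is a formal reassembly across the finite set of embeddings $I_v$ combined with standard faithfully flat descent from $W$ to $\Zp$.
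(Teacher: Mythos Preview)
Your proposal is correct and matches the paper's approach: the paper's proof simply says ``This follows from Proposition~11.6 immediately,'' relying on exactly the decomposition $\bfM_{\Lambda_v}\otimes_{\Zp} W \simeq \prod_{\sigma\in I_v}\bfM_{\Lambda_\sigma}$ from Subsection~11.1 that you spell out. Your explicit account of the faithfully flat descent step is a welcome unpacking of what the paper leaves implicit.
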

\begin{proof}
  This follows from Proposition~\ref{11.6} immediately. \qed
\end{proof}

\subsection{Flatness of $\bfM_\Lambda$.}
\label{sec:11.4}

\begin{thm}\label{11.8}
  Let $\Lambda$ be a free unimodular skew-Hermitian 
  $O_B\otimes \Zp$-module of rank one and let $\bfM_\Lambda$ be the
  associated local model. The structure morphism $\bfM_\Lambda\to
  \Spec \Z_p$ is finite and flat.  
\end{thm}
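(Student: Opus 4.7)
The plan is to reduce Theorem~\ref{11.8} to the already-established local statements at each place $v$ of $F$ above $p$. Specifically, I would exploit the decomposition
\[ \bfM_{\Lambda} = \prod_{v|p} \bfM_{\Lambda_v}, \]
where the product is the fiber product over $\Spec \Zp$, arising from the decomposition $O_B\otimes_\Z \Zp = \prod_{v|p} \calO_{B_v}$ discussed at the start of Section~\ref{sec:11.1}. This decomposition expresses $\bfM_\Lambda$ as a finite fiber product of the local factors $\bfM_{\Lambda_v}$.

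The next step is to invoke the place-by-place results already proved. At an unramified place $v$ of $B$, Theorem~\ref{11.4}(2) gives that $\bfM_{\Lambda_v}\to \Spec \Zp$ is finite and flat; at a ramified place $v$ of $B$, Theorem~\ref{11.7} does the same. So each factor in the fiber product is finite flat over $\Spec \Zp$.

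It then remains to observe that a finite fiber product (over $\Spec \Zp$) of finite flat $\Zp$-schemes is again finite and flat. Flatness is preserved under base change and tensor products of flat modules are flat, so $\bigotimes_{v|p}$ of the coordinate rings (each a finite flat $\Zp$-algebra) is a finite flat $\Zp$-algebra; finiteness follows because a tensor product of finitely generated $\Zp$-algebras that are themselves finite as $\Zp$-modules is finite as a $\Zp$-module. Thus $f:\bfM_\Lambda \to \Spec \Zp$ is finite and flat, completing the proof.

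There is no real obstacle here; the entire content of Theorem~\ref{11.8} has been carried out in the preceding subsections via the case-by-case local analysis (the actual work being the explicit determination of $\bfM_{\Lambda_\sigma}(k)$ and the lifting constructions in Propositions~\ref{11.3} and \ref{11.6}). The present theorem merely packages these local results together using the product decomposition of the skew-Hermitian $O_B\otimes \Zp$-lattice $\Lambda$.
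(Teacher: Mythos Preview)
Your proposal is correct and matches the paper's own proof, which simply says ``This follows from Theorems~\ref{11.4} and \ref{11.7}.'' You have merely made explicit the product decomposition $\bfM_\Lambda = \prod_{v|p} \bfM_{\Lambda_v}$ from Subsection~\ref{sec:11.1} and the elementary fact that a fiber product of finite flat $\Zp$-schemes is finite flat, which is exactly what the one-line proof in the paper is invoking.
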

\begin{proof}
  This follows from Theorems~\ref{11.4} and \ref{11.7}. \qed
\end{proof}


\begin{thm}\label{10.1}
  The moduli scheme $\calMpK\to \Spec \Z_{(p)}$ 
  is flat and every
  connected component is 
  projective and of relative dimension zero. \qed
\end{thm}

\section{More constructions of \dieu modules}
\label{sec:12}

In this section we handle two technical problems that arise from the
results of previous sections. Keep the notation and assumptions 
of Section~\ref{sec:11}.

\subsection{\dieu $\calO_\bfB$-modules with given Lie type}
\label{sec:12.1}
In \eqref{eq:10.5} we introduced the map 
$\alpha: {\rm Dieu}^{O_B\otimes \Zp}_m(k)\to \calG(k)\backslash \bfM_\Lambda(k)$. 
For each place $v|p$ of $F$, let ${\rm Dieu}^{\calO_{B_v}}_1(k)$ 
denote the set of
isomorphism classes of separably quasi-polarized \dieu $\calO_{B_v}$-modules of $W$-rank $4d_v$
satisfying the determinant condition, where $d_v=[F_v:\Qp]$.


Suppose $M=\oplus_{v|p}M_v$ is a \dieu $O_F\otimes \Zp$-module of ($W$-)rank $4d$ such that $\rank_W M_v=4d_v$.
Put $I=\coprod_{v|p} I_v$.
For each place $v|p$ and $i\in I_v$, let $e_{i,1}\le e_{i,2}\le  e_{i,3}\le e_{i,4}$ be the integers
such that  
\[ (M_v/\sfV M_v)^i\simeq k[\pi]/(\pi^{e_{i,1}})
\oplus k[\pi]/(\pi^{e_{i,2}}) \oplus k[\pi]/(\pi^{e_{i,3}})
\oplus k[\pi]/(\pi^{e_{i,4}}), \]
where $(M_v/\sfV M_v)^i$ denotes the $i$-component of $M_v/\sfV M_v$.
We define the {\it Lie type} of $M_v$ by 
\begin{equation}
  \label{eq:12.4}
  \ul e(M_v):=( \ul e_i)_{i\in I_v},\quad \ul e_i:=(e_{i,1},e_{i,2}, 
   e_{i,3},e_{i,4}). 
\end{equation}
The {\it Lie type} of $M$ is defined by
\begin{equation}
  \label{eq:12.5}
  \ul e (M):=(\ul e (M_v))_{v|p}.
\end{equation}
For two \dieu $O_F\otimes \Zp$-modules $M_1$ and $M_2$, 
one has $M_1/\sfV M_1\simeq  M_2/\sfV M_2$ as 
$O_F\otimes k$-modules if and only if $\ul e(M_1)=\ul e(M_2)$. 
If $(M_v, \<\ , \, \>)\in {\rm Dieu}^{\calO_{B_v}}_1(k)$, then 
$\ul e_i=(e_{i,1},e_{i,1},e_{i,2}, e_{i,2})$ for two  
integers $e_{i,1}$ and $e_{i,2}$ with $0\le e_{i,1}\le e_{i,2}\le e_v$ 
and $e_{i,1}+e_{i,2}=e_v$ for each $i\in I_v$, 
In this case we define the {\it reduced Lie type of $M_v$} 
and that of $M$, respectively, by 
\begin{equation}
  \label{eq:12.55}
  \ul e^r(M_v):=((e_{i,1}, e_{i,2})))_{i\in I_v} \quad \text{and \ } 
\ul e^r(M):=(\ul e^r (M_v))_{v|p}.
\end{equation}
 


\begin{prop}\label{12.1}
  The map $\alpha: {\rm Dieu}^{O_B\otimes \Zp}_1(k)\to \calG(k)\backslash   
  \bfM_\Lambda(k)$ is surjective. 
\end{prop}

\begin{proof}
  It suffices to show the surjectivity of the map 
  \begin{equation}
    \label{eq:12.6}
   \alpha_v: {\rm Dieu}^{\calO_{B_v}}_1(k)\to
  \calG_v(k)\backslash \bfM_{\Lambda_v}(k) 
  \end{equation}
  for each place $v|p$. 
  The target orbit space in (\ref{eq:12.6}) 
  is classified by the reduced Lie types of the
  objects (Theorem~\ref{11.4} and Proposition~\ref{11.5}). 
  When $v$ is unramified in $B$, this consists of tuples
  $(e_{i,1}, e_{i,2})_{i\in I_v}$ of pairs of integers
  with $0\le e_{i,1}\le e_{i,2}\le e_v$ and
  $e_{i,1}+e_{i,2}=e_v$. When $v$ is ramified in $B$, this
  consists of the single tuple $((c,e_v-c))_{i\in I_v}$, where
  $c:=\lfloor e_v/2 \rfloor$. 

  In the ramified case, by Theorem~\ref{72}
  there is a separably quasi-polarized \dieu $\calO_{B_v}$-module $M$
  with the determinant condition whose Lie type is the minimal one, that
  is, $(M/\sfV M)^j\simeq k[\pi]/(\pi^c)\oplus k[\pi]/(\pi^{e_v-c})$ for
  all $j\in \Z/2f_v\Z$. So one has the surjectivity of $\alpha_v$. 
  
  It remains to treat the 
  unramified case.
  We need to write down a separably
  anti-quasi-polarized \dieu $\calO_v$-module $M_1$ of rank $2d_v$ such
  that the Lie type $\ul e(M_1)$ of $M_1$ is equal to
  the given one $ ((e_{i,1},e_{i,2}))_{i\in I_v}$. 
  Fix an identification 
  $I_v\simeq \Z/f_v\Z$. Let $M_1=\oplus_{i\in \Z/f_v\Z}
  M_1^i$, where each $M_1^i$ is a free rank two  
  $W^i$-module generated by two
  elements $X_i$ and $Y_i$. For each $i\in \Z/f_v\Z$, let $(\,,\, ):M_1^i\times M_1^i\to W$ be the pairing that satisfies 
\begin{equation}\label{eq:81}
 (X_i, \pi^{e-1} Y_i)=1\quad \text{and}\  (X_i, \pi^{b} Y_i)=0,
 \quad 
 \forall\ 0\le b \le e-2  
\end{equation}
and $(ax,y)=(x,ay)$ for $a\in \calO$ and $x,y\in M^i_1$.
  Define
  the Verschiebung map $\sfV :M_1^{i+1}\to M_1^i$ by 
  \begin{equation}
    \label{eq:12.7}
    \sfV  X_{i+1}=\pi^{e_{i,1}} X_i, \quad \sfV  
    Y_{i+1}=p \pi^{e_v-e_{i,2}}
    Y_i.  
  \end{equation}
  It is easy to show that $(\sfV X,\sfV Y)=p(X,Y)^{\sigma^{-1}}$ for $X,Y\in
  M_1$ and that the Lie type $\ul e(M_1)$ of $M_1$ is equal to 
  $(e_{i,1},e_{i,2})_{i\in I_v}$. Therefore,  
  one has the surjectivity of $\alpha_v$. \qed
\end{proof}

\begin{remark}\label{12.2}
  The \dieu module $M_1$ constructed in the proof of
  Proposition~\ref{12.1} has the slope sequence
  \begin{equation}
    \label{eq:12.8}
  \nu(M_1)=\left \{
  \left ( \frac{\sum_{i} e_{i,1}}{d_v} \right )^{d_v},\left
  (\frac{\sum_{i} e_{i,2}}{d_v} \right )^{d_v} \right \}.  
  \end{equation}
This exhausts all possible slope sequences that can occur in
Corollary~\ref{76} in the case where $v$ is unramified in $B$.
\end{remark}

\subsection{Slope sequences of \dieu
$\calO_\bfB$-modules: a refinement}
\label{sec:12.2}
Our goal is to determine precisely slope sequences 
that arise from separably quasi-polarized \dieu $O_B\otimes \Zp$-modules 
satisfying the determinant condition (for $m=1$). 
This problem is local. So it suffices to treat 
\dieu $\calO_{B_v}$-modules for each place $v$ lying over
$p$. To simplify the notation as we did in 
Sections~\ref{sec:04}--\ref{sec:09}, 
we write $\bfB$, $\bfF$ etc.\ for $B_v$, $F_v$ etc.\ and
drop the subscript $v$ from our notation.   

Theorem~\ref{73} determines exactly 
all possible slope sequences for 
separably polarized \dieu $\calO_\bfB$-modules of rank
$4dm$, in particular for the present case of rank $4d$
(Corollary~\ref{76}). 
Recall that $d,e,f$ denote the degree, ramification index and
inertia degree of $\bfF$, respectively. 
The following result refines Corollary~\ref{76} 
for those \dieu modules in addition satisfying the determinant condition.

\begin{thm}\label{12.3} \ 
\begin{enumerate}
\item Suppose that $\bfB$ is the $2\times 2$ matrix algebra. 
Let $\nu$ be a slope
  sequence as follows:
\begin{equation}
  \label{eq:12.9}
   \nu =\left \{\left (\frac{1}{2}\right )^{4d}\right \}, \quad
  \text{or} \quad  \nu=\left \{\left (\frac{a}{d}\right )^{2d}, \left
 (\frac{d-a}{d}\right )^{2d}\right \}  
\end{equation}
for an integer $a$ with $0\le a< d/2$. Then there exists a separably
quasi-polarized \dieu $\calO_\bfB$-module $M$ of rank $4d$ satisfying the
determinant condition such that $\nu(M)=\nu$. 
\item Suppose that $\bfB$ is the quaternion division algebra. 
Suppose $M$ is a separably quasi-polarized \dieu $\calO_\bfB$-module of
rank $4d$ satisfying the determinant condition. Then 
\begin{equation}
  \label{eq:12.10}
   \nu(M) =\left \{\left (\frac{1}{2}\right )^{4d}\right \}, \quad
  \text{or} \quad  \nu(M)=\left \{\left (\frac{a}{2d}\right )^{2d}, \left
 (\frac{2d-a}{2d}\right )^{2d}\right \},  
\end{equation}
for an odd integer $a$ with $2\lfloor e/2\rfloor f \le a< d$.
Conversely, if $\nu$ is a slope sequence as in (\ref{eq:12.10}), then
there exists a separably
quasi-polarized \dieu $\calO_\bfB$-module $M$ of rank $4d$ satisfying the
determinant condition such that $\nu(M)=\nu$. 
\end{enumerate}
\end{thm}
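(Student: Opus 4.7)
The matrix algebra case is essentially immediate from results already in hand. Theorem~\ref{73} (with the supersingular case supplied by Theorem~\ref{72}) realizes each Newton polygon in \eqref{eq:12.9} by a separably quasi-polarized \dieu $\calO_\bfB$-module of rank $4d$, and Remark~\ref{54} asserts that when $\bfB$ is the matrix algebra every such $M$ automatically satisfies (K). Hence (1) requires nothing new.

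\textbf{Part (2), existence.} For the supersingular sequence I invoke Theorem~\ref{72}. For a non-supersingular sequence $\nu = \{(a/2d)^{2d}, ((2d-a)/2d)^{2d}\}$ with odd $a$ in the prescribed range, note that $2[e/2]f \le a < d$ forces $e$ odd; write $e = 2c+1$ and $a = 2cf + 2r+1$ with $2r+1 < f$. The plan is to build $M$ by a controlled perturbation of the superspecial module of Subsection~\ref{sec:82}. Work in an $\calO_\bfB \otimes W$-free module of rank one with generator giving $W^i$-bases $\{e^j, \Pi e^{j+f}\}$ of each $M^j$; since $F\Pi = \Pi F$, the Frobenius takes the shape
\[
F_j = \begin{pmatrix} \alpha_j & -\pi \beta_{j+f} \\ \beta_j & \alpha_{j+f} \end{pmatrix},
\]
and I will choose $\alpha_j, \beta_j \in W^i$ satisfying $\ord_\pi(\alpha_j \alpha_{j+f} + \pi \beta_j \beta_{j+f}) = e$ (so that $c_j = e$ and (K) holds by Lemma~\ref{52}). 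The perturbation parameter $r$ enters by raising the $\pi$-valuation of $\alpha_j$ in $2r+1$ positions of $\Z/2f\Z$ relative to the superspecial template $(\alpha_j, \beta_j) = (0, \pi^c)$, so that the iterated Frobenius $F^{2f}$ on $M^0$ shifts its Newton polygon away from supersingular to one with smallest slope $a/(2d)$. The separable $\calO_\bfB$-linear quasi-polarization is then constructed by the same Hensel-adapted recipe as \eqref{eq:813}, with units $u_j, v_j$ chosen to match the perturbed $\alpha_j, \beta_j$.

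\textbf{Part (2), constraint.} Conversely, assume $M$ is separably quasi-polarized of rank $4d$, satisfies (K), and is non-supersingular, with $\nu(M) = \{(a/2d)^{2d}, ((2d-a)/2d)^{2d}\}$ (Corollary~\ref{76}). By Proposition~\ref{56}(2), $M$ is free of rank one over $\calO_\bfB \otimes W$; a choice of generator yields the bases and the matrix form of $F_j$ above. I then form the $\sigma^f$-semilinear operator $\Phi := \Pi \circ F^f \colon M^0 \to M^0$ on the $W^0$-rank $2$ module $M^0$. A direct check gives $\Phi^2 = -\pi F^{2f}$, and since $F^{2f}$ on $M^0$ has $\pi$-Newton slopes $a$ and $2d - a$, the $\pi$-Newton slopes of $\Phi$ are
\[
\gamma_1 = \tfrac{a+1}{2}, \qquad \gamma_2 = \tfrac{2d-a+1}{2}.
\]
The desired bound $a \ge 2[e/2]f = 2cf$ (with $a$ odd) is equivalent to $\gamma_1 \ge cf+1$. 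Computing $\Phi$ in the basis $\{e^0, \Pi e^f\}$, one finds
$\Phi = \left(\begin{smallmatrix} -\pi B & -\pi A' \\ A & -\pi B' \end{smallmatrix}\right)$,
with $A, B, A', B'$ obtained by iterating $F^f$, so three of the four entries are already divisible by $\pi$; Mazur's inequality reduces the claim to $d_1 \ge cf+1$, where $d_1$ is the smallest $\pi$-elementary divisor of $\Phi$.

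\textbf{Main obstacle.} The crux is the sharp lower bound $d_1 \ge cf + 1$. The pointwise estimate $\ord_\pi \alpha_j + \ord_\pi \alpha_{j+f} \ge e$ (extracted from $\ord_\pi(\alpha_j\alpha_{j+f} + \pi\beta_j\beta_{j+f}) = e$) does not by itself bound $\ord_\pi A$, since $A$ is essentially $\prod_{j=0}^{f-1} \alpha_j$ up to $\pi$-corrections and the valuation sum may be concentrated on the opposite half of $\Z/2f\Z$. To close the gap I intend to exploit the duality $\psi(\Pi x, y) = -\psi(x, \Pi y)$ and the $F$-$V$ adjunction $\psi(Fx, y) = \psi(x, Vy)^\sigma$ coming from the separable $\calO_\bfB$-linear quasi-polarization; this forces a symmetric distribution of the $\pi$-valuations between the pair $(\alpha_j, \beta_j)$ and $(\alpha_{j+f}, \beta_{j+f})$, yielding the required lower bound on both $\ord_\pi A$ and on $\ord_\pi A', \ord_\pi B, \ord_\pi B'$. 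A more conceptual alternative, should the direct computation prove refractory, is to invoke the Kottwitz--Rapoport description of the set $B(G,\mu)$ for the local PEL datum at $v$, where $G$ is the unitary similitude group attached to $(\Lambda_v, \psi_v)$ and $\mu$ is the minuscule cocharacter prescribed by (K); in the minimal case this set is finite and explicit, and its image under the Newton map recovers exactly the range $2[e/2]f \le a < d$ together with the supersingular Newton polygon.
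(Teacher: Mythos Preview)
Your Part (1) is correct and arguably cleaner than the paper's: the paper routes through Proposition~\ref{12.1} and Remark~\ref{12.2}, but your appeal to Theorem~\ref{73} plus Remark~\ref{54} (via Lemma~\ref{53}) gets there directly. For the existence half of Part (2) your plan is the right shape; the paper carries it out with explicit matrices for $\Pi_j$, $F_j$, and the pairing (see (\ref{eq:12.11})--(\ref{eq:12.17})), and then computes $F^{2f}$ on $M^0$ to read off the slope---you would need to do the same, but there is no conceptual obstruction.

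The constraint half of Part (2), however, has a genuine gap, and it is exactly the ``main obstacle'' you flag. Your $\Phi=\Pi F^f$ argument needs the lower bound $d_1\ge cf+1$, and the duality/symmetry heuristic you propose (``symmetric distribution of the $\pi$-valuations between $(\alpha_j,\beta_j)$ and $(\alpha_{j+f},\beta_{j+f})$'') does not close it: the $F$--$V$ adjunction and $\Pi$-compatibility relate the matrix of $F_j$ to that of $V_{j+f-1}$ (its adjoint with respect to $\psi$), not to $F_{j+f}$, so you do not get the pairing of valuations you want. Your $B(G,\mu)$ alternative is not available off the shelf either, since the group here is non-connected and the relevant admissible set has not been computed in the paper.

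The paper bypasses all of this with a single observation you are not using: Proposition~\ref{11.5}. By Corollary~\ref{57}, $M\simeq\Lambda\otimes W$ as skew-Hermitian $\calO_\bfB\otimes W$-modules, so $VM/pM\subset M/pM$ defines a point of $\bfM_{\Lambda}(k)$. Proposition~\ref{11.5} says that in the ramified case this local model has a \emph{single} $k$-point, of minimal reduced Lie type $(c,e-c)$ with $c=[e/2]$. Hence for every $j$ the elementary divisors of $V_j$ (and therefore of $F_j$) are $\pi^{c}$ and $\pi^{e-c}$, so $F_j(M^j)\subset\pi^{c}M^{j+1}$, giving $F^{2f}(M)\subset\pi^{2cf}M$ and thus smallest slope $\ge 2cf/2d$. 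Combined with Corollary~\ref{76} this is exactly the bound $a\ge 2[e/2]f$. In short: the constraint is a one-line corollary of the local-model computation already done in Section~\ref{sec:11}, and you should invoke it rather than attempt a direct $\pi$-valuation estimate on iterated Frobenius.
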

\begin{proof}
  (1) This is proved in Proposition~\ref{12.1} and
      Remark~\ref{12.2}. 

  (2) Proposition~\ref{11.5} asserts that the reduced Lie type $\ul
      e^r(M)$ of $M$ is the minimal one $\{(c,e-c); i\in \Z/f\Z\}$,
      where 
      $c:=\lfloor e/2 \rfloor$. This yields $\sfF^{2f}(M)\subset \pi^{2fc} M$ and hence
      that smallest slope $\beta\ge 2cf/2d$. Then the first assertion
      follows from Corollary~\ref{76}. We now prove the second statement.
     
      Suppose that $\nu$ is a slope
      sequence as in (\ref{eq:12.10}). Suppose $e=2c$ is even. 
      Then $\nu$ is
      supersingular and this follows from Theorem~\ref{72}. 
      It remains to treat
      the case where $e=2c+1$ is odd. We may also assume that $\nu$ is
      non-supersingular as the supersingular case has been done 
      by Theorem ~\ref{72}. Write $a=2cf+2r+1$, where 
      $0< 2r+1<f$. 
Let 
\[ M:=\bigoplus_{j\in \Z/2f\Z} M^j, \]
where each $M^j$ is a free rank two $W^i$-module generated by elements
$X_j$ and $Y_j$ (with $i=j \mod f$). As before, we fix a presentation
$\calO_{\bfB}=\calO_{\bfL}[\Pi]$ as in (\ref{eq:41}) and
(\ref{eq:42}). We shall describe the Frobenius map $\sfF$ and the 
map $\Pi$ by
their representative matrices with respect to the bases
$\{X_i,Y_i\}$:
\[ \sfF_j:M^j\to M^{j+1}, \quad \Pi_j:M^j\to M^{j+f}, \quad \forall\,
j\in \Z/2f\Z  \]
in the sense that $\sfF_j=\begin{pmatrix} a_{11} & a_{12} \\ a_{21} & a_{22} \end{pmatrix}$ if the latter matrix presents $\sfF_j$ for the bases $\{X_j, Y_j\}$ and $\{X_{j+1}, Y_{j+1}\}$.

Put 
\begin{equation}
  \label{eq:12.11}
  \Pi_j=\begin{pmatrix} 0 & -\pi \\ 1 & 0 \end{pmatrix}, \quad \forall\,
j\in \Z/2f\Z. 
\end{equation}

For each $j\in \Z/2f\Z$, define a $W^i$-bilinear pairing 
\[ \<\, ,\>_\bfF:M^j\times M^{j+f} \to W^i \]
by 
\begin{equation}
  \label{eq:12.12}
  \begin{split}
  \<X_j,X_{j+f}\>_\bfF&=\<Y_j,Y_{j+f}\>_\bfF=0, \\  
\<X_j,Y_{j+f}\>_\bfF&=1 \quad \text{and}\quad \<Y_j,X_{j+f}\>_\bfF=-1.  
  \end{split}
\end{equation}
It is easy to show that $\<\Pi X, \Pi Y\>_{\bfF}=\pi \<X,Y\>_\bfF$ for
$X\in M^j$ and $Y\in M^{j+f}$. So $\<\,,\>_{\bfF}$ is an 
unimodular skew-Hermitian form on $M$ over $W\otimes \calO$. 
Put 
\begin{equation}
  \label{eq:12.13}
  \sfF_j=
  \begin{cases}
   \begin{pmatrix} 0 & -p\pi^{-c} \\ \pi^c & 0 \end{pmatrix}, & j=0, \\
   \begin{pmatrix} \pi^c & 0 \\ 0 & p\pi^{-c} \end{pmatrix}, & 1\le j
   \le r, \\ 
    \begin{pmatrix} p\pi^{-c} & 0 \\ 0 & \pi^c 
    \end{pmatrix}, & r< j<f. \\
  \end{cases}
\end{equation}
Using the commutative relation $\Pi_{j+1} F_j=F_{j+1} \Pi_j$,
we compute
\begin{equation}
  \label{eq:12.14}
  \sfF_j=
  \begin{cases}
   \begin{pmatrix} 0 & -\pi^{c+1} \\ p\pi^{-c}\pi^{-1} & 0   
    \end{pmatrix}, & j=f; \\
   \begin{pmatrix} \pi^c & 0 \\ 0 & p\pi^{-c} \end{pmatrix}, & f+1\le j
   \le f+r; \\ 
    \begin{pmatrix} p\pi^{-c} & 0 \\ 0 & \pi^c 
    \end{pmatrix}, & f+r< j<2f. \\
  \end{cases}
\end{equation} 
As the matrix coefficients of $\sfF_j$ lie in the image of $\Zp[\pi]$
and $\det \sfF_j=p$, one has $\<\sfF X,\sfF
Y\>_\bfF=p\<X,Y\>^\sigma_\bfF$ for 
$X\in M^{j}$ and $Y\in M^{j+f}$.

Putting $\<x,y\>:=\Tr_{\bfF/\Qp} (\delta \<x,y\>_\bfF)$, where
$\delta$ is a generator of $\calD^{-1}_{\bfF/\Qp}$, 
we obtain a separable
$\calO_{\bfB}$-linear quasi-polarization $\<\,,\>:M\times M\to W$. 
It is easy to see from our construction that $\dim_k (M/\sfV M)^j=e$
for all $j\in \Z/2f\Z$.  
Thus, by Lemma~\ref{52}, $M$ satisfies the determinant condition. 

We compute
\begin{equation}
  \label{eq:12.15}
  \sfF^f=p^r \begin{pmatrix} 0 & -(p\pi^{-c})^{f-2r} \\ (\pi^c)^{f-2r} &
  0 \end{pmatrix}: M^0\to M^f;  
\end{equation} 
\begin{equation}
  \label{eq:12.16}
  \sfF^f=p^r \begin{pmatrix} 0 & -(\pi^c)^{f-2r} \pi \\
  (p\pi^{-c})^{f-2r} \pi^{-1} &
  0 \end{pmatrix}: M^f\to M^0;  
\end{equation}
and 
\begin{equation}
  \label{eq:12.17}
  \sfF^{2f}=p^{2r} \begin{pmatrix} -(\pi^c)^{2(f-2r)} \pi & 0  \\
  0 &  - (p\pi^{-c})^{2(f-2r)} \pi^{-1}
 \end{pmatrix}: M^0\to M^0.  
\end{equation}
The valuation ($\ord_p$) of the first diagonal entry of the last matrix is
\[ [2er+2c(f-2r)+1]/e=(2cf+2r+1)/e=a/e. \]
This shows that the slope sequence of the \dieu module $M$ is equal to
$\nu$. This completes the construction of a desired \dieu module $M$
and hence completes the proof of the theorem. \qed  
\end{proof}

\section{Construction of Moret-Bailly families with $O_B$-action}
\label{sec:13}

Keeping our assumption $m=1$,  we assume in addition 
that $F=\Q$ in the remainder of this paper (Sections~\ref{sec:13} and
 \ref{sec:14}). 
In this section we assume in addition that {\it $p$ is ramified in $B$}.
Recall that $k$ denotes an \ac field of \ch $p$. 
We shall prove

\begin{thm}\label{13.1}
  There is a non-constant family of supersingular polarized $O_B$-abelian
  surfaces over $\bfP^1_k$. 
\end{thm}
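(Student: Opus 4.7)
The plan is to adapt Moret-Bailly's classical construction of the $\bfP^1$-components of the supersingular locus of $\calA_{2,\Fpbar}$ to incorporate the $O_B$-structure. First I fix a superspecial polarized abelian $O_B$-surface $(A_0,\lambda_0,\iota_0)$ over $k$: existence follows from Lemma~\ref{215} (non-emptiness of $\calMpK$), and since $p$ is ramified in $B$, every object of $\calM(k)$ is supersingular by Corollary~\ref{76} and in fact superspecial by the local-model uniqueness of Proposition~\ref{11.5} combined with Corollary~\ref{57}.

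Using the explicit Dieudonn\'e module construction of Section~\ref{sec:82} specialized to $e=f=m=1$, we have $M=M^0\oplus M^1$ (each $M^j$ of $W$-rank two) with both the Frobenius $F$ and the action of the uniformizer $\Pi\in\calO_{B_p}$ given by the matrix $\bigl(\begin{smallmatrix} 0 & -p \\ 1 & 0\end{smallmatrix}\bigr)$ from $M^j$ to $M^{j+1}$. Hence $A_0[F]=A_0[\iota_0(\Pi)]$ is a local-local group scheme of order $p^2$ isomorphic to $\alpha_p^2$, and the scheme of $\alpha_p$-subgroups of $A_0[F]$ is naturally a $\bfP^1_k$. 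Letting $\calL\subset A_0[F]\times\bfP^1_k$ denote the tautological universal $\alpha_p$-subgroup, I set
\[
\calA\;:=\;(A_0\times\bfP^1_k)/\calL,
\]
an abelian scheme of relative dimension two over $\bfP^1_k$. The polarization $\lambda_0$ descends because $\Pi^*=-\Pi$ forces the induced Weil pairing to vanish on $A_0[\Pi]$, so every fiber $\calL_{[s:t]}$ is automatically isotropic.

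The main obstacle is descending the full $\calO_B$-action: although $\iota_0(\Pi)$ acts as zero on $A_0[F]$ and hence preserves each $\calL_{[s:t]}$, the subring $\calO_L\subset\calO_{B_p}$ acts on $\Lie(A_0[F])$ through $\calO_L/p\calO_L\cong\F_{p^2}$, splitting it into two eigenlines, so naive descent of $\iota_0|_{\calO_L}$ fails for a generic fiber. I expect to remedy this by replacing $\iota_0$ on $\calA$ by a twisted $\calO_B$-action: each quotient $\calA_{[s:t]}$ is again superspecial, so by Corollary~\ref{57} its Dieudonn\'e module admits a compatible $\calO_B\otimes W$-module structure, and the family of such structures can be globalized over $\bfP^1_k$ using a suitable correspondence with $A_0$. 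Non-constancy of the resulting moduli morphism $\bfP^1_k\to\calM_k$ is then checked by computing the Hodge filtrations of two distinct fibers $\calA_{[s:t]}$ and showing that they represent non-isomorphic polarized abelian $O_B$-surfaces.
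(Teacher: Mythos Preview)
Your diagnosis of the obstacle is exactly right, but your proposed remedy does not work and the paper's solution is quite different from what you sketch.

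First, the claim that ``each quotient $\calA_{[s:t]}$ is again superspecial'' is false: in the Moret-Bailly family the generic fibre is supersingular but \emph{not} superspecial --- that is precisely why the family is non-constant. So you cannot invoke Corollary~\ref{57} fibrewise, and in any case that corollary presupposes an $\calO_\bfB$-module structure rather than producing one. The vague ``globalize via a suitable correspondence'' is where the real work would lie, and you have not indicated how to do it. There is also a smaller problem with your polarization step: you start from a separable $\lambda_0$, so $\ker\lambda_0$ has order prime to $p$ and contains no $\alpha_p$; the polarization cannot then be descended through $A_0\to A_0/\calL$.

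The paper's key idea is to avoid the obstacle altogether by choosing a \emph{different} starting surface $A_0$, one which does \emph{not} satisfy the determinant condition. Concretely, one takes $A_0$ with Lie type $(2,0)$, i.e.\ $\Lie(A_0)=k^2\oplus 0$ under the $\F_{p^2}$-decomposition, and with $\Pi\cdot\Lie(A_0)=0$. Then $\calO_\bfB$ acts on $A_0[F]\simeq\alpha_p^2$ through the single character $\sigma_1:\F_{p^2}\hookrightarrow k$, so \emph{every} $\alpha_p$-subgroup of $A_0[F]$ is automatically $O_B$-stable, and the $O_B$-action descends to the whole $\bfP^1$-family for free. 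For $B=B_{p,\infty}$ one can take $A_0=E\times E$ with the diagonal action; for general $B$ one constructs such an $(A_0,\iota_0)$ by first writing down the required $p$-divisible $O_B\otimes\Zp$-module and then realizing it via an $O_B$-linear isogeny from any supersingular abelian $O_B$-surface (Lemma~\ref{13.2}). The polarization is handled by replacing $\lambda_0$ by $p\lambda_0$ if necessary so that $\ker\lambda_0\supset A_0[F]$, after which any rank-$p$ subgroup is isotropic. A pleasant by-product (Lemma~\ref{14.4}) is that every fibre of the resulting family has Lie type $(1,1)$, so the image actually lands in $\calM_{K,\Fpbar}$.
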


\subsection{Case $B=B_{p,\infty}$}
\label{sec:13.2}
We begin with the construction of a Moret-Bailly family with $O_B$-action 
for the case $B=B_{p,\infty}$,
where $B_{p,\infty}$ is the definite quaternion $\Q$-algebra 
ramified exactly at $\{p, \infty\}$. 
Choose a supersingular elliptic curve $E$ over $k$. There is an isomorphism 
$B\simeq \End^0(E):=\End(E)\otimes \Q$ of $\Q$-algebras 
and we fix one. Then the 
endomorphism ring $\End(E)$ is a maximal order $O_B$ of $B$. The
subgroup scheme $E[\sfF]:=\ker \sfF=\alpha_p$ is $O_B$-stable 
as the Frobenius morphism is functorial. 
This induces a ring homomorphism 
\begin{equation}
  \label{eq:13.1}
  \phi: O_B/(p)=\F_{p^2}[\Pi]/(\Pi^2) \to \End_k(\alpha_p)=k.
\end{equation}
Since $k$ is commutative, this map factors through the maximal
commutative quotient 
$(\F_{p^2}[\Pi]/(\Pi^2))^{\rm ab}=\F_{p^2}[\Pi]/(\Pi^2,I)$, 
where $I$ is the two-sided ideal
of $\F_{p^2}[\Pi]/(\Pi^2)$ generated by elements of the form
$ab-ba$ for all $a, b\in \F_{p^2}[\Pi]/(\Pi^2)$. Since $\Pi a -
a\Pi=(a^p-a)\Pi$ and $a^p-a$ is invertible if $a\not\in \Fp$, 
the element $\Pi$ lies in $I$. 
This shows that the action of $O_B$ on
$E[\sfF]=\alpha_p$ factors through the quotient 
$O_B\twoheadrightarrow \F_{p^2}$. Put
$\Hom_{\Fp}(\F_{p^2},k)=\{\sigma_1, \sigma_2\}$. 
We may assume this action
is given by the embedding $\sigma_1:\F_{p^2}\to k$. 

Let $A_0:=E\times E$ and let $\iota_0:O_B \to \End(A_0)$ be the
diagonal action. Let $M$ be the \dieu module of $A_0$. Then the Lie
algebra $\Lie(A_0)=M/\sfV M$ has the decomposition of
$\sigma_i$-components (\ref{eq:45}):
\begin{equation}
  \label{eq:13.2}
  \Lie(A_0)=\Lie(A_0)^1\oplus \Lie(A_0)^2=k^2\oplus 0,
\end{equation}
and satisfies the condition 
\begin{equation}
  \label{eq:13.3}
  \Pi(\Lie(A_0))=0.
\end{equation}

Consider the functor $\calX$ over $k$ which sends each $k$-scheme $S$ to
the set of $O_B$-stable locally free finite subgroup schemes over $S$ of order $p$ in  $A_0[\sfF]_S=\alpha_{p,S}\times \alpha_{p,S}$.
Clearly this functor is representable by a
projective scheme (again denoted) $\calX\subset \bfP^1$ over $k$, 
as the condition of being $O_B$-stable is closed.
Each locally free finite subgroup $S$-scheme $H$ of order $p$ 
corresponds to a rank one locally
free $\calO_S$-submodule $\calL$ in $\calO_S^2=\Lie(A_0[\sfF]_S)$ 
which locally for Zariski topology is a direct
summand.
As the action of $O_B$ on $\Lie(E[\sfF]_S)$ is given by
the scalar multiplication through the map 
$\sigma_1: \F_{p^2} {\to} k\to \calO_S$, the condition that $\calL$ is
$O_B$-stable is automatically satisfied. This shows that
$\calX=\bfP^1$. 

Let $\bfH\subset A_0\times \bfP^1$ be the universal family, and let 
$\bfA:=(A_0\times \bfP^1)/\bfH$. 
As $\bfH$ is $O_B$-stable, we have 
a supersingular $O_B$-abelian scheme $(\bfA,\iota_{\bfA})$ over
$\bfP^1$. Ignoring the structure of the $O_B$-action, this family 
is constructed by Moret-Bailly  
and it is non-constant \cite[p.131]{moret-bailly:p1}. 
By \cite[Lemma~9.2]{kottwitz:jams92}, one can choose an $O_B$-linear
polarization 
$\lambda_0$ on $(A_0,\iota_0)$. 
Replacing $\lambda_0$ by $p\lambda_0$ if necessary 
one may assume that $\ker \lambda_0\supset A_0[\sfF]$. Since $\bfH$ is
isotropic with respect the Weil pairing defined by $\lambda_0$ (every
finite flat subgroup scheme of order $p$ has this property), the
polarization $\lambda_0\times \bfP^1$ on $A_0\times \bfP^1$ descends to a
polarization 
$\lambda_{\bfA}$ on $\bfA$, which is also $O_B$-linear. Therefore, we
have constructed a non-constant family of supersingular polarized
$O_B$-abelian surfaces $(\bfA,\lambda_{\bfA}, \iota_{\bfA})$ 
over $\bfP^1$ for $B=B_{p,\infty}$. 

\subsection{General case}
\label{sec:13.3}
Now let $B$ be an arbitrary definite quaternion algebra over $\Q$ ramified at $p$. 
By the construction above, we only need to
construct a superspecial $O_B$-abelian surface $(A_0,\iota_0)$ 
that satisfies the conditions (\ref{eq:13.2}) and (\ref{eq:13.3}).

We first find a superspecial $p$-divisible $O_B\otimes \Zp$-module
$(H_2,\iota_2)$ (of height 4) over $k$ such that the conditions
(\ref{eq:13.2}) and (\ref{eq:13.3}) for $\Lie(H_2)$ are satisfied. 
One can directly write 
down a superspecial \dieu $O_B\otimes \Zp$-module of rank $4$ 
with such conditions (see an example in Section~\ref{sec:13.5}) 
and let $(H_2,\iota_2)$ be the corresponding 
$p$-divisible $O_B\otimes \Zp$-module. Alternatively, let
$O_{B_{p,\infty}}$ be 
the maximal order in Section~\ref{sec:13.2} 
and $(A_0,\lambda_0)$ be the
superspecial $O_{B_{p,\infty}}$-abelian surface used there. After 
identifying 
$O_B\otimes \Zp$ with $O_{B_{p,\infty}}\otimes \Zp$, the attached
$p$-divisible $O_B\otimes \Zp$-module
$(H_2,\iota_2):=(A_0,\iota_0)[p^\infty]$ shares the desired property. 

Choose a supersingular $O_B$-abelian surface $(A_1,\iota_1)$. 
It exists by the non-emptiness of moduli spaces and Corollary~\ref{76} (2). Indeed, there exists a polarized 
$O_B$-abelian surface $(A_1,\lambda_1,\iota_1)$. Since $p$ is ramified in $B$, $A_1$ must be supersingular by Corollary~\ref{76} (2). Alternatively, one can first 
construct an embedding $B\to \Mat_2(B_{p,\infty})$ of $\Q$-algebras. One does this by choosing an appropriate quaternion algebra $B'$ such that $B\otimes B'\simeq M_2(B_{p,\infty})$. So we obtain a supersingular $B$-abelian surface $A'$ up to isogeny. Replacing $A'$ by an abelian surface in the isogeny class, we obtain a supersingular $O_B$-abelian surface. 
Let $(H_1,\iota_1):=(A_1,\iota_1)[p^\infty]$ be the associated
$p$-divisible $O_B\otimes \Zp$-module.  

\begin{lemma}\label{13.2}
  There is an $O_B\otimes \Zp$-linear isogeny $\varphi: (H_1,\iota_1)\to
  (H_2,\iota_2)$ over $k$.
\end{lemma}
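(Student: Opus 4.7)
The plan is to invoke Lemma~\ref{61} directly. Since $F=\Q$ and $p$ is ramified in $B$, the algebra $\bfB:=B\otimes_\Q\Qp$ is the quaternion division algebra over $\Qp$, and $\calO_\bfB=O_B\otimes_\Z\Zp$ is its unique maximal order. Both $(H_1,\iota_1)$ and $(H_2,\iota_2)$ are then $p$-divisible $\calO_\bfB$-modules of height $4=[\bfB:\Qp]$, so the equal-height hypothesis of Lemma~\ref{61} is satisfied.

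To apply the lemma I only need to verify that $\nu(H_1)=\nu(H_2)$. Since $(A_1,\iota_1)$ is a supersingular abelian $O_B$-surface, $H_1=A_1[p^\infty]$ is supersingular of height $4$, with slope sequence $\{(1/2)^4\}$. By construction $H_2$ is superspecial: one either writes down an explicit superspecial \dieu $O_B\otimes\Zp$-module of rank $4$ meeting (\ref{eq:13.2}) and (\ref{eq:13.3}) (as alluded to in Subsection~\ref{sec:13.5}), or takes $H_2=(A_0,\iota_0)[p^\infty]$ for the superspecial abelian $O_{B_{p,\infty}}$-surface built in Subsection~\ref{sec:13.2}, transported along an identification $O_B\otimes\Zp\simeq O_{B_{p,\infty}}\otimes\Zp$. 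Either way $H_2$ is supersingular with slope sequence $\{(1/2)^4\}$.

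Lemma~\ref{61} then produces an $\calO_\bfB$-linear quasi-isogeny $\varphi:H_1\to H_2$; multiplying by a sufficiently large power of $p$ if necessary, we may assume that $\varphi$ is an honest isogeny without altering its $\calO_\bfB$-linearity. Under the identification $\calO_\bfB=O_B\otimes\Zp$, this $\varphi$ is the desired $O_B\otimes\Zp$-linear isogeny.

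The argument has no serious obstacle: the slope comparison is immediate from the supersingularity of both sides, and the internal Noether--Skolem step inside the proof of Lemma~\ref{61} already delivers the compatibility of $\varphi$ with the two $\calO_\bfB$-structures, so nothing beyond matching slope sequences is required here.
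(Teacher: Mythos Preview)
Your proof is correct and takes essentially the same approach as the paper: the paper's own proof of Lemma~\ref{13.2} simply reproduces the Noether--Skolem argument from the proof of Lemma~\ref{61} in this special case, so your direct appeal to Lemma~\ref{61} is the more economical formulation of the same idea.
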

\begin{proof}
  It suffices to find a $B_p$-linear quasi-isogeny $  
  (H_1,\iota_1)\to (H_2,\iota_2)$ in the category of $p$-divisible groups 
  over $k$ up to isogeny, where $B_p:=B\otimes \Qp$.
  Since $H_1$ and $H_2$ are supersingular, one chooses an isogeny
  $\varphi: H_1 \to H_2$. Define the map 
  $\iota'_2:B_p \to \End^0(H_1)$ by $\iota'_2(a):=\varphi^{-1} \iota_2(a)  
  \varphi$ for all $a\in B_p$. 
We have two
$\Qp$-algebra homomorphisms $\iota_1, \iota_2': B_p \to \End^0(H_1)$, and 
$\End^0(H_1)$ is a central simple $\Qp$-algebra, because $H_1$ is
supersingular.
By the Noether-Skolem theorem, there exists an element $g\in \End^0(H_1)^\times$
such that $\iota_2'=g\circ \iota_1 \circ g^{-1}$.   
That is, we have the following commutative diagram 
\[  \begin{CD}
    H_1 @>g>> H_1 @>\varphi>> H_2 \\
    @VV{\iota_1(a)} V@VV{\iota_2'(a)}V @VV\iota_2(a)V \\
    H_1 @>g>> H_1 @>\varphi>> H_2. \\ 
  \end{CD} 
\]
Replacing $\varphi$ by $\varphi\circ g$, we get a 
$B_p$-linear quasi-isogeny $\varphi:(H_1,\iota_1)\to
  (H_2,\iota_2)$. This proves the lemma. \qed 
\end{proof}

By Lemma~\ref{13.2}, we choose an $O_B\otimes \Zp$-linear isogeny
  $\varphi: 
  (H_1,\iota_1)\to (H_2,\iota_2)$. Let $K:=\ker \varphi$; this is an
  $O_B$-stable subgroup scheme of $A_1$. Let $A_0:=A_1/K$ and let
  $\iota_0: O_B\to \End(A_0)$ the induced action. Then one has
  an isomorphism $(A_0,\iota_0)[p^\infty]\simeq (H_2,\iota_2)$. This
  yields an $O_B$-abelian surface satisfying the conditions
  (\ref{eq:13.2}) and (\ref{eq:13.3}). 
  Applying the construction in 
  Section~\ref{sec:13.2} to this $(A_0,\iota_0)$, 
  we construct a non-constant family of polarized 
  $O_B$-abelian surfaces $(\bfA,\lambda_{\bfA}, \iota_{\bfA})$ 
  over $\bfP^1$. This finishes the proof of
  Theorem~\ref{13.1}. \qed

\subsection{An example.}
\label{sec:13.5}
We write down a superspecial 
\dieu $O_B\otimes \Zp$-module $M$ such that the conditions
(\ref{eq:13.2}) and (\ref{eq:13.3}) for $M/\sfV M$ are satisfied. 
Write $M=M^1\oplus M^2$ as a free $W$-module of rank $4$ 
with a $\Z_{p^2}$-action,
where $M^1$ and $M^2$ are free module generated by elements
$\{e^1_1,e^1_2\}$ and $\{e^2_1,e^2_2\}$, respectively.
Define the Verschiebung map $\sfV $ by 
\[ \sfV (e^2_1)=-pe^1_1, \quad \sfV (e^2_2)=-pe^1_2, \quad   \sfV (e^1_1)=e^2_1,
\quad \sfV (e^1_2)=e^2_2.  \] 
This determines the Frobenius map and defines a \dieu module with
a $\Z_{p^2}$-action which satisfies the condition (\ref{eq:13.2}) for
$M/\sfV M$. 
By the property $\Pi a=\sigma(a) \Pi$ for $a\in \Z_{p^2}$,
we must define $\Pi:M^1 \to  M^2$  and $\Pi: M^2 \to M^1$ such that
$\Pi^2=-p$ and $\Pi \sfV= \sf V \Pi$.
Note that the map $\Pi$ is determined by its restriction 
to $M^1$, $\Pi|_{M^1}:M^1 \to M^2$, because $\Pi^2=-p$.
The condition (\ref{eq:13.3}) implies 
that $\Pi(M^2)=pM^1$ and hence $\Pi(M^1)=M^2$. 


Clearly, $\Pi \sfV =\sfV \Pi$ if and only if 
$\Pi \sfV (e^i_j)=\sfV \Pi(e^i_j)$ for all $i,j$. 
We define $\Pi$ by setting 
$\Pi(e^i_j):=\sfV (e^i_j)$ for all $i,j$.
Then the conditions $\Pi \sfV =\sfV  \Pi$ and $\Pi^2=-p$ are satisfied. 
Therefore, this gives a 
superspecial \dieu $O_B\otimes \Zp$-module for which the conditions 
(\ref{eq:13.2}) and (\ref{eq:13.3}) are fulfilled.



\section{Dimensions of special fibers}
\label{sec:14}
Keep the setting of the previous section. 
We have proven that $\dim \calMpK\otimes \Fpbar=0$; see
Theorem~\ref{10.1}. 
Our goal is to determine the dimensions of 
the special fibers $\calM_{\Fpbar}:=\calM
 \otimes \Fpbar$, 
$\calM_{K, \Fpbar}:=\calM_{K}\otimes \Fpbar$ and
$\calM^{(p)}_{\Fpbar}:=\calM^{(p)} \otimes \Fpbar$. 

\begin{thm}\label{14.1} Assume that $m=1$ and $F=\Q$.
  \begin{enumerate}
  \item If $p$ is unramified in $B$, then $\dim \calM_{\Fpbar}=0$.
  \item If $p$ is ramified in $B$, then $\dim \calM_{\Fpbar}=1$.
  \item We have $\dim \calM^{(p)}_{\Fpbar}=0$. 
  \end{enumerate}
\end{thm}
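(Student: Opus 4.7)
My plan is to treat the three assertions in turn, relying on Corollary~\ref{76} (specialised to $d=[F:\Q]=1$), the Moret-Bailly construction of Theorem~\ref{13.1}, the finite forgetful morphism $f:\calM_{\Fpbar}\to\calA_{2,\Fpbar}$ to the Siegel moduli space, and the Norman-Oort theorem $\dim \calS_2=1$ on the supersingular locus.

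For (1), Corollary~\ref{76}(1) with $d=1$ forces every $\ul A\in \calM(k)$ to have slope sequence $\{0^2,1^2\}$ (ordinary) or $\{(1/2)^4\}$ (supersingular); in the latter case the Morita equivalence reduces the $p$-divisible group to a height-two supersingular one, which is unique up to isomorphism over $k$ and hence superspecial. At an ordinary point I would invoke the Serre--Tate canonical lifting to produce a lift to $\Spec W$ whose closed point is $\ul A$; since the generic fiber of $\calM$ is zero-dimensional by Proposition~\ref{23}, this forces the local deformation ring to have zero-dimensional special fiber. At a superspecial point I would use that the superspecial locus in $\calA_{2,\Fpbar}$ is zero-dimensional together with finiteness of $f$.

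For (2), Corollary~\ref{76}(2) with $d=1$ leaves only the supersingular slope sequence $\{(1/2)^4\}$ (the two-slope option asks for an odd integer $a$ with $0\le a<1$, which is empty). Theorem~\ref{13.1} then yields a non-constant morphism $\bfP^1_k\to\calM_{\Fpbar}$, which has finite fibers since the target is locally of finite type, so its image is a one-dimensional closed subvariety and $\dim\calM_{\Fpbar}\ge 1$. Conversely, $f$ factors through the supersingular locus $\calS_2$, whence $\dim\calM_{\Fpbar}\le \dim\calS_2=1$.

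For (3), when $p$ is unramified in $B$ separability already implies the determinant condition (Lemma~\ref{53}(1) and Proposition~\ref{56}(1)(b)), hence $\calM^{(p)}=\calMpK$ and the conclusion is Theorem~\ref{10.1}. When $p$ is ramified, the strategy is to establish the technical claim that every separably quasi-polarized \dieu $O_B\otimes\Zp$-module of rank $4$ is superspecial; granting this, every point of $\calM^{(p)}(k)$ is superspecial, its $f$-image lies in the zero-dimensional superspecial locus of $\calA_{2,\Fpbar}$, and finiteness of $f$ yields $\dim\calM^{(p)}_{\Fpbar}=0$. To prove the claim I would exploit $\Pi^2=-p$, the commutation $F\Pi=\Pi F$, and perfectness of $\<\,,\>$: tracking the invariants $a_j=\dim_k M^j/\Pi M^{j+f}$ and $c_j=\dim_k(M/VM)^j$ of Subsection~\ref{sec:53} together with the separability constraint $c_0+c_1=2em$ and the recursion (\ref{eq:56}), the aim is to force $FM=VM$. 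The main obstacle will be ruling out the asymmetric possibilities $(c_0,c_1)\in\{(0,2),(2,0)\}$ that the bare Dieudonn\'e constraints of Section~\ref{sec:05} do not forbid; here the perfectness of the skew-Hermitian pairing on $M^j\times M^{j+f}$ together with the Frobenius compatibility $\<Fx,y\>=\<x,Vy\>^{\sigma}$ must enter to pin down the balanced Lie type $c_0=c_1=1$.
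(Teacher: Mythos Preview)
Parts (1) and (2) match the paper's arguments. For (3) in the unramified case you take a valid alternative route: the paper simply quotes part~(1), whereas you observe that separability forces the determinant condition so $\calM^{(p)}=\calMpK$ and then invoke Theorem~\ref{10.1}; either works.

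Your plan for (3) in the ramified case is right in outline (reduce to the claim that every separably quasi-polarized \dieu $O_B\otimes\Zp$-module of rank $4$ is superspecial, i.e.\ Proposition~\ref{14.2}), but the internal roadmap is inverted. You propose to rule out the asymmetric Lie types $(c_0,c_1)\in\{(0,2),(2,0)\}$ and ``pin down'' $(1,1)$. Two problems. First, the asymmetric types \emph{cannot} be ruled out: Lemma~\ref{14.3} exhibits a separably polarized superspecial object with Lie type $(2,0)$. Second, they are the \emph{easy} case: if $(M/VM)^1=k^2$ and $(M/VM)^2=0$ then $VM^2=pM^1$ and $VM^1=M^2$, whence $FM=VM$ directly, without using separability at all. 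The genuine obstacle is the balanced case $(1,1)$, and there the decisive step is not numerical. One notes that $\Pi$ is nilpotent on the two-dimensional space $M/VM$, so after relabelling $\Pi\ol{M^2}\subset V\ol{M^2}$ inside $\ol M:=M/pM$, with equality by dimension count. Separability now enters: both $V\ol M$ and $\Pi\ol M$ are their own orthogonal complements for the induced perfect pairing on $\ol M$ (using $\<Fx,y\>=\<x,Vy\>^\sigma$ and $\Pi^*=-\Pi$), so taking orthogonal complements of $\Pi\ol{M^2}=V\ol{M^2}$ yields $\Pi\ol{M^1}=V\ol{M^1}$. Hence $\Pi M=VM$, and then $V^2M=V\Pi M=\Pi VM=\Pi^2M=pM$, so $M$ is superspecial. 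Tracking the invariants $a_j,c_j$ alone will not get you there; the orthogonal-complement step is what your plan is missing.
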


A close examination shows that when $p$ is ramified in $B$, one has 
$\dim \calM_{K,\Fpbar}=1$; see Proposition~\ref{14.5}. This refines 
Theorem~\ref{14.1} (2).

\subsection{Unramified case}
\label{sec:14.1}
Suppose that $p$ is unramified in $B$. Let $(A,\lambda,\iota)$ be a
polarized $O_B$-abelian surface over $\Fpbar$. 
By Corollary~\ref{76}, $A$ is either ordinary or
supersingular. If $A$ is ordinary, then one has the canonical lifting
$(\bfA,\lambda_{\bfA}, \iota_{\bfA})$ over $W(\Fpbar)$ of
$(A,\lambda,\iota)$. Since the 
generic fiber $\calM_{\Qpbar}$ has dimension zero, each subscheme
$\calM_{D,\Qpbar}$ has finitely many points, if it is not empty. 
Recall that
$\calM_{D}\subset \calM$ denotes the subscheme parametrizing the
objects $(A,\lambda,\iota)$ in $\calM$ with polarization 
degree $\deg \lambda=D^2$.  
This implies that the
ordinary locus $\calM^{\rm ord}_{D,\Fpbar}$ of $\calM_{D,\Fpbar}$ has
finitely many points and hence it has dimension zero. 
Therefore, the ordinary locus $\calM^{\rm ord}_{\Fpbar}$ 
has dimension zero.  

Suppose now that $A$ is supersingular. Then $A$ must be
superspecial. To see this, let $H:=A[p^\infty]$ be the 
associated $p$-divisible group. Since $O_B\otimes
\Zp\simeq \Mat_2(\Zp)$, the $p$-divisible group $H$ is isomorphic to
$H_1\times H_2$, where $H_1$ and $H_2$ are supersingular $p$-divisible
groups of height $2$. Therefore, $A$ is superspecial.

For any positive integers $g$ and $D$, let $\calA_{g,D}$ denote the
coarse moduli space over $\Fpbar$ of $g$-dimensional 
polarized abelian varieties
$(A,\lambda)$ with polarization degree $\deg \lambda=D^2$.  
Let $\Lambda_{g,D}\subset \calA_{g,D}$ be the superspecial locus. It
is known that $\Lambda_{g,D}$
is a finite closed subscheme. 
Let $f:\calM_{D,\Fpbar}\to \calA_{2,D}$
be the forgetful morphism: $f(A,\lambda,\iota)=(A,\lambda)$. The
morphism $f$ induces a map
\[ f: \calM_{D,\Fpbar}^{\rm ss}\to \Lambda_{g,D}, \] 
where $\calM_{D,\Fpbar}^{\rm ss}$ is the supersingular locus of
$\calM_{D,\Fpbar}$. 
As $\dim \Lambda_{g,D}=0$ and the
forgetful map $f$ is finite (see \cite{yu:lift}), 
the supersingular locus $\calM_{D,\Fpbar}^{\rm ss}$     
also has dimension zero. We conclude that $\calM_{\Fpbar}$ 
has dimension zero. This proves Theorem~\ref{14.1} (1).

\subsection{Ramified case}
\label{sec:14.2}
Suppose that the prime $p$ is ramified in $B$. We know that any
polarized $O_B$-abelian surface over $k$ is supersingular
(Corollary~\ref{76}).  
By Theorem~\ref{13.1}, there is a non-constant family $\ul \bfA \to
\bfP^1_{\Fpbar}$ of supersingular polarized $O_B$-abelian surfaces. 
This gives rise to a non-constant moduli map 
\[ f': \bfP^1_{\Fpbar} \to \calM_{\Fpbar}. \]
Therefore, $\dim \calM_{\Fpbar}\ge \dim f'(\bfP^1)=1$. 

On the other hand, the forgetful morphism $f:\calM_{D,\Fpbar} \to
\calA_{2,D}$ factors through the supersingular locus 
$\calA_{2,D}^{\rm  ss}\subset \calA_{2,D}$. Since $f$ is finite, one
gets
\begin{equation}
  \label{eq:14.1}
  \dim \calM_{D,\Fpbar}\le \dim \calA_{2,D}^{\rm ss}.
\end{equation}
For any integer $i$ with $0\le i\le g$, 
let $\calA^{(i)}_{g,D}\subset \calA_{g,D}$ denote
the reduced locally closed subscheme that consists of objects
$(A,\lambda)$ of $p$-rank equal to $i$. Norman and Oort
\cite{norman-oort} showed that
the collection of $p$-strata forms a stratification and for each $i$
\[ \dim \calA^{(i)}_{g,D}=g(g-1)/2+i. \] 
When $g=2$, one has $\calA^{(0)}_{2,D}=\calA_{2,D}^{\rm ss}$ and gets 
$\dim \calA_{2,D}^{\rm ss}=1$. This proves the other direction 
\[ \dim \calM_{D,\Fpbar} \le 1.  \]
We conclude that $\dim \calM_{\Fpbar}=1$. This proves Theorem~\ref{14.1}
(2).



\subsection{Dimension of $\calM^{(p)}_{\Fpbar}$}
\label{sec:14.3}



We now prove Theorem~\ref{14.1} (3).
For the case where the prime $p$ is
unramified in $B$, we have shown in Theorem~\ref{14.1} (1) 
that $\calM_{\Fpbar}$ is zero-dimensional. Therefore, $\dim
\calM^{(p)}_{\Fpbar}=0$ in the unramified case. 

We now treat the case where 
$p$ is ramified in $B$. Recall that $k$ denotes an \ac field of \ch $p$, and 
  that $O_B\otimes \Zp=\Z_{p^2}[\Pi]$, $\Pi^2=-p$, $\Pi a=\sigma(a)\Pi$ 
  for $a\in \Z_{p^2}$.  

\begin{prop}\label{14.2}
  Assume that $p$ is ramified in $B$. Every prime-to-$p$ degree
  polarized $O_B$-abelian surface $(A,\lambda,\iota)$ over $k$
  is superspecial. 
\end{prop}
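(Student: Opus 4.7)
The plan is to analyze the covariant Dieudonn\'e module $M=M(A)$ of the prime-to-$p$ degree polarized abelian $O_B$-surface $\ul A=(A,\lambda,\iota)$ and to show that $FM=VM$, which is equivalent to the superspeciality of $A$. Because $\deg\lambda$ is prime to $p$, the induced alternating pairing on $M$ is perfect, so $M$ is a separably quasi-polarized \dieu $\calO_\bfB$-module of $W$-rank $4$, where $\bfB:=B\otimes\Qp$ is the division quaternion algebra over $\Qp$ (since $p$ is ramified in $B$ and $F=\Q$). By Corollary~\ref{76}(2), $A$ is supersingular, so every slope of $M$ equals $1/2$.

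Decompose $M=M^0\oplus M^1$ under the action of $\calO_{\bfL^{\rm nr}}=\Z_{p^2}$: both components are $W$-free of rank $2$ and $\Pi$ interchanges them with $\Pi^2=-p$. The invariants $a_j=\dim_k M^j/\Pi M^{j-1}$ satisfy $a_0+a_1=2$ by Corollary~\ref{43}, and separability combined with Lemma~\ref{55} forces $c_j:=\dim_k(M/VM)^j=a_j$. Hence $(a_0,a_1)\in\{(2,0),(1,1),(0,2)\}$, and it suffices to verify $FM=VM$ in each of these three cases.

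When $(a_0,a_1)=(0,2)$, one has $V\colon M^1\to M^0$ a semi-linear bijection and $VM^0=pM^1$ directly. From $FV=VF=p$ we get $FM^0=F(VM^1)=pM^1$, and $pFM^1=F(pM^1)=F(VM^0)=pM^0$ forces $FM^1=M^0$ by injectivity of $F|_{M^1}$, so $FM=M^0\oplus pM^1=VM$. The case $(2,0)$ is symmetric. In the remaining case $(1,1)$, $M$ is free of rank one over $\calO_\bfB\otimes W$. Pick the standard basis $\{x^0_k,x^1_k\}_{k=1,2}$ of Lemma~\ref{42} satisfying $\Pi x^0_1=x^1_1$, $\Pi x^1_2=x^0_2$, and $\psi_i(x^0_k,x^1_l)=\delta_{k,l}$, and parameterize
\[
F(x^0_1)=ax^1_1+bx^1_2,\qquad F(x^0_2)=cx^1_1+dx^1_2,\qquad a,b,c,d\in W.
\]
The commutation $F\Pi=\Pi F$ determines $F|_{M^1}$ and forces $p\mid d$; write $d=pd'$. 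Imposing the Dieudonn\'e relation $\langle Fx,y\rangle=\sigma\langle x,Vy\rangle$ for all $x,y\in\{x^0_1,x^0_2\}$ gives $c=bu$ and $d'=au$ with $u\in\{\pm1\}$, while supersingularity forces $\Delta:=pad'-bc$ to have $p$-adic valuation $1$, whence $v_p(b)\ge1$. Computing $V|_{M^0}=p(F|_{M^1})^{-1}$ explicitly and reducing modulo $p$, one checks that the $W$-spans of the columns of $F|_{M^0}$ and $V|_{M^0}$ inside $M^1$ coincide: both contain $pM^1$, and both reduce modulo $pM^1$ to the line $k\cdot\bar x^1_1$. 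Hence $FM^0=VM^0$, and symmetrically $FM^1=VM^1$, so $FM=VM$.

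In every case $FM=VM$, so $M$ is superspecial, and consequently so is $A$. The principal technical hurdle is the $(1,1)$ case: three structural ingredients --- the $\Pi$-commutation, the Dieudonn\'e--pairing compatibility, and supersingularity --- must be combined carefully to pin down the matrix of $F$ enough that $FM^0$ and $VM^0$ reduce to the same line modulo $p$.
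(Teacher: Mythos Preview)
Your argument in the asymmetric cases $(a_0,a_1)\in\{(2,0),(0,2)\}$ is correct and coincides with the paper's treatment of $M/VM=k^2\oplus 0$. The difference lies entirely in the case $(a_0,a_1)=(1,1)$, where you compute in coordinates and the paper argues structurally.

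Two imprecisions in your $(1,1)$ computation deserve comment. First, the relation $\langle Fx,y\rangle=\langle x,Vy\rangle^\sigma$ for $x,y\in\{x^0_1,x^0_2\}$ together with $FV=p$ yields $BA^T=-pI$ (where $A,B$ are the matrices of $F|_{M^0},F|_{M^1}$); unwinding this gives $c^2=p(a^2-1)$, $b^2=p(d'^2-1)$ and $ab=cd'$, hence $p\mid b$, $p\mid c$ and $\bar a,\bar{d'}\in\{\pm1\}$. That is already enough for your final step, but it does \emph{not} force the exact identities $c=ub$, $d'=ua$; for those you would also need the companion relation with $x,y\in\{x^1_1,x^1_2\}$. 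Second, the valuation $v_p(\Delta)=1$ has nothing to do with supersingularity: the $\Pi$-commutation gives $\det A=\det B=\Delta$, and $[M:FM]=p^{2}$ then forces $2v_p(\Delta)=2$. With these corrections your matrix argument goes through.

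The paper avoids all of this. Since $\Pi$ commutes with $V$ and $\Pi^2=-p$, the operator $\Pi$ acts nilpotently on the two-dimensional space $M/VM$; as it swaps the two one-dimensional eigencomponents, one of the induced maps vanishes, so (say) $\Pi\ol{M^2}=V\ol{M^2}$ inside $\ol M=M/pM$. Now separability enters: under the perfect pairing $\ol{M^1}\times\ol{M^2}\to k$, the orthogonal complement of $\Pi\ol{M^2}$ is $\Pi\ol{M^1}$ (because $\Pi^*=-\Pi$) and that of $V\ol{M^2}$ is $V\ol{M^1}$ (because $V$ and $F$ are adjoint). Hence $\Pi\ol{M^1}=V\ol{M^1}$ as well, so $\Pi M=VM$, and then
\[
V^2M=V\Pi M=\Pi VM=\Pi^2M=pM
\]
gives superspeciality in one line. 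Your coordinate computation is in effect a verification of the equality $\Pi M=VM$ by hand; the paper's route isolates the structural reason and never chooses a basis.
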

\begin{proof}
  Let $(M,\<\ , \,\>)$ be the covariant 
  \dieu $O_B\otimes \Zp$-module associated to $(A,\lambda,\iota)$.
  
  Suppose $M/\sfV M=k^2\oplus 0$ with respect to the action of
  $\F_{p^2}\otimes_{\Fp} k=k\times k$. Then $\sfV M^2=pM^1$ and
  $\sfV M^1=M^2$, and hence $\sfF M^1=M^2$ and $\sfF M^2=pM^1$. This proves that
  $\sfF M=\sfV M$ and that $M$ is superspecial 

  We show that if $\Pi(M/\sfV M)=0$, 
  then $M$ is
  superspecial. Indeed, it follows that $\Pi M\subset \sfV M$.
  From $\dim M/\Pi M=2$ (as $\Pi^2=-p$) and $\dim M/\sfV M=2$ it follows
  that $\Pi M=\sfV M$. Then $\sfV ^2M =\sfV \Pi M=\Pi \sfV M=\Pi^2
  M=pM$ and hence $M$ is superspecial.  
  So far we have not used the
  separability of polarizations.


Suppose that $M/\sfV M=k\oplus k$. Consider the induced 
perfect pairing $\<\,,\>:\ol M\times \ol M\to k$, where $\ol M=M/pM$.
Since $\Pi$ is nilpotent on $M/\sfV M$ one may assume for example that 
$\Pi \ol {M^2}=\sfV  \ol {M^2}$. Taking the orthogonal complements of 
$\Pi \ol {M^2}$ and $\sfV  \ol {M^2}$, we have 
$\Pi \ol {M^1}=\sfV  \ol {M^1}$. This proves that $\Pi(M/\sfV M)=0$. 
By the above argument, $M$ is superspecial. \qed
\end{proof}

Since the superspecial locus of the Siegel moduli space is
zero-dimensional, the superspecial locus of any PEL-type moduli space
is zero-dimensional, too. Proposition~\ref{14.2} implies that 
$\dim \calM^{(p)}_{\Fpbar}=0$. This proves Theorem~\ref{14.1} (3) and 
hence completes the proof of Theorem~\ref{14.1}. \qed 

\begin{lemma}\label{14.3}
  Assume that $p$ is ramified in $B$. There is a prime-to-$p$ degree
   polarized superspecial $O_B$-abelian surface over $k$ 
   that does not satisfy
  the determinant condition.  
\end{lemma}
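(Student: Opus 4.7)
The plan is to build the required object by modifying the isogeny-quotient construction of Section~\ref{sec:13}. Start from a point $(A_1,\lambda_1,\iota_1)\in\calMpK(k)$, which is non-empty by Lemma~\ref{215}; by Proposition~\ref{14.2} such an $A_1$ is automatically superspecial. The strategy is to form an $O_B$-equivariant $p$-power isogenous quotient $A_0$ of $A_1$ whose $p$-divisible group has the unbalanced Lie decomposition $k^2\oplus 0$ in the $\F_{p^2}$-grading, so that the determinant condition (K) of (\ref{eq:23}) fails, and then to equip $A_0$ with an $O_B$-linear polarization of prime-to-$p$ degree.

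First I build a local model at $p$: a superspecial $p$-divisible $O_B\otimes\Zp$-module $H_0$ of height four with $\Lie(H_0)=k^2\oplus 0$ and carrying a separable $O_B\otimes\Zp$-linear quasi-polarization. The underlying \dieu module $M_0$ is the one written down in Subsection~\ref{sec:13.5}; I attach a separable skew-Hermitian pairing to it by adapting the local recipe of Subsection~\ref{sec:82}, setting $\<e^0_i,e^1_j\>_{\bfF}=\delta_{ij}$ on the component pairing $M_0^0\times M_0^1\to W$ and verifying the $F$- and $\Pi$-compatibilities using $F=\Pi$ on $M_0$ together with $\Pi^2=-p$. With $H_0$ in hand, Lemma~\ref{13.2} furnishes an $O_B\otimes\Zp$-linear isogeny $\varphi\colon A_1[p^\infty]\to H_0$; its kernel $K\subset A_1$ is a finite flat $O_B$-stable subgroup scheme. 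Setting $A_0:=A_1/K$ with the induced $O_B$-action $\iota_0$ produces an abelian $O_B$-surface with $A_0[p^\infty]\cong H_0$, so $A_0$ is superspecial and $(A_0,\iota_0)$ violates the determinant condition.

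It remains to construct an $O_B$-linear polarization $\lambda_0$ on $A_0$ of prime-to-$p$ degree. By the version of Kottwitz's Lemma~9.2 invoked in Subsection~\ref{sec:13.2}, some $O_B$-linear polarization $\mu$ on $(A_0,\iota_0)$ exists. Its restriction $\mu[p^\infty]$ and the separable polarization constructed on $H_0$ above are two $O_B\otimes\Qp$-linear non-degenerate skew-Hermitian forms on the same rational isocrystal, and on a rank-one module over the $O_B\otimes\Qp$-linear endomorphism algebra such forms are unique up to a scalar in $\Qp^\times$ and an $O_B\otimes\Qp$-linear quasi-automorphism. Over $\Fpbar$, Corollary~\ref{34} together with Tate's isogeny theorem identifies $\End^0(A_0)\otimes\Qp$ with the $O_B\otimes\Qp$-centralizer in the endomorphism algebra of this rational isocrystal, so the needed quasi-automorphism lifts to a rational $O_B$-linear quasi-isogeny $\tilde g$ of $A_0$. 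Composing $\mu$ rationally with $\tilde g$ and clearing integral denominators at $p$ by a prime-to-$p$ adjustment yields an honest $O_B$-linear polarization $\lambda_0$ on $A_0$ whose restriction to $A_0[p^\infty]$ is separable; consequently $\deg\lambda_0$ is prime to $p$. The principal obstacle is precisely this last step, namely showing that the local similitude converting $\mu[p^\infty]$ into the separable polarization on $H_0$ is implemented by an element of $\End^0(A_0)\otimes\Qp$; this rests on the supersingularity of $A_0$ over $\Fpbar$ and the consequent maximality of $\End^0(A_0)$.
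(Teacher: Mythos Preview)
Your construction of the superspecial abelian $O_B$-surface $A_0$ with unbalanced Lie type $k^2\oplus 0$ follows the same isogeny-quotient route as Section~\ref{sec:13} and as the paper's own proof of this lemma. The substance is the polarization step, and there your argument has a genuine gap.

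Two issues. First, the assertion that the two skew-Hermitian forms $\mu[p^\infty]$ and $\lambda_{H_0}$ on the rational isocrystal differ only by a scalar in $\Qp^\times$ and a $B_p$-linear quasi-automorphism is unjustified and in general false. The $B_p$-linear isocrystal endomorphism algebra here is $\Mat_2(\Qp)$ (centralizer of the division algebra $B_p$ in $\Mat_2(\bfD)$), and the similitude group does not act transitively on the nondegenerate symmetric elements: the discriminant in $\Qp^\times/(\Qp^\times)^2$ obstructs, exactly as in the classification of Section~\ref{sec:09}. Second, even granting such a local $h$, Tate's theorem only gives $\End^0_B(A_0)\otimes_\Q\Qp\simeq\End^0_{B_p}(H_0)$, so $h$ lies in the $p$-adic completion of $\End^0_B(A_0)$, not in $\End^0_B(A_0)$ itself; it is not a quasi-isogeny of $A_0$, and you still need an approximation step to produce a global $\tilde g$. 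The phrase ``clearing integral denominators at $p$ by a prime-to-$p$ adjustment'' cannot repair a $p$-adic denominator.

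The paper sidesteps both problems by working on the \emph{symmetric} side rather than the automorphism side. One writes $\lambda_{H_0}=\mu_p\cdot a_p$ for a $\mu$-Rosati-symmetric element $a_p\in\End^0_{B_p}(H_0)$, which always exists with no class obstruction; then by weak approximation one chooses a global totally positive symmetric $a\in\End^0_B(A_0)$ that is $p$-adically close to $a_p$, and sets $\lambda_0:=\mu\cdot(Na)$ for a suitable prime-to-$p$ integer $N$. This yields directly an $O_B$-linear polarization whose $p$-part is separable, without ever invoking a similitude between the two local forms. (A minor separate point: your proposed pairing $\<e^0_i,e^1_j\>_\bfF=\delta_{ij}$ on the module of Subsection~\ref{sec:13.5} does not satisfy $\<Fx,Fy\>=p\<x,y\>^\sigma$ as written; the paper instead just observes that $H_0\simeq E_0[p^\infty]^2$ for a supersingular elliptic curve $E_0$ and takes the product principal polarization.)
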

\begin{proof}
  Using the construction in Section~\ref{sec:13.2}, we have a
  superspecial 
  $p$-divisible $O_B\otimes \Zp$-module $(H,\iota_H)$ of height 4 such
  that the 
  conditions (\ref{eq:13.2}) and (\ref{eq:13.3}) for $\Lie(H)$ are
  satisfied. Thus, $(H,\iota_H)$ does not  satisfy
  the determinant condition.  There is a superspecial abelian 
  $O_B$-surface $(A_0,\iota_0)$ such that $(A_0,\iota_0)[p^\infty]\simeq
  (H,\iota_H)$. We fix an identification $(A_0,\iota_0)[p^\infty]=
  (H,\iota_H)$. 

  We can choose a separable $O_B\otimes \Zp$-linear
  quasi-polarization $\lambda_H$. 
  Note that $(H,\iota_H)$ is isomorphic to the $p$-divisible
  $O_{B_{p,\!\infty}}\otimes \Zp$-module $E_0[p^\infty]^2$ ($E_0$ is
  a supersingular elliptic curve) through the identification
  $O_{B_{p,\!\infty}}\otimes\Zp=O_B\otimes \Zp$. We can pick the
  product principal polarization on $E_0^2$ which yields such a
  quasi-polarization $\lambda_H$.        

  Choose an $O_B$-linear polarization $\lambda$ on $(A_0,\iota_0)$ and
  let $*$ denote the Rosati involution induced by $\lambda$. Then
  $\lambda a_p=\lambda_H$ for some element 
  $a_p\in \End^0_{O_B\otimes \Zp} (A_0[p^\infty])$ with $a_p^*=a_p$. 
  Since $\End_{B}^0(A_0)\otimes
  \Qp=\End^0_{B\otimes\Qp}(A_0[p^\infty])$, 
  by weak approximation we can 
  choose a totally positive symmetric element $a\in \End^0_{B}(A_0)$
  such that $a$ is sufficiently close to $a_p$. Then we have $(A,\lambda a,
  \iota)[p^\infty] \simeq (H,\lambda_H,\iota_H)$. Replacing $a$ by
  $N a$ for a positive prime-to-$p$ integer $N$ if necessary, we get a
  prime-to-$p$ degree $O_B$-linear polarization $\lambda_0=\lambda a$ 
  on $(A_0,\iota_0)$. This proves the lemma. \qed     
\end{proof}

We know that when $p$ is ramified in $B$, the whole moduli space
$\calM_{\Fpbar}$ is supersingular. On the other hand when $p$ is
unramified in $B$, the 
moduli space may have both supersingular and ordinary points according
to Corollary~\ref{76}. 
The following lemma says that this is indeed the case. 

\begin{lemma}\label{14.35}
  When $p$ is unramified in $B$, the moduli space
  $\calM^{(p)}_{K,\Fpbar}$ consists of both ordinary and supersingular
  points. 
\end{lemma}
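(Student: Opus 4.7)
\emph{Proof plan for Lemma~\ref{14.35}.} By Corollary~\ref{76}, when $p$ is unramified in $B$ the only possible slope sequences of a polarized abelian $O_B$-surface over $\Fpbar$ are the ordinary sequence $\{0^2,1^2\}$ and the supersingular sequence $\{(1/2)^4\}$. The plan is to exhibit one point of each type in $\calMpK(\Fpbar)$ by constructing $A$ as a suitable modification of $E^2$ for an elliptic curve $E$ of matching reduction type, mirroring the pattern of Sections~\ref{sec:13} and~\ref{sec:14}.

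For the ordinary point, we choose an imaginary quadratic field $K$ in which $p$ splits and which splits $B$ at every finite place where $B$ ramifies. Such a $K$ exists by class field theory, since only finitely many local splitting conditions are imposed. Take $E/\Fpbar$ an ordinary elliptic curve with $\End(E)=O_K$, so $\End(E^2)=\Mat_2(O_K)$. The splitting $B\otimes_\Q K\simeq \Mat_2(K)$ supplies $B\hookrightarrow \Mat_2(K)$, and after modifying $E^2$ by a prime-to-$p$ isogeny we obtain an abelian surface $A$ with $\iota\colon O_B\hookrightarrow \End(A)$ for which the induced inclusion at $p$,
\[ O_B\otimes\Zp=\Mat_2(\Zp)\hookrightarrow \Mat_2(O_K\otimes\Zp)=\Mat_2(\Zp)\times\Mat_2(\Zp), \]
is the diagonal one. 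Then $A[p^\infty]$ becomes the standard rank-one ordinary $O_B\otimes\Zp$-module, and Lemma~\ref{53}(1) (via Morita equivalence, since $B$ is unramified at $p$) forces the determinant condition automatically.

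For the supersingular point, let $E/\Fpbar$ be supersingular, so $\End^0(E^2)=\Mat_2(B_{p,\infty})$. An embedding $B\hookrightarrow \Mat_2(B_{p,\infty})$ exists by the embedding criterion of \cite{yu:embed} (at each place the required CSA inclusion holds: at $p$ it is the tautological $\Mat_2(\Qp)\subset \Mat_2(\bfD)$, at $\infty$ it is the diagonal $\H\subset \Mat_2(\H)$, and at the remaining primes it is routine). After adjustment by a prime-to-$p$ isogeny, we arrange that the $O_B\otimes\Zp$-structure on the $p$-divisible group of the resulting surface $A$ realizes the separably quasi-polarizable superspecial \dieu module satisfying the determinant condition that was constructed in Section~\ref{sec:08}; the surjectivity statement in Proposition~\ref{12.1} guarantees that this realization is attainable.

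In both cases, the last step is to upgrade the construction to a prime-to-$p$ $O_B$-linear polarization by the weak-approximation argument of Lemma~\ref{14.3}: starting with any polarization $\lambda_0$ on the surface, twist by a totally positive symmetric element $b\in B^\times$ chosen $p$-adically close to the element prescribing the target separable quasi-polarization on $A[p^\infty]$ and with $\Nr(b)$ prime to $p$. The resulting triple $(A,\lambda,\iota)$ then lies in $\calMpK(\Fpbar)$ and is ordinary, respectively supersingular, as designed. The main technical obstacle is matching the local $O_B\otimes\Zp$-structure at $p$ in each global construction to a prescribed \dieu module satisfying the determinant condition, and this is precisely what the prime-to-$p$ isogeny adjustment combined with Proposition~\ref{12.1} is set up to accomplish.
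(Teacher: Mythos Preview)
Your approach is essentially the same as the paper's: reduce to producing an embedding of $B$ into $\End^0(A)$ for an ordinary (resp.\ supersingular) abelian surface $A$, realise the ordinary case via $E^2$ for an ordinary $E$ with CM by an imaginary quadratic field $K$ that splits $B$ and in which $p$ splits, realise the supersingular case via $E^2$ for supersingular $E$, and then use the isogeny argument of Lemma~\ref{13.2} together with the weak-approximation construction of Lemma~\ref{14.3} to produce the prime-to-$p$ $O_B$-linear polarization.

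Two small points are worth tightening. First, a \emph{prime-to-$p$} isogeny does not change the $p$-divisible group, so it cannot be used to adjust the $O_B\otimes\Zp$-structure at $p$; what is actually needed (and what Lemma~\ref{13.2} provides) is an $O_B\otimes\Zp$-linear isogeny of $p$-divisible groups, possibly of $p$-power degree, whose kernel one then divides by globally. Second, the appeal to Proposition~\ref{12.1} is unnecessary here: since $p$ is unramified in $B$ and $F=\Q$, every object of $\calM(k)$ automatically satisfies the determinant condition (this is the first line of the paper's proof, and follows from Lemma~\ref{53}(1) or simply from $e=f=1$), so no special matching of Lie types is required.
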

\begin{proof}
  As $p$ is unramified in $B$, the determinant condition is automatically
  satisfied for objects in $\calM(k)$. 
  Let $(H,\lambda_H,\iota_H)$ be a supersingular or ordinary separably
  quasi-polarized $p$-divisible $O_B\otimes \Zp$-module. 
  Since $B$ can
  be embedded into $\End^0(A)$ for any supersingular abelian surface,
  we can find a supersingular $O_B$-abelian surface with
  $(A_0,\iota_0)[p^\infty] \simeq (H,\iota_H)$. 
  We use the argument in the proof of
  Lemma~\ref{14.3} again to obtain a prime-to-$p$ degree $O_B$-linear
  polarization. For the ordinary case, we choose 
  any imaginary quadratic field $K$ such that $K$ splits $B$ and $p$
  splits in $K$. Then
  there is a $\Q$-algebra embedding of $B$ into $\Mat_2(K)$. Choose an
  ordinary elliptic curve $E$ such that $\End^0(E)\simeq K$ and take the
  ordinary abelian surface $A=E^2$. As $\End^0_B(A)\otimes
  \Qp=\End^0_{B\otimes \Qp}(A[p^\infty])$, we can repeat the previous
  argument and get a prime-to-$p$ degree polarized ordinary $O_B$-abelian
  surface. \qed 
\end{proof}

\begin{remark}\
(1) The proof of Theorem~\ref{14.1} does not use local models. 
The method of local models we understand is 
developed in \cite{rapoport-zink}, where
Rapoport and Zink formulate the moduli spaces of polarized self-dual
(multi-)chains of 
$O_B$-abelian varieties together with the determinant condition. The
moduli spaces considered in Theorem~\ref{14.1} 
parametrize polarized $O_B$-abelian varieties only
and are not exactly the same as
those in loc.~cit. In order to apply the method of local models in our
case, one
needs to first extend the
objects $(A,\lambda,\iota)$ over a base scheme to a
self-dual chain of $O_B$-abelian schemes, and then apply the theorem
``normal forms of lattice chains'' \cite[Theorem
3.16]{rapoport-zink}. To do that
the following closed
condition must hold: there is an $O_B$-linear 
isogeny $\varphi:A^t \to A$ such that
$\varphi\circ \lambda=p$. In the cases of Theorem~\ref{14.1}, this is
the case where $\deg \lambda$ is at most divisible by $p^2$ 
and in this case we use local models to
calculate the dimension;
see Corollary~\ref{14.10} and Section~\ref{sec:11}.
Note that the degree of the polarization of objects here
can be divisible by a high power of $p$ and 
we also consider the objects which do not satisfy the determinant 
condition.


We remark that there is a subtle difference between the moduli space of
polarized self-dual chains of $O_B$-abelian varieties and that of 
polarized $O_B$-abelian varieties in the chain.
This may be seen from
the fact the morphism of deformation functors 
${\rm Def}[A_0,\lambda_0]\to
{\rm Def} [A_0, p \lambda_0]$, sending $(A,\lambda)$ to
$(A,p\lambda)$, is not an isomorphism (for the sake of
simplicity, we disregard the $O_B$-structures). For example, if
$\lambda_0$ is principal, then the deformation functor 
${\rm Def}[A_0,\lambda_0]$ is formally smooth while ${\rm
  Def}[A_0,p\lambda_0]$ is not. Furthermore, the universal deformation
$(\wt A,\wt \lambda)$ of $(A_0,p\lambda_0)$ does not
give rise to a self-dual chain of $O_B$-abelian schemes because the
multiplication by $p$ is not divisible by the
polarization $\wt \lambda$.

The referee points out that one still can apply local models for
computing some subvariety in which
points may have polarization highly divisible by $p$ in the
following way. Consider the locus $\calN$ of
$\calM_{\Fpbar}$ consisting of points with polarization degree 
divisible by at most $p^2$ 
and let $\calV:=\cup_{n=1}^\infty \phi_p^n(\calN)\subset
\calM_{\Fpbar}$, where
$\phi_p:\calM_{\Fpbar}\to \calM_{\Fpbar}$ sends $(A,\lambda,\iota)$ to
$(A,p \lambda, \iota)$. Then we have $\dim \calV=\dim \calN$ and we
can use local models to compute $\dim \calN$.
We thank the referee for this comment. 

(2) We directly show that the method of local models does not work well for
the previous example $(A_0, p\lambda_0)$ over $k$, where $\lambda_0$
is a principal polarization.
Let $(\Lambda, \psi)$ be a symplectic $\Zp$-lattice of
rank $4$ with elementary divisor type $(p,p)$. We can choose a symplectic
basis $\Zp$-basis $e_1, e_2, f_1, f_2$ with
$\psi(e_i,f_j)=p\,\delta_{ij}$ such that the Hodge filtration of
$H^{DR}_1(A_0)$ corresponds to the $k$-subspace generated by
$\{f_1,f_2\}$ via a trivialization 
$(M(A_0), \<\,, \>)=(\Lambda\otimes W(k),\psi)$. The universal
deformation of this point is then spanned by $\wt f_1:=f_1+t_{11} e_1+t_{12}
e_2$ and $\wt f_2:=f_2+t_{21}e_1+t_{22}e_2$ subject to the condition
$\psi(\wt f_1,\wt f_2)=0$. Thus, the universal deformation ring 
is  isomorphic to
$W(k)[[t_{11},t_{12}, t_{21},t_{22}]]/(p(t_{11}-t_{22}))$ and in
particular the local model $\bfM_{\Lambda}$ is not flat over $\Zp$.
However, by a theorem of Mumford \cite[Theorem 2.3.3]{oort:oslo}
(cf.~\cite[Theorem 4.5]{yu:lift})  
the moduli scheme $\calA_{2,p^2}$ is flat over $\Zp$. Therefore, the
local model does not control of the singularity of the moduli scheme
$\calA_{2,p^2}$ at the point $(A_0,p\lambda_0)$.

(3) We used local models to prove $\dim \calMpK\otimes \Fpbar =0$ (Theorem~\ref{10.1}). 
Proposition~\ref{14.2} gives a different proof of this result.  
Lemma~\ref{14.3} shows that 
the inclusion $\calMpK(k)\subset \calM^{(p)}(k)$ is
strict at least when  $B\otimes \Qp$ is a division $\Qp$-algebra. 
This phenomenon is different from the reduction modulo $p$ of
Hilbert moduli schemes or Hilbert-Siegel moduli schemes. 
In the Hilbert-Siegel case, any separably polarized
abelian variety with RM by $O_F$ of a totally real field $F$
satisfies the determinant condition automatically; see Yu
\cite{yu:reduction}, G\"ortz~\cite{goertz:topological} and
Vollaard~\cite{vollaard:thesis}.       

(4) We know that the moduli space $\calM^{(p)}_{\Fpbar}$ is non-empty
    (Lemma~\ref{215}). When $p$ is
    ramified in $B$, the moduli space $\calM_{\Fpbar}$ consists of
    both one-dimensional components (e.g.\ Moret-Bailly
    families) and zero-dimensional components (e.g. points in
    $\calM^{(p)}_{\Fpbar}$).  
\end{remark}

\subsection{Dimension of $\calM_{K,\Fpbar}$}
\label{sec:14.4}

Using Theorem~\ref{14.1} (1), we only need to treat the ramified case.

\begin{lemma}\label{14.4}
  Assume that $p$ is ramified in $B$. Let $M_0$ be a \dieu $O_B\otimes
  \Zp$-module over $k$ such that
\[ M_0/\sfV M_0=k^2 \oplus 0, \]
that is the Lie type of $M_0$ is $(2,0)$ with respect to the action of $\Z_{p^2}$. 
  Let $M$ be any \dieu module such that $\sfV M_0\subset M \subset M_0$
  and $\dim_k (M_0/M)=1$. Then one has
\[ M/\sfV M=k\oplus k. \] 
\end{lemma}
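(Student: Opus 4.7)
The plan is to work with the $\Z_{p^2}$-decomposition $M_0 = M_0^1 \oplus M_0^2$ coming from the action of $\calO_\bfL = \Z_{p^2} \subset O_B \otimes \Zp$, where each $M_0^j$ is a free $W$-module of rank $2$. Since $V$ interchanges the two $\Z_{p^2}$-components (following the convention of Section~\ref{sec:51}), the hypothesis $M_0/VM_0 = k^2 \oplus 0$ splits into $M_0^2/VM_0^1 = 0$ and $M_0^1/VM_0^2 = k^2$. The first equality forces $VM_0^1 = M_0^2$, so $V$ restricts to a semilinear $W$-module isomorphism $M_0^1 \isoto M_0^2$. The second, by the Smith normal form (elementary divisors) in the free rank-two $W$-module $M_0^1$, forces $VM_0^2 = pM_0^1$. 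Hence $VM_0 = pM_0^1 \oplus M_0^2$.

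Next I would verify that $M$ is $\Z_{p^2}$-stable and thus decomposes as $M = M^1 \oplus M^2$. Indeed, $M/VM_0$ is a one-dimensional $k$-subspace of $M_0/VM_0 = k^2 \oplus 0$, so it lies in the first summand, on which $\Z_{p^2}$ acts through the scalar embedding $\sigma_1\colon \F_{p^2} \hookrightarrow k$. Any $k$-line there is automatically $\Z_{p^2}$-stable, and since $VM_0$ is $\Z_{p^2}$-stable too, so is $M$. We then have $M^2 = M_0^2$ (as $M_0^2 \subset VM_0 \subset M$) and $M^1 \subset M_0^1$ a sublattice of colength one containing $pM_0^1$. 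One may also note that $M$ is automatically $\Pi$-stable: by the computation of the $a_j$'s in Section~\ref{sec:53}, the Lie type $(2,0)$ forces $\Pi M_0^2 = pM_0^1 \subset M^1$, while $\Pi M_0^1 \subset M_0^2 = M^2$ is immediate.

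It remains to read off $M/VM$. One has $VM = VM^1 \oplus VM^2$ with $VM^2 = VM_0^2 = pM_0^1 \subset M^1$, so that $(M/VM)^1 = M^1/pM_0^1$ has $k$-dimension $\dim_k M_0^1/pM_0^1 - \dim_k M_0^1/M^1 = 2 - 1 = 1$. On the other side, because $V\colon M_0^1 \isoto M_0^2$ is a semilinear $W$-isomorphism, the quotient $M_0^2/VM^1$ has the same $W$-length as $M_0^1/M^1$, namely $1$; hence $(M/VM)^2 \cong k$ as well. Combining, $M/VM = k \oplus k$.

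The only mildly subtle points are the identification $VM_0^2 = pM_0^1$ via Smith normal form and the automatic $O_B$-stability of a Dieudonn\'e submodule $M$ satisfying the index hypothesis; once these are in place, the remainder is a direct bookkeeping with the $\Z_{p^2}$-components.
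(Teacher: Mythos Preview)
Your proof is correct and follows essentially the same approach as the paper's: both exploit the $\Z_{p^2}$-decomposition $M_0=M_0^1\oplus M_0^2$ together with the consequences $VM_0^1=M_0^2$ and $VM_0^2=pM_0^1$ of the Lie type hypothesis, and then read off the two components of $M/VM$. The paper carries this out by choosing explicit bases and working modulo $p$ in $\ol{M_0}=M_0/pM_0$, while you phrase the same computation lattice-theoretically and coordinate-free; your additional verification that $M$ is automatically $\Z_{p^2}$- and $\Pi$-stable is a useful point the paper leaves implicit.
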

\begin{proof}
  Choose bases $\{X^1_1, X^1_2\}$ and $\{X^2_1, X^2_2\}$ for 
  $M_0^1$ and $M_0^2$, respectively. Since $M_0/\sfV M_0=k^2 \oplus 0$,   
  $M_0$ is superspecial (see the proof of Proposition~\ref{14.2}). 
  Since $\sfV M\supset \sfV ^2M_0=pM_0$, we
  can check the statement by passage to  $\ol M_0:=M_0/pM_0$. 
  Write $x^i_j$ for the image
  of $X^i_j$ in $\ol M_0$. One has
\[ \sfV \ol M_0={\rm Span}_k\{x^2_1, x^2_2\}, \quad 
\ol M:=M/pM_0={\rm Span}_k\{x^2_1, x^2_2, ax^1_1+bx^1_2\}, \]
for some $(a,b)\neq (0, 0)\in k^2$. Then 
\[  \sfV \ol M={\rm Span}_k \{\sfV (ax^1_1+bx^1_2)\}\subset \ol M_0^2, \quad
\text{and }\quad \dim \sfV \ol M=1. \]
This gives $M/\sfV M=k\oplus k$. \qed
\end{proof}

\begin{prop}\label{14.5}
  Assume that $p$ is ramified in $B$. We have $\dim \calM_{K,\Fpbar}=1$.
\end{prop}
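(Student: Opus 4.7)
My plan is to sandwich $\dim \calM_{K,\Fpbar}$ between $1$ and $1$. The upper bound is essentially free: since $\calM_K\subset \calM$ is closed, one has
\[
\dim \calM_{K,\Fpbar} \le \dim \calM_{\Fpbar}=1
\]
by Theorem~\ref{14.1}(2). So the substantive work is to produce a one-dimensional family inside $\calM_{K,\Fpbar}$.

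The natural candidate is the Moret-Bailly family $(\bfA,\lambda_\bfA,\iota_\bfA)\to \bfP^1_{\Fpbar}$ constructed in Section~\ref{sec:13}, which by Theorem~\ref{13.1} yields a non-constant moduli map $f':\bfP^1_{\Fpbar}\to \calM_{\Fpbar}$. Since $f'$ is non-constant and $\bfP^1$ is proper and connected, its image is a one-dimensional closed subvariety of $\calM_{\Fpbar}$, so it suffices to show that this image lands in $\calM_{K,\Fpbar}$, i.e.\ that every fiber of $\bfA\to \bfP^1_{\Fpbar}$ satisfies the determinant condition~(K).

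To check~(K), I will work with the covariant \dieu module at a geometric point. By Lemma~\ref{52}(2) (which applies because $p$ is ramified in $B$, so $\bfB=B\otimes\Qp$ is the division quaternion algebra and $d=f=e=1$ in our $F=\Q$ setting), condition (K) at a geometric point is equivalent to the requirement $\dim_k(M/VM)^j=1$ for each $j\in \Z/2\Z$, i.e.\ the Lie type is $(1,1)$. Here $M_0$, the \dieu module of $A_0=E\times E$, has Lie type $(2,0)$ by construction (see (\ref{eq:13.2})). The quotient $\bfA=A_0\times S/H$ is formed by killing an $O_B$-stable rank-$p$ subgroup of $A_0[F]$; on \dieu modules this replaces $M_0$ by a submodule $M$ with $VM_0\subset M\subset M_0$ and $\dim_k(M_0/M)=1$. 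Lemma~\ref{14.4} then gives exactly $M/VM=k\oplus k$, which is the Lie type $(1,1)$ we need. This holds uniformly in the parameter on $\bfP^1$, so the whole family lies in $\calM_{K,\Fpbar}$.

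Combining the two bounds yields $\dim \calM_{K,\Fpbar}=1$. The only step that requires genuine input is the verification that the Moret-Bailly quotient has the correct Lie type, but this has been packaged as Lemma~\ref{14.4}; the rest is bookkeeping. I do not anticipate any obstacle once Lemma~\ref{14.4} is in hand, since the characterization of~(K) via Lie type in Lemma~\ref{52}(2) translates the condition directly into the statement of Lemma~\ref{14.4}.
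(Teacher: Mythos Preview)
Your proof is correct and follows essentially the same route as the paper: use the Moret-Bailly family from Section~\ref{sec:13} to get the lower bound, invoke Lemma~\ref{14.4} to verify that each fiber has Lie type $(1,1)$ and hence satisfies (K), and use $\calM_{K,\Fpbar}\subset\calM_{\Fpbar}$ together with Theorem~\ref{14.1}(2) for the upper bound. You are slightly more explicit than the paper in spelling out the upper bound and in citing Lemma~\ref{52}(2) to translate (K) into the Lie-type condition, but the argument is the same.
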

\begin{proof}
  In the previous section, we construct a polarized $O_B$-abelian
  surface $(\bfA,\lambda_\bfA, \iota_\bfA)$ over $\bfP^1$
  starting from a superspecial abelian surface 
  $(A_0,\lambda_0,\iota_0)$ with additional structures and get
  a non-constant moduli map $f':\bfP^1 \to \calM_{\Fpbar}$. The
  \dieu module $M_0$ of $A_0$ has the property $M_0/\sfV M_0=k^2\oplus
  0$. By Lemma~\ref{14.4}, every fiber of the family
  $(\bfA,\lambda_\bfA, \iota_\bfA)\to \bfP^1$ has Lie type
  $(1,1)$. Then the image $f'(\bfP^1)$ is contained in
  $\calM_{K,\Fpbar}$. This shows that $\dim \calM_{K,\Fpbar}=1$. \qed   
\end{proof}

\begin{remark}\label{14.6}
  As a consequence of Proposition~\ref{14.5}, we have the following
  result: When $p$ is
  ramified in $B$, there is a
  polarized $O_B$-abelian surface over $k$ with the determinant
  condition that can not be lifted to a polarized $O_B$-abelian surface
  in \ch zero. 
\end{remark}

\subsection{Dimension of $\calM_{K,D,\Fpbar}$}
\label{sec:14.5}
For an integer $D\ge 1$, let
$\calM_{K,D,\Fpbar}:=\calM_{K,D}\otimes_{\Z_{(p)}} \Fpbar$, where
  $\calM_{K,D}=\calM_K\cap \calM_D$. We would like to determine the
  dimension of $\calM_{K,D,\Fpbar}$ if it is non-empty. By
  Theorem~\ref{14.1}, $\dim \calM_{K,D,\Fpbar}=0$ if $p$ is unramified
  in $B$ or $p\nmid D$. Thus, we shall assume that $p$ is ramified in
  $B$ and $p\mid D$. For the case where $p|| D$, we use local models
  to compute the dimension. 

Let $(\Lambda_1,\psi)$ 
be a skew-Hermitian $O_B\otimes \Zp$-module of $\Zp$-rank
$4$ of discriminant $(p^2)$. We may write
$\Lambda_1=O_B\otimes \Zp=\Z_{p^2}[\Pi]$ and 
$\psi(x,y)=\tr_{\bfB/\Qp} (x\xi
y^*)$ for a skew-Hermitian element $\xi\in B_p^\times$. The
condition $\disc \psi=(p^2)$ implies that $\xi\in (O_B\otimes \Zp)^\times$.
Up to isomorphism the element $\xi$ is uniquely determined up to a
scalar in $(\Z_{p}^\times)^2$. 

Let $\bfM_{\Lambda_1}$ be the local model over $\Zp$ 
associated to the lattice $\Lambda_1$ defined as in
Section~\ref{sec:10.1}. 

\begin{lemma}\label{14.9}
  The special fiber $\bfM_{\Lambda_1, {\Fpbar}}$ is a union of two projective
  lines meeting at one point.
\end{lemma}
\begin{proof}
  Write $\Lambda_{1,W}:=\Lambda_1\otimes W=\oplus_{j\in
  \Z/2\Z}\Lambda_{1,W}^j$. 
  We can choose 
  a $W$-base $\{e^j_1,e^j_2\}$ for
  $\Lambda_{1,W}^j$ for each $j\in \Z/2\Z$ such that  
\begin{equation}
\label{eq:14.2}
  \begin{split}
  &\Pi e^j_1=e^{j+1}_2, \quad \Pi e^j_2=-p e^{j+1}_1,\\
   \psi(e^1_1,e^2_1)=1, \quad  &\psi(e^1_2,e^2_2)=-p, \quad
   \psi(e^i_r,e^j_s)=0\quad \text{if $i=j$ or $r\neq s$.}
  \end{split}
  \end{equation}
   To see this,
    write $O_B \otimes_{\Z} W=(W\times W)[\Pi]$ and let $e^1_1=(1,0)$,
    $e^1_2=(1,0)\Pi$,  $e^2_1=(0,1)$,
    $e^2_2=(0,1)\Pi$. Up to $(W^{\times})^2=W^\times$, 
    we may choose $\xi$ to be 
    $(1,-1)$. Then the
    conditions in \eqref{eq:14.2} are satisfied. 

  Any point in $\bfM_{\Lambda_1}({\Fpbar})$ is given by an ${\Fpbar}$-subspace
  $\scrF=\scrF^1\oplus\scrF^2\subset \Lambda_{1,{\Fpbar}}^1\oplus
  \Lambda_{1,{\Fpbar}}^2$ such that 
  \begin{equation}
    \label{eq:14.3}
    \dim_{\Fpbar} \scrF^j=1,\quad \Pi(\scrF^j)\subset \scrF^{j+1}, \quad
    \psi(\scrF^1,\scrF^2)=0 
  \end{equation}
  for $j\in \Z/2\Z$. Write $\scrF^j=\< t_{j1} e^j_1+t_{j2}
  e^j_2\>$ with $[t_{j1}:t_{j2}]\in \bfP^1({\Fpbar})$ for $j\in \Z/2\Z$. 
  The conditions in \eqref{eq:14.3} is then given by the condition
  $t_{11} t_{21}=0$. Thus, $\calM_{\Lambda_1,\Fpbar}=\{[0:1]\}\times
  \bfP^1\cup \bfP^1\times \{[0:1]\}$ with two $\bfP^1$'s 
  meeting at the point $([0:1],[0:1])$. \qed 
\end{proof}

\begin{cor}\label{14.10}
 If $p$ is ramified in $B$, $p||D$ and $\calM_{K,D,\Fpbar}$ is
 non-empty, then $\dim \calM_{K,D,\Fpbar}=1$.  
\end{cor}
\begin{proof}
  Choosing an auxiliary prime to $pD$ level structure, we have a
  finite surjective cover $\calM_{K,D,N}\to \calM_{K,D}$. Using the
  local model diagram, we have $\dim \calM_{K,D,N,\Fpbar}=\dim
  \bfM_{\Lambda_1,\Fpbar}=1$ by Lemma~\ref{14.9}. It follows that
  $\dim \calM_{K,D,\Fpbar}=1$. \qed
\end{proof}

\begin{question}
  We mention a few  problems which are not solved in this paper.

(1) What is the dimension of $\calM_{K,D,\Fpbar}$ if it is non-empty
    and $p^2|D$? We expect the answer to be one.

(2) We prove in Theorem~\ref{10.1} that $\calMpK\to \Spec \Z_{(p)}$ is
    finite and flat. Which points in $\calMpK$ are \'etale
    over $\Spec \Z_{(p)}$?
    How about the same questions for $\calM_K$ and $\calM^{(p)}$ when
    $F=\Q$ in Theorem~\ref{14.1}? 

(3) What are the dimensions of variant moduli spaces in
    Theorem~\ref{14.1} where $F$ is an arbitrary totally real field
    and $m=1$?

(4) Is the map $\varphi^{\rm loc}$ (and its variant for the canonical
    local model considered in \cite{pappas-rapoport} or
    \cite{pappas-zhu})  
    in Section~\ref{sec:10.2} surjective?   
\end{question}








\section*{Acknowledgments}
  Parts of the present work were done while the author's visits at the
  RIMS, Kyoto University, the IEM, Universit\"at Duisburg-Essen, 
  the IMS, Chinese University of Hong Kong, the PMI of POSTECH, and
  the Max-Planck-Institut f\"ur Mathematik in Bonn. 
  He wishes to thank Akio Tamagawa and Ulrich G\"ortz for helpful
  discussions and these institutions for kind hospitality 
  and excellent working conditions. The author was 
  partially supported by the grants MoST 100-2628-M-001-006-MY4,
  103-2918-I-001-009 and 107-2115-M-001-001-MY2. He is indebted to the
  referee for a long list of helpful comments which have eliminated
  several inaccuracies and improved the exposition significantly.  


\begin{thebibliography}{99}
\def\jams{{\it J. Amer. Math. Soc.}} 
\def\invent{{\it Invent. Math.}} 
\def\ann{{\it Ann. Math.}} 
\def\ihes{{\it Inst. Hautes \'Etudes Sci. Publ. Math.}} 


\def\acta{{\it Acta Math.}}
\def\ecole{{\it Ann. Sci. \'Ecole Norm. Sup.}}
\def\ecole4{{\it Ann. Sci. \'Ecole Norm. Sup. (4)}} 
\def\mathann{{\it Math. Ann.}} 
\def\duke{{\it Duke Math. J.}} 
\def\jag{{\it J. Algebraic Geom.}} 
\def\advmath{{\it Adv. Math.}}
\def\compos{{\it Compositio Math.}} 
\def\ajm{{\it Amer. J. Math.}} 
\def\grenoble{{\it Ann. Inst. Fourier (Grenoble)}}
\def\crelle{{\it J. Reine Angew. Math.}}
\def\mrl{{\it Math. Res. Lett.}}
\def\imrn{{\it Int. Math. Res. Not.}}
\def\acad{{\it Proc. Nat. Acad. Sci. USA}}
\def\tams{{\it Trans. Amer. Math. Sci.}}
\def\cras{{\it C. R. Acad. Sci. Paris S\'er. I Math.}} 
\def\mathz{{\it Math. Z.}} 
\def\cmh{{\it Comment. Math. Helv.}}
\def\docmath{{\it Doc. Math. }}
\def\asian{{\it Asian J. Math.}}
\def\jussieu{{\it J. Inst. Math. Jussieu}} 
\def\plms{{\it Proc. London Math. Soc.}}
\def\bams{{\it Bull. Amer. Math. Soc.}}

\def\jlms{{\it J. London Math. Soc.}}
\def\blms{{\it Bull. London Math. Soc.}}
\def\manmath{{\it Manuscripta Math.}} 
\def\jnt{{\it J. Number Theory}} 
\def\ijm{{\it Israel J. Math.}}
\def\ja{{\it J. Algebra}} 
\def\pams{{\it Proc. Amer. Math. Sci.}}
\def\smfmemoir{{\it Bull. Soc. Math. France, Memoire}}
\def\bsmf{{\it Bull. Soc. Math. France}}
\def\sb{{\it S\'em. Bourbaki Exp.}}
\def\jpaa{{\it J. Pure Appl. Algebra}}
\def\jems{{\it J. Eur. Math. Soc. (JEMS)}}
\def\jtokyo{{\it J. Fac. Sci. Univ. Tokyo}}
\def\cjm{{\it Canad. J. Math.}}
\def\jaums{{\it J. Australian Math. Soc.}}
\def\pspm{{\it Proc. Symp. Pure. Math.}}
\def\ast{{\it Ast\'eriques}}
\def\pamq{{\it Pure Appl. Math. Q.}}
\def\nagoya{{\it Nagoya Math. J.}}
\def\forum{{\it Forum Math. }}
\def\tjm{{\it Taiwanese J. Math.}}
\def\rt{{\it Represent. Theory}}
\def\bordeaux{{\it J. Th\'eor. Nombres Bordeaux}}
\def\ijnt{{\it Int. J. Number Theory}}
\def\jmsj{{\it J. Math. Soc. Japan}}
\def\aa{{\it Acta Artih.}}


\def\tp{{to appear}}

\newcommand{\princeton}[1]{Ann. Math. Studies #1, Princeton
  Univ. Press}

\newcommand{\LNM}[1]{Lecture Notes in Math., vol. #1, Springer-Verlag}

\bibitem{artin-winters} M. Artin and G. Winters, 
Degenerate fibres and stable reduction of curves.
{\it Topology}~{\bf 10} (1971), 373--383. 

\bibitem{deligne:corvallis} P. Deligne, Vari\'et\'es de Shimura:
  interpr\'etation modulaire, et techniques de construction de
  mod\`eles canoniques. {\it Automorphic forms, representations and
  $L$-functions (Proc. Sympos.Pure Math., Oregon State
  Univ. Corvallis, 1977), Part 2,} 247--289. Proc. Sympos. Pure Math.,
  XXXIII, {\it Amer. Math. Soc.,} 1979.

\bibitem{deligne-pappas} P. Deligne and G. Pappas, Singularit\'es des
  espaces de modules de Hilbert, en les caract\'eristiques divisant le
  discriminant. \compos~{\bf 90} (1994), 59--79.


\bibitem{faltings:end} G.~Faltings, Endlichkeitss\"atze f\"ur abelsche
  Variet\"aten \"uber Zahlk\"orpern. \invent~{\bf 73} (1983), 349--366.

\bibitem{goertz:symplectic}  U. G\"ortz, On the flatness of local
  models for the symplectic group. \advmath~{\bf 176} (2003),
  89--115. 



\bibitem{goertz:topological}  U. G\"ortz, Topological flatness of
  local models in the ramified case. \mathz~{\bf 250} (2005), 775--790. 


\bibitem{grothendieck:bt} A. Grothendieck, {\it Groupes de
    Barsotti-Tate et Cristaux de Dieudonn\'e}. Les Presses de
    l'Universit\'e de Montr\'eal, 1974.

\bibitem{sga7-1}
{\it Groupes de monodromie en g\'eometrie alg\'ebrique
  I.} S\'eminaire de G\'eometrie Alg\'ebrique du Bois-Marie 1967--1969 
 (SGA 7 I). Dirig\'e par A. Grothendieck.Avec la collaboration de
 M. Raynaud et D. S. Rim. \LNM{288}. 1972. 

\bibitem{hartwig:kr}
P.~Hartwig,  Kottwitz-Rapoport and p-rank strata in the reduction of
Shimura varieties of PEL type. \grenoble~{\bf 65}, no. 3, 1031--1103.


\bibitem{he-rapoport}
X.~He and M.~Rapoport, 
Stratifications in the reduction of Shimura varieties.
{\it Manuscripta Math.}~{\bf 152} (2017), no. 3-4, 317--343. 




\bibitem{helgason:gsm34} S.~Helgason, 
{\it Differential geometry, Lie groups, and symmetric
spaces.}  Graduate Studies in Mathematics, 34. AMS, 2001. 641 pp.  

\bibitem{katsura-oort:surface} T. Katsura and F. Oort, Families of
  supersingular abelian surfaces, \compos~{\bf 62} (1987), 107--167.

\bibitem{koblitz:thesis} N. Koblitz, $p$-adic variant of the
  zeta-function of families of varieties defined over finite fields. 
  \compos~{\bf 31} (1975), 119--218.

\bibitem{kottwitz:isocrystals} R. E. Kottwitz, Isocrystals with
  additional structure. \compos~{\bf 56} (1985), 201--220.

\bibitem{kottwitz:jams92} R. E.~Kottwitz, Points on some Shimura
  varieties over finite fields. \jams~{\bf 5} (1992), 373--444.

\bibitem{kottwitz:isocrystals2} R. E. Kottwitz, Isocrystals with
  additional structure. II. \compos~{\bf 109} (1997), 255--339.

\bibitem{manin:thesis} Yu. Manin, Theory of commutative formal groups
  over fields of finite characteristic. 
  {\it Russian Math. Surveys}~{\bf 18} (1963), 1--80.

\bibitem{moonen:bt1} B. Moonen,  Group schemes with additional
    structures and Weyl group cosets. {\it Moduli of Abelian Varieties},
  255--298.  (ed. by C. Faber, G. van der Geer and F. Oort), 
  {Progress in Mathematics}~{\bf 195}, Birkh\"auser 2001.

\bibitem{moonen:eo} B. Moonen, A dimension formula for Ekedahl-Oort
  strata. \grenoble~{\bf 54} (2004), 666--698.



\bibitem{moonen:st} B. Moonen, Serre-Tate Theory for Moduli Spaces of
  PEL Type. \ecole4~{\bf 37} (2004), 223--269.



\bibitem{moret-bailly:p1} L. Moret-Bailly, Familles de courbes et de
   vari\'et\'es ab\'eliennes sur ${\mathbf P}^1$. S\'em. sur les
   piur ${\mathbf P}^1$. S\'em. sur les
   pinceaux de courbes de genre au moins deux
   (ed. L. Szpiro). \ast~{\bf 86} (1981), 109--140.

\bibitem{moret-bailly:ast85} L.~Moret-Bailly, {\it Pinceaux de
  vari\'et\'es ab\'eliennes.}  {\ast}~{\bf 129}  (1985), 266 pp.  

\bibitem{messing:bt} W. Messing, {\it The crystals associated to
  Barsotti-Tate groups: with applications to abelian schemes.} 
  Lecture Notes in Math. 264, Springer-Verlag, 1972. 

\bibitem{mumford:av} D. Mumford, {\it Abelian Varieties.} Oxford
  University Press, 1974.

\bibitem{norman-oort} P.~Norman and F.~Oort, Moduli of abelian varieties,
  \ann~{\bf 112} (1980), 413--439. 

\bibitem{omeara:book} O.~T.~O'Meara, {\it Introduction to quadratic
  forms}. Reprint of the 1973 edition. 
  Classics in Mathematics. Springer-Verlag, Berlin, 2000. xiv+342 pp. 

\bibitem{oort:cm} F. Oort, The isogeny class of a CM-type abelian
  variety is defined over a finite extension of the prime
  field. \jpaa~{\bf 3} (1973), 399--408.


\bibitem{oort:oslo} F.~Oort,  Finite group schemes, local moduli for
  abelian varieties, and lifting problems. \compos~{\bf 23} (1971),
  265--296. 

\bibitem{pappas:rz} G.~ Pappas, On the arithmetic moduli
schemes of PEL Shimura varieties. \jag~{\bf 9} (2000), 577--605.

\bibitem{pappas-rapoport} G. Pappas and M. Rapoport, Local models in
  the ramified case I. The EL-case. \jag~{\bf 12} (2003), 107--145. 



\bibitem{pappas-rapoport:2} G. Pappas and M. Rapoport, Local models in
  the ramified case II. Splitting models. 
  \duke~{\bf 127}  (2005), 193--250. 


\bibitem{pappas-rapoport:3} G. Pappas and M. Rapoport, Local models in
  the ramified case. III. Unitary groups. \jussieu~{\bf 8}
  (2009), no. 3, 507--564. 


\bibitem{pappas-zhu}
G. Pappas and X. Zhu,  Local models of Shimura varieties and a
conjecture of Kottwitz. {\it Invent. Math.}~{\bf 194} (2013), no. 1, 
147--254. 


\bibitem{rapoport-richartz} M. Rapoport and M. Richartz, On the
  classification and specialization of $F$-isocrystals with additional
  structure. \compos~{\bf 103} (1996), 153--181. 

\bibitem{rapoport-viehmann} M. Rapoport and E. Viehmann, Towards
  a theory of local Shimura varieties. {\it M\"unster J. Math.}~{\bf
  7} (2014), no. 1, 273--326. 

\bibitem{rapoport-zink} M. Rapoport and Th. Zink, {\it Period Spaces
    for $p$-divisible groups}. \princeton{141}, 1996.


\bibitem{smithling:admD} B. D. Smithling, 
Admissibility and permissibility for minuscule cocharacters in
orthogonal groups. \manmath~{\bf 136} (2011), no. 3-4, 295--314.  



\bibitem{smithling:tfD} B. D. Smithling,
Topological flatness of orthogonal local models in the split, 
even case. I \mathann~{\bf 350} (2011) no. 2, 381--416.



\bibitem{tsukamoto:1961} T.~Tsukamoto, On the local theory of
  quaternionic anti-hermitian forms. \jmsj~{\bf 13} (1961)
  387--400. 

  
\bibitem{vollaard:thesis} I.~Vollaard, On the Hilbert-Blumenthal moduli 
problem.  {\it J. Inst. Math. Jussieu}~{\bf  4}  (2005), 653--683.    


\bibitem{wedhorn:ordinary} T. Wedhorn, Ordinariness in good reductions
  of Shimura varieties of PEL-type. \ecole4~{\bf 32} (1999), 575--618.

\bibitem{wedhorn:eo} T. Wedhorn, The dimension of Oort strata of
  Shimura varieties of PEL-type. {\it Moduli of Abelian Varieties},
  441--471. (ed. by C. Faber, G. van  der Geer and F. Oort), 
  {Progress in Mathematics}~{\bf 195}, Birkh\"auser 2001.

\bibitem{vasiu:D1} 
A.~Vasiu,  Integral models in unramified mixed characteristic
 (0,2) of Hermitian orthogonal Shimura varieties of PEL type, Part
 I. {\it J. Ramanujan Math. Soc.}~{\bf 27} (2012), no. 4, 425--477. 


\bibitem{vasiu:D2} A.~Vasiu,  
Integral models in unramified mixed characteristic (0,2) of hermitian
orthogonal Shimura varieties of PEL type, Part II. {\it Math. Nachr.} 
~{\bf 287} (2014), no. 14-15, 1756--1773. 

\bibitem{yu:lift} C.-F. Yu, Lifting abelian varieties with additional
  structures. \mathz~{\bf 242} (2002), 427--441. 

\bibitem{yu:reduction} C.-F. Yu, On reduction of Hilbert-Blumenthal
  varieties. \grenoble~{\bf 53} (2003), 2105--2154.

\bibitem{yu:cm} C.-F. Yu, The isomorphism classes of abelian varieties
  of CM-type. \jpaa~{\bf 187} (2004) 305--319.

\bibitem{yu:mass_hb} C.-F.~Yu, On the mass formula of supersingular
  abelian varieties with real multiplications. \jaums~{\bf 78} (2005),
  373--392.

\bibitem{yu:gmf} C.-F. Yu, An exact geometric mass
  formula. \imrn~{\bf 2008}, Article ID rnn113, 11 pages. 


\bibitem{yu:smf} C.-F. Yu, Simple mass formulas on Shimura varieties
  of PEL-type. \forum~{\bf 22} (2010), no. 3, 565--582.



\bibitem{yu:endo} C.-F. Yu, On finiteness of endomorphism rings of
  abelian varieties. \mrl~{\bf 17} (2010), no. 2, 357--370.



\bibitem{yu:embed} C.-F. Yu,  Embeddings of fields into simple
  algebras: generalizations and applications. \ja~{\bf 368} (2012),
  1--20. 




\bibitem{zarhin:end}
J. G.~Zarhin, Isogenies of abelian varieties over fields of finite
characteristics, {\it Math. USSR Sbornik}~{\bf 24} (1974), 451--461.

\bibitem{zink:cartier} Th. Zink, {\it Cartiertheorie kommutativer
  formaler Gruppen. } Teubner-Texte Math. Teubner, Leipzig, 1984.

\end{thebibliography}
\end{document}